\def\Ho{\operatorname{Ho}\nolimits}
\def\opp{{\operatorname{opp}\nolimits}}
\def\mperf{\operatorname{\!-perf}\nolimits}
\def\mMod{\operatorname{\!-mod}\nolimits}
\def\mperm{\operatorname{\!-perm}\nolimits}
\def\mproj{\operatorname{\!-proj}\nolimits}
\DeclareSymbolFont{rsfscript}{OMS}{rsfs}{m}{b}
\DeclareSymbolFontAlphabet{\mathrsfs}{rsfscript}
\definecolor{shadecolor}{gray}{0.95}
\def\bfit{\bfseries\itshape}
\def\longmapright#1{\hspace{0.3em}\smash{
     \mathop{\longrightarrow}\limits^{#1}}\hspace{0.3em}}
\def\longisom{{\longmapright{\sim}}}
\newtheorem{theo}{Theorem}[section]
\newtheorem{prop}[theo]{Proposition}
\newtheorem{lem}[theo]{Lemma}
\newtheorem{coro}[theo]{Corollary}
\def\equat{\refstepcounter{theo}\begin{equation}}
\def\endequat{\end{equation}}
\renewcommand\thesection{\arabic{section}}
    \def\FM{{\mathbb{F}}}
    \def\LM{{\mathbb{L}}}
    \def\QM{{\mathbb{Q}}}
    \def\ZM{{\mathbb{Z}}}
  \def\ab{{\mathbf a}}  \def\AC{{\mathcal{A}}}
\def\Bb{{\mathbf B}}  \def\bb{{\mathbf b}}  \def\BC{{\mathcal{B}}}
    \def\CC{{\mathcal{C}}}
    \def\EC{{\mathcal{E}}}
\def\Fb{{\mathbf F}}    
\def\Gb{{\mathbf G}}
  \def\kb{{\mathbf k}}  
\def\Lb{{\mathbf L}}    \def\LC{{\mathcal{L}}}
\def\Nb{{\mathbf N}}  \def\nb{{\mathbf n}}  
\def\Ob{{\mathbf O}}    \def\OC{{\mathcal{O}}}
\def\Pb{{\mathbf P}}    \def\PC{{\mathcal{P}}}
    \def\RC{{\mathcal{R}}}
\def\Sb{{\mathbf S}}    
\def\Tb{{\mathbf T}}    \def\TC{{\mathcal{T}}}
\def\Ub{{\mathbf U}}    
\def\Vb{{\mathbf V}}    
  \def\wb{{\mathbf w}}  
\def\Xb{{\mathbf X}}    \def\XC{{\mathcal{X}}}
\def\Yb{{\mathbf Y}}    \def\YC{{\mathcal{Y}}}
\def\Zb{{\mathbf Z}}
\def\Drm{{\mathrm{D}}}
\def\Grm{{\mathrm{G}}}    
\def\Hrm{{\mathrm{H}}}
\def\Mrm{{\mathrm{M}}}    
\def\Nrm{{\mathrm{N}}}
\def\Rrm{{\mathrm{R}}}
    \def\YCB{{\boldsymbol{\mathcal{Y}}}}
\def\Zrm{{\mathrm{Z}}}
  \def\sti{{\tilde{s}}}  
  \def\tti{{\tilde{t}}}
  \def\shat{{\hat{s}}}
          \def\sdo{{\dot{s}}}
          \def\xdo{{\dot{x}}}
\def\Gbt{{\tilde{\Gb}}}
\def\Lbt{{\tilde{\Lb}}}          
\def\Nbt{{\tilde{\Nb}}}          
\def\Pbt{{\tilde{\Pb}}}
\def\Tbt{{\tilde{\Tb}}}
\def\Ybt{{\tilde{\Yb}}}
\def\a{\alpha}
\def\b{\beta}
\def\G{\Gamma}
\def\d{\delta}
\def\D{\Delta}
\def\l{\lambda}
\def\L{\Lambda}
\def\o{\omega}
\def\O{\Omega}
\def\s{\sigma}
\def\th{\theta}
\def\Th{\Theta}
\def\z{\zeta}
\DeclareMathOperator{\ad}{{\mathrm{ad}}}
\DeclareMathOperator{\End}{{\mathrm{End}}}
\DeclareMathOperator{\Hom}{{\mathrm{Hom}}}
\DeclareMathOperator{\Id}{{\mathrm{Id}}}
\DeclareMathOperator{\Ind}{{\mathrm{Ind}}}
\DeclareMathOperator{\Irr}{{\mathrm{Irr}}}
\DeclareMathOperator{\Ker}{{\mathrm{Ker}}}
\DeclareMathOperator{\Res}{{\mathrm{Res}}}
\DeclareMathOperator{\Tr}{{\mathrm{Tr}}}
\DeclareMathOperator{\Comp}{{\mathrm{Comp}}}
\DeclareMathOperator{\rhom}{{\mathrm{RHom}}}
\DeclareMathOperator{\rgamma}{{\mathrm{R}\Gamma}}
\DeclareMathOperator{\rgammac}{{\mathrm{R}\Gamma_{\!\! c}}}
\DeclareMathOperator{\rgammacdim}{{\mathrm{R}\Gamma_{\!\! c}^{\mathrm{dim}}}}
\def\to{\rightarrow}
\def\longto{\longrightarrow}
\def\injto{\hookrightarrow}
\def\longtrait#1{\hspace{0.3em}{~ \SS{#1} ~ \over ~}\hspace{0.3em}}
\def\fonction#1#2#3#4#5{\begin{array}{rccc}
{#1} : & {#2} & \longto & {#3} \\
& {#4} & \longmapsto & {#5} 
\end{array}}
\def\fonctio#1#2#3#4{\begin{array}{ccc}
{#1} & \longto & {#2} \\
{#3} & \longmapsto & {#4} 
\end{array}}
\def\vide{\varnothing}
\def\DS{\displaystyle}
\def\SS{\scriptstyle}
\def\SSS{\scriptscriptstyle}
\def\lexp#1#2{\kern\scriptspace\vphantom{#2}^{#1}\kern-\scriptspace#2}
\def\le{\hspace{0.1em}\mathop{\leqslant}\nolimits\hspace{0.1em}}
\def\ge{\hspace{0.1em}\mathop{\geqslant}\nolimits\hspace{0.1em}}
\mathchardef\inferieur="321E
\mathchardef\superieur="321F
\def\eqna{\begin{eqnarray*}}
\def\endeqna{\end{eqnarray*}}
\def\itemth#1{\item[${\mathrm{(#1)}}$]}
\def\qlb{{\overline{\QM}_{\! \ell}}}
\long\def\@car#1#2\@nil{#1}
\long\def\@first#1#2{#1}
\long\def\@second#1#2{#2}
\long\def\ifempty#1{\expandafter\ifx\@car#1@\@nil @\@empty
  \expandafter\@first\else\expandafter\@second\fi}
\def\SL{{\Sb\Lb}}
\def\SU{{\Sb\Ub}}
\def\ve{{\SSS{\vee}}}
\def\surto{\twoheadrightarrow}
\theoremstyle{remark}
\newtheorem{rema}[theo]{Remark}
\newtheorem{exemple}[theo]{Example}
\newtheorem{exemples}[theo]{Examples}
\newtheorem{contre}[theo]{Counter-example}
\theoremstyle{plain}
\def\xyinj{\ar@{^{(}->}}
\def\xysur{\ar@{->>}}
\def\hlinewd#1{%
\noalign{\ifnum0=`}\fi\hrule \@height #1 %
\futurelet\reserved@a\@xhline}
\newlength\epaisLigne
\newcommand{\longsurto}{\relbar\joinrel\twoheadrightarrow}
\newcommand{\longinjto}{\lhook\joinrel\longrightarrow}
\def\perm{\operatorname{\!-perm}\nolimits}
\def\red{{\mathrm{red}}}
\def\modules{\operatorname{\!-mod}\nolimits}
\def\brauer{\operatorname{Br}}
\def\shift{{\mathrm{sh}}}
\begin{document}

\baselineskip=16pt
%\large\baselineskip=20pt
%\Large\baselineskip=24pt

\title{Derived categories and Deligne-Lusztig varieties~II}

\author{{\sc C\'edric Bonnaf\'e}}
\address{
Institut de Math\'ematiques et de Mod\'elisation de Montpellier (CNRS: UMR 5149), 
Universit\'e Montpellier 2,
Case Courrier 051,
Place Eug\`ene Bataillon,
34095 MONTPELLIER Cedex,
FRANCE} 

\makeatletter
\email{cedric.bonnafe@univ-montp2.fr}
\makeatother

\author{{\sc Jean-Fran\c{c}ois Dat}}

\address{Institut de Math\'ematiques de Jussieu, 
4 place Jussieu,
75252 Paris cedex 05,
FRANCE}
\email{dat@math.jussieu.fr}

\makeatother

\author{{\sc Rapha\"el Rouquier}}

\address{UCLA Mathematics Department
Los Angeles, CA 90095-1555, 
USA}
\email{rouquier@math.ucla.edu}

\makeatother

%\subjclass{According to the 2000 classification:
%Primary ???; Secondary ???}

\date{\today}

\thanks{The first author is partly supported by the ANR (Project No ANR-12-JS01-0003-01 ACORT). 
The second author thanks the Institut Universitaire de France
and the project ANR-14-CE25-0002-01  PerCoLaTor for their support.
The third author is partly supported by the NSF (grant DMS-1161999) and by a
grant from the Simons Foundation (\#376202, Rapha\"el Rouquier)}

\begin{abstract} 
This paper is a continuation and a completion of~\cite{BR}. We extend the Jordan decomposition
of blocks: we show that blocks of finite groups
of Lie type in non-describing characteristic are Morita equivalent to blocks of subgroups
associated to isolated elements of the dual group --- this is the modular version of a
fundamental result of Lusztig, and the best approximation of the character-theoretic Jordan
decomposition that can be obtained via Deligne-Lusztig varieties. The key new result is the
invariance of the part of the cohomology in a given modular series
of Deligne-Lusztig varieties associated to a given Levi subgroup,
under certain variations of parabolic subgroups.

We also bring in local block theory methods: we show that the equivalence arises from a 
splendid Rickard equivalence. Even in the setting of \cite{BR}, the finer homotopy equivalence
was unknown. As a consequence, the equivalences
preserve defect groups and categories of subpairs. We finally determine when
Deligne-Lusztig induced representations of tori generate the derived category of representations.
An additional new feature is an extension of the results to disconnected reductive
algebraic groups, which is required to handle local subgroups.
\end{abstract}

\maketitle

\pagestyle{myheadings}

\markboth{\sc C. Bonnaf\'e, J.-F. Dat \& R. Rouquier}{\sc Derived categories and Deligne-Lusztig varieties}

\setcounter{tocdepth}{3}
\tableofcontents

% \tableofcontents
% 
% \vskip1cm

\section{Introduction}

Let $\Gb$ be a connected reductive algebraic group over an algebraic closure of a finite field,
endowed with an endomorphism $F$, a power
of which is a Frobenius endomorphism. Let $\ell$ be a prime number distinct from the defining
characteristic of $\Gb$ and $K$ a finite extension of
$\QM_\ell$, large enough for the finite groups considered. Let
$\OC$ be the ring of integers of $K$ over $\ZM_\ell$ and $k$ the residue field.
We will denote by $\Lambda$ a ring that is either $K$, $\OC$ or $k$.

\smallskip
The main tool for the study of representations of $\Gb^F$ over $\L$
is the Deligne-Lusztig induction.
Let $\Lb$ be an $F$-stable Levi subgroup of $\Gb$ contained in a parabolic subgroup $\Pb$
with unipotent radical $\Vb$ so that $\Pb=\Vb\rtimes\Lb$. Consider the Deligne-Lusztig
variety
$$\Yb_\Pb=\{g\Vb \in \Gb/\Vb~|~g^{-1}F(g) \in \Vb\cdot F(\Vb)\}.$$
It has a left action of $\Gb^F$ and a right action of $\Lb^F$ by multiplication.
The corresponding complex of $\ell$-adic cohomology induces a triangulated functor
$$\RC_{\Lb\subset \Pb}^{\Gb}:D^b(\L\Lb^F)\to D^b(\L\Gb^F),\
M\mapsto R\G_c(\Yb_\Pb,\L) \otimes_{\L\Lb^F}^\LM M$$
and a morphism
$$R_{\Lb\subset\Pb}^{\Gb}=[\RC_{\Lb\subset \Pb}^{\Gb}]:
G_0(\L\Lb^F)\to G_0(\L\Gb^F).$$
This is the usual Harish-Chandra construction when $\Pb$ is $F$-stable.

\subsection{Jordan decomposition}
Let $\Gb^*$ be a group Langlands dual to $\Gb$, with Frobenius $F^*$.
Consider the set $\Irr(\Gb^F)$ of characters of irreducible representations of $\Gb^F$ over $K$.
Deligne and Lusztig gave a decomposition of $\Irr(\Gb^F)$ into rational series
$$\mathrm{Irr}(\Gb^F)=\coprod_{(s)}\mathrm{Irr}(\Gb^F,(s))$$
where $(s)$ runs over the set of $\Gb^{*F^*}$-conjugacy classes of semi-simple elements of $\Gb^{*F^*}$.
The {\em unipotent characters} of $\Gb^F$ are those in $\mathrm{Irr}(\Gb^F,1)$.

Let $\Lb$ be an $F$-stable Levi subgroup of $\Gb$ with dual $\Lb^*\subset \Gb^*$ containing
$C_{\Gb^*}(s)$.
Lusztig constructed a bijection
$$\mathrm{Irr}(\Lb^F,(s))\xrightarrow{\sim}\mathrm{Irr}(\Gb^F,(s)),\ \psi\mapsto\pm
R_{\Lb}^{\Gb}(\psi).$$

If $s\in Z(\Lb^*)$, then there is a bijection
$$\mathrm{Irr}(\Lb^F,(1))\xrightarrow{\sim}\mathrm{Irr}(\Lb^F,(s)),\ \psi\mapsto
\eta\psi$$
where $\eta$ is the one-dimensional character of $\Lb^F$ corresponding to $s$, and we obtain
a bijection
$$\mathrm{Irr}(\Lb^F,(1))\xrightarrow{\sim}\mathrm{Irr}(\Gb^F,(s)).$$
This provides a description of irreducible characters of $\Gb^F$ in the rational series $(s)$
in terms of unipotent characters of an other group, when $C_{\Gb^*}(s)$ is a Levi subgroup
of $\Gb^*$.

\bigskip
Let us now consider the modular version of the theory described above.
Let $s$ be a semi-simple element of $\Gb^{*F^*}$ of order prime to $\ell$.
Consider $\coprod_{t}\Irr(\Gb^F,(t))$, where
$(t)$ runs over conjugacy classes of semi-simple elements of $\Gb^{*F^*}$ whose $\ell'$-part
is $(s)$. Brou\'e and Michel \cite{BrMi} have shown this is a union of blocks of $\OC\Gb^F$.
The sum of the corresponding block idempotents is
an idempotent $e_s^{\Gb^F}\in Z(\OC\Gb^F)$, and we obtain a decomposition
$$\OC\Gb^F\mMod=\bigoplus_{(s)} \OC \Gb^Fe_s^{\Gb^F}\mMod$$
where $(s)$ runs over $\Gb^{*F^*}$-conjugacy classes of semi-simple $\ell'$-elements of
$\Gb^{*F^*}$.

\smallskip
Let $\Lb$ be an $F$-stable Levi subgroup of $\Gb$ with dual $\Lb^*$ containing
$C_{\Gb^*}(s)$. Let
$\Pb$ be a parabolic subgroup of $\Gb$ with unipotent radical $\Vb$ and Levi complement
$\Lb$.
Brou\'e \cite{broue} conjectured that the $(\OC\Gb^F,\OC\Lb^F)$-bimodule 
$\Hrm^{\dim\Yb_\Pb}(\Yb_{\Pb},\OC)e_s^{\Lb^F}$ induces
a Morita equivalence between $\OC\Gb^Fe_s^{\Gb^F}$ and $\OC\Lb^Fe_s^{\Lb^F}$. This was proven
by Brou\'e \cite{broue} when $\Lb$ is a torus and in \cite{BR} in general. 

Brou\'e also
conjectured that the truncated complex of cohomology $\Grm\Gamma_c(\Yb_\Vb,\OC)e_s^{\Lb^F}$
(Rickard's refinement of $\Rrm\Gamma_c(\Yb_\Vb,\OC)e_s^{\Lb^F}$,
well defined in the homotopy category \cite{Ri}) induces a splendid Rickard equivalence
between $\OC\Gb^Fe_s^{\Gb^F}$ and $\OC\Lb^Fe_s^{\Lb^F}$: it induces not only an equivalence
of derived categories, but even an equivalence of homotopy categories, and it induces a similar
equivalence for centralizers of $\ell$-subgroups. One of our main results here is a proof of
that conjecture.
In order to show that there is a homotopy equivalence, for connected groups, we show that the
global functor induces local derived equivalences for centralizers of $\ell$-subgroups.
Since such centralizers need not be connected, we need to extend
the results of \cite{BR} to disconnected groups. So, part of
this work involves working with disconnected groups.

We also extend the ``Jordan decomposition equivalences'' (Morita and splendid Rickard) to the
``quasi-isolated case'': assume now only $C_{\Gb^*}^\circ(s)\subset\Lb^*$, and that 
$\Lb^*$ is minimal with respect to this property. We show that
the right action of $\Lb^F$ on $\Hrm^{\dim\Yb_\Pb}(\Yb_{\Pb},\OC)e_s^{\Lb^F}$ extends to
an action of $N=N_{\Gb^F}(\Lb,e_s^{\Lb^F})$ commuting with the action of $\Gb^F$, and the
resulting bimodule induces a Morita equivalence between 
$\OC\Gb^Fe_s^{\Gb^F}$ and $\OC Ne_s^{\Lb^F}$. Similarly, the complex $\Grm\Gamma_c(\Yb_\Vb,\OC)e_s^{\Gb^F}$
induces a splendid Rickard equivalence
 between $\OC\Gb^Fe_s^{\Gb^F}$ and $\OC Ne_s^{\Lb^F}$.

As a consequence, we deduce that the bijection between blocks of
$\OC\Gb^Fe_s^{\Gb^F}$ and $\OC Ne_s^{\Lb^F}$ preserves the local structure,
and in particular, preserves defect groups. Cabanes and Enguehard
have proven this under some assumptions on $\ell$ 
\cite[Proposition 5.1]{CaEn1}, and Kessar and Malle in the
setting of \cite{BR}, when
one of the blocks under consideration has abelian defect groups (modulo a central $\ell$-subgroup)
\cite[Theorem 1.3]{KeMa}, an important step in their proof
of half of Brauer's height zero conjecture for all finite groups~\cite[Theorem~1.1]{KeMa} and 
the second half for quasi-simple groups~\cite[Main~Theorem]{KeMa2}.

\medskip
Let us summarize this.

\begin{theo}
\label{th:introequiv}
Assume $C_{\Gb^*}^\circ(s)\subset\Lb^*$ and that 
$\Lb^*$ is minimal with respect to this property.

The right action of $\Lb^F$ on $\Grm\Gamma_c(\Yb_\Pb,\OC)e_s^{\Lb^F}$ extends
to an action of $N$ and the resulting complex $C$ induces a splendid
Rickard equivalence between $\OC\Gb^Fe_s^{\Gb^F}$ and $\OC Ne_s^{\Lb^F}$.
The bimodule $\Hrm^{\dim\Yb_\Pb}(C)$ induces a Morita equivalence
between $\OC\Gb^Fe_s^{\Gb^F}$ and $\OC Ne_s^{\Lb^F}$. 

The bijections between blocks of
$\OC\Gb^Fe_s^{\Gb^F}$ and $\OC Ne_s^{\Lb^F}$ induced by those equivalences
preserve the local structure.
\end{theo}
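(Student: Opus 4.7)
The plan is to build Theorem~\ref{th:introequiv} in three layers: first upgrade the Morita equivalence of \cite{BR} to a splendid Rickard equivalence in the case $C_{\Gb^*}(s)\subset\Lb^*$; then extend the right action from $\Lb^F$ to $N$ in the general quasi-isolated case by establishing parabolic-invariance of the $s$-part of the Deligne--Lusztig complex; finally deduce the Morita statement and the local consequences.

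First, starting from the bimodule equivalence of \cite{BR} in the case $C_{\Gb^*}(s)\subset\Lb^*$, I would promote it to a splendid Rickard equivalence $\Grm\Gamma_c(\Yb_\Pb,\OC)e_s^{\Lb^F}$ between $\OC\Gb^F e_s^{\Gb^F}$ and $\OC\Lb^F e_s^{\Lb^F}$. By Rickard's criterion, it suffices to exhibit compatible derived equivalences after the Brauer functor at every $\ell$-subgroup $Q\le\Gb^F$. The point is that the Brauer quotient of $\Grm\Gamma_c(\Yb_\Pb,\OC)$ at $Q$ is computed by the fixed-point subvariety $\Yb_\Pb^Q$, which is itself (a disjoint union of) Deligne--Lusztig varieties for the centralizer $C_\Gb(Q)$ with respect to an appropriate parabolic; the group $C_\Gb(Q)$ being in general disconnected, this step requires the extension of the framework of \cite{BR} to disconnected reductive groups, as flagged in the introduction. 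An induction on $|\Gb|$ (reducing via $Q\ne 1$ to the disconnected centralizer) then provides the local derived equivalences and hence the global splendid Rickard equivalence.

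Second, for the quasi-isolated case where $C_{\Gb^*}^\circ(s)\subset\Lb^*$ is minimal but possibly $C_{\Gb^*}(s)\not\subset\Lb^*$, I would extend the right $\OC\Lb^F$-action on $C=\Grm\Gamma_c(\Yb_\Pb,\OC)e_s^{\Lb^F}$ to a right $\OC N$-action. For $n\in N$, conjugation by $n$ sends $\Pb$ to another parabolic $\Pb'$ with Levi $\Lb$, and right multiplication by $n$ yields an isomorphism $\Yb_\Pb\stackrel{\sim}{\to}\Yb_{\Pb'}$ of $\Gb^F\times\Lb^F$-varieties (where the right $\Lb^F$-structure on the target is twisted by $n$). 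To turn this into an $N$-action on $C$, I would invoke the parabolic-invariance result announced in the abstract: the $e_s^{\Lb^F}$-part of $\Grm\Gamma_c(\Yb_\Pb,\OC)$ is canonically independent of the choice of parabolic $\Pb$ with Levi $\Lb$, once $s$ is fixed. This is where I expect the main obstacle to lie, because one needs not merely an abstract isomorphism but a coherent system of isomorphisms satisfying a cocycle condition on chains of parabolics, and the proof presumably relies on explicit geometric comparison morphisms between $\Yb_\Pb$ and $\Yb_{\Pb'}$ together with a vanishing statement for the off-series cohomology.

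Third, once the $N$-action is defined, the resulting complex $C$ is a complex of $(\OC\Gb^F e_s^{\Gb^F}, \OC N e_s^{\Lb^F})$-bimodules. To check it is a splendid Rickard equivalence, I would use that $\Lb^F\triangleleft N$ with $N/\Lb^F$ controlled by the stabilizer of $e_s^{\Lb^F}$ in $N_{\Gb^F}(\Lb)/\Lb^F$ and descend the equivalence $\OC\Gb^F e_s^{\Gb^F}\simeq\OC\Lb^F e_s^{\Lb^F}$ along the central extension, using that the action of $N/\Lb^F$ on the $(s)$-Lusztig series matches the action of the corresponding component group of $C_{\Gb^*}(s)$, so that the decomposition $e_s^{\Gb^F}=\sum e_{(t)}^{\Gb^F}$ over $\Gb^{*F^*}$-classes $(t)$ with $\ell'$-part $(s)$ is compatible on both sides. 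The Morita statement follows by taking $\Hrm^{\dim\Yb_\Pb}$, using concentration of $Ce_s^{\Lb^F}$ in middle degree, which is inherited from the connected-centralizer case after checking that the $N$-action respects the cohomological grading. Finally, preservation of the local structure (defect groups, Brauer pairs, fusion systems) is a formal consequence of the existence of a splendid Rickard equivalence by the standard theory of Puig source algebras and Rickard equivalences, so no further geometric input is required at that stage.
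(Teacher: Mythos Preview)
Your overall plan has the right shape, but Step~2 contains a genuine gap, and the order of operations differs from the paper in a way that matters.

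\medskip
\textbf{The gap in extending the action.} You propose to build the $N$-action on $C$ from the parabolic-invariance isomorphisms, and you correctly flag that this requires a cocycle condition on chains of parabolics. The paper shows that this approach fails: the comparison isomorphisms $\Theta_{\Vb_1,\Vb_2,s}$ are \emph{not} transitive in general (Remark~\ref{rem:transitivite} gives an explicit counterexample involving iterated Frobenius). So the coherent system of isomorphisms you are hoping for does not exist at the level of complexes or even bimodules. What the parabolic-invariance result (Theorem~\ref{th:indepjordan}) actually buys is only that the bimodule $\Hrm_c^d(\Yb_\Vb,k)e_s^{\Lb^F}$ is $N$-\emph{stable}, i.e.\ isomorphic to each of its $N$-twists; it does not produce a compatible action.

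The paper resolves this via an entirely different mechanism: a Clifford-theoretic argument (\S\ref{se:Clifford}) together with a regular embedding $\Gb\hookrightarrow\Gbt$ into a group with connected centre (\S\ref{se:Morita}). In $\Gbt$ the centralizer $C_{\Gbt^*}(\sti)$ is connected, so the super-regular situation of \cite{BR} applies directly and the action of $\Nbt^F$ on the induced bimodule exists for free. One then checks that the relevant endomorphism algebra modulo radical is commutative (a product of copies of $k$), so that the obstruction class for extending the action descends from $\Gbt$ to $\Gb$ and vanishes (Proposition~\ref{pr:Clifford}). This is the key idea you are missing.

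\medskip
\textbf{Order of operations.} The paper does not first establish a splendid Rickard equivalence in the super-regular case and then extend. It proceeds: (i) Morita over $k$ in the quasi-isolated setting via the Clifford/connected-centre argument (Theorem~\ref{th:Moritaquasi}); (ii) splendid Rickard over $k$ (Theorem~\ref{th:Rickardk}), where the Brauer-functor analysis you sketch is used, together with the regularity of local series (Proposition~\ref{pr:seriesQ}) and the homotopy-category machinery of the appendix, to identify $\End^\bullet_{k\Gb^F}(C)$ and then lift the idempotent coming from the Morita step; (iii) lift to $\OC$ via Rickard's theorem and read off local structure from Puig's theory. Your Step~3 description of ``descending the equivalence $\OC\Gb^Fe_s^{\Gb^F}\simeq\OC\Lb^Fe_s^{\Lb^F}$'' is also off: that equivalence does not hold in the quasi-isolated case, and the passage from $\Lb^F$ to $N$ is an ascent, not a descent.
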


Significant progress has been made recently
on counting conjectures for finite groups, using the classification of
finite simple groups, and \cite{BR} has proved very useful. We hope
this theorem will lead to simplifications and new results.

The character-theoretic consequence of this theorem is
that, for groups with disconnected center, the Jordan decomposition shares many of the
properties of that for the connected case (commutation with Deligne-Lusztig induction for example).
In type $A$, the Jordan decomposition of characters links all series to unipotent series of
smaller groups: even in that case, the good behaviour of those correspondences was known only
when $q$ is large (Bonnaf\'e \cite{asterisque} for $\SL_n$ and Cabanes \cite{Ca} for
$\SU_n$).

\subsection{Generation of the derived category}

One of the two key steps in \cite{BR} was the proof that the category of
perfect complexes for $\OC\Gb^F$ is generated by the complexes
$\Rrm\Gamma_c(\Yb_\Bb)$,
where $\Bb$ runs over Borel subgroups of $\Gb$ with
an $F$-stable maximal torus. We show here a more precise
result of generation of the derived category of $\OC\Gb^F$.
Let $\EC$ be the set $\{\Rrm\Gamma_c(\Yb_\Bb)\otimes_{\OC\Tb^F}^\Lb M\}$, where
$\Tb$
runs over $F$-stable maximal tori of $\Gb$, $\Bb$ over
Borel subgroups of $\Gb$ containing $\Tb$, and $M$ over isomorphism
classes of $\OC\Tb^F$-modules.

\begin{theo}
The set $\EC$ generates $D^b(\OC\Gb^F)$ (as a thick subcategory) if and only
if all elementary abelian $\ell$-subgroups of $\Gb^F$ are contained in
tori.
\end{theo}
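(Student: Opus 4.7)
The plan is to translate the question into a local Brauer-quotient analysis, exploiting that every object of $\EC$ can be represented by an $\ell$-permutation complex. Since the permutation modules $\OC[\Tb^F/H]$, $H\le\Tb^F$, generate $D^b(\OC\Tb^F)$ as a thick subcategory, one has
\[
  \langle\EC\rangle \;=\; \bigl\langle\,\{\Rrm\Gamma_c(\Yb_\Bb/H) : \Tb \text{ an $F$-stable maximal torus},\ \Bb\supset\Tb,\ H\le\Tb^F\}\,\bigr\rangle,
\]
using $\Rrm\Gamma_c(\Yb_\Bb)\otimes_{\OC\Tb^F}^{\Lb}\OC[\Tb^F/H] \simeq \Rrm\Gamma_c(\Yb_\Bb/H)$ together with Rickard's theorem that each such complex is represented by a bounded complex of $\ell$-permutation $\OC\Gb^F$-modules.

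The key computation is the Brauer quotient. For $E\subset\Gb^F$ an $\ell$-subgroup, the fixed-point formula for $\ell$-permutation complexes yields
\[
  \brauer_E\Rrm\Gamma_c(\Yb_\Bb/H) \;\simeq\; \Rrm\Gamma_c\bigl((\Yb_\Bb/H)^E,k\bigr),
\]
and $(\Yb_\Bb/H)^E=\{xH : x^{-1}Ex\subset H\}$ is non-empty if and only if $E$ is $\Gb$-conjugate to a subgroup of $H\subset\Tb^F$, equivalently, $E$ lies in some maximal torus of $\Gb$. For the direction $(\Rightarrow)$, if some elementary abelian $E\subset\Gb^F$ is not contained in a maximal torus, then $\brauer_E$ annihilates every generator of $\langle\EC\rangle$, hence all of $\langle\EC\rangle$; since $\brauer_E(\OC[\Gb^F/E])\simeq k[N_{\Gb^F}(E)/E]\ne 0$, the permutation module $\OC[\Gb^F/E]$ belongs to $D^b(\OC\Gb^F)\setminus\langle\EC\rangle$.

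For the converse $(\Leftarrow)$, suppose every elementary abelian $\ell$-subgroup of $\Gb^F$ is toral. By the main generation theorem of~\cite{BR}, $\langle\EC\rangle$ already contains all perfect complexes. For each such $E$, choosing $\Tb\supset E$ and $\Bb\supset\Tb$, the complex $\Rrm\Gamma_c(\Yb_\Bb/E)\in\EC$ has non-vanishing Brauer quotient at $E$. The hypothesis moreover passes to centralizers: if $E'\subset C_\Gb(E)^\circ$ is elementary abelian and $F$-stable, then $E$ commutes with $E'$, so $\langle E,E'\rangle$ is elementary abelian in $\Gb^F$ and therefore toral, and any maximal torus realizing this is automatically contained in $C_\Gb(E)^\circ$. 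Varying $\Tb$, $\Bb$, and $H$ and invoking~\cite{BR} for the centralizers $C_\Gb(E)^\circ$ should yield a sufficiently rich supply of Brauer quotients to conclude $\langle\EC\rangle = D^b(\OC\Gb^F)$ via a local-to-global detection principle.

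The main obstacle is precisely this detection principle: proving that a thick subcategory of $D^b(\OC\Gb^F)$ containing the perfect complexes, whose Brauer quotients cover the appropriate local derived categories at every elementary abelian $\ell$-subgroup, must equal $D^b(\OC\Gb^F)$. Its proof rests on a Quillen--Benson-type stratification of $D^b(\OC\Gb^F)$ modulo perfect complexes indexed by elementary abelian $\ell$-subgroups, together with a descending induction on $|E|$ that extracts $\ell$-permutation summands with prescribed vertex from the Brauer-quotient data, iteratively reducing to smaller centralizers.
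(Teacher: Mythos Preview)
Your direction $(\Rightarrow)$ has a subtle gap: $\brauer_E$ is a functor on $\Ho^b(\OC\Gb^F\mperm)$, not on $D^b(\OC\Gb^F)$, since an acyclic complex of $\ell$-permutation modules can have non-acyclic Brauer quotient. So the step ``$\brauer_E$ annihilates every generator, hence all of $\langle\EC\rangle$'' does not follow formally. This is repairable (for instance via support varieties in the stable module category, or by arguing with vertices of terms of a representing complex and passing to the stable category), and the paper also treats this direction lightly.

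The substantive gap is in $(\Leftarrow)$. You correctly isolate the obstacle---a local-to-global detection principle for thick subcategories of $D^b(\OC\Gb^F)$ in terms of Brauer quotients---but you do not prove it, and no such principle is available off the shelf. The paper's route is quite different and bypasses this entirely. It works in $\Ho^b(\OC\Gb^F\mperm)$ throughout, where Brauer functors genuinely exist, and shows by a double induction on $(|Q|,d)$---with $d$ a minimal Deligne--Lusztig dimension for $C_\Gb(Q)$, controlled by the cohomological concentration theorem of~\cite{BR}---that every $\ell$-permutation module with one-dimensional source and vertex $Q$ normalizing a pair $(\Tb\subset\Bb)$ already lies in the thick $\Ho^b$-subcategory generated by the complexes $\Grm\Gamma_c(\Yb_\Bb)\otimes_{\OC Q}L$. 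The passage to $D^b$ then uses an external input you do not mention: $D^b(\OC\Gb^F)$ is generated, as a thick subcategory, by indecomposable modules with elementary abelian vertex and one-dimensional source~\cite[Corollary~2.3]{Rou3}, a consequence of Serre's theorem on products of Bocksteins. Under the torality hypothesis such modules fall into the class just handled, and the result follows. Your proposed stratification-and-descent argument might be made to work, but carrying it out would require machinery comparable in depth to the theorem itself; by contrast, the paper reduces everything to the single-degree vanishing of~\cite{BR} and the result of~\cite{Rou3}.
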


\smallskip
 This, in turn, requires an extension
of the results of Brou\'e-Michel \cite{BrMi}
on the compatibility between Deligne-Lusztig series of
characters and the Brauer morphism, to disconnected groups. We are able to achieve this by
refining our result on the generation of the category of perfect complexes to a
generation of the category of $\ell$-permutation modules whose vertices are contained
in tori (the crucial case is that of connected groups).
Such a result allows us to obtain a generating result for the full
derived category, under the assumption that all elementary abelian $\ell$-subgroups
are contained in tori.

Note that the condition
is automatically satisfied for $\Gb\Lb_n$ and $\Ub_n$ (see Examples \ref{ex:elementaryabelian}) and
when $\ell$ is {\it very good} for $\Gb$.

\subsection{Independence of the Deligne-Lusztig induction of the parabolic in a given series}

It is known in most cases, and conjectured in general, that the map
$R_{\Lb\subset\Pb}^\Gb$ on Grothendieck groups is actually independent of $\Pb$ 
(\cite{DL,L} when $\Lb$ is a torus and \cite{bm} when $q>2$ and $F$ is
a Frobenius endomorphism over $\Fb_q$).
On the other hand,
the functor $\RC_{\Lb\subset \Pb}^{\Gb}$ does depend on $\Pb$. Our main
new geometrical result proves the independence after truncating by a suitable
series.

\smallskip
Let $\Pb_1$ and $\Pb_2$ be two parabolic subgroups admitting a common
Levi complement $\Lb$. Denote by $\Vb_i^*$ the unipotent radical of the
parabolic subgroup of $\Gb^*$ corresponding to $\Pb_i$.

\begin{theo}
\label{thD}
Let $s$ be a semi-simple element of $\Lb^{*F^*}$ of order prime to $\ell$.
If
$$C_{\Vb_1^* \cap \lexp{F^*}{\Vb_1^*}}(s) \subset C_{\Vb_2^*}(s)
\quad\text{\it and}\quad
C_{\Vb_2^* \cap \lexp{F^*}{\Vb_2^*}}(s) \subset C_{\lexp{F^*}{\Vb_1^*}}(s)$$
then there is an isomorphism 
of functors between 
$$\RC_{\Lb \subset \Pb_1}^\Gb : \Drm^b(\L\Lb^F e_s^{\Lb^F}) \longto \Drm^b(\L\Gb^Fe_s^{\Gb^F})$$ 
$$\RC_{\Lb \subset \Pb_2}^\Gb[m] : \Drm^b(\L\Lb^F e_s^{\Lb^F}) \longto \Drm^b(\L\Gb^Fe_s^{\Gb^F}),
\leqno{\text{\it and}}$$
where $m=\dim(\Yb_{\Pb_2}^\Gb)-\dim(\Yb_{\Pb_1}^\Gb)$.
\end{theo}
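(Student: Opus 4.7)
The two functors are represented by right tensor product with the complexes of $(\Lambda\Gb^F,\Lambda\Lb^F)$-bimodules $C_i=\Rrm\Gamma_c(\Yb_{\Pb_i},\Lambda)e_s^{\Lb^F}$, so the assertion is that $C_1\simeq C_2[m]$ in the derived category of bimodules. My plan is to construct this isomorphism geometrically via an intermediate Deligne--Lusztig--type variety $\Zb$ that interpolates between $\Yb_{\Pb_1}$ and $\Yb_{\Pb_2}$, carrying a $\Gb^F\times(\Lb^F)^\opp$-action and equipped with two equivariant morphisms
$$\Yb_{\Pb_1}\;\stackrel{\pi_1}{\longleftarrow}\;\Zb\;\stackrel{\pi_2}{\longrightarrow}\;\Yb_{\Pb_2},$$
then to show that after multiplication by $e_s^{\Lb^F}$ on the right both $\pi_i$ induce quasi-isomorphisms, up to the respective shifts.

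\textbf{Step 1: the interpolating variety.} A natural candidate is the variety of cosets $g(\Vb_1\cap F(\Vb_2))$ with $g^{-1}F(g)\in\Vb_1\cdot F(\Vb_2)$, or a closely related variety adapted to the relative position of $\Pb_1$ and $F(\Pb_2)$. The two natural projections to $\Yb_{\Pb_1}$ and $\Yb_{\Pb_2}$ have fibers which are, generically, iterated affine fibrations whose dimensions are controlled by $\dim(\Vb_i\cap F(\Vb_j))$; a direct bookkeeping identifies the discrepancy with $m=\dim\Yb_{\Pb_2}-\dim\Yb_{\Pb_1}$.

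\textbf{Step 2: reduction to cohomological vanishing in the series $s$.} Using a Leray-type argument for $\pi_1,\pi_2$, the isomorphism $C_1\simeq C_2[m]$ reduces to showing that, after right multiplication by $e_s^{\Lb^F}$, the higher direct images $\Rrm^i(\pi_j)_*\Lambda$ contribute trivially to the $s$-part of the cohomology except in a single degree. Put differently, the non-diagonal strata of the natural Bruhat-type decomposition of $\Zb$ must give no contribution to the $e_s^{\Gb^F}\otimes e_s^{\Lb^F}$ summand of its cohomology.

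\textbf{Step 3: translation to the dual group.} Each non-trivial stratum of $\Zb$ is a Deligne--Lusztig variety for a proper Levi of $\Lb$ (or of some intermediate group), and, by the compatibility between Deligne--Lusztig induction and the rational-series decomposition, its contribution to the $e_s$-part is indexed by $\Gb^{*F^*}$-conjugates of $s$ whose centralizer meets the corresponding intersection $\Vb_1^*\cap\lexp{F^*}{\Vb_2^*}$ (resp.\ $\Vb_2^*\cap\lexp{F^*}{\Vb_1^*}$) non-trivially. The two hypotheses $C_{\Vb_1^*\cap\lexp{F^*}{\Vb_1^*}}(s)\subset C_{\Vb_2^*}(s)$ and $C_{\Vb_2^*\cap\lexp{F^*}{\Vb_2^*}}(s)\subset C_{\lexp{F^*}{\Vb_1^*}}(s)$ dualize precisely to the assertion that these centralizer intersections are trivial, whence the required vanishing.

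\textbf{Main obstacle.} The principal difficulty lies in Step 3: establishing a Mackey-type formula at the level of bounded derived categories of bimodules (not merely Grothendieck groups) that is refined by the rational-series decomposition, and then identifying which dual-group centralizer conditions precisely correspond to the vanishing of each geometric stratum. This is analogous in spirit to the vanishing arguments used to prove Brou\'e's Morita-equivalence conjecture in~\cite{BR}, but is strictly finer: here one compares two functors, not two bimodules, so the stratum-by-stratum analysis must track the $\Lb^F$-action on the fibres and extract the $e_s$-isotypic part without losing the derived-category information. The symmetry between the two hypotheses reflects the fact that one condition controls $\pi_1$ and the other controls $\pi_2$; both are needed simultaneously to collapse the bi-directional zigzag into a genuine isomorphism $C_1\simeq C_2[m]$.
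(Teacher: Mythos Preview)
Your overall architecture---an interpolating variety with a closed/open decomposition, then showing the open part contributes nothing to the $e_s$-isotypic cohomology---matches the paper's strategy. The paper's interpolating object is the two-term ``sequence'' variety
\[
\Yb_{\Vb_1,\Vb_2}=\{(g_1\Vb_1,g_2\Vb_2)\mid g_1^{-1}g_2\in\Vb_1\Vb_2,\ g_2^{-1}F(g_1)\in\Vb_2\cdot F(\Vb_1)\},
\]
whose closed subvariety $\Yb_{\Vb_1,\Vb_2,2}^{\mathrm{cl}}$ is an affine-space fibration over $\Yb_{\Vb_1}$; one then compares with the cyclic shift $\Yb_{\Vb_2,F(\Vb_1)}$ (which has an \'etale-equivalent site), whose corresponding closed subvariety fibers over $\Yb_{\Vb_2}$. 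Each of your two hypotheses controls exactly one of these two closed-inclusion/open-vanishing steps, as you guessed.

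The gap is in Step~3. Your proposed mechanism---a derived-category Mackey formula refined by rational series, together with the claim that the open strata are Deligne--Lusztig varieties for proper Levis of $\Lb$---is neither available nor accurate. The open complement $\Yb^{\mathrm{op}}_{\Vb_\bullet,j}$ is not in general such a variety, and a Mackey formula at the level of $D^b$ of bimodules is precisely the kind of statement one is trying to \emph{avoid} proving directly. The paper circumvents this entirely by a two-stage reduction:
\begin{itemize}
\item[(i)] \emph{Reduction to maximal tori.} Using the generation theorem of \cite{BR} (their Theorem~A$'$), the bimodule $\Lambda\Lb^Fe_s^{\Lb^F}$ lies in the thick subcategory generated by complexes $\RC_{\Tb'\subset\Bb'}^{\Lb}(\Lambda\Tb'^Fe_s^{\Tb'^F})$. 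Tensoring $\Rrm\Gamma_c(\Yb^{\mathrm{op}}_{\Vb_\bullet,j})$ with such a complex yields, by transitivity of Deligne--Lusztig varieties, the open stratum for a \emph{Borel} sequence. So it suffices to prove the vanishing when $\Lb$ is a torus.
\item[(ii)] \emph{Torus case by induction on length.} After translating the centralizer hypothesis into a condition on roots (a property $\PC(\nb,j,\theta)$: $\theta(N_w(w_1\cdots w_{j-2}(\alpha^\vee)))\neq 1$ for $\alpha\in\Phi^+(w_{j-1},w_j)$), one argues by induction on $l(w_{j-1})$, using cyclic shifts and braid-type reductions to reach the base case $n_1=\dot s_\alpha=n_2^{-1}$. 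There the open stratum is computed explicitly via an $\Sb\Lb_2$ calculation (Lemma~5.10 in the paper): it is an induction from a torus-like group $\Sb(\alpha,\wb)$ whose connected component already kills the $e_\theta$-part.
\end{itemize}
Neither step uses anything resembling a Mackey decomposition of the open stratum; the whole point is to reduce to a rank-one situation where the vanishing is a concrete computation. Your Step~3 as written does not supply an argument for the vanishing and would need to be replaced by (i)--(ii) or an equivalent mechanism.
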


\bigskip

%\noindent{\bf Remark.--- } 
For instance, if $C_{\Vb_1^*}(s)=C_{\Vb_2^*}(s)$, then 
the assumption of Theorem~\ref{thD} is satisfied.

\bigskip

This is the key result to prove Theorem \ref{th:introequiv}.
This result shows that when $C_{\Gb^*}^\circ(s)\subset\Lb^*$, the
$(\OC\Gb^F,\OC\Lb^F)$-bimodule $\Hrm^{\dim\Yb_\Pb}(\Yb_{\Pb},\OC)e_s^{\Lb^F}$ 
is independent of $\Pb$, a question left open in \cite{BR}.
We deduce that the bimodule is stable under the action
of $N=N_{\Gb^F}(\Lb^F,e_s^{\Lb^F})$. Using an embedding in a group with
connected center, we show that the obstruction for extending the action
of $\Lb^F$ to $N$ does vanish.

\bigskip

\noindent{\bf Remark.--- } Theorem 1.3 is used in \cite{Dat} to construct 
equivalences of categories between tamely ramified blocks of $p$-adic general linear groups.
Roughly speaking,  the main idea of \cite{Dat} is to ``glue'' the bimodules giving the Morita
equivalences of \cite{BR} along a suitable building. The gluing
process crucially uses the independence of the bimodules on the choice of
parabolic subgroups.

\subsection{Structure of the article}

We begin in \S\ref{se:generation} with the study of generation of
the category of perfect complexes, then we
move to complexes of $\ell$-permutation modules and finally we derive
our result on the derived category. A key tool, due to Rickard, is that
the Brauer functor applied to the complex of cohomology of a variety is
the complex of cohomology of the fixed point variety.

Section  \S\ref{se:rational} is devoted to the study of
rational series and their compatibility with local block theory.
Brou\'e and Michel proved a commutation formula between generalized
decomposition maps and Deligne-Lusztig induction. We need to extend
the compatibility between Brauer and Deligne-Lusztig theory to disconnected
groups, and check that the local blocks obtained from a series
satisfying $C^\circ_{\Gb^*}(s)\subset\Lb^*$ also satisfy a similar assumption
$C^\circ_{(C^\circ_\Gb(Q))^*}(s)\subset(\Lb\cap C^\circ_\Gb(Q))^*$.

From \S\ref{se:comparing} onwards, the group $\Gb$ is assumed to be connected.
Sections \S\ref{se:comparing} and \S\ref{sec:proof} are devoted to
the study of the dependence of the Deligne-Lusztig induction with respect to the
parabolic subgroup. The first section is devoted to the particular case
of varieties associated with Borel subgroups (and generalizations involving
sequences of elements). It is convenient there to work with a reference maximal torus.
This is the crucial case, from which the general
one is deduced in the latter section, where we go back to Levi subgroups that do not necessarily
contain that fixed maximal torus.
%(see, in~\cite[\S{11}]{BR}, 
%the equivalence between Theorems~A and~A' and between Theorems~B and~B').

The final section \S\ref{se:jordan} is devoted to the Jordan decomposition.
We start by providing an extension of the action of $N$ on the cohomology
bimodule by proving that the cocycle obstruction would survive in a similar
setting for a group with connected center, where the action does exist.
The Rickard equivalence is obtained inductively, and that induction requires
working with disconnected groups.

In an appendix, we provide some results on the homotopy category of complexes
of $\ell$-permutation modules for a general finite group.

\medskip
We would like to thank the Referee for an extraordinarily thorough
list of suggestions, which greatly improved our paper.

\bigskip

\bigskip

%Commutation avec $R_L^G$?

\section{Notations}

\subsection{Modules}
Let $\ell$ be a prime number, $K$ a finite extension of
$\QM_\ell$ large enough for the finite groups considered,
 $\OC$ its ring of integers over $\ZM_\ell$ and $k$ its residue field.
We will denote by $\Lambda$ a ring that is either $K$, $\OC$ or $k$.

Given $\CC$ an additive category, we denote by
$\Comp^b(\CC)$ the category of bounded complexes of objects of $\CC$
and by  $\mathrm{Ho}^b(\CC)$ its homotopy category.

\smallskip
Let $A$ be a $\Lambda$-algebra, finitely generated
and projective as a $\Lambda$-module. We denote by $A^\opp$ the algebra opposite to $A$.
We denote by $A\mMod$ the category of finitely generated $A$-modules and by $A\mproj$ its
full subcategory of projective modules. We denote by $G_0(A)$ the
Grothendieck group of $A\mMod$.

 We put $\Comp^b(A)=\Comp^b(A\mMod)$,
$D^b(A)=D^b(A\mMod)$ and $\Ho^b(A)=\Ho^b(A\mMod)$. We denote by $A\mperf\subset D^b(A)$ the thick
full subcategory of perfect complexes (complexes quasi-isomorphic to objects of
$\Comp^b(A\mproj)$).

\smallskip
Let $C \in \Comp^b(A)$. There 
is a unique (up to a non-unique isomorphism) complex $C^\red$ 
which is isomorphic to $C$ in the homotopy category $\Ho^b(A)$ and which 
has no non-zero direct summand that is homotopy equivalent to $0$. 
Note that $C \simeq C^\red \oplus C'$ for some $C'$ homotopy equivalent to zero.

We denote by $\End^\bullet_A(C)$ the total $\Hom$-complex, with
degree $n$ term $\bigoplus_{j-i=n}\Hom_A(C^i,C^j)$.

\smallskip
Let $B$ be $\Lambda$-algebra, finitely generated
and projective as a $\Lambda$-module.
Let $C$ be a bounded complex of $(A\otimes_\Lambda B^\opp)$-modules,
finitely generated and projective as left $A$-modules and as right $B$-modules.
We say that $C$ induces a {\em Rickard equivalence} between $A$ and $B$ if
the canonical map $B\to \End_A^\bullet(C)$ is an isomorphism 
in
$\Ho(B\otimes_\Lambda B^\opp)$ and the canonical map 
$A\to \End_{B^\opp}^\bullet(C)^\opp$ is an isomorphism in
$\Ho(A\otimes_\L A^\opp)$.

\subsection{Finite groups}
\label{se:finitegroups}
Let $G$ be a finite group. We denote by $G^\opp$ the opposite group to $G$.
We put $\Delta G=\{(g,g^{-1}) | g\in G\}\subset G\times G^\opp$.
Given $g\in G$, we denote by $|g|$ the order of $g$.

Let $H$ be a subgroup of $G$ and $x\in G$.
We denote by $x_*$ the equivalence of categories 
$$x_* : \L (x^{-1}Hx)\modules \longisom \L H\modules$$
where $x_*(M)=M$ as a $\Lambda$-module and the action of $h\in H$ on $x_*(M)$ is given by
the action of $x^{-1}hx$ on $M$. We also denote by $x_*$ the corresponding isomorphism
of Grothendieck groups
$$x_* : G_0(\L (x^{-1}Hx)) \longisom G_0(\L H).$$

\medskip
 We assume $\Lambda=\OC$ or 
$\Lambda=k$ in the remainder of \S\ref{se:finitegroups}.

An {\it $\ell$-permutation $\Lambda G$-module} is defined
to be a direct summand of a finitely generated permutation module. We denote by
$\Lambda G\mperm$ the full subcategory of $\Lambda G\mMod$ with objects the
$\ell$-permutation $\Lambda G$-modules.

Let $Q$ be an $\ell$-subgroup $Q$ of $G$.
We consider the {\it Brauer functor}
$\brauer_Q : \L G\perm \to k[N_G(Q)/Q]\perm$. Given $M\in \L G\perm$,
we define $\brauer_Q(M)$ as the image of 
$M^Q$ in $(kM)_Q$, where $(kM)_Q$ is the largest quotient of $kM = k \otimes_\L M$ on which $Q$ 
acts trivially. 

We denote by $\mathrm{br}_Q:(\Lambda G)^Q\to kC_G(Q)$ the algebra morphism given by
$\mathrm{br}_Q(\sum_{g\in G}\lambda_g g)=
\sum_{g\in C_G(Q)}\lambda_g g$ where $\lambda_g\in\Lambda$ for $g\in G$.
Given $M\in\Lambda G\mperm$ and $e\in Z(\Lambda G)$ an idempotent, we have
$\brauer_Q(Me)=\brauer_Q(M)\mathrm{br}_Q(e)$.

\smallskip
Let $H$ be a subgroup of $G$, let $b$ be an idempotent of $Z(\Lambda G)$ and
$c$ an idempotent of $Z(\Lambda H)$. Let $C\in\Comp^b(\Lambda Gb\otimes (\Lambda Hc)^\opp)$.
We say that $C$ is {\em splendid} if the $(C^\red)^i$'s are $\ell$-permutation modules whose
indecomposable direct summands have a vertex contained in $\Delta H$.

%if $\brauer_R(C)\simeq 0$ in
%$\Ho(k(G\times H^\opp))$ for all $\ell$-subgroups of $G\times H^\opp$ not conjugate to
%a subgroup of $\Delta H$ and if
%$\brauer_{\Delta Q}(C)$ induces a Rickard equivalence between $kC_G(Q)\brauer_Q(b)$ and
%$kC_H(Q)\brauer_Q(c)$ for all $\ell$-subgroups $Q$ of $H$.

\subsection{Varieties}
Let $p$ be a prime number different from $\ell$ and 
$\FM$ an algebraic closure of $\FM_p$. By variety, we mean a quasi-projective
algebraic variety over $\FM$.

Let $\Xb$ be a variety acted
on by a finite group $G$.
There is an object $\Grm\Gamma_c(\Xb,\Lambda)$ of $\Ho^b(\Lambda G\mperm)$, well defined up to a unique isomorphism.
It is a representative in the homotopy category of $\Lambda G$-modules
of the isomorphism class of the complex of \'etale $\Lambda$-cohomology with
compact support of $\Xb$ constructed as $\tau_{\le 2\dim \Xb}$ of the Godement resolution
(cf \cite[\S 2]{Rou1}, \cite[\S 1.2]{DuRou}, and \cite{Ri}).
We denote by $\Rrm\Gamma_c(\Xb,\Lambda)$ the image of $\Grm\Gamma_c(\Xb,\Lambda)$ in $D^b(\Lambda G)$.

\smallskip
Assume $\Lambda=\OC$ or $k$ and let $Q$ be an $\ell$-subgroup of $G$. The inclusion
$\Xb^Q\hookrightarrow \Xb$ induces an isomorphism \cite[Theorem 4.2]{Ri}
$$\Grm\Gamma_c(\Xb^Q,k) \xrightarrow{\sim} \brauer_Q(\Grm\Gamma_c(\Xb,\Lambda))
\text{ in }\Ho^b(kN_G(Q)\mperm).$$

\subsection{Reductive groups}

Let $\Gb$ be a (possibly disconnected) reductive algebraic group endowed
with an endomorphism $F$, a power $F^\delta$ of which is a Frobenius endomorphism defining a
rational structure over a finite field $\FM_q$ of characteristic $p$.
We refer to \cite{dm-nonc,dm-nonc-2} for basic results on disconnected groups.

\smallskip
Recall that a torus of $\Gb$ is torus of $\Gb^\circ$. Following the classical terminology
(cf for example \cite[\S 6.2]{Springer}), we define a
{\em Borel subgroup} of $\Gb$ to be a maximal connected solvable subgroup of $\Gb$.
We define a {\em parabolic subgroup} of $\Gb$ to be a
subgroup $\Pb$ of $\Gb$ such that $\Gb/\Pb$ is complete. We define the {\em unipotent radical} $\Vb$ of
a parabolic subgroup $\Pb$ to be its unique maximal connected unipotent normal subgroup. A {\em
Levi complement} to $\Vb$ in $\Pb$ is a subgroup $\Lb$ of $\Pb$ such that $\Pb=\Vb\rtimes\Lb$.

Note that a closed subgroup $\Pb$ of $\Gb$ is a parabolic subgroup of $\Gb$ if and only if
$\Pb^\circ$ is a parabolic subgroup of $\Gb^\circ$.
Let $\Pb$ be a parabolic subgroup of $\Gb$.
We have $\Pb^\circ=\Pb\cap\Gb^\circ$.
The unipotent radical $\Vb$ of $\Pb$ coincides with that of $\Pb^\circ$.
A Levi complement to $\Vb$ in $\Pb$ is a subgroup of the form
$N_{\Pb}(\Lb_\circ)$, where $\Lb_\circ$ is a Levi
complement of $\Vb$ in $\Pb_\circ$ (then $\Lb^\circ=\Lb_\circ$ and
$\Pb=\Vb\rtimes\Lb$). Note that our definition of parabolic subgroup is more general than that
of ``parabolic'' subgroup of \cite{dm-nonc}, which requires $\Pb=N_\Gb(\Pb^\circ)$.

\smallskip
We denote by $\nabla(\Gb,F)$ the set of pairs $(\Tb,\theta)$ where $\Tb$ is an $F$-stable maximal
torus of $\Gb$ and $\theta$ is an irreducible character of $\Tb^F$. Note that here $\Tb$ is a torus of $\Gb^\circ$.

Given an integer $d$, we denote by $\nabla_{d'}(\Gb,F)$ the set of pairs
$(\Tb,\theta)\in\nabla(\Gb,F)$ such that the order of $\theta$ is prime to $d$.
We put $\nabla_\Lambda(\Gb,F)=\nabla(\Gb,F)$ if $\Lambda=K$ and
$\nabla_\Lambda(\Gb,F)=\nabla_{\ell'}(\Gb,F)$ if $\Lambda=\OC$ or $k$ (recall that $k$ is a field
of characteristic $\ell$).

\subsection{Deligne-Lusztig varieties}

Given $\Pb$ a parabolic subgroup of $\Gb$ with unipotent radical $\Vb$ and
$F$-stable Levi complement $\Lb$, we define the Deligne-Lusztig variety
$$\Yb_\Vb=
\Yb_\Vb^\Gb=\Yb_\Pb=\Yb_\Pb^\Gb=\{g\Vb \in \Gb/\Vb~|~g^{-1}F(g) \in \Vb\cdot F(\Vb)\}.$$
This is a smooth variety, as in the case of connected reductive groups. It has
a left action by multiplication of $\Gb^F$ and a right action by
multiplication of $\Lb^F$ (note that the left and right actions of $Z(\Gb)^F$ coincide).
This provides a triangulated functor

\equat\label{eq:rlg-def}
\fonction{\RC_{\Lb\subset \Pb}^{\Gb}}{D^b(\L\Lb^F)}{D^b(\L\Gb^F)}{
M}{R\G_c(\Yb_\Vb,\L) \otimes_{\L\Lb^F}^\LM M}
\endequat
and a morphism
$$R_{\Lb\subset \Pb}^{\Gb}=[\RC_{\Lb\subset \Pb}^{\Gb}]:
G_0(\L\Lb^F)\to G_0(\L\Gb^F).$$

We put $\Xb_\Pb^\Gb=\{g\Pb \in \Gb/\Pb~|~g^{-1}F(g) \in \Pb\cdot F(\Pb)\}=\Yb_\Pb^\Gb/\Lb^F$.

\begin{rema}
\label{re:actionN}
Since $\Yb_\Pb$ depends only on $\Vb$, it is endowed with an action of $N_{\Gb^F}(\Pb^\circ,\Lb^\circ)$, which
is the group of rational points of the maximal Levi subgroup with connected component $\Lb^\circ$.
\end{rema}

\medskip

\bigskip

\def\brauer{{\mathrm{Br}}}

\section{Generation}
\label{se:generation}

\medskip

The aim of this section is to
extend ~\cite[Theorem~A]{BR} to the case of disconnected 
groups, and to deduce a generation theorem for the derived category.

\smallskip
In this section \S\ref{se:generation}, $\Gb$ is a (possibly disconnected) reductive
algebraic group.

\bigskip

\subsection{Centralizers of $\ell$-subgroups}

Let $\Pb$ be a parabolic subgroup of $\Gb$ admitting 
an $F$-stable Levi complement $\Lb$, and let $\Vb$ denote 
the unipotent radical of $\Pb$.
It is easily checked~\cite[Proof~of~Proposition~2.3]{dm-nonc} that 
\equat\label{eq:gy}
\Yb_\Vb^\Gb = \coprod_{g \in \Gb^F/\Gb^{\circ F}} g \Yb_\Vb^{\Gb^\circ}=\Gb^F\times_{\Gb^{\circ F}}\Yb_\Vb^{\Gb^\circ}.
\endequat
It follows immediately from~(\ref{eq:gy}) that 
\equat\label{eq:ind-rlg}
\RC_{\Lb\subset \Pb}^{\Gb} \circ \Ind_{\Lb^{\circ F}}^{\Lb^F} 
\simeq \RC_{\Lb^\circ \subset \Pb^\circ}^{\Gb} \simeq \Ind_{\Gb^{\circ F}}^{\Gb^F} \circ 
\RC_{\Lb^\circ \subset \Pb^\circ}^{\Gb^\circ}.
\endequat
If $\Gb=\Pb\cdot\Gb^\circ$, then the isomorphism $\Gb^\circ/\Vb\times_{\Lb^\circ}\Lb\simeq \Gb/\Vb$ induces an
isomorphism
\equat\label{eq:res-rlg}
\RC_{\Lb^\circ \subset \Pb^\circ}^{\Gb^\circ} \circ 
\Res_{\Lb^{\circ F}}^{\Lb^F} 
\simeq \Res_{\Gb^{\circ F}}^{\Gb^F} \circ \RC_{\Lb \subset \Pb}^{\Gb}.
\endequat

\bigskip

\begin{prop}\label{prop:centralisateur}
Let $Q$ be a finite solvable $p'$-group of automorphisms of $\Gb$ that commute with $F$ and normalize $(\Pb,\Lb)$.
\begin{itemize}
\itemth{a} The group $\Gb^Q$ is reductive.

\itemth{b} $\Pb^Q$ is a parabolic subgroup of $\Gb^Q$ 
whose unipotent radical is $\Vb^Q$ and 
admitting $\Lb^Q$ as an $F$-stable Levi complement. 
In particular, $\Vb^Q$ is connected.

\itemth{c} The natural map $\Gb^Q/\Vb^Q \to (\Gb/\Vb)^Q$ 
is an isomorphism of $(\Gb^Q,N_\Gb(\Pb^\circ,\Lb^\circ)^Q)$-varieties. 
%Here, $\D Q \subset \Gb^F \times \Lb^F$ denotes the diagonal embedding 
%of $Q$ and $(\Gb/\Vb)^{\D Q}$ denotes the variety of fixed points under this 
%diagonal action.

\itemth{d} $(\Vb \cdot \lexp{F}{\Vb})^Q=\Vb^Q\cdot \lexp{F}{(\Vb^Q)}$.

\itemth{e} The natural map $\Yb_{\Vb^Q}^{\Gb^Q} \to (\Yb_\Vb^{\Gb})^Q$ 
is an isomorphism of $((\Gb^Q)^F,(N_{\Gb}(\Pb^\circ,\Lb^\circ)^Q)^F)$-varieties. If $Q$ is an $\ell$-group, it gives rise to an isomorphism
$\brauer_Q(\Grm\Gamma_c(\Yb_\Vb^\Gb,k))\xrightarrow{\sim}\Grm\Gamma_c(\Yb_{\Vb^Q}^{\Gb^Q},k)$ in
$\Ho^b(k((\Gb^Q)^F\times (N_{\Gb}(\Pb^\circ,\Lb^\circ)^Q)^{F\opp})$.
\end{itemize}
\end{prop}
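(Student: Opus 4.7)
The plan is to reduce all five statements to the basic cohomology vanishing $H^1(Q,\Ub)=1$ for any connected unipotent algebraic group $\Ub$ on which $Q$ acts. This follows by induction on the length of a $Q$-stable composition series of $\Ub$, reducing to the case $\Ub=\GM_a$, whose $\FM$-points form a $\QM$-vector space and are thus uniquely $|Q|$-divisible.

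For (a) and (b), I would invoke the classical Steinberg theorem (see \cite{dm-nonc}): fixed points of a finite solvable $p'$-group of automorphisms acting on a connected reductive group form a connected reductive subgroup. Since $(\Gb^Q)^\circ=(\Gb^\circ)^Q$ and $\Gb^Q/(\Gb^Q)^\circ$ embeds into the finite group $\Gb/\Gb^\circ$, this gives (a). Applied to $\Pb^\circ$ and $\Lb^\circ$, Steinberg's theorem also yields that $(\Pb^\circ)^Q$ is a parabolic of $(\Gb^\circ)^Q$ with Levi complement $(\Lb^\circ)^Q$ and unipotent radical $\Vb^Q$, the connectedness of $\Vb^Q$ being the cohomology vanishing. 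Taking normalizers in $\Pb^Q$ and $\Gb^Q$ then extends the Levi decomposition to the full disconnected groups and gives (b).

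Statements (c) and (d) are pure $H^1$-vanishing arguments. For (c), injectivity is trivial; for surjectivity, a class $g\Vb\in(\Gb/\Vb)^Q$ produces a $1$-cocycle $q\mapsto g^{-1}q(g)\in\Vb$, which is a coboundary, so that $g$ can be translated by an element of $\Vb$ into $\Gb^Q$. For (d), the multiplication map $\mu\colon\Vb\times\lexp{F}{\Vb}\to\Vb\cdot\lexp{F}{\Vb}$ is a principal $(\Vb\cap\lexp{F}{\Vb})$-bundle under the action $(v,w)\cdot x=(vx,x^{-1}w)$, and is $Q$-equivariant because $F$ commutes with $Q$. Each fiber over a $Q$-fixed $x$ is then a $Q$-stable torsor under the connected unipotent group $\Vb\cap\lexp{F}{\Vb}$, and cohomology vanishing provides a $Q$-fixed preimage $(v_0,w_0)\in\Vb^Q\times(\lexp{F}{\Vb})^Q=\Vb^Q\times\lexp{F}{(\Vb^Q)}$, yielding the decomposition.

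For (e), combining (c) and (d) identifies $\Yb_{\Vb^Q}^{\Gb^Q}$ with the locally closed subvariety $(\Yb_\Vb^\Gb)^Q$ of $\Gb^Q/\Vb^Q\simeq(\Gb/\Vb)^Q$, giving the desired isomorphism of varieties; equivariance under $(\Gb^Q)^F\times(N_\Gb(\Pb^\circ,\Lb^\circ)^Q)^{F\opp}$ is formal. When $Q$ is an $\ell$-group, the homotopy-equivalence statement then follows from the Brauer--Rickard isomorphism $\brauer_Q\Grm\Gamma_c(\Xb,k)\simeq\Grm\Gamma_c(\Xb^Q,k)$ from \cite{Ri} recalled in the Notations, applied to $\Xb=\Yb_\Vb^\Gb$. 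The main obstacle is (b) in the disconnected setting: one has to verify that $\Pb^Q$ is parabolic in the generalized sense, i.e.\ that $\Gb^Q/\Pb^Q$ is complete (which follows from the analogue of (c) for $\Pb$ in place of $\Vb$, identifying $\Gb^Q/\Pb^Q$ with the closed subvariety $(\Gb/\Pb)^Q$ of $\Gb/\Pb$), and that the $Q$-fixed points of the Levi complement $\Lb$ remain a Levi complement of $\Vb^Q$ in $\Pb^Q$; both points rest on $H^1(Q,\Vb)=1$ together with careful bookkeeping of connected components.
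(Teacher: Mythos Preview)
Your cohomological approach via $H^1(Q,\Ub)=1$ is genuinely different from the paper's and in several respects more streamlined. The paper does \emph{not} argue directly for arbitrary $Q$: it first treats the case where $Q=\langle l\rangle$ is cyclic, using the semisimplicity of $l$ to locate an $l$-stable Levi complement (for (c)) and a $Q$-fixed $F$-stable maximal torus of $\Lb$ (for (d), via the fact that the number of $F$-stable maximal tori is a power of $p$), and then reaches general solvable $Q$ by induction along a normal series with cyclic quotients. Your torsor arguments for (c) and (d) bypass both the Levi trick and the torus-counting step and work uniformly for all finite $p'$-groups $Q$, which is a gain. (A small correction: $\GM_a(\FM)$ is an $\FM_p$-vector space, not a $\QM$-vector space; the unique $|Q|$-divisibility comes from $\gcd(|Q|,p)=1$. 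Also, rather than reducing all the way to $\GM_a$, it is cleaner to stop at abelian unipotent quotients via the derived series, where $H^1(Q,-)$ already vanishes by the divisibility argument.)

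There is, however, one real gap. Statement (c) asserts an isomorphism of \emph{varieties}, not merely a bijection on $\FM$-points, and your cocycle argument only yields the latter. A bijective morphism between smooth varieties need not be an isomorphism in positive characteristic. The paper closes this gap by a tangent-space computation: since $|Q|$ is prime to $p$, taking $Q$-invariants is exact on $\FM$-vector spaces, so the tangent space of $(\Gb/\Vb)^Q$ at the base point is $(\mathfrak{g}/\mathfrak{v})^Q=\mathfrak{g}^Q/\mathfrak{v}^Q$, which matches that of $\Gb^Q/\Vb^Q$; hence the map is \'etale, and bijectivity finishes. This is again an $H^1$-vanishing, now for $Q$ acting on $\mathfrak{v}$, so it fits your framework perfectly---but it must be said. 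Finally, your appeal to ``Steinberg'' for (a) and (b) needs a word of caution: the results in \cite{dm-nonc} are stated for a single quasi-semisimple automorphism, so for general $Q$ one still needs an inductive step (exactly the d\'evissage the paper performs); also, fixed points under a quasi-semisimple automorphism form a reductive group whose identity component is reductive, not a connected group in general, so your equality $(\Gb^Q)^\circ=(\Gb^\circ)^Q$ should be weakened to $(\Gb^Q)^\circ=((\Gb^\circ)^Q)^\circ$.
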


\bigskip

\begin{proof}
Assume first $Q$ is cyclic, generated by an element $l$.

\medskip

(a) and (b) follow from~\cite[Proposition~1.3, Theorem~1.8, Proposition~1.11]{dm-nonc}.

\medskip

(c) Note that that both varieties are smooth (for $(\Gb/\Vb)^Q$, this follows from the fact
that $Q$ is a $p'$-group and $\Gb/\Vb$ is smooth).
The injectivity of the map is clear.

Let us prove the surjectivity. Let $g\Vb \in (\Gb/\Vb)^Q$. Then, 
$g^{-1}l(g)\in \Vb$. Denote by $\ad(g)$ the automorphism $x\mapsto gxg^{-1}$ of $\Gb$. Since
$\ad(g)^{-1}l\ad(g)$ is semisimple, it stabilizes a maximal torus 
of $\Pb^\circ$ (see~\cite[Theorem~7.5]{steinberg}), hence it stabilizes the
unique Levi complement $\Lb^\prime$ of $\Pb^\circ$ containing this maximal torus.
Since all Levi complements are conjugate under the action of $\Vb$, there exists 
$v \in \Vb$ such that $v^{-1}\Lb^\prime v=\Lb^\circ$. It follows that $(gv)^{-1}l(gv)\in\Vb$ and
$(gv)^{-1}l(gv)$ normalizes $\Lb^\circ$, hence $(gv)^{-1}l(gv)=1$, so $gv\in \Gb^Q$,
as desired.

The tangent space at $\Vb$ of $(\Gb/\Vb)^Q$ is the $Q$-invariant part of
the tangent space of $\Gb/\Vb$ at $\Vb$. That last tangent space is a quotient of
the tangent space of $\Gb$ at the origin. It follows that the canonical map
$\Gb^Q\to (\Gb/\Vb)^Q$ induces a surjective map between tangent spaces
at the origin. Consequently, the canonical map $\Gb^Q/\Vb^Q \to (\Gb/\Vb)^Q$
induces a surjective map between tangent spaces at the origin. We deduce that the map is
an isomorphism.

\medskip

(d) The number of $F$-stable maximal tori of $\Lb$ is a power of $p$ 
(see~\cite[Corollary~14.16]{steinberg}). Since $Q$ is a $p'$-group,
it normalizes 
some $F$-stable maximal torus. Using now the root system with respect to 
this maximal torus, we deduce that there exists a $Q$-stable subgroup $\Vb'$ of $\Vb$ such that 
$\Vb=\Vb' \cdot (\Vb \cap F(\Vb))$ and $\Vb' \cap F(\Vb)=1$. 
Therefore, $\Vb \cdot F(\Vb)=\Vb' \cdot F(\Vb)$ and the result 
follows.

\medskip

(e) follows immediately from (c) and (d).

\medskip
We prove now the proposition by induction on $|Q|$.
Let $Q_1$ be a normal subgroup of $Q$ of index a prime number
and let $l\in Q$, $l{\not\in}Q_1$. Let $Q_2$ be the subgroup of $Q$ generated by $l$.
By induction, the proposition holds for $Q$ replaced by $Q_1$: we have
a reductive group $\Gb_1=\Gb^{Q_1}$ and a parabolic subgroup $\Pb_1=\Pb^{Q_1}$ with unipotent
radical $\Vb_1=\Vb^{Q_1}$ and an $F$-stable Levi complement $\Lb_1=\Lb^{Q_1}$. These are all stable under $Q_2$.
The cyclic case of the proposition applied to the action of $Q_2$ on $(\Gb_1,\Pb_1,\Vb_1,\Lb_1)$ establishes the
proposition for the action of $Q$ on $(\Gb,\Pb,\Vb,\Lb)$.
\end{proof}

\bigskip

\begin{rema}
If $Q$ is a finite solvable $p'$-subgroup of $\Gb^F$, then $N_{\Gb}(Q)$ is reductive. If in addition $Q$ 
normalizes $(\Pb,\Lb)$, then $N_{\Pb}(Q)$ is a parabolic subgroup of $N_{\Gb}(Q)$ with unipotent radical
$\Vb^Q$ and Levi complement $N_{\Lb}(Q)$. The maps defined in (e) of Proposition \ref{prop:centralisateur}
are equivariant for the diagonal action of $N_{\Gb}(\Pb^\circ,\Lb^\circ,Q)^F$.
\end{rema}

We will need a converse to Proposition \ref{prop:centralisateur}, in the case of tori.

\begin{lem}\label{lem:bijtori}
Let $Q$ be a finite solvable $p'$-group of automorphisms of $\Gb$ that commute with $F$. We assume $Q$
stabilizes a maximal torus of $\Gb$ and a Borel subgroup containing that maximal torus.

Let $\Tb_Q$ be an $F$-stable maximal torus of $\Gb^Q$ contained in a Borel subgroup $\Bb_Q$ of
$\Gb^Q$. Then, $C_{\Gb^\circ}(\Tb_Q)$ is an $F$-stable maximal torus of $\Gb$ that is contained in
a $Q$-stable Borel subgroup $\Bb$ of $\Gb$ such that $(\Bb^Q)^\circ=\Bb_Q$.
\end{lem}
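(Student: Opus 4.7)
The plan is to reduce via $(\Gb^Q)^\circ$-conjugation to a ``canonical'' pair of maximal torus and Borel in $\Gb$ furnished by the hypothesis, and then to carry out a root-theoretic computation showing that the centralizer of $(\Tb^{*Q})^\circ$ in $\Gb^\circ$ is exactly $\Tb^*$.

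Fix a $Q$-stable maximal torus $\Tb^*$ of $\Gb$ contained in a $Q$-stable Borel subgroup $\Bb^*$, as provided by the hypothesis. Applying Proposition~\ref{prop:centralisateur} to the parabolic $\Bb^*$ with Levi complement $\Tb^*$, one sees that $(\Bb^{*Q})^\circ$ is a Borel subgroup of $(\Gb^Q)^\circ$ with maximal torus $(\Tb^{*Q})^\circ$. Since any two pairs ``maximal torus contained in a Borel subgroup'' in the connected reductive group $(\Gb^Q)^\circ$ are conjugate under that group, there exists $g\in(\Gb^Q)^\circ$ with $g\Tb_Q g^{-1}=(\Tb^{*Q})^\circ$ and $g\Bb_Q g^{-1}=(\Bb^{*Q})^\circ$. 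Set $\Tb_0:=g^{-1}\Tb^* g$ and $\Bb:=g^{-1}\Bb^* g$. Since $g\in\Gb^Q$, both subgroups are $Q$-stable. Moreover $\Tb_0$ is a maximal torus of $\Gb$ contained in the Borel subgroup $\Bb$, we have $\Tb_Q\subseteq\Tb_0$, and $(\Bb^Q)^\circ=g^{-1}(\Bb^{*Q})^\circ g=\Bb_Q$.

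It remains to show $C_{\Gb^\circ}(\Tb_Q)=\Tb_0$, equivalently (after conjugating by $g$) that $C_{\Gb^\circ}((\Tb^{*Q})^\circ)=\Tb^*$. The centralizer in question is the Levi subgroup of $\Gb^\circ$ generated by $\Tb^*$ together with the root subgroups $\Ub_\alpha$ for those roots $\alpha$ of $(\Gb^\circ,\Tb^*)$ that vanish on $(\Tb^{*Q})^\circ$. A character $\alpha\in X^*(\Tb^*)$ vanishes on $(\Tb^{*Q})^\circ$ if and only if $\sum_{q\in Q}q(\alpha)=0$ in $X^*(\Tb^*)$, since the cocharacter lattice of $(\Tb^{*Q})^\circ$ is $X_*(\Tb^*)^Q$. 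This is the decisive point where the $Q$-stability of $\Bb^*$ enters: $Q$ permutes the positive roots of $(\Gb^\circ,\Tb^*,\Bb^*)$, so for a positive root $\alpha$ the sum $\sum_q q(\alpha)$ equals $|Q_\alpha|\cdot\sum_{\beta\in Q\cdot\alpha}\beta$, a non-empty sum of positive roots, which lies in the strict positive cone spanned by the simple roots and is therefore non-zero. The same argument applies to negative roots, so no root vanishes on $(\Tb^{*Q})^\circ$ and $C_{\Gb^\circ}((\Tb^{*Q})^\circ)=\Tb^*$. Finally, $\Tb_0=C_{\Gb^\circ}(\Tb_Q)$ inherits $F$-stability from that of $\Tb_Q$ ($\Bb$ itself need not be $F$-stable, but that is not required). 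The only delicate point is the root-theoretic step above, resolved by using the $Q$-stable Borel to forbid cancellations in $\sum_q q(\alpha)$, equivalently to ensure that $(\Tb^{*Q})^\circ$ is a regular subtorus of $\Gb^\circ$.
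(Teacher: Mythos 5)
Your proof is correct, and it takes a genuinely different route from the paper for the heart of the statement. The paper first proves that $C_{\Gb^\circ}(\Tb_Q)$ is an $F$-stable maximal torus by induction on $|Q|$, invoking the cyclic case from Digne--Michel's Theorem~1.8 as the base, and only then handles the Borel subgroup by the conjugation trick from a canonical $Q$-stable pair. You instead carry out the conjugation step first (as the paper does, and as you correctly deduce from Proposition~\ref{prop:centralisateur} plus conjugacy of Borel pairs in $(\Gb^Q)^\circ$), and then replace the induction entirely by a self-contained root-theoretic computation: since $X_*((\Tb^{*Q})^\circ)=X_*(\Tb^*)^Q$, a root $\alpha$ vanishes on $(\Tb^{*Q})^\circ$ iff $\sum_{q\in Q}q(\alpha)=0$, and this sum never vanishes because $Q$ preserves $\Bb^*$ and hence permutes positive roots, so the orbit sum is a nonempty positive combination of positive roots. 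Thus $(\Tb^{*Q})^\circ$ is a regular subtorus and $C_{\Gb^\circ}((\Tb^{*Q})^\circ)=\Tb^*$, from which everything follows after undoing the conjugation. What this buys you is an elementary, uniform argument that avoids both the induction on $|Q|$ and the appeal to the cyclic case of Digne--Michel; what the paper's route buys is that it leans on an already-established cyclic result and on the same inductive machinery used throughout the section, keeping the argument short in context. Both arguments use the hypothesis that $Q$ stabilizes some pair $(\Tb^*\subset\Bb^*)$ in exactly the same way.
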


\begin{proof}
Note that the lemma holds for $Q$ cyclic by \cite[Theorem 1.8]{dm-nonc}.
We proceed by induction on $|Q|$ as in the proof of Proposition \ref{prop:centralisateur}, and we keep the notations
of that proof.
We know (Lemma for $Q_2$) that $\Tb_{Q_1}=C_{\Gb_1^\circ}(\Tb_Q)$ is an $F$-stable maximal torus of $\Gb_1^\circ$.
By induction,
$C_{\Gb^\circ}(\Tb_Q)=N_{\Gb^\circ}(\Tb_{Q_1})^\circ=C_{\Gb^\circ}(\Tb_{Q_1})$ is an $F$-stable maximal
torus of $\Gb$.

The existence of the Borel subgroup can be obtained as in \cite[p.350]{dm-nonc}.
Let $\Tb'$ be a maximal torus of $\Gb$ stable under $Q$ and $\Bb'$ be a Borel subgroup of $\Gb$ containing $\Tb'$ and
stable under $Q$. By Proposition \ref{prop:centralisateur}, $(\Tb^{\prime Q})^\circ$ is a maximal torus of $\Gb^Q$ and 
$(\Bb^{\prime Q})^\circ$ is a Borel subgroup containing it. So, there is $x\in (\Gb^Q)^\circ$ such that
$\Tb_Q={^x(\Tb^{\prime Q})^\circ}$ and $\Bb_Q={^x(\Bb^{\prime Q})^\circ}$. Let $\Bb={^x\Bb'}$. This is
a $Q$-stable Borel subgroup of $\Gb$ containing $C_{\Gb^\circ}(\Tb_Q)$. By Proposition \ref{prop:centralisateur},
$(\Bb^Q)^\circ$ is a Borel subgroup of $\Gb^Q$, hence $(\Bb^Q)^\circ=\Bb_Q$.
\end{proof}

\bigskip

To complete Proposition \ref{prop:centralisateur}, note the following result.

\bigskip

\begin{lem}\label{lem:delta}
Let $P$ be an $\ell$-subgroup of $\Gb^F \times N_{\Gb^F}(\Pb,\Lb)^\opp$ such that 
$(\Yb_\Vb^\Gb)^P \neq \vide$. Then $P$ is $(\Gb^F\times 1)$-conjugate
to a subgroup of $\Delta N_{\Gb^F}(\Pb,\Lb)$.
\end{lem}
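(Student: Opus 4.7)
\textit{Plan.} The proof proceeds in three stages: reduction of $P$ to a graph subgroup, trivialization via the vanishing of $H^1(Q,\Vb)$, and promotion of the conjugating element to $\Gb^F$ by Lang--Steinberg.

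First I fix $x_0 = g\Vb \in (\Yb_\Vb^\Gb)^P$. The stabilizer of $x_0$ under the left action of $\Gb^F$ is $g\Vb g^{-1}\cap\Gb^F$, a finite subgroup of the connected unipotent group $g\Vb g^{-1}$ and therefore a $p$-group. Since $P$ is an $\ell$-group with $\ell\neq p$, it follows that $P\cap(\Gb^F\times 1) = 1$. Letting $Q \subseteq N_{\Gb^F}(\Pb,\Lb)$ denote the image of $P$ under the second projection, we conclude that $P = \{(\sigma(y),y) : y \in Q\}$ for an anti-homomorphism $\sigma \colon Q \to \Gb^F$ satisfying $g^{-1}\sigma(y)g y \in \Vb$ for all $y$.

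Second, I write $\sigma(y) = gv_y y^{-1} g^{-1}$ with $v_y \in \Vb$ unique. Exploiting the anti-homomorphism property of $\sigma$, the map $c \colon Q \to \Vb$, $y \mapsto v_{y^{-1}}$, is a $1$-cocycle for the conjugation action of $Q \subseteq N_\Gb(\Vb)$ on $\Vb$. Because $Q$ is a finite $\ell$-group with $\ell\neq p$ and $\Vb$ is connected unipotent, a d\'evissage along a $Q$-stable filtration with $\GM_a$-quotients yields $H^1(Q,\Vb)=1$. Hence $c$ is a coboundary: there exists $v \in \Vb$ with $v_{y^{-1}} = v\cdot ({^y v})^{-1}$, equivalently $v_y = v\cdot y^{-1}v^{-1}y$. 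A direct computation then shows that $g_0 := v^{-1}g^{-1} \in \Gb$ satisfies $g_0\sigma(y)g_0^{-1} = y^{-1}$ for every $y\in Q$, so $(g_0,1)\,P\,(g_0,1)^{-1} \subseteq \Delta N_{\Gb^F}(\Pb,\Lb)$.

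Third, I must upgrade $g_0 \in \Gb$ to an element of $\Gb^F$. The variety $T = \{g_0' \in \Gb : g_0'\sigma(y)(g_0')^{-1} = y^{-1} \text{ for all } y\in Q\}$ is a non-empty $F$-stable right coset of $H := C_\Gb(\sigma(Q))$, which is reductive by Proposition~\ref{prop:centralisateur}(a). Lang--Steinberg yields a rational point in every $F$-stable $H^\circ$-coset, so it suffices to show that the class of $T$ in $H^1(F,\pi_0(H))$ is trivial. This is the main obstacle; I would handle it by exploiting the residual freedom in the choice of $g$ (within the $F$-stable variety $(\Yb_\Vb^\Gb)^P$) and of $v$ (within the $\Vb^Q$-torsor of trivializations of $c$) to ensure that the Lang representative $g_0^{-1}F(g_0) \in H$ can be taken in $H^\circ$, where Lang--Steinberg applies cleanly.
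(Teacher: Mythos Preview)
Your first two steps are correct. In Step~2 you replace the paper's finite fixed-point argument by a cohomological one: the paper picks $m$ with $F^m(\Pb)=\Pb$, observes that the $\ell$-group $y^{-1}Ry\subset\Pb^{F^m}$ acts on the set $\Pb^{F^m}/\Lb^{F^m}$ of $p$-power cardinality, and takes a fixed point to produce $v\in\Vb$ with $(yv)^{-1}R(yv)\subset\Lb$. Your $H^1(Q,\Vb)=1$ d\'evissage achieves the same conclusion and is arguably cleaner, though the counting argument is more elementary.

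Step~3 has a genuine gap. You correctly locate the obstruction --- one needs the Lang element $g_0^{-1}F(g_0)$ to lie in $H^\circ$ --- but your proposed remedy (``exploit residual freedom in the choices of $g$ and $v$'') is not how this is resolved, and you do not carry it out. In fact no further choice is needed: the element is \emph{already} in $H^\circ$. Write $y=gv=g_0^{-1}$; then $y\Vb=g\Vb\in\Yb_\Vb^\Gb$, so $y^{-1}F(y)\in\Vb\cdot F(\Vb)$. Since $y^{-1}Ry=Q$ and $R\subset\Gb^F$, one has $y^{-1}F(y)\in C_\Gb(Q)$ as well. Now Proposition~\ref{prop:centralisateur}(b) and~(d) give
\[
(\Vb\cdot F(\Vb))\cap C_\Gb(Q)=C_\Vb(Q)\cdot F(C_\Vb(Q))\subset C_\Gb^\circ(Q),
\]
so $y^{-1}F(y)\in C_\Gb^\circ(Q)$, and Lang's theorem on the connected group $C_\Gb^\circ(Q)$ produces $x\in C_\Gb^\circ(Q)$ with $x^{-1}F(x)=y^{-1}F(y)$; then $h=yx^{-1}\in\Gb^F$ conjugates $P$ into $\Delta Q$. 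This is precisely the paper's closing move; your outline was missing only this structural input from Proposition~\ref{prop:centralisateur}.
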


\bigskip

\begin{proof}
Replacing $\Lb$ by $N_\Gb(\Pb,\Lb)$, we can assume that 
$N_\Gb(\Pb,\Lb)=\Lb$.

Let $Q \subset \Lb^F$ (respectively $R \subset \Gb^F$) denote the 
image of $P$ through the second (respectively first) projection and let $y\Vb \in (\Yb_\Vb^\Gb)^P$. 

If $g \in R$, then there exists $l \in Q$ such that $(g,l) \in P$. 
Therefore, $gyl\Vb=y\Vb$, hence $y^{-1}gy \Vb=l^{-1}\Vb$. This implies 
that $y^{-1}Ry \subset Q \Vb$. We denote by $\eta : R \to Q$ the composition 
$R \xrightarrow{\sim} y^{-1}Ry \injto Q\Vb \surto Q$. Since $R$ (respectively $Q$) acts 
freely on $\Gb/\Vb$ as they are $\ell$-groups, the previous computation shows that $\eta$ is an isomorphism, 
and that 
$$P=\{(g,\eta(g))~|~g \in R\}.$$

Now, there exists a positive integer $m$ such that $F^m(\Pb)=\Pb$ 
and $y^{-1}Ry \subset \Pb^{F^m}$. 
So $y^{-1}Ry$ acts by left translation on $\Pb^{F^m}/\Lb^{F^m}$. 
Since $y^{-1}Ry$ is a finite $\ell$-group and $|\Pb^{F^m}/\Lb^{F^m}|=|\Vb^{F^m}|$ 
is a power of $p$, it follows that $y^{-1}Ry$ has a fixed point in 
$\Pb^{F^m}/\Lb^{F^m}$. 
Consequently, there exists $v \in \Vb$ such that $y^{-1}lyv\Lb=v\Lb$ 
for all $l \in R$. In other words, $(yv)^{-1} R (yv) \subset \Lb$. This means that, 
by replacing $y$ by $yv$ if necessary, we may assume that $y^{-1}Ry \subset \Lb$. 
Therefore, $y^{-1}Ry = Q$ and $P=\{(yly^{-1},l)~|~l \in Q\}$. 

Now, $y^{-1}F(y)\in \Vb \cdot F(\Vb)$ but, since $F(yly^{-1})=yly^{-1}$ for all 
$l \in Q$, we deduce that $y^{-1}F(y) \in C_{\Gb}(Q)$. So 
$$y^{-1}F(y) \in (\Vb \cdot F(\Vb)) \cap C_{\Gb}(Q)=C_\Vb(Q)\cdot F(C_\Vb(Q)) 
\subset C_{\Gb}^\circ(Q)$$
(see Proposition~\ref{prop:centralisateur}(b) and~(d)). So, by Lang's Theorem, 
there exists $x \in C_{\Gb}^\circ(Q)$ such that $y^{-1}F(y)=x^{-1}F(x)$. 
This implies that $h=yx^{-1} \in \Gb^F$, and 
$$P=\{(hlh^{-1},l)~|~l \in Q\},$$
as expected.
\end{proof}

\begin{coro}
\label{cor:deltavertex}
The indecomposable summands of the $\OC(\Gb^F\times N_{\Gb^F}(\Pb,\Lb)^\opp)$-module
$\Grm\Gamma_c(\Yb_\Vb^\Gb,\OC)^{\red}$
have a vertex contained in
$\Delta N_{\Gb^F}(\Pb,\Lb)$.
\end{coro}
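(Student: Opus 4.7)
The plan is to exploit the characterization of vertices of indecomposable objects in the homotopy category $\Ho^b(\OC H\mperm)$ (for $H=\Gb^F\times N_{\Gb^F}(\Pb,\Lb)^\opp$) via the Brauer functor, together with the Brauer-functor/fixed-point compatibility for cohomology of varieties and Lemma~\ref{lem:delta}.

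More precisely, I would invoke the appendix (which develops $\ell$-permutation theory for complexes) to use the following fact: if $M$ is an indecomposable summand of $C^{\red}$ for some $C\in\Comp^b(\OC H\mperm)$, then any vertex $P$ of $M$ is an $\ell$-subgroup of $H$ characterized (up to $H$-conjugacy) by the requirement that $\brauer_P(M)\not\simeq 0$ in $\Ho^b(k[N_H(P)/P]\mperm)$ and $P$ is maximal with this property; in particular the non-vanishing of $\brauer_P(M)$ forces $\brauer_P(C^{\red})\not\simeq 0$, and hence $\brauer_P(\Grm\Gamma_c(\Yb_\Vb^\Gb,\OC))\not\simeq 0$, since $C^{\red}$ and $\Grm\Gamma_c(\Yb_\Vb^\Gb,\OC)$ differ by a contractible summand.

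Now take $M$ an indecomposable summand of $\Grm\Gamma_c(\Yb_\Vb^\Gb,\OC)^{\red}$ and $P$ a vertex. By Rickard's theorem (cited in the Varieties subsection), there is an isomorphism
\[
\brauer_P\bigl(\Grm\Gamma_c(\Yb_\Vb^\Gb,\OC)\bigr) \;\simeq\; \Grm\Gamma_c\bigl((\Yb_\Vb^\Gb)^P,k\bigr)
\]
in $\Ho^b(k[N_H(P)/P]\mperm)$. Non-vanishing of the left-hand side therefore implies $(\Yb_\Vb^\Gb)^P\neq\vide$. Lemma~\ref{lem:delta} then says that $P$ is $(\Gb^F\times 1)$-conjugate to a subgroup of $\Delta N_{\Gb^F}(\Pb,\Lb)$, which is exactly what the corollary asserts about the vertices of indecomposable summands.

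I do not anticipate a serious obstacle: the whole argument is essentially a ``Brauer-functor detects vertices'' observation combined with the geometric input already isolated in Lemma~\ref{lem:delta}. The only delicate point is making sure that the notion of vertex makes sense for indecomposable objects of $\Ho^b(\OC H\mperm)$ (taking $C^{\red}$ to enforce Krull--Schmidt-type uniqueness) and that it is detected by $\brauer_P$ in the homotopy category; this is precisely what the appendix is designed to furnish, so the corollary falls out as an immediate application.
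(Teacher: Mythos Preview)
Your proposal is correct and follows essentially the same route as the paper: combine Rickard's isomorphism $\brauer_P(\Grm\Gamma_c(\Yb_\Vb^\Gb,\OC))\simeq\Grm\Gamma_c((\Yb_\Vb^\Gb)^P,k)$ with Lemma~\ref{lem:delta} and the appendix's vertex-detection via the Brauer functor. The paper phrases the last step contrapositively---showing $\brauer_Q$ vanishes in $\Ho^b$ for every $Q$ not conjugate into $\Delta N_{\Gb^F}(\Pb,\Lb)$ and then invoking Lemma~\ref{lem:bounded}---which is precisely the lemma from the appendix that justifies your implication ``$\brauer_P(M)\neq 0\Rightarrow\brauer_P(C^{\red})\not\simeq 0$ in $\Ho^b$''.
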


\bigskip

\begin{proof}
Let $Q$ be an $\ell$-subgroup of $\Gb^F\times N_{\Gb^F}(\Pb,\Lb)^\opp$ that
is not $(\Gb^F\times 1)$-conjugate to a subgroup of 
$\Delta N_{\Gb^F}(\Pb,\Lb)$.
We have $\brauer_Q(\Grm\Gamma_c(\Yb_\Vb^\Gb,\OC))\simeq
\Grm\Gamma_c((\Yb_\Vb^\Gb)^Q,k)\simeq 0$ in 
$\Ho^b(kN_{\Gb^F\times (\Lb^F)^\opp}(Q))$ by Lemma \ref{lem:delta}.
The result follows now from Lemma \ref{lem:bounded}.
\end{proof}

\bigskip

\subsection{Perfect complexes and disconnected groups} 

\smallskip
Given $M$ a simple $\L\Gb^F$-module, we denote by $\YC(M)$ the set of 
pairs $(\Tb,\Bb)$ such that $\Tb$ is an $F$-stable maximal torus of 
$\Gb$ and $\Bb$ is a (connected) Borel subgroup of $\Gb$ containing $\Tb$
such that 
$\rhom_{\L\Gb^F}^\bullet(R\G_c(\Yb_{\Bb},\Lambda),M) \neq 0$. 
We then set $d(M)=\min_{(\Tb,\Bb) \in \YC(M)} \dim(\Yb_{\Bb})$. 
The following two theorems are proved in~\cite[Theorem~A]{BR} whenever 
$\Gb$ is connected.

\bigskip

\begin{theo}\label{theo:A}
Let $M$ be a simple $\L\Gb^F$-module. Then $\YC(M)\neq \vide$. Moreover, 
given $(\Tb,\Bb) \in \YC(M)$ such that $d(M)=\dim(\Yb_{\Bb})$, we have 
$$\Hom_{\Drm^b(\L\Gb^F)}(R\G_c(\Yb_{\Bb},\Lambda),M[-i]) = 0$$
for all $i \neq d(M)$.
\end{theo}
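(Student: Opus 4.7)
The plan is to reduce Theorem~\ref{theo:A} to the connected case, which is \cite[Theorem~A]{BR}. Since Borel subgroups and maximal tori of $\Gb$ coincide with those of $\Gb^\circ$, the decomposition~(\ref{eq:gy}) yields an isomorphism
\[
R\G_c(\Yb_\Bb^\Gb,\Lambda)\simeq \Ind_{\Gb^{\circ F}}^{\Gb^F} R\G_c(\Yb_\Bb^{\Gb^\circ},\Lambda),
\]
and by Frobenius reciprocity,
\[
\rhom_{\Lambda\Gb^F}^\bullet\bigl(R\G_c(\Yb_\Bb^\Gb,\Lambda), M\bigr)\simeq \rhom_{\Lambda\Gb^{\circ F}}^\bullet\bigl(R\G_c(\Yb_\Bb^{\Gb^\circ},\Lambda), \Res_{\Gb^{\circ F}}^{\Gb^F}M\bigr).
\]

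For the non-emptiness of $\YC(M)$, I would argue by contradiction. If $\rhom_{\Lambda\Gb^{\circ F}}^\bullet(R\G_c(\Yb_\Bb^{\Gb^\circ},\Lambda),\Res M)=0$ for every Borel subgroup $\Bb$ of $\Gb^\circ$, then the thick subcategory of $D^b(\Lambda\Gb^{\circ F})$ consisting of objects $P$ with $\rhom(P,\Res M)=0$ contains every $R\G_c(\Yb_\Bb^{\Gb^\circ},\Lambda)$. By the generation theorem of \cite{BR}, these objects generate $\Gb^{\circ F}\mperf$, so in particular $\Lambda\Gb^{\circ F}$ lies in this subcategory; but $\rhom(\Lambda\Gb^{\circ F},\Res M)=\Res M\neq 0$ in $D^b(\Lambda)$, a contradiction.

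For the vanishing statement, I invoke Clifford theory: since $M$ is simple and $\Gb^{\circ F}\triangleleft\Gb^F$, the simple $\Lambda\Gb^{\circ F}$-module composition factors of $\Res_{\Gb^{\circ F}}^{\Gb^F}M$ form a single $\Gb^F$-orbit $\{N_1,\ldots,N_r\}$. Left multiplication by $g\in\Gb^F$ induces a $\Gb^{\circ F}$-equivariant isomorphism $\Yb_\Bb^{\Gb^\circ}\xrightarrow{\sim}\Yb_{g\Bb g^{-1}}^{\Gb^\circ}$, so the integer $d_{\Gb^\circ}(N_j)$ is independent of $j$; denote the common value by $d_0$. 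The key intermediate step is to establish $d_\Gb(M)=d_0$. The inequality $d_\Gb(M)\ge d_0$ follows from a d\'evissage: for any $(\Tb,\Bb)\in\YC(M)$, the long exact sequences in $\Hom$ associated to a filtration of $\Res M$ force some composition factor $N_j$ to satisfy $(\Tb,\Bb)\in\YC(N_j)$, whence $\dim\Yb_\Bb\ge d_{\Gb^\circ}(N_j)=d_0$. The reverse inequality is the main difficulty: in the modular Clifford setting, $\Res M$ may fail to be semisimple, and $\rhom$ to $\Res M$ could a priori vanish at a pair $(\Tb,\Bb)$ even when $\rhom$ to some subquotient does not; overcoming this requires re-examining the proof of \cite[Theorem~A]{BR} to exhibit an explicit non-zero class in the top Ext group $\Ext^{d_0}_{\Lambda\Gb^{\circ F}}(R\G_c(\Yb_\Bb^{\Gb^\circ},\Lambda),N)$ that survives the d\'evissage of $\Res M$.

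Granted $d_\Gb(M)=d_0$, the conclusion is reached as follows. Fix $(\Tb,\Bb)\in\YC(M)$ with $\dim\Yb_\Bb=d(M)=d_0$ and take a filtration $0=M_0\subset M_1\subset\cdots\subset M_r=\Res M$ with each $M_i/M_{i-1}\simeq N_{\sigma(i)}$ simple. For each $i$, either $(\Tb,\Bb)\notin\YC(N_{\sigma(i)})$, in which case $\Hom(R\G_c(\Yb_\Bb^{\Gb^\circ},\Lambda),N_{\sigma(i)}[-j])=0$ for all $j$ by definition of $\YC$; or $(\Tb,\Bb)\in\YC(N_{\sigma(i)})$, in which case $\dim\Yb_\Bb=d_0=d_{\Gb^\circ}(N_{\sigma(i)})$ and \cite[Theorem~A]{BR} yields $\Hom(R\G_c(\Yb_\Bb^{\Gb^\circ},\Lambda),N_{\sigma(i)}[-j])=0$ for $j\neq d_0$. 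An induction on $i$, using the long exact sequences in $\Hom$ arising from the short exact sequences $0\to M_{i-1}\to M_i\to N_{\sigma(i)}\to 0$, then yields $\Hom(R\G_c(\Yb_\Bb^{\Gb^\circ},\Lambda),\Res M[-j])=0$ for $j\neq d_0=d(M)$; the adjunction above delivers the required vanishing.
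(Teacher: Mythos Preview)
Your reduction to the connected case via the adjunction
\[
\Hom_{\Drm^b(\L\Gb^F)}(R\G_c(\Yb_\Bb^\Gb,\Lambda),M[-i])\simeq
\Hom_{\Drm^b(\L\Gb^{\circ F})}(R\G_c(\Yb_\Bb^{\Gb^\circ},\Lambda),\Res_{\Gb^{\circ F}}^{\Gb^F}M[-i])
\]
is exactly the paper's approach. The difficulty you flag, however, is illusory: your claim that ``in the modular Clifford setting, $\Res M$ may fail to be semisimple'' is false. Clifford's theorem holds over any field. If $M$ is a simple $\L\Gb^F$-module (hence a simple $k\Gb^F$-module when $\L=\OC$), pick a simple $\L\Gb^{\circ F}$-submodule $S\subset\Res M$; then $\sum_{g\in\Gb^F/\Gb^{\circ F}}gS$ is a nonzero $\Gb^F$-submodule of $M$, hence equals $M$, and each $gS$ is simple over $\Gb^{\circ F}$. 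Thus $\Res_{\Gb^{\circ F}}^{\Gb^F}M$ is a semisimple $\L\Gb^{\circ F}$-module whose simple summands form a single $\Gb^F$-orbit.

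Once you use this, the filtration and d\'evissage arguments, and the ``re-examination of the proof of \cite[Theorem~A]{BR}'' you propose, are all unnecessary. Write $\Res M\simeq\bigoplus_j N_j$ with each $N_j$ simple. The $N_j$ are $\Gb^F$-conjugate, so the integers $d_{\Gb^\circ}(N_j)$ coincide; call the common value $d_0$. The adjunction shows $\YC(M)=\bigcup_j\YC(N_j)$, which is nonempty by the connected case, and $d(M)=d_0$. For $(\Tb,\Bb)\in\YC(M)$ with $\dim\Yb_\Bb=d_0$, the $\Hom$-group decomposes as a direct sum over $j$, and each summand vanishes for $i\neq d_0$ by \cite[Theorem~A]{BR} (either $(\Tb,\Bb)\notin\YC(N_j)$ and all $\Hom$'s vanish, or $(\Tb,\Bb)\in\YC(N_j)$ realizes $d_{\Gb^\circ}(N_j)=d_0$). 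This is precisely the paper's argument.
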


\bigskip

\begin{proof}
By~(\ref{eq:ind-rlg}), we have 
$$\Hom_{\Drm^b(\L\Gb^F)}(R\G_c(\Yb_{\Bb}^{\Gb},\Lambda),M[-i]) = 
\Hom_{\Drm^b(\L\Gb^{\circ F})}(R\G_c(\Yb_{\Bb}^{\Gb^\circ},\Lambda),
\Res_{\Gb^{\circ F}}^{\Gb^F} M[-i]).
$$
Since $M$ is simple and $\Gb^{\circ F}\lhd\Gb^F$, it follows that
$\Res_{\Gb^{\circ F}}^{\Gb^F} M$ is semisimple. 
Since the theorem holds 
in $\Gb^{\circ F}$ (see~\cite[Proof of Theorem~A]{BR}), we know that $\YC(M)$ is not empty. 
The second statement follows from the fact that, if two simple 
$\L\Gb^{\circ F}$-modules $M_1$ and $M_2$ occur in the semisimple module
$\Res_{\Gb^{\circ F}}^{\Gb^F} M$, then they 
are conjugate under $\Gb^F$, and so $d(M_1)=d(M_2)=d(M)$.
\end{proof}

\bigskip

\begin{theo}\label{theo:A-engendrement}
The triangulated category $\L\Gb^F\mperf$ 
is generated by the complexes $R\G_c(\Yb_{\Bb},\L)$, where 
$\Tb$ runs over the set of $F$-stable maximal tori of $\Gb$ and $\Bb$ runs over the 
set of Borel subgroups of $\Gb$ containing $\Tb$.
\end{theo}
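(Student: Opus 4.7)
The plan is to reduce the statement to the connected case, established in \cite[Theorem~A]{BR}, via the induction functor $\Ind_{\Gb^{\circ F}}^{\Gb^F}$. A Borel subgroup of $\Gb$ is by definition a maximal connected solvable subgroup, hence lies in $\Gb^\circ$ and is precisely a Borel subgroup of $\Gb^\circ$; likewise an $F$-stable maximal torus of $\Gb$ is an $F$-stable maximal torus of $\Gb^\circ$. Thus the generating family in the theorem is indexed by exactly the same Borel pairs as in the connected group $\Gb^\circ$. The first step is to invoke the disjoint union decomposition~(\ref{eq:gy}) to obtain, for any such Borel $\Bb$, an isomorphism of complexes of $\L\Gb^F$-modules
$$R\G_c(\Yb_\Bb^\Gb, \L) \simeq \Ind_{\Gb^{\circ F}}^{\Gb^F}\bigl(R\G_c(\Yb_\Bb^{\Gb^\circ}, \L)\bigr).$$

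Next, I would apply the connected case \cite[Theorem~A]{BR}, which says that the thick subcategory of $\L\Gb^{\circ F}\mperf$ generated by the $R\G_c(\Yb_\Bb^{\Gb^\circ}, \L)$'s (as $\Bb$ varies) equals all of $\L\Gb^{\circ F}\mperf$. Since $\Gb^{\circ F}$ is a normal subgroup of finite index in $\Gb^F$, the ring $\L\Gb^F$ is free as a right $\L\Gb^{\circ F}$-module; hence $\Ind_{\Gb^{\circ F}}^{\Gb^F}$ is exact, preserves projectives, and therefore induces a triangulated functor $\L\Gb^{\circ F}\mperf \to \L\Gb^F\mperf$ compatible with the formation of thick subcategories. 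Combined with the isomorphism above, this shows that the thick subcategory $\EC$ of $\L\Gb^F\mperf$ generated by the $R\G_c(\Yb_\Bb^\Gb, \L)$'s contains the essential image of $\Ind_{\Gb^{\circ F}}^{\Gb^F}$ restricted to $\L\Gb^{\circ F}\mperf$.

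To conclude, I would observe that $\L\Gb^F \simeq \Ind_{\Gb^{\circ F}}^{\Gb^F}(\L\Gb^{\circ F})$ as a left $\L\Gb^F$-module, so the regular module $\L\Gb^F$ belongs to $\EC$, and consequently $\EC = \L\Gb^F\mperf$, since $\L\Gb^F\mperf$ is the smallest thick subcategory containing $\L\Gb^F$. There is no serious obstacle in this plan: it is a formal reduction, the only nontrivial inputs being the connected case from \cite{BR} and the geometric decomposition~(\ref{eq:gy}).
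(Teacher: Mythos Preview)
Your proposal is correct and is precisely the reduction the paper intends: the paper states Theorem~\ref{theo:A-engendrement} immediately after Theorem~\ref{theo:A}, prefaced by the remark that both results are known in the connected case by \cite[Theorem~A]{BR}, and leaves the disconnected case to the reader; your argument via~(\ref{eq:gy}) and the observation $\L\Gb^F\simeq\Ind_{\Gb^{\circ F}}^{\Gb^F}(\L\Gb^{\circ F})$ is exactly the expected proof.
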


\bigskip

\subsection{Generation of the derived category}
\label{se:genderived}

In this subsection~\S\ref{se:genderived}, we assume $\Lambda=\OC$ or $k$. 
We refer to Appendix~\ref{app:l-perm} for the needed facts about $\ell$-permutation 
modules.

\smallskip
Let $Q$ be an $\ell$-subgroup of $\Gb^F$
and let $M$ be an indecomposable $\ell$-permutation $\L[\Gb^F \times Q^\opp]$-module with vertex
$\D Q$. 
We denote by $\YC[M]$ the set of pairs $(\Tb,\Bb)$ satisfying the following conditions:
\begin{itemize}
\item $\Tb$ is an $F$-stable maximal 
torus of $\Gb$ contained in a Borel subgroup $\Bb$ of $\Gb$ such that
$Q$ normalizes $(\Tb,\Bb)$
\item $M$ is a direct summand of a term of the complex 
$\bigl(\Res_{\Gb^F \times Q^\opp}^{\Gb^F \times N_{\Gb^F}(\Bb,\Tb)^\opp}
 \Grm\G_c(\Yb_{\Bb},\L)\bigr)^\red$. 
\end{itemize}

We set $d[M]=\min_{(\Tb,\Bb) \in \YC[M]} \dim(\Yb_{C^\circ_{\Bb}(Q)}^{C^\circ_{\Gb}(Q)})$.

\bigskip

\begin{lem}\label{lem:l-perm}
If $Q$ normalizes a pair $(\Tb\subset\Bb)$ where $\Tb$ is an $F$-stable
maximal torus and $\Bb$ a Borel subgroup of $\Gb$, then
$\YC[M] \neq \vide$. Moreover, given $(\Tb,\Bb) \in \YC[M]$ 
such that $d[M]=\dim(\Yb_{C^\circ_{\Bb}(Q)})$, the degree $i$ term of the complex 
$\bigl(\Res_{\Gb^F \times Q^\opp}^{\Gb^F \times N_{\Gb^F}(\Bb,\Tb)^\opp} \Grm\G_c(\Yb_{\Bb},\L)\bigr)^\red$ 
has no direct summand isomorphic to $M$ if $i \neq d[M]$.
\end{lem}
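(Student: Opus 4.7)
The plan is to transfer the statement, via the Brauer functor, to a problem about the connected reductive group $\Hb := C^\circ_\Gb(Q)$, where Theorems~\ref{theo:A} and~\ref{theo:A-engendrement} apply. Set $\bar N := N_{\Gb^F \times Q^\opp}(\Delta Q)/\Delta Q$. Since $M$ has vertex $\Delta Q$, its Brauer image $\Mti := \brauer_{\Delta Q}(M)$ is a non-zero indecomposable projective $k\bar N$-module. By the properties of $\ell$-permutation modules recalled in Appendix~\ref{app:l-perm}, $M$ is a direct summand of a term of a reduced complex $C$ of $\ell$-permutation $\Lambda[\Gb^F \times Q^\opp]$-modules if and only if $\Mti$ is a direct summand of a term of $\brauer_{\Delta Q}(C)^\red$.

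First I would identify the Brauer image of the cohomology complex. For a $Q$-stable pair $(\Tb,\Bb)$ with $\Vb$ the unipotent radical of $\Bb$, the $\Delta Q$-action on $\Yb_\Bb$ is by conjugation of $Q \subset \Gb^F$. Rickard's theorem combined with Proposition~\ref{prop:centralisateur}(e) (applied to $Q$ acting on $\Gb$ by inner automorphisms) yields
$$\brauer_{\Delta Q}(\Grm\G_c(\Yb_\Bb,\Lambda)) \simeq \Grm\G_c(\Yb_{\Vb^Q}^{\Gb^Q}, k) \quad\text{in }\Ho^b(k\bar N\perm),$$
and by~(\ref{eq:gy}) and~(\ref{eq:ind-rlg}) the right-hand side is induced from the Deligne-Lusztig complex $\Grm\G_c(\Yb_{C^\circ_\Bb(Q)}^\Hb, k)$ on the connected reductive group $\Hb$ along the natural inclusions $\Hb^F \hookrightarrow (\Gb^Q)^F \hookrightarrow \bar N$ (note $\Vb^Q$ is connected by Proposition~\ref{prop:centralisateur}(b), and $C^\circ_\Bb(Q)$ is a Borel of $\Hb$ containing the $F$-stable maximal torus $C^\circ_\Tb(Q)$).

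To prove~(a), I would apply Theorem~\ref{theo:A-engendrement} to $\Hb$. Combined with the standard observation that every indecomposable projective appears as a direct summand of a term of some generator of $k\Hb^F\mperf$ in reduced form (else its simple head would lie in the right-orthogonal to the generating set, contradicting generation), it follows that every indecomposable projective summand of $\Res^{\bar N}_{\Hb^F}\Mti$ appears as a summand of a term of $\Grm\G_c(\Yb_{\Bb_Q}^\Hb, k)^\red$ for some Borel $\Bb_Q$ of $\Hb$ containing an $F$-stable maximal torus. By Lemma~\ref{lem:bijtori}, such a pair lifts to a $Q$-stable pair $(\Tb,\Bb)$ in $\Gb$ with $C^\circ_\Bb(Q) = \Bb_Q$, producing an element of $\YC[M]$.

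For the concentration statement~(b), fix a minimizing $(\Tb,\Bb) \in \YC[M]$ and apply Theorem~\ref{theo:A} to $\Hb$: the simple head $L$ of the relevant indecomposable projective summand of $\Res^{\bar N}_{\Hb^F}\Mti$ satisfies $d(L) = d[M]$, so
$$\Hom_{D^b(k\Hb^F)}(R\G_c(\Yb_{C^\circ_\Bb(Q)}^\Hb, k), L[-i]) = 0 \quad\text{for } i \neq d[M].$$
This forces the corresponding projective to appear only in degree $d[M]$ of the reduced complex on the $\Hb$-side, and the Brauer correspondence transfers the concentration back to $M$. The main obstacle is the careful bookkeeping of restrictions and inductions between $\bar N$, $(\Gb^Q)^F$ and $\Hb^F$, and the verification that ``being a summand of a term of a reduced complex'' is preserved in both directions by the Brauer functor and by these inductions/restrictions; I expect those compatibilities to follow cleanly from the appendix on $\ell$-permutation modules.
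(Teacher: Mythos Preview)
Your approach is correct in outline and close to the paper's, but you take an unnecessary detour through the connected component $\Hb=C_\Gb^\circ(Q)$, and this creates exactly the bookkeeping you flag at the end.

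Two simplifications collapse your argument to the paper's. First, the chain $\Hb^F\hookrightarrow(\Gb^Q)^F\hookrightarrow\bar N$ has a redundant step: since $Q$ acts by inner automorphisms, $(\Gb^Q)^F=C_\Gb(Q)^F$, and the paper observes directly that $N_{\Gb^F\times Q^\opp}(\Delta Q)=(C_\Gb(Q)^F\times 1)\Delta Q$, so $\bar N=C_\Gb(Q)^F$ on the nose. Second, and more to the point, Theorem~\ref{theo:A} is stated and proved for possibly \emph{disconnected} $\Gb$ precisely so that it can be applied here to $C_\Gb(Q)$ itself. The paper sets $V=\brauer_{\Delta Q}(M)$, an indecomposable projective $kC_\Gb(Q)^F$-module with simple head $L$, identifies $\brauer_{\Delta Q}(\Grm\G_c(\Yb_\Bb^\Gb,\Lambda))\simeq\Grm\G_c(\Yb_{\Bb_Q}^{C_\Gb(Q)},k)$ via Proposition~\ref{prop:centralisateur}(e), and then invokes Lemma~\ref{lem:bounded} to say that $M$ occurs in degree $i$ of the reduced complex if and only if $V$ does. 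Both the non-emptiness of $\YC[M]$ and the concentration in a single degree then follow immediately from Theorem~\ref{theo:A} applied to $L$ on $C_\Gb(Q)$, with $d[M]=d(L)$.

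Your route through $\Hb$ can be made to work: one checks that induction from the normal subgroup $\Hb^F$ preserves reducedness of complexes (restriction detects contractible summands), and that the indecomposable summands of $\Res_{\Hb^F}^{\bar N}\Mti$, being $\bar N$-conjugate, all share the same $d$-invariant. But these verifications are precisely what is already packaged into the disconnected version of Theorem~\ref{theo:A} (see its proof, which reduces to $\Gb^\circ$ via exactly this conjugacy argument). Using that theorem directly eliminates the bookkeeping.
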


\bigskip

\begin{proof}
%Let $\Tb$ be a maximal torus of $\Gb$ contained in a Borel subgroup
%$\Bb$. Let $H$ be a subgroup of $N_\Gb(\Tb,\Bb)^F$. Since $H$
%is $F$-stable, it follows that $N_\Gb(H)$ is $F$-stable. So, it
%admits an $F$-stable torus $\Tb'$
%Recall that a subgroup $H$ of $\Gb^F$ is contained in a maximal 
%torus of $\Gb$ if and only if it is contained in an $F$-stable maximal torus. Indeed, 
%if $H$ is contained in a maximal torus $\Tb$, then $N_{\Tb}(H) \subset N_\Gb(H)$ and, 
%since $H$ is $F$-stable, $N_\Gb(H)$ is also $F$-stable and admits an $F$-stable 
%maximal torus $\Tb'$, which necessarily contains $H$\footnote{Reference?}.
%
Note that $N_{\Gb^F \times Q^\opp}(\D Q)=(C_\Gb(Q)^F \times 1)\D Q$,
and we identify $C_\Gb(Q)^F$ with $N_{\Gb^F \times Q^\opp}(\D Q)/\D Q$ via the first projection.
Let $V=\brauer_{\D Q}(M)$, an indecomposable projective 
$kC_\Gb(Q)^F$-module. Let $L$ be the simple quotient of $V$.

% By Theorem~\ref{theo:A} applied to $C_\Gb(Q)$, there exists an $F$-stable maximal torus 
% $\Tb_Q$ of $C_\Gb^\circ(Q)$, a Borel subgroup $\Bb_Q$ of $C_\Gb^\circ(Q)$ 
% such that $\rhom_{kC_\Gb(Q)}^\bullet(\Rrm\G_c(\RC_{\Tb_Q \subset \Bb_Q}^{C_\Gb(Q)}(\OC\Tb_Q^F),L) \neq 0$, 
% and we may assume that $d(L)=\dim(\Xb_{\Bb_Q}^{C_\Gb(Q)})$. 

Now, let $\Bb_Q$ be a Borel subgroup of $C_\Gb(Q)$ admitting an $F$-stable maximal 
torus $\Tb_Q$. By Lemma \ref{lem:bijtori}, $C_\Gb(\Tb_Q)^\circ$ is an $F$-stable maximal torus of $\Gb$ and it is contained
in a Borel subgroup $\Bb$ of $\Gb$ such that $\Bb_Q=C_{\Bb}^\circ(Q)$.

We set 
$D=\bigl(\Res_{C_\Gb(Q)^F \times 1}^{C_\Gb(Q)^F \times \Tb_Q^{F\opp}} 
\Grm\G_c(\Yb_{\Bb_Q}^{C_\Gb(Q)},k)\bigr)^\red$.
By Proposition~\ref{prop:centralisateur}(e), we have
$$\brauer_{\D Q}(\Grm\G_c(\Yb_{\Bb}^\Gb,\L))
\simeq \Grm\G_c\left((\Yb_{\Bb}^\Gb)^{\D Q},k\right)
\simeq \Grm\G_c(\Yb_{\Bb_Q}^{C_\Gb(Q)},k) \simeq D$$
in $\Ho^b(kC_\Gb(Q)^F)$. It follows from Lemma~\ref{lem:bounded} that $M$ is a direct summand 
of the $i$-th term of 
$\bigl(\Res_{\Gb^F \times Q^\opp}^{\Gb^F\times\Tb_Q^{F\opp}} \Grm\G_c(\Yb_{\Bb}^\Gb,\L)\bigr)^\red$ 
if and only if $V$ is a direct summand of $D^i$. So the result follows from 
Theorem~\ref{theo:A}. Note that $d[M]=d[V]=d(L)=\dim \Yb_{\Bb_Q}^{C_\Gb(Q)}$.
\end{proof}

\bigskip

Recall that given a Borel subgroup $\Bb$ of $\Gb$ with an $F$-stable maximal torus $\Tb$, the variety
$\Yb_\Bb$ has a right action of $N_{\Gb^F}(\Tb,\Bb)$ (cf Remark \ref{re:actionN}).

\smallskip
Let $\AC$ be the thick subcategory of $\Ho^b(\L \Gb^F)$ generated by the complexes 
of the form 
$$\Grm\G_c(\Yb_{\Bb},\L)\otimes_{\L Q} L,$$
where 
\begin{itemize}
\item $\Tb$ runs over $F$-stable maximal tori of $\Gb$
\item $\Bb$ runs over Borel subgroups of $\Gb$ containing $\Tb$
\item $Q$ is an $\ell$-subgroup of $N_{\Gb^F}(\Tb,\Bb)$
\item and $L$ is a $\L Q$-module, free of rank $1$ over $\L$. 
\end{itemize}

Let $\BC$ be the full subcategory of $\L\Gb^F\modules$ consisting of modules whose indecomposable direct 
summands have a one-dimensional source and a vertex $Q$ which normalizes a pair $(\Tb\subset\Bb)$
where $\Tb$ is an $F$-stable maximal torus and $\Bb$ a Borel subgroup of $\Gb$.

\begin{theo}\label{theo:engendrement}
We have $\AC=\Ho^b(\BC)$.
\end{theo}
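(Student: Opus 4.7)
My plan is to establish both inclusions.

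For $\AC\subset\Ho^b(\BC)$, I would show that every generator $C=\Grm\G_c(\Yb_\Bb,\L)\otimes_{\L Q}L$ of $\AC$ lies in $\Ho^b(\BC)$ by checking that every term of $C^{\red}$ belongs to $\BC$. By Corollary~\ref{cor:deltavertex}, the indecomposable summands of $\Grm\G_c(\Yb_\Bb,\L)^{\red}$ as an $\L[\Gb^F\times N_{\Gb^F}(\Bb,\Tb)^\opp]$-module have vertex contained in $\Delta N_{\Gb^F}(\Bb,\Tb)$. After restricting the right action to $Q$ and tensoring with $L$ over $\L Q$, the resulting $\L\Gb^F$-module has indecomposable summands with one-dimensional source (inherited from $L$) and vertex contained in $Q$; since $Q$ (hence any of its subgroups) normalizes $(\Tb,\Bb)$, each such summand is in $\BC$.

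For the reverse inclusion $\Ho^b(\BC)\subset\AC$, since $\AC$ is thick it suffices to show each indecomposable $M\in\BC$, placed in degree $0$, lies in $\AC$. Let $Q$ be a vertex of $M$ normalizing some pair $(\Tb,\Bb)$, and let $L$ be a one-dimensional source. Via a source-algebra-type correspondence between indecomposable $\ell$-permutation $\L\Gb^F$-modules with vertex $Q$ and source $L$ and indecomposable $\ell$-permutation $\L[\Gb^F\times Q^\opp]$-modules with vertex $\Delta Q$, I associate to $M$ a module $\tilde M$ satisfying $\tilde M\otimes_{\L Q}L\cong M$. Lemma~\ref{lem:l-perm} applied to $\tilde M$ with a pair $(\Tb,\Bb)\in\YC[\tilde M]$ achieving the minimum $d[\tilde M]$ then shows that $\tilde M$ is a direct summand of the $d[\tilde M]$-th term of the reduced complex $D=\bigl(\Res_{\Gb^F\times Q^\opp}^{\Gb^F\times N_{\Gb^F}(\Bb,\Tb)^\opp}\Grm\G_c(\Yb_\Bb,\L)\bigr)^{\red}$ and of no other term. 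Using compatibility of the correspondence with $-\otimes_{\L Q}L$, which sends $\tilde M$ to $M$ and sends indecomposables of $D$ non-isomorphic to $\tilde M$ to modules non-isomorphic to $M$, this translates into the fact that $M$ appears as a direct summand of exactly the $d[\tilde M]$-th term of $(\Grm\G_c(\Yb_\Bb,\L)\otimes_{\L Q}L)^{\red}$.

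The argument is then completed by a general splitting principle for reduced complexes of $\ell$-permutation modules, to be established in the appendix: an indecomposable $\ell$-permutation module that occurs as a direct summand in exactly one term of a reduced bounded complex of $\ell$-permutation modules is itself, shifted to that degree, a direct summand of the whole complex in the homotopy category. Applied to $M$ and $C=\Grm\G_c(\Yb_\Bb,\L)\otimes_{\L Q}L\in\AC$, this yields that $M[-d[\tilde M]]$ is a direct summand of $C$ in $\Ho^b(\L\Gb^F)$, whence $M\in\AC$ by thickness.

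The principal difficulty is the splitting principle invoked at the end: the differentials of a reduced complex can couple an indecomposable summand in one degree with non-isomorphic indecomposables in adjacent degrees, so producing a genuine homotopy direct summand requires exploiting the locality of endomorphism rings of indecomposable $\ell$-permutation modules together with the uniqueness of reduced forms. A secondary technical point is the setup of the source-algebra correspondence and the verification that it is compatible with $-\otimes_{\L Q}L$ at the level of indecomposable summands of reduced complexes.
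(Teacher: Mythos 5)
Your argument for the inclusion $\AC\subset\Ho^b(\BC)$ is essentially correct and parallels what the paper leaves implicit. The substantive direction is $\Ho^b(\BC)\subset\AC$, and there your proposal breaks down.

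The ``splitting principle'' you invoke at the end is false. Take $G$ cyclic of order $\ell$, $\L=k$, and the reduced complex $0\to k\to kG\to 0$ with the map $1\mapsto\sum_{g\in G}g$. The trivial module $k$ is an indecomposable $\ell$-permutation $kG$-module occurring as a summand of exactly one term, yet it is not a direct summand of the complex in the homotopy category: a split inclusion $k\to (0\to k\to kG\to 0)$ would require the composite of a nonzero endomorphism of $k$ with the injective differential $k\to kG$ to be zero. The obstruction you yourself flag --- the differential coupling an indecomposable summand in one degree to non-isomorphic summands in adjacent degrees --- is precisely what makes the principle fail, and neither locality of endomorphism rings nor uniqueness of reduced forms rescues it.

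The paper avoids the need for any such splitting by inducting on the pair $(|Q|,d[N])$, ordered lexicographically. Given an indecomposable $N\in\BC$ with vertex $Q$ and one-dimensional source $L$, one picks an indecomposable $\ell$-permutation $\L(\Gb^F\times Q^\opp)$-module $M$ with vertex $\D Q$ such that $N$ is a summand of $M\otimes_{\L Q}L$ and $d[M]=d[N]$, and sets $D=\bigl(\Res_{\Gb^F\times Q^\opp}^{\Gb^F\times N_{\Gb^F}(\Tb,\Bb)^\opp}\Grm\G_c(\Yb_\Bb,\L)\bigr)^\red$ for a suitable pair $(\Tb,\Bb)\in\YC[M]$. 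By Lemma~\ref{lem:l-perm} and Corollary~\ref{cor:deltavertex}, for $i\neq d[M]$ every indecomposable summand of $D^i$ has either a strictly smaller vertex or the same vertex $\D Q$ but strictly smaller $d$-invariant; hence every indecomposable summand $N'$ of $D^i\otimes_{\L Q}L$ is strictly smaller in the inductive ordering, so $D^i\otimes_{\L Q}L\in\AC$ for $i\neq d[N]$. Since $D\otimes_{\L Q}L\in\AC$ by construction, successive truncations and the triangulated structure of $\AC$ force $D^{d[N]}\otimes_{\L Q}L\in\AC$, and therefore $N\in\AC$ by thickness. The key point is that one never has to split $N$ off the complex; one only needs the \emph{other} terms to already lie in $\AC$, and that is exactly what the induction delivers. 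Your proposal, by contrast, tries to split $N$ off directly and so runs into the obstruction above; the intermediate ``source-algebra correspondence'' machinery is also unnecessary, since the paper's definition of $d[N]$ as a minimum over all eligible $M$ does the job with no extra apparatus.
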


\bigskip

\begin{proof}
Given $N$ an indecomposable $\L\Gb^F$-module with a one-dimensional source 
$L$ and a vertex $Q$ which normalizes a pair $(\Tb\subset\Bb)$, where
$\Tb$ is an $F$-stable maximal torus and $\Bb$ a Borel subgroup, we set  
$d[N]$ to be the minimum of the numbers $d[M]$, where $M$ runs over the set of 
indecomposable $\ell$-permutation $\L(\Gb^F \times Q^\opp)$-modules with vertex $\D Q$ and 
such that $N$ is a direct summand of $M \otimes_{\L Q} L$. 

Note that if $M$ is an indecomposable $\ell$-permutation $\L(\Gb^F \times Q^\opp)$-module with vertex 
properly contained in $\D Q$, then the indecomposable direct summands of $M \otimes_{\L Q} L$ 
have vertices of size $< |Q|$ and a one-dimensional source.
Since the $\L(\Gb^F \times Q^\opp)$-module $\L G$ is a direct sum 
of indecomposable modules with vertices contained in $\D Q$, we deduce that there is an 
indecomposable $\ell$-permutation $\L(\Gb^F \times Q^\opp)$-module $M$ with vertex $\D Q$ 
and such that $N$ is a direct summand of $M \otimes_{\L Q} L$. 

\smallskip
We now proceed by induction on the pair $(|Q|,d[N])$ (ordered lexicographically) 
to show that $N \in \AC$. Fix $M$ an indecomposable $\ell$-permutation $\L(\Gb^F \times Q^\opp)$-module 
$M$ with vertex $\D Q$ and such that $N$ is a direct summand of $M \otimes_{\L Q} L$, 
with $d[N]=d[M]$. Let $(\Tb,\Bb) \in \YC[M]$ be such that 
$\dim(\Yb_{\Bb})=d[M]$ and let 
$D=\bigl(\Res_{\Gb^F \times Q^\opp}^{\Gb^F \times N_{\Gb^F}(\Tb,\Bb)^{\opp}}
\Grm\G_c(\Yb_{\Bb}^\Gb,\L)\bigr)^\red$. 

If $i \neq d[M]$, then Lemma \ref{lem:l-perm} and Corollary
\ref{cor:deltavertex} show that the indecomposable 
direct summands $M'$ of $D^i$ have vertices of size $< |Q|$, or have vertex $\D Q$ and satisfy $d[M'] < d[M]$. 
Therefore, the indecomposable direct summands $N'$ of $D^i \otimes_{\L Q} L$ 
have vertices of size $< |Q|$ or have vertex $Q$ and satisfy $d[N'] < d[N]$. We 
deduce from the induction hypothesis that $D^i \otimes_{\L Q} L \in \AC$ for $i \neq d[N]$. 
Since $N$ is a direct summand of $D^{d[N]} \otimes_{\L Q} L$ and $D \otimes_{\L Q} L \in \AC$ 
by construction, we deduce that $N \in \AC$.
\end{proof}

\bigskip

\begin{coro}\label{coro:engendrement}
Assume that every elementary abelian $\ell$-subgroup of $\Gb^F$ normalizes a pair
$(\Tb\subset\Bb)$ where $\Tb$ is an $F$-stable maximal torus and $\Bb$ a Borel subgroup
of $\Gb$. Then $D^b(\L \Gb^F)$ is generated, as a triangulated category closed under
direct summands, by the complexes
$\Rrm\Gamma_c(\Yb_\Bb,\L)\otimes_{\L Q} L$, where $\Tb$ runs over the set of
$F$-stable maximal 
tori of $\Gb$, $\Bb$ runs over the set of Borel subgroups of $\Gb$ containing $\Tb$, 
$Q$ runs over the set of $\ell$-subgroups of $N_{\Gb^F}(\Tb,\Bb)$ 
and $L$ runs over the set of (isomorphism classes) of $\L Q$-modules 
which are free of rank $1$ over $\L$.
\end{coro}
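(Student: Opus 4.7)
The plan is to deduce the corollary from Theorem~\ref{theo:engendrement}. By that theorem, the thick subcategory of $\Ho^b(\L\Gb^F)$ generated by the complexes $\Rrm\Gamma_c(\Yb_\Bb,\L)\otimes_{\L Q} L$ coincides with $\Ho^b(\BC)$. Its image under the canonical functor $\Ho^b(\L\Gb^F)\to D^b(\L\Gb^F)$ is contained in the thick subcategory of $D^b(\L\Gb^F)$ generated by the same complexes, and contains in particular every module of $\BC$ (viewed as a complex concentrated in a single degree). The corollary therefore reduces to showing that $\BC$ generates $D^b(\L\Gb^F)$ as a thick subcategory.

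Under the corollary's hypothesis, every elementary abelian $\ell$-subgroup $E$ of $\Gb^F$ normalizes a pair $(\Tb\subset\Bb)$ with $\Tb$ an $F$-stable maximal torus and $\Bb$ a Borel subgroup containing it. Every indecomposable $\ell$-permutation $\L\Gb^F$-module with elementary abelian vertex therefore has trivial source (in particular one-dimensional) and satisfies the vertex condition defining $\BC$, placing it in $\BC$. It therefore suffices to prove the following general assertion, independent of the reductive structure: for any finite group $G$, the thick subcategory of $D^b(\L G)$ generated by $\ell$-permutation modules with elementary abelian vertex equals $D^b(\L G)$.

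The proof of this general assertion is the main content and proceeds in two stages, building on the results of Appendix~\ref{app:l-perm}. First, one shows that $D^b(\L G)$ is generated as a thick subcategory by $\ell$-permutation modules: any $\L G$-module is a direct summand of its induction from a Sylow $\ell$-subgroup (the index being invertible in $\L$), and modules over an $\ell$-group are reached from trivial modules by finitely many extensions. Second, one argues by induction on the order of the vertex $Q$ of an indecomposable $\ell$-permutation module that it belongs to the thick subcategory generated by those with elementary abelian vertex: when $Q$ is not elementary abelian, one picks a proper elementary abelian subgroup $E<Q$ and uses natural triangles in $D^b(\L G)$ relating $\L[\Gb^F/E]$, $\L[\Gb^F/Q]$ and modules with strictly smaller vertex to complete the descent. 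Carrying out this inductive step carefully while preserving the $\ell$-permutation structure is the main technical obstacle, and is the role of the appendix.
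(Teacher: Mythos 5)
Your opening reduction is correct and matches the paper's strategy: by Theorem~\ref{theo:engendrement}, the thick subcategory of $D^b(\L\Gb^F)$ generated by the complexes $\Rrm\Gamma_c(\Yb_\Bb,\L)\otimes_{\L Q} L$ contains every object of $\BC$, so it suffices to show $\BC$ generates $D^b(\L\Gb^F)$ under the corollary's hypothesis. From there the arguments diverge, and the divergence is where your proof fails.

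The paper invokes a result cited as~\cite[Corollary~2.3]{Rou3}: for any finite group $G$, the category $D^b(\L G)$ is generated, as a thick subcategory, by indecomposable modules with \emph{elementary abelian vertices and one-dimensional source}. Under the corollary's hypothesis such modules lie in $\BC$, and that is the whole proof. You instead try to establish a closely related generation statement from scratch --- that $\ell$-permutation modules with elementary abelian vertex suffice --- and your sketch of the inductive step is where the gap lies. There are no ``natural triangles relating $\L[\Gb^F/E]$, $\L[\Gb^F/Q]$ and modules with strictly smaller vertex'' that would let you descend from an arbitrary $\ell$-subgroup $Q$ to a proper elementary abelian subgroup $E<Q$. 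This descent is precisely the hard content of the cited theorem: the paper's own Remark following the corollary observes that the generation statement is a consequence of Serre's theorem on products of Bocksteins (equivalently, of Quillen-type stratification in group cohomology), and that no other proof is known. A triangle-chasing argument at the level of permutation modules has no access to this cohomological input and cannot succeed. Your further remark that ``carrying out this inductive step \dots is the role of the appendix'' is also incorrect: Appendix~\ref{app:l-perm} concerns Brauer functors and splitting criteria for $\ell$-permutation complexes, not generation of $D^b(\L G)$ by modules with elementary abelian vertex.

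A secondary issue: by restricting to $\ell$-permutation modules you are restricting to \emph{trivial} source. Over $\L=k$ this is harmless since an $\ell$-group has no nontrivial rank-one $k$-representations, so ``one-dimensional source'' and ``trivial source'' coincide for elementary abelian vertices. Over $\L=\OC$, however, $\OC^\times$ may contain $\ell$-power roots of unity, so a rank-one $\OC Q$-module need not be trivial; your ``general assertion'' is then strictly stronger than what is actually needed (and stronger than what \cite{Rou3} provides), and you offer no argument that the extra strength is available. The remedy is simply to retain the full one-dimensional-source version, which is exactly why the theorem and corollary are phrased with arbitrary rank-one $\L Q$-modules $L$ rather than only the trivial one.

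In short: replace the two-stage sketch by a citation of the generation theorem for modules with elementary abelian vertex and one-dimensional source, observe that the hypothesis of the corollary places all such modules in $\BC$, and conclude via Theorem~\ref{theo:engendrement}.
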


\begin{proof}
Since the category $\Drm^b(\L\Gb^F)$ is generated, as a triangulated category closed under
taking direct summands, by indecomposable modules with 
elementary abelian vertices and one-dimensional source~\cite[Corollary~2.3]{Rou3}, 
the statement follows from Theorem \ref{theo:engendrement}.
\end{proof}

\bigskip

\begin{rema}
It is easy to show conversely that if $D^b(\L \Gb^F)$ is generated by the
 complexes
$\Rrm\Gamma_c(\Yb_\Bb,\L)\otimes_{\L Q} L$
as in Corollary 
\ref{coro:engendrement}, then $\Drm^b(\L\Gb^F)$ is
generated by indecomposable modules with a one-dimensional source and
an elementary abelian vertex that normalizes a pair
$(\Tb\subset\Bb)$ where $\Tb$ is an $F$-stable maximal torus and $\Bb$ a Borel subgroup.

In particular, the generation assumption
for $\Lambda=k$ implies that all elementary abelian $\ell$-subgroups of $\Gb^F$
are contained in maximal tori.

The particular case
$\Gb^F=\Gb\Lb_n(\FM_q)$ (for arbitrary $n$) is enough to ensure that $\Drm^b(H)$ is
generated by indecomposable modules with 
elementary abelian vertices and one-dimensional source, for any finite group $H$ --- this fact is
a straightforward consequence of Serre's product of Bockstein's Theorem, but we know of no
other proof. It would be
interesting to find a direct proof of that result for $\Gb\Lb_n(\FM_q)$.
\end{rema}

\medskip
Recall that an element of $G_0(\L\Gb^F)$ is {\em uniform} if it is in the image of
$\sum_{\Tb}R_\Tb^\Gb(G_0(\L\Tb^F))$, where $\Tb$ runs over the set of
$F$-stable maximal tori of $\Gb$.

One can actually describe exactly which complexes are ``uniform''.

\begin{coro}
\label{cor:uniform}
Let $\TC$ be the full triangulated subcategory of $D^b(\L \Gb^F)$ generated by
the complexes
$\Rrm\Gamma_c(\Yb_\Bb,\L)\otimes_{\L N_{\Gb^F}(\Tb,\Bb)} M$
where $\Tb$ runs over the set of $F$-stable maximal 
tori of $\Gb$, $\Bb$ runs over the set of Borel subgroups of $\Gb$ containing $\Tb$
and $M$ runs over the set of (isomorphism classes) of finitely generated
$\L N_{\Gb^F}(\Tb,\Bb)$-modules.
Assume that every elementary abelian $\ell$-subgroup of $\Gb^F$ normalizes a pair
$(\Tb\subset\Bb)$ where $\Tb$ is an $F$-stable maximal torus and $\Bb$ a Borel subgroup
of $\Gb$.

An object $C$ of $\Drm^b(\L \Gb^F)$ is in $\TC$ if and only if 
$[C]\in G_0(\L\Gb^F)$ is uniform.
\end{coro}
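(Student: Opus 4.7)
The plan is to establish the two implications separately.

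For the forward direction ($C \in \TC$ implies $[C]$ uniform), I would show that each generator $\Rrm\Gamma_c(\Yb_\Bb^\Gb, \L) \otimes_{\L N_{\Gb^F}(\Tb, \Bb)}^{\LM} M$ has uniform class. The key ingredient is the decomposition~(\ref{eq:gy}) expressing $\Yb_\Bb^\Gb$ as $\Gb^F \times_{\Gb^{\circ F}} \Yb_\Bb^{\Gb^\circ}$, together with the fact that the right $N_{\Gb^F}(\Tb, \Bb)$-action on $\Yb_\Bb^\Gb$ restricts to the right $\Tb^F$-action on $\Yb_\Bb^{\Gb^\circ}$ on the identity component and permutes the other components. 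A standard bimodule computation then yields
$$\Rrm\Gamma_c(\Yb_\Bb^\Gb, \L) \otimes_{\L N_{\Gb^F}(\Tb, \Bb)}^{\LM} M \simeq \Ind_{\Gb^{\circ F}}^{\Gb^F}\bigl(\Rrm\Gamma_c(\Yb_\Bb^{\Gb^\circ}, \L) \otimes_{\L \Tb^F}^{\LM} \Res_{\Tb^F}^{N_{\Gb^F}(\Tb, \Bb)} M\bigr),$$
whose class in $G_0(\L\Gb^F)$ equals $R_\Tb^\Gb\bigl([\Res_{\Tb^F}^{N_{\Gb^F}(\Tb, \Bb)} M]\bigr)$ by~(\ref{eq:ind-rlg}), and hence is uniform by definition.

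For the reverse direction ($[C]$ uniform implies $C \in \TC$), I proceed from Corollary~\ref{coro:engendrement}, which says $\Drm^b(\L\Gb^F)$ is the thick closure of $\TC$: thus $C$ is a direct summand of some $X \in \TC$, say $X = C \oplus C'$. Applying the forward direction to $X$ gives $[X] \in U$ (where $U$ denotes the uniform subgroup of $G_0(\L\Gb^F)$), whence $[C'] = [X] - [C] \in U$. Granted $C' \in \TC$, the split distinguished triangle $C' \to X \to C \to C'[1]$ places $C$ in $\TC$. I plan to conclude $C' \in \TC$ by induction on an appropriate invariant---for instance, the minimum of $\dim_\L X'$ over all $X' \in \TC$ containing $C$ as a direct summand, choosing $X$ realizing this minimum so that $\dim_\L C' < \dim_\L X$.

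The main obstacle is the reverse direction, since $\TC$ is not assumed to be closed under direct summands, and the passage ``direct summand of an object of $\TC$ with uniform class, hence in $\TC$'' requires a nontrivial argument. A parallel route would be to first prove the auxiliary statement that $[D] = 0$ in $G_0(\L\Gb^F)$ implies $D \in \TC$: given $[C] \in U$, one can construct an explicit $Y \in \TC$ with $[Y] = [C]$ (as a direct sum, with shifts for sign changes, of generators $\RC_{\Tb_i \subseteq \Bb_i}^\Gb(\Ind V_i) = R_{\Tb_i}^\Gb(V_i)$), so that $C \oplus Y[1]$ has zero class and the problem reduces to the sub-statement via the split triangle separating $C$ from $Y[1]$.
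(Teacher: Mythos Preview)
Your forward direction is fine (when $\Gb$ is connected, $N_{\Gb^F}(\Tb,\Bb) = \Tb^F$ and each generator has class $R_\Tb^\Gb([M])$ by definition; your displayed isomorphism for the disconnected case is not literally correct, but set that aside). The genuine gap is in the reverse direction. Your induction on $\mu(C) = \min\{\dim_\L X' : C \mid X',\ X' \in \TC\}$ does not terminate: with $X = C \oplus C'$ a minimizer you only get $\mu(C') \le \mu(C)$, and nothing rules out equality (also $\dim_\L$ is infinite when $\L=\OC$). Your parallel route correctly reduces to the statement that $[D]=0$ forces $D \in \TC$, but you do not prove this, and it is the whole point.

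The paper handles this in one line by quoting Thomason's classification of dense triangulated subcategories \cite[Theorem~2.1]{Tho}: Corollary~\ref{coro:engendrement} makes $\TC$ dense in $D^b(\L\Gb^F)$, and Thomason's theorem then says an object lies in a dense triangulated subcategory exactly when its class lands in the image of that subcategory's $K_0$. If you want to argue directly, the two ingredients you are missing are: (i)~density forces $C \oplus C[1] \in \TC$ for \emph{every} object $C$ --- write $X = C \oplus C' \in \TC$ and observe that the cone of the idempotent of $X$ projecting onto $C'$ is $C \oplus C[1]$; and (ii)~$[D]=0$ in the Grothendieck group of a triangulated category means $D \oplus E \simeq E$ for some object $E$ (this is a lemma of Thomason). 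Together these give $D \oplus (E \oplus E[1]) \simeq E \oplus E[1] \in \TC$, and the evident split triangle places $D$ in $\TC$.
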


\begin{proof}
The statement follows from Corollary \ref{coro:engendrement} and from Thomason's classification of
full triangulated dense subcategories \cite[Theorem 2.1]{Tho}.
\end{proof}

\medskip

\begin{rema}
Note that Corollary \ref{cor:uniform} holds also for $\Lambda=K$: in the proof, Corollary
\ref{coro:engendrement} is replaced by Theorem \ref{theo:A-engendrement}.
\end{rema}

\medskip

\begin{exemples}
\label{ex:elementaryabelian}
(1) If $\Gb=\Gb\Lb_n(\FM)$ or $\Sb\Lb_n(\FM)$, then all abelian subgroups 
consisting of semisimple elements are contained in maximal tori. 
This just amounts to the classical result in linear algebra which says 
that a family of commuting semisimple elements always admits a basis 
of common eigenvectors.

\medskip

(2) Assume $\Gb$ is connected. Let $\pi(\Gb)$
denote the set of prime numbers which are bad for $\Gb$ or divide 
$|(\Zrm(\Gb^*)/\Zrm(\Gb^*)^\circ)^{F^*}|$.
% cf \cite[End~of~\S{11}]{BR}.
If $\ell \not\in \pi$ and 
if $t$ is an $\ell$-element of $\Gb^F$, then $C_\Gb(t)$ is a
Levi subgroup of $\Gb$ and $\pi(C_\Gb(t))\subset\pi(\Gb)$ 
\cite[Proposition 13.12(iii)]{CaEnbook}.
An induction argument shows the following fact.
\equat\label{eq:bons-bons}
\text{\it If $\ell \not\in \pi$, then all abelian $\ell$-subgroups 
of $\Gb^F$ are contained in maximal tori.}
\endequat
So Corollary~\ref{coro:engendrement} can be applied if $\ell \not\in \pi$. This generalizes 
(1).
\end{exemples}

\bigskip

\begin{contre}
Assume here, and only here, that $\ell = 2$ (so that $p \neq 2$) and that $\Gb=\Pb\Gb\Lb_2(\FM)$. 
Let $t$ (respectively $t'$) denote the class of the matrix $\begin{pmatrix} 1 & 0 \\ 0 & -1 \end{pmatrix}$
(respectively $\begin{pmatrix} 0 & 1 \\ 1 & 0 \end{pmatrix}$) in $\Gb$. 
Then $\langle t,t'\rangle$ is an elementary abelian $2$-subgroup of $\Gb$ 
which is not contained in any maximal torus of $\Gb$ (indeed, since $\Gb$ has rank $1$, 
all finite subgroups of maximal tori of $\Gb$ are cyclic).
\end{contre}

\section{Rational series}
\label{se:rational}

\medskip

\subsection{Rational series in connected groups}
We assume in this subsection~\S\ref{se:rationalseries} that $\Gb$ is connected.

\label{se:rationalseries}
\medskip

\medskip

Let $d$ be a positive integer divisible by $\delta$ and such that 
$(wF)^d(t)=t^{q^{d/\delta}}$ for all $t\in\Tb$ and $w\in N_{\Gb}(\Tb)$. Let
$\zeta$ be a generator of $\FM_{\!q^{d/\d}}^\times$.
Recall \cite[Proposition 13.7]{dmbook} that the map
$$\fonction{\Nrm}{Y(\Tb)}{\Tb^F}{\l}{N_{F^d/F}(\l(\z))=
\l(\z)\,\lexp{F}{(\l(\z))}\cdots\lexp{F^{d-1}}{(\l(\z))}}$$
is surjective and it induces an isomorphism $Y(\Tb)/(F-1)(Y(\Tb))\xrightarrow{\sim}\Tb^F$.
The morphism 
$$Y(\Yb)\times X(\Tb)\to K^\times,\ (\lambda,\mu)\mapsto \zeta^{\langle\mu,\lambda+F(\lambda)+\cdots+
F^{d-1}(\lambda)\rangle}$$
factors through $N\times 1$ and induces a morphism $\Tb^F\times X(\Tb)\to K^\times$. The corresponding morphism
$X(\Tb)\to\Hom(\Tb^F,K^\times)=\Irr(\Tb^F)$ is surjective and induces an isomorphism
$X(\Tb)/(F-1)(X(\Tb))\xrightarrow{\sim}\Irr(\Tb^F)$ \cite[Proposition 13.7]{dmbook}.

% (recall that $\z$ is the generator of $\FM_{\!q^{d/\d}}^\times$ defined in~\cite[\S{4.4.1}]{BR}. 

Let $(\Gb^*,\Tb^*,F^*)$ be a triple dual to $(\Gb,\Tb,F)$ \cite[Definition 5.21]{DL}.
The isomorphisms $X(\Tb)/(F-1)(X(\Tb))\xrightarrow{\sim}\Irr(\Tb^F)$ and $X(\Tb)/(F-1)(X(\Tb))=
Y(\Tb^*)/(F^*-1)(Y(\Tb^*))\xrightarrow{\sim}\Tb^{*F}$ induce an isomorphism
$\Irr(\Tb^F)\xrightarrow{\sim}\Tb^{*F^*}$.

\medskip
Let $(\Tb,\th) \in \nabla(\Gb,F)$ and let $\Phi$ (respectively $\Phi^\vee$) 
denote the root (respectively coroot) system of $\Gb$ relative to $\Tb$. 

We set $\th^Y=\th \circ \Nrm : Y(\Tb) \to K^\times$ and
$$\Phi^\vee(\th)=\Phi^\vee \cap \Ker(\th^Y).$$

Note that $\Phi^\vee(\th)$ is closed and symmetric, hence it defines a root system. We
denote by $W_\Gb^\circ(\Tb,\th)$ its Weyl group.
%If $\a^\vee \in \Phi^\vee(\th_1)$, then 
%$$\lexp{s_\a}{\th_1^Y}(\l)=\th_1^Y(\l-\langle \l,\a\rangle \a^\vee)=\th_1^Y(\l).$$
%So $s_\a$ stabilizes $\th_1^Y$ and so it stabilizes $\Phi_1^\vee(\th_1)$. 
%Therefore we can define the Weyl group of $\Phi_1^\vee(\th_1)$, 
%which will be denoted by $W^\circ(\Tb_1,\th_1)$. 
It is a subgroup of the Weyl group $N_\Gb(\Tb)/\Tb$ and it is
contained in the stabilizer $W_\Gb(\Tb,\th)$ of $\th^Y$.

\medskip
This can be translated as follows in the dual group \cite[Proposition 2.3]{dmbook}.
Let $s \in \Tb^{*F^*}$ be the element corresponding to $\th$. 
Identifying the coroot system $\Phi^\vee$ with the root system of
$\Gb^*$, we obtain that
$$\Phi^\vee(\th)=\{\a^\vee \in \Phi^\vee~|~\a^\vee(s)=1\}$$
is the root system of $C_{\Gb^*}(s)$. If $\Vb^*$ is a unipotent subgroup of $\Gb^*$ normalized by
$\Tb^*$, then $C_{\Vb^*}(s)$ is generated by the one-parameter subgroups of $\Gb^*$ normalized by $\Tb^*$,
contained in $\Vb^*$, and corresponding to elements of $\Phi^\vee(\th)$.

The group $W^\circ_\Gb(\Tb,\th)$ is identified with the Weyl group $W^\circ(\Tb^*,s)$ 
of $C_{\Gb^*}^\circ(s)$ relative to $\Tb^*$ while $W_\Gb(\Tb,\th)$ is identified 
with the Weyl group $W(\Tb^*,s)$ of $C_{\Gb^*}(s)$. 

\medskip
Recall that $(\Tb_1,\th_1)$ and $(\Tb_2,\th_2)$ are in the same {\em geometric series} if
there exists $x \in \Gb$ such that 
$(\Tb_2,\th_2^Y) = \lexp{x}{(\Tb_1,\th_1^Y)}$ and $x^{-1}F(x)\Tb_1 \in W_\Gb(\Tb_1,\th_1)$.
The pairs are in the same {\em rational series} if in addition the element
$s_2\in \Tb_1^{*F^*}$ corresponding to $\lexp{x^{-1}}{\th_2}$ is
$\Gb^{*F^*}$-conjugate to $s_1$.
We have now a direct description of rational series.

\bigskip

%We denote by $N^\circ(\Tb_1^*,s_1)$ the inverse image of $W^\circ(\Tb_1^*,s_1)$ in
%$N_{\Gb_1}(\Tb_1)$.

\begin{prop}\label{prop:rational-series}
The pairs $(\Tb_1,\th_1)$ and $(\Tb_2,\th_2)$ are in the same rational series 
if and only if there exists $x \in \Gb$ such that 
$(\Tb_2,\th_2^Y) = \lexp{x}{(\Tb_1,\th_1^Y)}$ and $x^{-1}F(x)\Tb_1 \in W^\circ_\Gb(\Tb_1,\th_1)$. 
\end{prop}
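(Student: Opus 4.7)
The plan is to translate the statement to the dual group $\Gb^*$, where rational conjugacy becomes $\Gb^{*F^*}$-conjugacy of the corresponding semisimple elements, and where $W_\Gb^\circ(\Tb,\theta)$ is identified with $W^\circ(\Tb^*,s) = N_{C_{\Gb^*}^\circ(s)}(\Tb^*)/\Tb^*$, as recalled just before the statement. So it is enough to prove the following dual version: if $s_i \in \Tb_i^{*F^*}$, then the pairs $(\Tb_i^*, s_i)$ are in the same rational series if and only if there exists $y \in \Gb^*$ with $\lexp{y}{(\Tb_1^*,s_1)} = (\Tb_2^*,s_2)$ and $y^{-1}F^*(y)\Tb_1^* \in W^\circ(\Tb_1^*,s_1)$.

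For the ``if'' direction, observe that such a $y$ automatically gives geometric conjugacy, and that $y^{-1}F^*(y) \in N_{C_{\Gb^*}^\circ(s_1)}(\Tb_1^*) \subset C_{\Gb^*}^\circ(s_1)$. Since $C_{\Gb^*}^\circ(s_1)$ is connected, Lang--Steinberg applied to it produces $c \in C_{\Gb^*}^\circ(s_1)$ with $c^{-1}F^*(c) = y^{-1}F^*(y)$. Then $g := yc^{-1}$ lies in $\Gb^{*F^*}$ and conjugates $s_1$ to $\lexp{y}{s_1} = s_2$, giving the required rational equivalence.

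For the ``only if'' direction, rational equivalence yields $g \in \Gb^{*F^*}$ with $\lexp{g}{s_1} = s_2$, and geometric equivalence yields $y_0 \in \Gb^*$ with $\lexp{y_0}{(\Tb_1^*,s_1)}=(\Tb_2^*,s_2)$. Then $g^{-1}y_0 \in C_{\Gb^*}(s_1)$, so the torus $\lexp{g^{-1}}{\Tb_2^*}= \lexp{g^{-1}y_0}{\Tb_1^*}$ is a (necessarily $F^*$-stable) maximal torus of the connected reductive group $C_{\Gb^*}^\circ(s_1)$, as is $\Tb_1^*$ itself. Since any two maximal tori of a connected group are conjugate, pick $h \in C_{\Gb^*}^\circ(s_1)$ with $\lexp{h}{\Tb_1^*} = \lexp{g^{-1}}{\Tb_2^*}$ and set $y := gh$. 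By construction $\lexp{y}{(\Tb_1^*,s_1)}=(\Tb_2^*,s_2)$, and the key cancellation
\[
y^{-1}F^*(y) = h^{-1}g^{-1}F^*(g)F^*(h) = h^{-1}F^*(h)
\]
places $y^{-1}F^*(y)$ in $C_{\Gb^*}^\circ(s_1)$; on the other hand it normalizes $\Tb_1^*$ because $\lexp{y}{\Tb_1^*}=\Tb_2^*$ is $F^*$-stable, so $y^{-1}F^*(y)\Tb_1^* \in W^\circ(\Tb_1^*,s_1)$, as wanted.

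The main obstacle here is mostly bookkeeping: setting up the dual dictionary cleanly (in particular, verifying that the duality sends ``$x^{-1}F(x)\Tb \in W_\Gb^\circ$'' to ``$y^{-1}F^*(y)\Tb^* \in W^\circ(\Tb^*,s)$'' with no parasitic $W_\Gb/W_\Gb^\circ$ coset information), after which both implications are short Lang--Steinberg arguments inside $C_{\Gb^*}^\circ(s_1)$.
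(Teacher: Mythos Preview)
Your proof is correct and follows essentially the same route as the paper: both translate the statement to the dual group and then reduce each direction to a short Lang--Steinberg argument inside $C_{\Gb^*}^\circ(s_1)$, using that any two maximal tori of this connected group are conjugate. The only cosmetic difference is that in the ``only if'' direction you invoke an auxiliary geometric conjugator $y_0$ to see that $\lexp{g^{-1}}{\Tb_2^*}$ lies in $C_{\Gb^*}^\circ(s_1)$, whereas the paper observes this directly from $s_1\in\lexp{g^{-1}}{\Tb_2^*}$.
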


%\bigskip
%
%\begin{rema}\label{rem:rem}
%If $\Tb_2=\lexp{x}{\Tb_1}$, then $x^{-1}F(x)$ belongs to the normalizer of $\Tb_1$. So, in the 
%above proposition, the statement ``$x^{-1}F(x) \in W^\circ(\Tb_1,\th_1)$'' means that 
%its class in $N_{\Gb_1}(\Tb_1)/\Tb_1$ belongs to $W^\circ(\Tb_1,\th_1)$. This short-cut will 
%be used throughout this paper.\finl
%\end{rema}
%
%
%\bigskip

\begin{proof}
Note that given $x\in\Gb$ such that $\lexp{x}{\Tb_1}$ is $F$-stable, then
$x^{-1}F(x)\in N_{\Gb_1}(\Tb_1)$.

Let $\Tb_i^*$ be an $F^*$-stable maximal torus of $\Gb^*$ and let $s_i \in \Tb_i^{*F^*}$ 
be such that the $\Gb^{*F^*}$-orbit of $(\Tb_i^*,s_i)$ corresponds to the $\Gb^F$-orbit 
of $(\Tb_i,\th_i)$. Then the statement of the proposition is equivalent to the following:
\begin{itemize}
\item[$(*)$] {\it $s_1$ and $s_2$ are $\Gb^{*F^*}$-conjugate if and only if there exists 
$x \in \Gb^*$ such that $(\Tb_2^*,s_2)=\lexp{x}{(\Tb_1^*,s_1)}$ and $x^{-1}F^*(x)\Tb_1^*
\in W^\circ(\Tb_1^*,s_1)$.}
\end{itemize}
So let us prove $(*)$. 

\medskip

First, if $s_1$ and $s_2$ are $\Gb^{*F^*}$-conjugate, then there exists $x \in \Gb^{*F^*}$ such that 
$s_2=xs_1x^{-1}$. Then $\Tb_1^*$ and $x^{-1}\Tb_2^*x$ are two maximal tori of $C_{\Gb^*}^\circ(s_1)$, so 
there exists $y \in C_{\Gb^*}^\circ(s_1)$ such that $y\Tb_1^* y^{-1} = x^{-1} \Tb_2^* x$. Then
$(\Tb_2^*,s_2)=\lexp{xy}{(\Tb_1^*,s_1)}$ and
$$(xy)^{-1} F^*(xy)=y^{-1}F^*(y) \in C_{\Gb^*}^\circ(s_1),$$
as desired.

\medskip

Conversely, assume that there exists $x \in \Gb^*$ such that $(\Tb_2^*,s_2)=\lexp{x}{(\Tb_1^*,s_1)}$ 
and $x^{-1}F^*(x)\Tb_1^* \in W^\circ(\Tb_1^*,s_1)$. By Lang's Theorem applied to the connected
group $C_{\Gb^*}^\circ(s_1)$, 
there exists $y \in C_{\Gb^*}^\circ(s_1)$ such that $x^{-1}F^*(x)=y^{-1}F^*(y)$. Then 
$xy^{-1} \in \Gb^{*F^*}$ and $s_2=xy^{-1} s_1 y x^{-1}$. The proof of $(*)$ is complete.
\end{proof}

\bigskip

We can now translate the properties of regularity and super-regularity defined in~\cite[\S{11.4}]{BR}. 
Let $\Pb$ be a parabolic subgroup of $\Gb$ and let $\Lb$ be a Levi subgroup of $\Pb$. We assume that 
$\Lb$ is $F$-stable. Let $\XC \subset \nabla(\Lb,F)$ be a rational series.

\bigskip

\begin{prop}\label{prop:regularity}
The rational series $\XC$ is $(\Gb,\Lb)$-regular (respectively $(\Gb,\Lb)$-super-regular) if 
and only if $W^\circ_\Gb(\Tb,\th) \subset \Lb$ (respectively $W_\Gb(\Tb,\th) \subset \Lb$) 
for some (or any) pair $(\Tb,\th) \in \XC$. 
\end{prop}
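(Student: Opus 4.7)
The plan is to transport the statement through the duality between $(\Gb,\Lb)$ and $(\Gb^*,\Lb^*)$ already set up in \S\ref{se:rationalseries}. First I would recall from \cite[\S 11.4]{BR} that, by definition, $\XC$ is $(\Gb,\Lb)$-regular (respectively super-regular) if $C_{\Gb^*}^\circ(s) \subset \Lb^*$ (respectively $C_{\Gb^*}(s) \subset \Lb^*$), where $s \in \Lb^{*F^*}$ is a semisimple element corresponding to $\XC$. I would note that this dual-side condition depends only on the $\Lb^{*F^*}$-orbit of $s$, which is well defined thanks to Proposition~\ref{prop:rational-series}; this simultaneously takes care of the ``some or any pair'' part of the statement, since any two pairs in $\XC$ produce $\Lb^{*F^*}$-conjugate dual pairs $(\Tb^*,s)$.

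Next, I would fix a pair $(\Tb,\th) \in \XC$ with a dual $(\Tb^*,s)$, $\Tb^* \subset \Lb^*$. The discussion preceding the proposition provides the identifications $W_\Gb^\circ(\Tb,\th) = W^\circ(\Tb^*,s)$ (Weyl group of $C_{\Gb^*}^\circ(s)$ with respect to $\Tb^*$) and $W_\Gb(\Tb,\th) = W(\Tb^*,s)$ (Weyl group of $C_{\Gb^*}(s)$); moreover, duality identifies $N_\Lb(\Tb)/\Tb$ with the Weyl group $W(\Tb^*,\Lb^*)$. The statement is thereby reduced to the two equivalences
\[
C_{\Gb^*}^\circ(s) \subset \Lb^* \iff W^\circ(\Tb^*,s) \subset W(\Tb^*,\Lb^*),
\]
\[
C_{\Gb^*}(s) \subset \Lb^* \iff W(\Tb^*,s) \subset W(\Tb^*,\Lb^*),
\]
for subgroups of $\Gb^*$ containing $\Tb^*$.

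The first equivalence is a standard fact: both $C_{\Gb^*}^\circ(s)$ and $\Lb^*$ are connected reductive subgroups of $\Gb^*$ containing $\Tb^*$, and each is generated by $\Tb^*$ together with the root subgroups indexed by its root system relative to $\Tb^*$; since a root system is determined by its Weyl group inside the ambient one, inclusion of subgroups translates to inclusion of Weyl groups. For the second equivalence, I would combine the first with the canonical isomorphism $C_{\Gb^*}(s)/C_{\Gb^*}^\circ(s) \simeq W(\Tb^*,s)/W^\circ(\Tb^*,s)$: if $W(\Tb^*,s) \subset W(\Tb^*,\Lb^*)$, every coset class in $C_{\Gb^*}(s)/C_{\Gb^*}^\circ(s)$ admits a representative in $N_{\Lb^*}(\Tb^*)$, hence $C_{\Gb^*}(s) = C_{\Gb^*}^\circ(s)\cdot(N_{\Lb^*}(\Tb^*)\cap C_{\Gb^*}(s)) \subset \Lb^*$; the converse is immediate.

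The main obstacle is the bookkeeping in the super-regular case, where the disconnected group $C_{\Gb^*}(s)$ must be handled through its component group, and where one has to verify carefully that the identification $W_\Gb(\Tb,\th) = W(\Tb^*,s)$ is compatible with lifting Weyl group cosets to the normalizer. A secondary point is verifying that the dual-group definition of regularity from \cite[\S 11.4]{BR} matches the formulation used here; this should be a routine translation but must be checked against the original statement.
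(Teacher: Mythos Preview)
Your argument is correct. The paper's own proof consists of a single citation to \cite[Lemma~11.6]{BR} and nothing more, so there is no in-paper argument to compare against; what you have written is precisely the dual-group translation one expects that cited lemma to contain. The two non-trivial ingredients you use---that a closed root subsystem of $\Phi$ is determined by its reflection subgroup of $W$ (hence inclusion of Weyl groups forces inclusion of root systems, and then of the connected groups they generate), and the canonical isomorphism $C_{\Gb^*}(s)/C_{\Gb^*}^\circ(s)\simeq W(\Tb^*,s)/W^\circ(\Tb^*,s)$---are standard and correctly applied. Your caveat about checking the exact formulation of regularity in \cite[\S 11.4]{BR} is well placed but routine: the paper itself uses the centralizer condition $C_{\Gb^*}^\circ(s)\subset\Lb^*$ as synonymous with regularity throughout (see \S\ref{se:quasiisolatedsetting}).
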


\bigskip

\begin{proof}
This follows immediately from~\cite[Lemma~11.6]{BR}.
\end{proof}

\bigskip

\subsection{Coroots of fixed points subgroups}

\medskip
We consider now again a non-necessarily connected reductive group $\Gb$.

We fix an element $g \in \Gb$ which stabilizes a pair $(\Tb,\Bb)$ 
where $\Bb$ is a Borel subgroup of $\Gb$ and $\Tb$ is a maximal torus of $\Bb$. 
Such an element is called {\it quasi-semisimple} in~\cite{dm-nonc} and~\cite{dm-nonc-2}. 
For instance, any semisimple element of $\Gb$ is quasi-semisimple. 
Recall from~\cite[Theorem~1.8]{dm-nonc} that $C_\Gb(g)^\circ$ is a reductive group, 
that $C_\Bb(g)^\circ=\Bb \cap C_\Gb(g)^\circ$ is a Borel subgroup of $C_\Gb(g)$ and that 
$C_\Tb(g)^\circ=\Tb \cap C_\Gb(g)^\circ$ is a maximal 
torus of $C_\Bb(g)$. We shall be interested in determining the coroot system 
of the fixed points subgroup $C_\Gb(g)^\circ$.

Let $\Phi$ (respectively $\Phi^\vee$) be the root (respectively coroot) system of $\Gb^\circ$ 
relative to $\Tb$. Let $\Phi(g)$ (respectively $\Phi^\vee(g)$) denote the 
root (respectively coroot) system of $C_\Gb(g)^\circ$ relative to $C_\Tb(g)^\circ$. 
If $\O$ is a $g$-orbit in $\Phi$, we denote by $c_\O \in \FM^\times$ 
the scalar by which $g^{|\O|}$ acts on the one-parameter unipotent subgroup associated 
with $\a$ (through any identification of this one-parameter subgroup with the additive group 
$\FM$). We denote by $(\Phi/g)^a$ the set of $g$-orbits $\O$ in 
$\Phi$ such that there exist $\a$, $\b \in \O$ such that $\a + \b \in \Phi$. 
We denote by $(\Phi/g)^b$ the set of other orbits. We set
$$\Phi[g]=\{\O \in (\Phi/g)^a~|~c_\O= 1 \text{ and }p{\not=}2\} \cup \{\O \in (\Phi/g)^b~|~c_\O = 1\}.$$
Finally, if $\O \in (\Phi/g)^a$ (respectively $\O \in (\Phi/g)^b$), let 
$\overline{\O}^\vee=2\sum_{\a \in \O} \a^\vee$ 
(respectively $\overline{\O}^\vee=\sum_{\a \in \O} \a^\vee$).
Note that $\overline{\O}^\vee$ 
is $g$-invariant, so it belongs to $Y(\Tb)^g=Y(C_\Tb(g)^\circ)$.

\bigskip

\begin{prop}\label{prop:cooot}
$\Phi^\vee(g)=\{\overline{\O}^\vee~|~\O \in \Phi[g]\}$. 
\end{prop}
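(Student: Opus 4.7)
The plan is to reduce the computation to a rank-1 or rank-2 analysis of $g$-stable unipotent subgroups of $\Gb^\circ$. Recall that $C_\Gb(g)^\circ$ is reductive with maximal torus $\Tb_g := C_\Tb(g)^\circ$, so its coroots lie in $Y(\Tb_g) = Y(\Tb)^g$. The one-parameter unipotent subgroups of $C_\Gb(g)^\circ$ normalized by $\Tb_g$ are found inside the fixed-point groups $\Ub_\O^g$, where for each $g$-orbit $\O$ on $\Phi$ we set $\Ub_\O = \langle \Ub_\alpha \mid \alpha \in \O\rangle$, a $g$-stable unipotent subgroup of $\Gb^\circ$. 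So I would first show that $C_\Gb(g)^\circ$ is generated by $\Tb_g$ and the groups $\Ub_\O^g$ (for $\O$ ranging over $g$-orbits on $\Phi$), together with their opposites. Then for each orbit $\O$ I would determine (i) exactly when $\Ub_\O^g$ contains a nontrivial connected one-parameter subgroup, and (ii) the coroot in $Y(\Tb_g)$ corresponding to that subgroup.

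For an orbit $\O \in (\Phi/g)^b$, no two distinct roots in $\O$ add to a root, so $\Ub_\O = \prod_{\alpha\in\O} \Ub_\alpha$ is abelian; choosing compatible parametrizations $x_\alpha:\FM\to\Ub_\alpha$, the automorphism $g$ acts on $\Ub_\O \cong \FM^{|\O|}$ as a cyclic permutation of coordinates scaled by $c_\O$ on the "last" step. An elementary linear algebra computation shows that $\Ub_\O^g$ is trivial if $c_\O\neq 1$ and is one-dimensional (the diagonal copy of $\FM$) if $c_\O = 1$. In that latter case $\Tb_g$ acts on this one-dimensional group via the character $\sum_{\alpha\in\O}\alpha|_{\Tb_g}$, so the associated coroot in $Y(\Tb_g)$ is $\sum_{\alpha\in\O}\alpha^\vee$, which indeed equals $\overline{\O}^\vee$ in this case.

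For an orbit $\O \in (\Phi/g)^a$, there are $\alpha,\beta\in\O$ with $\alpha+\beta\in\Phi$. A case analysis (which in crystallographic systems reduces to the diagram automorphism of $A_2$ swapping the two simple roots, the only source of such orbits) shows that $|\O|=2$ and $\Ub_\O$ is the 3-dimensional Heisenberg-like group generated by $\Ub_\alpha$, $\Ub_\beta$ and $\Ub_{\alpha+\beta}$ inside an $A_2$-subgroup. Working in the corresponding rank-2 subgroup, one computes $\Ub_\O^g$ explicitly: in characteristic $p=2$ it contains the "wrong" one-parameter subgroup coming from $\Ub_{\alpha+\beta}$ (which is why that case is excluded), while for $p\neq 2$, $\Ub_\O^g$ is again one-dimensional, parametrized by $x_\alpha(t)x_\beta(t)x_{\alpha+\beta}(t^2/2)$ (up to structure constants). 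The action of $\Tb_g$ on this parametrization shows that the associated coroot in $Y(\Tb_g)$ is $2(\alpha^\vee+\beta^\vee)=\overline{\O}^\vee$. The doubling comes precisely from the presence of the term in degree $2$ in the parametrization, which is also exactly what obstructs the construction when $p=2$.

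Finally, assembling these rank-1 and rank-2 local contributions gives a complete list of one-parameter unipotent subgroups of $C_\Gb(g)^\circ$ normalized by $\Tb_g$: one for each $\O\in\Phi[g]$, with coroot $\overline{\O}^\vee$. Since $C_\Gb(g)^\circ$ is generated by $\Tb_g$ together with these subgroups, this proves $\Phi^\vee(g)=\{\overline{\O}^\vee : \O\in\Phi[g]\}$. The main obstacle is the rank-2 analysis of the $(\Phi/g)^a$ case: one must verify both that $\Ub_\O^g$ is one-dimensional (and not higher) when $p\neq 2$, and that its associated coroot carries the factor $2$; this is where the hypothesis $p\neq 2$ becomes essential, and where care is needed to check that the chosen Chevalley structure constants are compatible with the $g$-action (the quasi-semisimplicity of $g$, together with the fact that it preserves the pair $(\Tb,\Bb)$, provides this compatibility).
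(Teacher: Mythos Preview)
Your approach is essentially the same as the paper's: an orbit-by-orbit analysis of the $g$-fixed points in the $\Tb$-stable unipotent subgroups $\Ub_\O$, separating the cases $\O\in(\Phi/g)^b$ and $\O\in(\Phi/g)^a$. The paper carries this out by first reducing to $\Gb^\circ$ simply-connected and quasi-simple (so that each $\Gb_\alpha\simeq\Sb\Lb_2(\FM)$ and Steinberg's computations in~\cite[Theorem~8.2]{steinberg} apply verbatim); you should make that reduction explicit, since your identification of $\langle\Vb,\Vb^-\rangle$ in both cases relies on it.

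There is one genuine gap. In your treatment of $\O\in(\Phi/g)^a$ you implicitly assume $c_\O=1$: the parametrization $x_\alpha(t)x_\beta(t)x_{\alpha+\beta}(t^2/2)$ is only the fixed subgroup when $g$ swaps $x_\alpha$ and $x_\beta$ with trivial scalar. But $c_\O=-1$ can occur, and then $\Ub_\O^g$ is \emph{not} that diagonal subgroup; it is instead $\Ub_{\alpha+\beta}$ itself (this is the paper's case~(2)). In that situation $\O\notin\Phi[g]$, yet $\Ub_\O^g$ still contributes a one-parameter subgroup, with coroot $(\alpha+\beta)^\vee=\alpha^\vee+\beta^\vee$ rather than $2(\alpha^\vee+\beta^\vee)$. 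This is consistent with the proposition, because that coroot is $\overline{\O'}^\vee$ for the singleton orbit $\O'=\{\alpha+\beta\}\in(\Phi/g)^b$, and one checks $c_{\O'}=-c_\O=1$ so $\O'\in\Phi[g]$; but your write-up does not verify this reconciliation, nor does it show that for general $c_\O\neq 1$ no further coroot appears from $\O$. The paper handles this explicitly by reducing case~(2) to case~(1) for $\O'$. You should add this subcase to your $(\Phi/g)^a$ analysis; without it the claim that the contributions are indexed exactly by $\Phi[g]$ is not justified.
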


\bigskip

\begin{proof}
The statement depends only on the automorphism induced by $g$ on $\Gb^\circ$ and can be proved 
with assuming that $\Gb^\circ$ is semisimple. Since this automorphism 
can then be lifted uniquely to the simply-connected covering of $\Gb^\circ$ 
(see~\cite[9.16]{steinberg}), we may also assume that $\Gb^\circ$ 
is simply-connected. Therefore, $g$ permutes the irreducible components of $\Gb^\circ$ 
and an easy 
reduction argument shows that we may assume that $\Gb^\circ$ is quasi-simple. 
Let $\Ub$ denote the unipotent radical of $\Bb$, $\Ub^-$ the unipotent radical 
of the opposite Borel subgroup and, if $\a \in \Phi$, let $\Ub_{\a}$ denote 
the corresponding one-parameter unipotent subgroup. We also denote by $\Gb_{\a}$ 
the subgroup generated by $\Ub_{\a}$ and $\Ub_{-\a}$: it is isomorphic to 
$\Sb\Lb_2(\FM)$ because $\Gb^\circ$ is simply-connected. 

If $\O \in \Phi/g$, we denote by $\Ub_\O$ the unipotent subgroup generated by $(\Ub_\a)_{\a \in \O}$. 
We follow the proof of~\cite[Theorem~8.2]{steinberg}. According to this proof, 
any one-parameter unipotent subgroup $\Vb$ normalized by $C_\Tb(g)^\circ$ is contained 
in some of these $\Ub_\O$'s, and one of the following holds:
\begin{itemize}
 \itemth{1} $\O \in (\Phi/g)^b$ and $c_\O=1$.
 
 \itemth{2} $\O \in (\Phi/g)^a$ and $c_\O=-1$.
 
 \itemth{3} $\O \in (\Phi/g)^a$, $c_\O=1$ and $p \neq 2$.
\end{itemize}
In all cases, $\Vb=C_{\Ub_\O}(g)$. Let $\Vb^-=C_{\Ub_\O}(g)$. 

In case (1), as $[\Ub_\a,\Ub_\b]=1$ if $\a$, $\b \in \O$, the group $\langle \Ub_\O,\Ub_{-\O} \rangle$
is a direct product of groups isomorphic to $\Sb\Lb_2(\FM)$ which are permuted by $g$. It then follows that 
the coroot corresponding to the one-parameter subgroup $\Vb=C_{\Ub_{\O}}(g)$ 
(since $c_\O=1$) is equal to $\o^\vee=\overline{\O}^\vee$. 

In cases (2) or (3), it follows from the classification that $|\O|=2$ (this case only 
occurs in type $A_{2n}$). Let $\a \in \O$. Then $\Ub_\O=\Ub_\a\Ub_{g(\a)}\Ub_{\a+g(\a)}$. 
In case (2),  the computations done in~\cite[Proof~of~Theorem~8.2,~item~($2''''$)]{steinberg} show that 
$\Vb=\Ub_{\a+g(\a)}$. Therefore $\Vb \subset \Ub_{\O'}$, where $\O'$ is the $g$-orbit (of cardinality $1$) 
of $\a + g(\a)$ and $\O' \in (\Phi/g)^b$, and we are back to case (1). 

In case~(3), the computations done in~\cite[Proof~of~Theorem~8.2,~item~($2''''$)]{steinberg} show that 
$\langle \Vb,\Vb^-\rangle \simeq \Sb\Ob_3(\FM) \simeq \Pb\Gb\Lb_2(\FM)$ and that 
the associated coroot is $2(\a^\vee+g(\a^\vee))=\overline{\O}^\vee$. 
\end{proof}

\bigskip
\begin{rema}\label{rem:ordre pair}
If $\O \in (\Phi/g)^a$, then it follows from the classification 
that $|\O|$ is even, and so the order of $g$ is even.
\end{rema}

\bigskip

\subsection{Centralizers and rational series}\label{sec:fixed points}

\medskip

Let $g \in \Gb^F$ be a quasi-semisimple element of $\Gb$.
%, as defined in~\cite[Definition~1.1]{dm-nonc}. 
Let $(\Sb,\th) \in \nabla(C_\Gb^\circ(g),F)$. We then set $\Sb^+=C_{\Gb^\circ}(\Sb)$. 
It follows from~\cite[Theorem~1.8(iv)]{dm-nonc} that $\Sb^+$ is a maximal torus of $\Gb^\circ$ 
(containing $\Sb$).
 It is stable under the action of $g$, so we have a 
map $\LC_g : \Sb^+ \to \Sb^+$, $t \mapsto t^{-1} gtg^{-1}=[g,t]$ (which is a morphism of groups because 
$\Sb^+$ is abelian). If $t=\LC_g(s)$, then $t\lexp{g}{t}\lexp{g^2}{t}\cdots\lexp{g^{m-1}}{t}=\LC_{g^{m}}(s)$. 
In particular, if $t \in C_{\Sb^+}(g)=\Ker \LC_g$, then 
$t^{m}=\LC_{g^m}(s)$. This shows that any element of $C_{\Sb^+}(g) \cap \LC_g(\Sb^+)$ has order 
dividing the order of $g$. Note further that
$C_{\Sb^+}(g)^\circ = \Sb$  (see~\cite[Theorem~1.8(iii)]{dm-nonc}). We have
\begin{align*}
\dim(\Sb \cdot \LC_g(\Sb^+))&=\dim(\Sb)+\dim(\LC_g(\Sb^+))=
\dim(\Sb)+\dim(\Sb^+)-\dim(\ker\LC_g)^\circ\\
&=\dim(\Sb)+\dim(\Sb^+)-\dim(\Sb)=\dim(\Sb^+).
\end{align*}
We deduce that
\equat\label{eq:fini-ell}
\text{$\Sb^+=\Sb \cdot \LC_g(\Sb^+)$ and $\Sb \cap \LC_g(\Sb^+)$ is finite of exponent dividing the order of $g$.}
\endequat
Now, if $H$ is a $g$-stable finite subgroup of $\Sb^+$ of order prime to the order of $g$ then 
$C_H(g) \subset \Sb$ (because the order of $C_{\Sb^+}(g)/\Sb$ divides the order of $g$ 
by~\cite[Proposition~1.28]{dm-nonc}) and 
\equat\label{eq:produit-h}
H = C_H(g) \times \LC_g(H).
\endequat
So, if the linear character $\th$ of $\Sb^F$ has order prime to the order of $g$, then it can be extended 
canonically to a linear character $\th^+$ of $\Sb^{+F}$ as follows (we use the discussion above with $H$ the subgroup
of $\Sb^{+F}$ of elements with order dividing a power of the order of $\th$): $\th^+$ is trivial on $\LC_g(\Sb^{+F})$, 
is trivial on elements of $\Sb^{+F}$ of order prime to the order of $\th$ and coincides with $\th$ 
on $\Sb^F$. The fact that $\th^+$ is trivial on $\LC_g(\Sb^{+F})$ is equivalent to
\equat\label{eq:theta-plus-trivial}
\text{$\th^+$ is $g$-stable.}
\endequat
Note that, since $\Sb^+ \cap C_\Gb^\circ(g)=\Sb$ by~\cite[Theorem~1.8]{dm-nonc}, we may identify
the Weyl group of $C_\Gb^\circ(g)$ relative to $\Sb$ to a subgroup of the Weyl group 
of $\Gb^\circ$ relative to $\Sb^+$. Through this identification, we get:

\bigskip

\begin{lem}\label{lem:w0}
If the order of $\th$ is prime to the order of $g$, then $W_{C_\Gb^\circ(g)}(\Sb,\th) \subset
W_{\Gb^\circ}(\Sb^+,\th^+)$ 
and $W^\circ_{C_\Gb^\circ(g)}(\Sb,\th) \subset W^\circ_{\Gb^\circ}(\Sb^+,\th^+)$.
\end{lem}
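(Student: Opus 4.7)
The plan is to work with lifts in the normalizer and exploit the characterization of $\th^+$ stated just before the lemma. Any lift $\tilde w \in N_{C_\Gb^\circ(g)}(\Sb)$ normalizes $\Sb^+ = C_{\Gb^\circ}(\Sb)$ and commutes with $g$, which yields the embedding $W_{C_\Gb^\circ(g)}(\Sb) \hookrightarrow W_{\Gb^\circ}(\Sb^+)$ mentioned in the text. For the first inclusion, given $w \in W_{C_\Gb^\circ(g)}(\Sb,\th)$ with lift $\tilde w$, I would verify that $\tilde w \cdot \th^+$ satisfies the three defining properties of $\th^+$: its restriction to $\Sb^F$ equals $w \cdot \th = \th$; it is trivial on $\LC_g(\Sb^{+F})$ since $\tilde w$ commutes with $g$, so $\tilde w$-conjugation carries $\LC_g(t)$ to $\LC_g(\tilde w^{-1} t \tilde w)$; and it is trivial on elements of order prime to $|\th|$, since $\tilde w$-conjugation preserves order. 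Uniqueness then forces $\tilde w \cdot \th^+ = \th^+$.

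For the second inclusion, the strategy is to reduce, via Proposition \ref{prop:cooot}, to showing that for each $\O \in \Phi[g]$ with $\th^Y(\overline{\O}^\vee) = 1$, the reflection $s_{\overline{\O}^\vee}$, viewed in $W_{\Gb^\circ}(\Sb^+)$, lies in $W^\circ_{\Gb^\circ}(\Sb^+,\th^+)$. The explicit construction in the proof of that proposition shows that $s_{\overline{\O}^\vee}$ lifts to $\prod_{\a \in \O} s_\a$ when $\O \in (\Phi/g)^b$ (the associated $\Sb\Lb_2$'s commute, hence so do the reflections), and to $s_{\a + g\a}$ when $\O \in (\Phi/g)^a$ (the $A_2$-type situation with $|\O|=2$ and $\overline{\O}^\vee = 2(\a^\vee+g\a^\vee)$). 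A direct computation on $Y(\Sb)$, using $\langle \a, \lambda \rangle = \langle g\a, \lambda\rangle$ for $\lambda \in Y(\Sb)$, confirms that these lifts induce on $Y(\Sb)$ the correct reflection formula.

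The heart of the proof is then a coprimality argument. First I would establish that $|\th^+|$ divides $|\th|$: writing any $t \in \Sb^{+F}$ as a product of its $|\th|$-part $t_1$ and its coprime-to-$|\th|$-part $t_2$, the latter contributes $1$ to $\th^+$; then (\ref{eq:produit-h}), applied to the $g$-stable subgroup of $|\th|$-elements (which has order prime to $|g|$), decomposes $t_1 = u \cdot v$ with $u \in \Sb^F$ and $v \in \LC_g(\Sb^{+F})$, so $\th^+(t_1) = \th(u)$ has order dividing $|\th|$. In case $\O \in (\Phi/g)^b$, the $g$-invariance of $\th^+$ makes $\zeta := (\th^+)^Y(\a^\vee)$ independent of $\a \in \O$, and summing over the orbit gives $\zeta^{|\O|} = \th^Y(\overline{\O}^\vee) = 1$ (using $(\th^+)^Y|_{Y(\Sb)} = \th^Y$ and $\overline{\O}^\vee \in Y(\Sb)$); since $|\zeta|$ divides $\gcd(|\th^+|, |\O|)$, which divides $\gcd(|\th|, |g|) = 1$, we get $\zeta = 1$ and each $s_\a$ lies in $W^\circ_{\Gb^\circ}(\Sb^+,\th^+)$.

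The main obstacle is the case $\O \in (\Phi/g)^a$, where the analogous computation yields only $(\th^+)^Y(\a^\vee+g\a^\vee) \in \{\pm 1\}$, and one must rule out the value $-1$. This is where Remark \ref{rem:ordre pair} enters: the existence of such an orbit forces $|g|$ to be even, and since $|\th|$ is coprime to $|g|$ by hypothesis, $|\th|$ (hence $|\th^+|$) is odd, excluding the value $-1$ and completing the second inclusion.
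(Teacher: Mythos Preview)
Your proof is correct and follows essentially the same line as the paper's: the first inclusion is handled identically (via the defining properties of $\th^+$ and the fact that a lift commutes with $g$), and the second inclusion proceeds through Proposition~\ref{prop:cooot}, the $g$-invariance of $\th^+$, and the coprimality of $|\th^+|$ with $|g|$ (including the appeal to Remark~\ref{rem:ordre pair} in the $(\Phi/g)^a$ case).

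The only place your argument diverges from the paper is the final step. The paper shows $\th^{+Y}(\b^\vee)=1$ for \emph{every} $\b$ in the orbit $\O$ (its exponent $mr$ handles both cases uniformly), and then invokes Steinberg's result that the reflection $s_{g,\a}$ lies in the subgroup of $W_{\Gb^\circ}(\Sb^+)$ generated by the $s_{g^i(\b)}$. You instead name the image of $s_{g,\a}$ explicitly: $\prod_{\a\in\O}s_\a$ in case~(b), $s_{\a+g\a}$ in case~(a). These identifications are correct, but your justification in case~(a) --- checking that $s_{\a+g\a}$ restricts to the right reflection on $Y(\Sb)$ --- is not by itself sufficient, since several elements of $W_{\Gb^\circ}(\Sb^+)$ could have the same restriction. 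The cleanest fix is to observe that the image of $s_{g,\a}$ is $g$-fixed and lies in the $A_2$-type subgroup $\langle s_\a,s_{g\a}\rangle\simeq S_3$, whose only nontrivial $g$-fixed element is $s_{\a+g\a}$; this is essentially what the Steinberg reference in the paper provides.
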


\bigskip

\begin{proof}
Let $w \in W_{C_\Gb^\circ(g)}(\Sb,\th)$. Then $w$ stabilizes $\Sb^+=C_{\Gb^\circ}(\Sb)$ and its action 
on $\Sb$ commutes with the action of $g$. So it follows from the construction of $\th^+$ 
that $w$ stabilizes $\th^+$.

Let us now prove the second statement. Let $\a^\vee$ be a coroot of $C_\Gb^\circ(g)$ 
relative to $\Sb$ such that $\th^Y(\a^\vee)=1$. Let $s_{g,\a}$ denote 
the corresponding reflection in $W^\circ_{C_\Gb^\circ(g)}(\Sb,\th)$. It is sufficient to prove 
that $s_{g,\a} \in W^\circ_{\Gb^\circ}(\Sb^+,\th^+)$. 
Then it follows from Proposition~\ref{prop:cooot}
 that there exists a coroot $\b^\vee$ of $\Gb^\circ$ 
relative to $\Sb^+$ and $m \in \{1,2\}$ such that 
$$\a^\vee= m \sum_{i=0}^{r-1} g^i(\b^\vee),$$
where $r \ge 1$ is minimal such that $g^r(\b^\vee)=\b^\vee$. It follows 
from Remark~\ref{rem:ordre pair} that, if $m=2$, then $g$ has even order. Now,
$$1=\th^{+Y}(\a^\vee)=\prod_{i=1}^{r-1} \th^{+Y}(g^i(\b^\vee))^m=\th^{+Y}(\b^\vee)^{mr},$$
because $\th^+$ is $g$-stable. Since $m$ and $r$ divide the order of $g$, $mr$ is prime 
to the order of $\th^+$, so this implies that $\th^{+Y}(\b^\vee)=1$.
 In particular, 
$$s_\b,s_{g(\b)},\dots,s_{g^{r-1}(\b)} \in W^\circ_{\Gb^\circ}(\Sb^+,\th^+).$$
It follows from~\cite[Proof~of~Theorem~8.2,~Statement~($2'''$)]{steinberg} 
that then $s_{g,\a}$ belongs to the subgroup generated by $s_\b$, $s_{g(\b)}$,\dots, $s_{g^{r-1}(\b)}$. 
\end{proof}

\bigskip
Let $(\Tb_1,\theta_1),(\Tb_2,\theta_2)\in\nabla(\Gb,F)$.
We say that
$(\Tb_1,\theta_1)$ and $(\Tb_2,\theta_2)$ are {\em geometrically conjugate} (resp. in the same
{\em rational series}) if there is $t\in N_{\Gb^F}(\Tb_1)$ such that 
$(\Tb_1,\lexp{t}{\theta_1})$ and $(\Tb_2,\theta_2)$ are 
geometrically conjugate (resp. in the same rational series)
for $\nabla(\Gb^\circ,F)$. We denote by $\nabla(\Gb,F)/\equiv$ the set of rational series.

\bigskip
Let $Q$ be the subgroup of $\Gb$ generated by $g$ and let $\Nb$ be a subgroup of $N_\Gb(Q)$ containing $C_\Gb(Q)$.

\begin{coro}\label{coro:brauer-rational}
Let $(\Sb_1,\th_1),(\Sb_2,\th_2)\in\nabla_{|g|'}(\Nb,F)$ .
\begin{itemize}
\itemth{a} If $(\Sb_1,\th_1)$ and $(\Sb_2,\th_2)$ are geometrically conjugate in $\Nb$, 
then $(\Sb_1^+,\th_1^+)$ and $(\Sb_2^+,\th_2^+)$ are geometrically conjugate in $\Gb$.

\itemth{b} If $(\Sb_1,\th_1)$ and $(\Sb_2,\th_2)$ are in the same rational series of $\Nb$, 
then $(\Sb_1^+,\th_1^+)$ and $(\Sb_2^+,\th_2^+)$ are in the same rational series of $\Gb$.
\end{itemize}
So, the injective map $\nabla_{|g|'}(\Nb,F)\to \nabla_{|g|'}(\Gb,F),\
(\Sb,\th)\mapsto (\Sb^+,\th^+)$ induces a map 
$$i_Q^{\Gb}:\nabla_{|g|'}(\Nb,F)/\equiv\ \ \longto\ \ \nabla_{|g|'}(\Gb,F)/\equiv.$$
\end{coro}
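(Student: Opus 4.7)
The plan is to reduce both statements to the connected case in $\Nb$ by absorbing the initial $N_{\Nb^F}(\Sb_1)$-twist, then verify that the witness $x \in C_\Gb(g)^\circ$ for rational/geometric conjugacy in $\Nb^\circ$ also witnesses the corresponding conjugacy between $(\Sb_1^+,\th_1^+)$ and $(\Sb_2^+,\th_2^+)$ in $\Gb$. Note that $\Nb^\circ = C_\Gb(g)^\circ$ since $\Nb$ lies between $C_\Gb(Q)$ and $N_\Gb(Q)$, so the intrinsic ``connected'' definition of Proposition~\ref{prop:rational-series} applies inside $\Nb^\circ$.

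First I would handle the reduction: suppose the $\Nb$-conjugacy is realised by first applying $t \in N_{\Nb^F}(\Sb_1)$ and then an $\Nb^\circ$-conjugacy. Since $t$ normalises $Q=\langle g\rangle$, conjugation by $t$ sends $g$ to some power $g^k$ with $k$ coprime to $|g|$; a character is $g$-stable iff it is $g^k$-stable. Together with the fact that $\lexp{t}{\th_1}$ still has order prime to $|g|$ and agrees with $\lexp{t}{\th_1^+}$ on $\lexp{t}{\Sb_1^F}$, the uniqueness in the construction of the $+$-extension yields $\lexp{t}{\th_1^+}=(\lexp{t}{\th_1})^+$. Hence we may replace $\th_1$ by $\lexp{t}{\th_1}$ and assume from the start that the conjugacy is witnessed inside $\Nb^\circ=C_\Gb(g)^\circ$.

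Now fix $x \in C_\Gb(g)^\circ \subset \Gb^\circ$ with $\lexp{x}{\Sb_1}=\Sb_2$ and $\lexp{x}{\th_1^Y}=\th_2^Y$, together with $x^{-1}F(x)\Sb_1 \in W_{C_\Gb^\circ(g)}(\Sb_1,\th_1)$ (respectively in $W^\circ_{C_\Gb^\circ(g)}(\Sb_1,\th_1)$). Since $\Sb_i^+ = C_{\Gb^\circ}(\Sb_i)$ is functorial in $\Sb_i$ under conjugation, we immediately get $\lexp{x}{\Sb_1^+}=\Sb_2^+$. For the character, I would work on $Y(\Sb_i^+)$: because $x$ commutes with $g$, $x_*$ intertwines the $g$-actions on $Y(\Sb_1^+)$ and $Y(\Sb_2^+)$, and it sends $(g_*-1)Y(\Sb_1^+)$ onto $(g_*-1)Y(\Sb_2^+)$. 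By construction $\th_i^{+Y}$ restricts to $\th_i^Y$ on $Y(\Sb_i)$ and vanishes on $(g_*-1)Y(\Sb_i^+)$ (the dual statement to~(\ref{eq:theta-plus-trivial})); since $Y(\Sb_i^+)_\QM = Y(\Sb_i)_\QM + (g_*-1)Y(\Sb_i^+)_\QM$ by the argument preceding~(\ref{eq:fini-ell}) and the order of $\th_i^+$ is prime to $|g|$, these two properties characterise $\th_i^{+Y}$ uniquely. Comparing on $Y(\Sb_2)$ gives $\lexp{x}{\th_1^{+Y}} = \th_2^{+Y}$.

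It remains to check the Weyl-group condition. The inclusion $N_{C_\Gb^\circ(g)}(\Sb_1) \hookrightarrow N_{\Gb^\circ}(\Sb_1^+)$ induces an embedding $W_{C_\Gb^\circ(g)}(\Sb_1) \hookrightarrow W_{\Gb^\circ}(\Sb_1^+)$, sending the class $x^{-1}F(x)\Sb_1$ to $x^{-1}F(x)\Sb_1^+$. Lemma~\ref{lem:w0} then yields $x^{-1}F(x)\Sb_1^+ \in W^\circ_{\Gb^\circ}(\Sb_1^+,\th_1^+)$ in case (b), and the analogous containment in $W_{\Gb^\circ}(\Sb_1^+,\th_1^+)$ in case (a). In view of Proposition~\ref{prop:rational-series} and the definition of geometric conjugacy (extended to the disconnected group $\Gb$), this is exactly what is required. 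The main subtlety I expect to have to argue carefully is the cocharacter-level identification $\lexp{x}{\th_1^{+Y}}=\th_2^{+Y}$: one must check that the characterisation of the $+$-extension transports correctly through $x_*$ despite $x$ not being $F$-fixed, which is why working with $\Nrm$ and $Y$-cocharacters (rather than with $F$-fixed points) is essential.
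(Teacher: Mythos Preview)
Your proof is correct and follows essentially the same approach as the paper: reduce to $\Nb^\circ=C_\Gb^\circ(g)$, use that a witness $x\in C_\Gb^\circ(g)$ commutes with $g$ to transport the $+$-extension, and invoke Lemma~\ref{lem:w0} (the paper cites its main ingredient Proposition~\ref{prop:cooot}) for the Weyl-group condition via Proposition~\ref{prop:rational-series}. You are more explicit than the paper about the $N_{\Nb^F}(\Sb_1)$-twist reduction and the cocharacter-level characterisation of $\th^{+Y}$, but these are elaborations of the same argument rather than a different route.
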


\bigskip

\begin{proof}
(a) If $(\Sb_1,\th_1)$ and $(\Sb_2,\th_2)$ are geometrically conjugate in $\Nb^\circ=C_\Gb^\circ(g)$ then, 
by definition, there exists $x \in C_\Gb^\circ(g)$ such that $\Sb_2=\lexp{x}{\Sb_1}$ and 
$\th_2^Y = \lexp{x}{\th_1^Y}=\lexp{F(x)}{\th_1^Y}$ (as linear characters of $Y(\Sb_2)$). 
Since $x$ commutes with $g$, it sends $\LC_g(\Sb_1^+)$ to $\LC_g(\Sb_2^+)$, so it is immediately 
checked that $\th_2^{+Y}= \lexp{x}{\th_1^{+Y}}=\lexp{F(x)}{\th_1^{+Y}}$. The case
of geometric conjugacy in $\Nb$ and $\Gb$ follows immediately.

\medskip

(b) If $(\Sb_1,\th_1)$ and $(\Sb_2,\th_2)$ are in the same rational series of $C_\Gb^\circ(g)$, 
then, by Proposition~\ref{prop:rational-series}, 
there exists $x \in C_\Gb^\circ(g)$ such that $\Tb_2=\lexp{x}{\Tb_1}$, 
$\th_2^Y = \lexp{x}{\th_1^Y}$ (as linear characters of $Y(\Sb_2)$) 
and $x^{-1}F(x) \in W^\circ_{C_\Gb^\circ(g)}(\Sb_1,\th_1)$. So the result follows from (a) 
and from Propositions~\ref{prop:rational-series} and~\ref{prop:cooot}.
The case of rational series in $\Nb$ and $\Gb$ follows immediately.
\end{proof}

\bigskip

Let $\Lb$ be an $F$-stable Levi complement of a parabolic subgroup $\Pb$ of $\Gb$ containing $g$. 
Then $C_\Lb(g)$ is an $F$-stable Levi complement of $C_\Pb(g)$ \cite[Proposition~1.11]{dm-nonc}.

\bigskip

\begin{coro}\label{coro:super-regular}
Let $\XC\in\nabla_{|g|'}(C_\Lb^\circ(g),F)/\equiv$ be a rational series.
If $i_Q^\Lb(\XC)$ is $(\Gb^\circ,\Lb^\circ)$-regular (respectively 
$(\Gb^\circ,\Lb^\circ)$-super regular), then $\XC$ is $(C_\Gb^\circ(g),C_\Lb^\circ(g))$-regular 
(respectively $(C_\Gb^\circ(g),C_\Lb^\circ(g))$-super regular).
\end{coro}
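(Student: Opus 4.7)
\emph{Proof plan.} Let $(\Sb,\th)\in\XC$. The plan is to translate both the hypothesis and the conclusion into Weyl group inclusions via Proposition~\ref{prop:regularity}, and then to chain them together using Lemma~\ref{lem:w0}.

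First I would observe that, since $\Sb\subset C_\Lb^\circ(g)\subset\Lb^\circ$ is a maximal torus of $C_\Lb^\circ(g)$, the torus $\Sb^+=C_{\Gb^\circ}(\Sb)$ coincides with $C_{\Lb^\circ}(\Sb)$: the latter is a maximal torus of the Levi $\Lb^\circ$ (hence of $\Gb^\circ$) containing $\Sb$. In particular $\Sb^+\subset\Lb^\circ$, the canonical extension $\th^+$ depends only on $\Sb^+$ and $g$ (not on whether it is computed inside $\Lb^\circ$ or $\Gb^\circ$), and $i_Q^\Lb(\XC)$ is the rational series of $\Lb$ containing $(\Sb^+,\th^+)$. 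Moreover, by \cite[Proposition~1.11]{dm-nonc}, $C_\Lb^\circ(g)$ is a Levi complement of a parabolic subgroup of the connected reductive group $C_\Gb^\circ(g)$. Applying Proposition~\ref{prop:regularity} once to $(\Gb^\circ,\Lb^\circ)$ and once to $(C_\Gb^\circ(g),C_\Lb^\circ(g))$, the regular (resp.\ super-regular) hypothesis reads
\[
W^\circ_{\Gb^\circ}(\Sb^+,\th^+)\subset\Lb^\circ\qquad(\text{resp.}\ W_{\Gb^\circ}(\Sb^+,\th^+)\subset\Lb^\circ),
\]
while the conclusion to prove is $W^\circ_{C_\Gb^\circ(g)}(\Sb,\th)\subset C_\Lb^\circ(g)$ (resp.\ $W_{C_\Gb^\circ(g)}(\Sb,\th)\subset C_\Lb^\circ(g)$).

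Lemma~\ref{lem:w0} provides $W^\circ_{C_\Gb^\circ(g)}(\Sb,\th)\subset W^\circ_{\Gb^\circ}(\Sb^+,\th^+)$ and $W_{C_\Gb^\circ(g)}(\Sb,\th)\subset W_{\Gb^\circ}(\Sb^+,\th^+)$. For the regular case, I would then argue at the coroot level using Proposition~\ref{prop:cooot}: every coroot $\alpha^\vee$ of $C_\Gb^\circ(g)$ relative to $\Sb$ satisfying $\th^Y(\alpha^\vee)=1$ is of the form $\overline{\O}^\vee$ for some $\O\in\Phi(\Gb^\circ)[g]$, and, as shown in the proof of Lemma~\ref{lem:w0}, this identity forces $\th^{+Y}(\beta^\vee)=1$ for every $\beta\in\O$. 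The hypothesis then ensures each $\beta^\vee$ is a coroot of $\Lb^\circ$, so $\O\subset\Phi(\Lb^\circ)$; since the conditions defining $\Phi[g]$ (the orbit type $(\Phi/g)^{a,b}$, the scalar $c_\O$, the parity of $p$) depend only on $g$ and $\O$, we have $\O\in\Phi(\Lb^\circ)[g]$, and Proposition~\ref{prop:cooot} applied to $\Lb^\circ$ exhibits $\overline{\O}^\vee$ as a coroot of $C_\Lb^\circ(g)$.

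For the super-regular case I would work with lifts: any $w\in W_{C_\Gb^\circ(g)}(\Sb,\th)$ has a representative $n\in N_{C_\Gb^\circ(g)}(\Sb,\th)$, and Lemma~\ref{lem:w0} together with the hypothesis provides some $s\in\Sb^+\subset\Lb^\circ$ such that $ns\in N_{\Lb^\circ}(\Sb^+)$, forcing $n\in\Lb^\circ\cap C_\Gb^\circ(g)$. The main obstacle I expect is then to verify the identity
\[
\Lb^\circ\cap C_\Gb^\circ(g)=C_\Lb^\circ(g),
\]
which would immediately give $n\in C_\Lb^\circ(g)$ and conclude the argument. I would establish this identity by combining the root system description of Proposition~\ref{prop:cooot} with the fact that $C_\Lb^\circ(g)$ is the Levi subgroup of $C_\Gb^\circ(g)$ whose root subgroups are precisely those contained in $\Lb^\circ$; classical structure results on closed subgroups of a connected reductive group containing a given maximal torus then force $\Lb^\circ\cap C_\Gb^\circ(g)$ to coincide with this Levi, completing the proof.
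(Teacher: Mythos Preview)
Your approach is the same as the paper's: the paper's proof is the single sentence ``This follows from Proposition~\ref{prop:regularity} and Lemma~\ref{lem:w0}'', and you have correctly unpacked what this entails, including the passage from ``$\subset\Lb^\circ$'' to ``$\subset C_\Lb^\circ(g)$'' that the paper leaves implicit.

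Two remarks on execution. First, your separate coroot argument for the regular case is unnecessary: the identity $\Lb^\circ\cap C_\Gb^\circ(g)=C_\Lb^\circ(g)$ that you isolate for the super-regular case handles \emph{both} cases uniformly, since in either case Lemma~\ref{lem:w0} and the hypothesis place your representative $n\in N_{C_\Gb^\circ(g)}(\Sb)$ inside $\Lb^\circ$, and the identity then gives $n\in C_\Lb^\circ(g)$. Second, the proof of that identity is cleaner via the Levi decomposition than via Proposition~\ref{prop:cooot}: by Proposition~\ref{prop:centralisateur}(b) (or \cite[Proposition~1.11]{dm-nonc}) one has $C_\Pb^\circ(g)=C_\Vb(g)\rtimes C_\Lb^\circ(g)$ and $\Pb^\circ\cap C_\Gb^\circ(g)=C_\Pb^\circ(g)$; given $x\in\Lb^\circ\cap C_\Gb^\circ(g)\subset C_\Pb^\circ(g)$, write $x=vl$ with $v\in C_\Vb(g)$ and $l\in C_\Lb^\circ(g)\subset\Lb^\circ$, so $v=xl^{-1}\in\Lb^\circ\cap\Vb=\{1\}$ and $x=l\in C_\Lb^\circ(g)$.
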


\bigskip

\begin{proof}
This follows from Proposition~\ref{prop:regularity} and Lemma \ref{lem:w0}.
\end{proof}

\bigskip

The results above extend by induction to general nilpotent $p'$-subgroups.
Let $Q$ be a nilpotent subgroup of $\Gb^F$ of order prime to $p$. 
Fix a sequence $1=Q_0\subset Q_1\subset\cdots\subset Q_r=Q$ of
normal subgroups of $Q$ such that $Q_i/Q_{i-1}$ is cyclic for $1\le i\le r$.
Let $\Gb_i=N_\Gb(Q_1\subset\cdots\subset Q_i)$.

The construction above defines a map
\begin{equation}
\label{eq:mapquotient}
\nabla_{|Q|'}(\Gb_{i+1}/Q_i,F)=\nabla_{|Q|'}(N_{\Gb_i/Q_i}(Q_{i+1}/Q_i),F)\to
\nabla_{|Q|'}(\Gb_i/Q_i,F)
\end{equation}
that preserves rational and geometric series.

Fix $0\le j\le i\le r$.
Let $(\Tb,\theta)\in\nabla_{|Q|'}(\Gb_i,F)$. 
The kernel of the canonical map $\Tb^F\to (\Tb/(\Tb\cap Q_j))^F$ has order a divisor of a power
of the order of $Q_j$, and so does its cokernel, since it is isomorphic to 
$H^1(F,\Tb\cap Q_j)$.
Since $\theta$ is trivial on
$\Tb^F\cap Q_j$, it factors through a character of
$\Tb^F/(\Tb^F\cap Q_j)$ that comes by restriction from a (unique) character
 $\theta'$ of $(\Tb/(\Tb\cap Q_j))^F$.
We obtain a pair $(\Tb/(\Tb\cap Q_j),\theta')\in
\nabla_{|Q|'}(\Gb_i/Q_j,F)$. This correspondence defines a bijection
$\nabla_{|Q|'}(\Gb_i,F)\xrightarrow{\sim}\nabla_{|Q|'}(\Gb_i/Q_j,F)$ that preserves
rational and geometric series.

Composing those bijections with the map in (\ref{eq:mapquotient}), we obtain a map
$$\nabla_{|Q|'}(\Gb_{i+1},F)\to \nabla_{|Q|'}(\Gb_i,F)$$
and composing all those maps, we obtain a map
$$\nabla_{|Q|'}(N_\Gb(Q_1\subset\cdots\subset Q_r),F)\to \nabla_{|Q|'}(\Gb,F).$$
Finally, composing with the canonical map $\nabla_{|Q|'}(C_\Gb(Q),F)\to
\nabla_{|Q|'}(N_\Gb(Q_1\subset\cdots\subset Q_r),F)$, we obtain a map
$$\nabla_{|Q|'}(C_\Gb(Q),F)\to \nabla_{|Q|'}(\Gb,F)$$
that preserves rational and geometric series. Note that this map depends not
only on $Q$, but on the filtration $Q_1\subset\cdots\subset Q_r$.
Summarizing, we have the following proposition.

\begin{prop}
\label{pr:seriesQ}
Let $Q$ be a nilpotent subgroup of $\Gb^F$ of order prime to $p$. 
Fix a sequence $1=Q_0\subset Q_1\subset\cdots\subset Q_r=Q$ of
normal subgroups of $Q$ such that $Q_i/Q_{i-1}$ is cyclic for $1\le i\le r$.

The constructions above define a map
$$i_{Q_\bullet}^\Gb:\nabla_{|Q|'}(C_\Gb(Q),F)/\equiv\ \to\ \nabla_{|Q|'}(\Gb,F)
/\equiv$$

Let $\Lb$ be an $F$-stable Levi complement of a parabolic subgroup $\Pb$ of $\Gb$ containing $Q$. 
Let $\XC \in \nabla_{|Q|'}(C_\Lb(Q),F)/\equiv$ be a rational series.
Then
\begin{itemize}
\item $C_\Lb(Q)$ is an $F$-stable Levi complement of $C_\Pb(Q)$
\item if $i_{Q_\bullet}^\Lb(\XC)$ is $(\Gb^\circ,\Lb^\circ)$-regular
then $\XC$ is $(C_\Gb^\circ(Q),C_\Lb^\circ(Q))$-regular 
\item if $i_{Q_\bullet}^\Lb(\XC)$ is $(\Gb^\circ,\Lb^\circ)$-super regular
then $\XC$ is $(C_\Gb^\circ(Q),C_\Lb^\circ(Q))$-super regular.
\end{itemize}
\end{prop}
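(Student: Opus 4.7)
The plan is to proceed by induction on the length $r$ of the filtration. The base case $r=1$ is the cyclic case: $Q$ is cyclic of $p'$-order, generated by some $g \in \Gb^F$, and $i_{Q_\bullet}^\Gb$ is by construction the composition of the trivial restriction $\nabla(C_\Gb(Q),F) \to \nabla(N_\Gb(Q),F)$ with the map of Corollary \ref{coro:brauer-rational} for $\Nb = N_\Gb(Q)$. Well-definedness on rational series is given by that corollary; the Levi statement is Proposition \ref{prop:centralisateur}(b); preservation of regularity and super-regularity is Corollary \ref{coro:super-regular}.

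For the inductive step, I would observe that $i_{Q_\bullet}^\Gb$ is a composition of three kinds of maps: (a) the canonical bijections $\nabla_{|Q|'}(\Gb_i,F) \xrightarrow{\sim} \nabla_{|Q|'}(\Gb_i/Q_j,F)$; (b) the cyclic-case maps $\nabla_{|Q|'}(\Gb_{i+1}/Q_i,F) \to \nabla_{|Q|'}(\Gb_i/Q_i,F)$ arising from the action of $Q_{i+1}/Q_i$ (an instance of the $r=1$ construction applied in the quotient); and (c) the restriction $\nabla_{|Q|'}(C_\Gb(Q),F) \to \nabla_{|Q|'}(N_\Gb(Q_\bullet),F)$. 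Each factor preserves geometric conjugacy and rational series: (a) because the factorization of $\theta$ through $(\Tb/(\Tb\cap Q_j))^F$ is canonical and equivariant, and the bijection identifies the reflection subgroups governing the series via Proposition \ref{prop:rational-series}; (b) by Corollary \ref{coro:brauer-rational}; and (c) trivially from the definitions.

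The Levi statement that $C_\Lb(Q)$ is an $F$-stable Levi complement of $C_\Pb(Q)$ is a direct application of Proposition \ref{prop:centralisateur}(b), since $Q \subset \Lb$ is a finite nilpotent (hence solvable) $p'$-subgroup that normalizes the pair $(\Pb, \Lb)$. Preservation of $(\Gb^\circ,\Lb^\circ)$-regularity and super-regularity would then follow by iterating Corollary \ref{coro:super-regular} at each cyclic step of the filtration, using the identification of Weyl groups provided by (a) to transport the regularity hypothesis across the successive quotients $\Gb_i/Q_i$.

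The main technical obstacle is making rigorous sense of the quotient groups $\Gb_i/Q_j$ (for $j \le i$) appearing in the construction, in order to apply the cyclic-case statements to them. Since $\Gb_i$ normalizes the finite group $Q_i$, the quotient $\Gb_i/C_\Gb(Q_i)$ embeds in $\Aut(Q_i)$ and is thus finite, so $\Gb_i^\circ = C_\Gb(Q_i)^\circ$ is reductive by Proposition \ref{prop:centralisateur}(a), and $\Gb_i$ is a disconnected reductive group in the sense of the paper. Moreover $Q_j \cap \Gb_i^\circ$ is central in $\Gb_i^\circ$ (because $\Gb_i^\circ \subset C_\Gb(Q_i) \subset C_\Gb(Q_j)$), so $\Gb_i/Q_j$ is a well-defined disconnected reductive group to which both Proposition \ref{prop:centralisateur} and Corollary \ref{coro:brauer-rational} apply, completing the inductive step.
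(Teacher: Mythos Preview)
Your proposal is correct and follows the same inductive approach as the paper. The paper itself gives no separate proof: the proposition is presented as a summary of the preceding construction (``Summarizing, we have the following proposition''), and the induction on the filtration length is implicit in the phrase ``The results above extend by induction to general nilpotent $p'$-subgroups.'' You have simply made that induction explicit, correctly identifying the three ingredients (the quotient bijections, the cyclic-step maps via Corollary~\ref{coro:brauer-rational}, and the restriction from $C_\Gb(Q)$ to $N_\Gb(Q_\bullet)$), and your final paragraph on the reductivity of $\Gb_i/Q_j$ fills a detail the paper leaves entirely to the reader.
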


The map $i_{Q_\bullet}^\Gb$ is actually independent of the choice
of the filtration of $Q$, cf Remark \ref{re:indepfiltration} below.
\bigskip

\subsection{Generation and series}

\smallskip

Given $(\Tb,\theta)\in\nabla_\L(\Gb,F)$, we denote by
$e^\circ_\theta$ the block idempotent of $\Lambda\Tb^F$ not vanishing on $\theta$.

\medskip
We have now a generalization of \cite[Th\'eor\`eme A]{BR}.

Given $\XC\in\nabla_\Lambda(\Gb,F)/\equiv$,
let $\CC_\XC$ be the thick subcategory of $(\Lambda\Gb^F)\mperf$ generated by the complexes
$\Rrm\Gamma_c(\Yb_\Bb)e^\circ_\theta$ where $(\Tb,\theta)$ runs over $\XC$ and $\Bb$ runs over
Borel subgroups of $\Gb^\circ$ containing $\Tb$.

  Note that, by definition of rational series for non-connected groups, we obtain the same thick subcategory by taking
instead the complexes $\Rrm\Gamma_c(\Yb_\Bb)e_\theta$ where 
$e_\theta=\sum_{t\in N_{\Gb^F}(\Tb,\Bb)/C_{N_{\Gb^F}(\Tb,\Bb)}(\theta)}e^\circ_{\lexp{t}{\theta}}$.

\begin{theo}
\label{th:blocksperf}
Let $\XC\in\nabla_\Lambda(\Gb,F)/\equiv$.
There is a (unique) central idempotent $e_\XC$ of $\Lambda\Gb^F$ such that
$\CC_\XC=(\Lambda\Gb^F e_\XC)\mperf$.

We have a decomposition in central orthogonal idempotents of $\Lambda\Gb^F$
$$1=\sum_{\XC\in\nabla_\L(\Gb,F)/\equiv} e_\XC.$$
\end{theo}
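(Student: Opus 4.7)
The plan is to combine Theorem~\ref{theo:A-engendrement} with the classical compatibility between blocks of $\L\Gb^F$ and rational series (the Brou\'e--Michel theorem in the connected modular case), supplemented by Clifford theory to descend from $\Gb^\circ$ to $\Gb$.

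First I would invoke Theorem~\ref{theo:A-engendrement}: the category $(\L\Gb^F)\mperf$ is generated as a thick triangulated subcategory by the complexes $\Rrm\Gamma_c(\Yb_\Bb,\L)$, with $\Tb$ ranging over $F$-stable maximal tori of $\Gb$ and $\Bb$ over Borel subgroups of $\Gb^\circ$ containing $\Tb$. The decomposition of the unit $1_{\L\Tb^F}=\sum_\theta e^\circ_\theta$ into primitive central idempotents (with $\theta$ ranging over $\nabla_\L(\Tb,F)$) splits
$$\Rrm\Gamma_c(\Yb_\Bb,\L)=\bigoplus_\theta\Rrm\Gamma_c(\Yb_\Bb,\L)\,e^\circ_\theta,$$
and each summand lies in $\CC_\XC$ for the unique rational series $\XC$ containing $(\Tb,\theta)$. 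Hence $\bigcup_\XC\CC_\XC$ generates $(\L\Gb^F)\mperf$ as a thick triangulated subcategory.

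The second and key step is mutual orthogonality: for any primitive central idempotent $b$ of $\L\Gb^F$ there is exactly one series $\XC$ with $b\cdot C\ne 0$ for some $C\in\CC_\XC$. When $\L=K$ and $\Gb$ is connected, this is the Deligne--Lusztig decomposition of irreducible characters into rational series; in the modular connected case it is the theorem of Brou\'e--Michel. For $\Gb$ disconnected, I would reduce to $\Gb^\circ$ using~(\ref{eq:gy}), which gives $\Rrm\Gamma_c(\Yb_\Bb^\Gb,\L)\simeq\Ind_{\Gb^{\circ F}}^{\Gb^F}\Rrm\Gamma_c(\Yb_\Bb^{\Gb^\circ},\L)$: by Clifford theory the restriction of a block of $\L\Gb^F$ to $\L\Gb^{\circ F}$ is a sum of $\Gb^F$-conjugate blocks of $\L\Gb^{\circ F}$, whose associated rational series of $\Gb^\circ$ form a single $\Gb^F$-orbit, i.e.\ a single rational series of $\Gb$ in the sense of~\S\ref{sec:fixed points}.

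Setting $e_\XC$ to be the sum of the primitive central idempotents $b$ of $\L\Gb^F$ such that $b\cdot C\ne 0$ for some $C\in\CC_\XC$, the first step forces $\sum_\XC e_\XC=1$ and the second step gives pairwise orthogonality of the $e_\XC$. The inclusion $\CC_\XC\subseteq(\L\Gb^F e_\XC)\mperf$ is then immediate. For the reverse inclusion, the direct sum decomposition $(\L\Gb^F)\mperf=\bigoplus_\XC(\L\Gb^F e_\XC)\mperf$ induced by the orthogonal central idempotents combines with the generation statement to identify $(\L\Gb^F e_\XC)\mperf$ with the $e_\XC$-component of the thick envelope of $\bigcup_{\XC'}\CC_{\XC'}$, which by construction is $\CC_\XC$. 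The main obstacle is the orthogonality step: in the connected modular case this is precisely Brou\'e--Michel, and its promotion to the disconnected setting hinges on checking that the Clifford-theoretic behaviour of blocks under $\Gb^F/\Gb^{\circ F}$ is consistent with the $\Gb^F$-orbit definition of rational series used for disconnected groups.
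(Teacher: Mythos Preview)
Your approach is correct and reaches the same conclusion, but the route differs from the paper's in a noteworthy way.

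The paper proves \emph{categorical orthogonality} directly: using the induction/restriction identities~(\ref{eq:ind-rlg}) and~(\ref{eq:res-rlg}) it computes
$$\Hom^\bullet_{\Lambda\Gb^F}\bigl(\Rrm\Gamma_c(\Yb_{\Bb_1}^\Gb)e^\circ_{\theta_1},\,
\Rrm\Gamma_c(\Yb_{\Bb_2}^\Gb)e^\circ_{\theta_2}\bigr)\simeq
\Hom^\bullet_{\Lambda\Gb^{\circ F}}\Bigl(\Rrm\Gamma_c(\Yb_{\Bb_1}^{\Gb^\circ})e^\circ_{\theta_1},\,
\bigoplus_{t\in N_{\Gb^F}(\Tb_2,\Bb_2)/\Tb_2^F}\Rrm\Gamma_c(\Yb_{\Bb_2}^{\Gb^\circ})e^\circ_{\lexp{t}{\theta_2}}\Bigr),$$
and then invokes the connected case (\cite[Th\'eor\`eme~A]{BR}) to see that this vanishes unless $(\Tb_1,\theta_1)$ and $(\Tb_2,\lexp{t}{\theta_2})$ lie in the same $\Gb^\circ$-rational series for some $t$. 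The twist by $N_{\Gb^F}(\Tb_2,\Bb_2)$ on the right matches exactly the definition of rational series for disconnected $\Gb$, so $\CC_{\XC_1}\perp\CC_{\XC_2}$ whenever $\XC_1\neq\XC_2$. The idempotent decomposition is then a formal consequence of generation plus orthogonality, via \cite[Proposition~9.2]{BR}.

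You instead work at the level of blocks: you argue that each primitive central idempotent $b$ of $\Lambda\Gb^F$ covers a single $\Gb^F$-orbit of blocks of $\Lambda\Gb^{\circ F}$, hence (by the connected case) a single $\Gb^F$-orbit of $\Gb^\circ$-rational series, which you identify with a $\Gb$-rational series. This is valid, but the last identification---that a $\Gb^F$-orbit of $\Gb^\circ$-series coincides with a $\Gb$-series as defined here---is not entirely automatic from the definition, which only refers to $N_{\Gb^F}(\Tb_1)$ rather than all of $\Gb^F$; it needs the observation that $\Gb^{\circ F}$-conjugation preserves $\Gb^\circ$-series and that every coset in $\Gb^F/\Gb^{\circ F}$ is represented in $N_{\Gb^F}(\Tb,\Bb)$ for an $F$-stable pair. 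The paper's adjunction computation sidesteps this because the index set $N_{\Gb^F}(\Tb_2,\Bb_2)/\Tb_2^F$ is exactly the one appearing in the definition. What your approach buys is a description of $e_\XC$ as an explicit sum of blocks from the outset; what the paper's buys is a tighter match with the definitions and no appeal to block Clifford theory.
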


\begin{proof}
Note first that the theorem holds for $\Gb^\circ$ by \cite[Th\'eor\`eme A]{BR}.
Let $(\Tb_i,\theta_i)\in\nabla_\L(\Gb,F)$ and let $\Bb_i$ be a Borel subgroup of $\Gb^\circ$ containing $\Tb_i$
for $i\in\{1,2\}$.
By (\ref{eq:ind-rlg}) and (\ref{eq:res-rlg}), we have
$$\Hom^\bullet_{\Lambda\Gb^F}(\Rrm\Gamma_c(\Yb_{\Bb_1}^\Gb)e^\circ_{\theta_1},
\Rrm\Gamma_c(\Yb_{\Bb_2}^\Gb)e^\circ_{\theta_2})\simeq
\Hom^\bullet_{\Lambda\Gb^{\circ F}}(\Rrm\Gamma_c(\Yb_{\Bb_1}^{\Gb^\circ})e^\circ_{\theta_1},
\bigoplus_{t\in N_{\Gb^F}(\Tb_2,\Bb_2)/\Tb_2^F} \Rrm\Gamma_c(\Yb_{\Bb_2}^{\Gb^\circ})
e^\circ_{\lexp{t}{\theta_2}}).$$
The connected case of the theorem shows this is $0$ unless $(\Tb_1,\theta_1)$
and $(\Tb_2,\lexp{t}{\theta_2})$ are in the same rational series of $(\Gb^\circ,F)$ for
some $t$.

We have shown that the categories $\CC_{\XC_1}$ and $\CC_{\XC_2}$ are othogonal
for $\XC_1{\not=}\XC_2$. The theorem follows now from \cite[Proposition 9.2]{BR} and
Theorem \ref{theo:A-engendrement}.
\end{proof}

\bigskip

Let $\XC\in\nabla_\L(\Gb,F)/\equiv$.
Let $\AC_\XC$ be the thick subcategory of $\Ho^b(\L \Gb^F)$ generated by the complexes 
of the form 
$$\Grm\G_c(\Yb_{\Bb},\L) e_\theta \otimes_{\L Q} L,$$
where 
\begin{itemize}
\item $(\Tb,\theta)$ runs over $\XC$
\item $\Bb$ runs over Borel subgroups of $\Gb^\circ$ containing $\Tb$
\item $Q$ is an $\ell$-subgroup of $N_{\Gb^F}(\Tb,\Bb)$
\item and $L$ is a $\L Q$-module, free of rank $1$ over $\L$. 
\end{itemize}

Let $\BC_\XC$ be the full subcategory of $\L\Gb^Fe_\XC\modules$ consisting of modules whose indecomposable direct 
summands have a one-dimensional source and a vertex $Q$ which normalizes a pair $(\Tb\subset\Bb)$
where $\Tb$ is an $F$-stable maximal torus and $\Bb$ a Borel subgroup of $\Gb$.

\begin{theo}\label{theo:engendrementseries}
Let $\XC\in\nabla_\L(\Gb,F)/\equiv$. We have $\AC_\XC=\Ho^b(\BC_\XC)$.
\end{theo}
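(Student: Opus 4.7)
The plan is to prove the two inclusions $\AC_\XC\subseteq\Ho^b(\BC_\XC)$ and $\Ho^b(\BC_\XC)\subseteq\AC_\XC$ separately, organizing the argument around two central decompositions: that of $\L\Gb^F$ by the block idempotents $e_{\XC'}$, and that of $\L N_{\Gb^F}(\Tb,\Bb)$ by the orbit idempotents $e_\theta$. Note that $e_\theta$ is an $N_{\Gb^F}(\Tb,\Bb)$-invariant sum of primitive idempotents of $\L\Tb^F$, hence is central in $\L N_{\Gb^F}(\Tb,\Bb)$ and commutes with the right action of any subgroup $Q\subseteq N_{\Gb^F}(\Tb,\Bb)$.

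First I verify $\AC_\XC\subseteq\Ho^b(\BC_\XC)$ on the defining generators $\Grm\G_c(\Yb_\Bb,\L)e_\theta\otimes_{\L Q}L$ with $(\Tb,\theta)\in\XC$. Corollary~\ref{cor:deltavertex} ensures that the indecomposable summands of the terms of the underlying $\ell$-permutation complex have vertex contained in $Q$ and one-dimensional source. To see that the complex is concentrated in the block $e_\XC$ up to homotopy, I observe that, by the definition of rational series for a non-connected group, every character $\theta'$ in the $N_{\Gb^F}(\Tb,\Bb)$-orbit of $\theta$ belongs to $\XC$, so Theorem~\ref{th:blocksperf} yields $\Rrm\Gamma_c(\Yb_\Bb)e^\circ_{\theta'}\in\CC_\XC$; it follows that the direct summand $(1-e_\XC)\Grm\G_c(\Yb_\Bb)e_\theta$ is an acyclic $\ell$-permutation complex. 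Invoking the contractibility results of Appendix~\ref{app:l-perm}, this summand is null-homotopic, and the generator becomes isomorphic in $\Ho^b(\L\Gb^F)$ to a complex of modules lying in $\BC_\XC$.

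For the reverse inclusion, I exploit the block decomposition. The central idempotents $e_{\XC'}$ induce an orthogonal decomposition $\BC=\bigoplus_{\XC'}\BC_{\XC'}$ of additive categories, hence an orthogonal decomposition $\Ho^b(\BC)=\bigoplus_{\XC'}\Ho^b(\BC_{\XC'})$. By Theorem~\ref{theo:engendrement}, $\Ho^b(\BC)=\AC$. Combined with the inclusions $\AC_{\XC'}\subseteq\Ho^b(\BC_{\XC'})$ established in the first step, orthogonality forces the desired equalities $\AC_{\XC'}=\Ho^b(\BC_{\XC'})$ as soon as $\sum_{\XC'}\AC_{\XC'}=\AC$.

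Finally, I verify this last identity on the generators of $\AC$. Since the $e_\theta$ form a complete set of orthogonal central idempotents of $\L N_{\Gb^F}(\Tb,\Bb)$ summing to $1$ and commute with the right $Q$-action, each generator $\Grm\G_c(\Yb_\Bb,\L)\otimes_{\L Q}L$ of $\AC$ decomposes in $\Ho^b(\L\Gb^F)$ as
$$\Grm\G_c(\Yb_\Bb,\L)\otimes_{\L Q}L=\bigoplus_{[\theta]}\Grm\G_c(\Yb_\Bb,\L)e_\theta\otimes_{\L Q}L,$$
the sum ranging over $N_{\Gb^F}(\Tb,\Bb)$-orbits of characters of $\Tb^F$. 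By the same remark as above, each orbit $[\theta]$ lies in a unique rational series $\XC'$ of $\Gb$, and the corresponding summand is a generator of $\AC_{\XC'}$, proving $\AC\subseteq\sum_{\XC'}\AC_{\XC'}$. The main obstacle in this plan is the contractibility statement used in the first step, which rests on the appendix's lemmas about when a bounded acyclic complex of $\ell$-permutation modules is null-homotopic in $\Ho^b$.
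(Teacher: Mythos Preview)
Your overall architecture matches the paper's: establish $\AC_\XC\subseteq\Ho^b(\BC_\XC)$ from Theorem~\ref{th:blocksperf}, decompose the generators of $\AC$ via the $e_\theta$'s to get $\AC=\sum_{\XC'}\AC_{\XC'}$, and then invoke Theorem~\ref{theo:engendrement} together with the block decomposition $\Ho^b(\BC)=\bigoplus_{\XC'}\Ho^b(\BC_{\XC'})$.

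There is, however, a genuine gap at exactly the point you flag. You argue that $(1-e_\XC)\Grm\G_c(\Yb_\Bb,\L)e_\theta$ is acyclic (correct, by Theorem~\ref{th:blocksperf}) and then appeal to ``the contractibility results of Appendix~\ref{app:l-perm}'' to conclude it is null-homotopic. But Lemma~\ref{le:H0Brauer}, the relevant appendix result, requires $\brauer_Q(C)$ to be acyclic for \emph{every} $\ell$-subgroup $Q$, whereas acyclicity in $D^b$ is only the case $Q=1$. Bounded acyclic complexes of $\ell$-permutation modules are \emph{not} contractible in general, so this step does not go through as written. Moreover, you need the contractibility as a complex of $(\L\Gb^F,\L Q)$-bimodules (so that it survives $-\otimes_{\L Q}L$), not merely as a complex of $\L\Gb^F$-modules; the fact that the terms of $\Grm\G_c(\Yb_\Bb)^\red$ are projective on each side separately (a consequence of Corollary~\ref{cor:deltavertex}) does not by itself force a bilinear contracting homotopy.

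What is actually needed is to verify the full hypothesis of Lemma~\ref{le:H0Brauer}: for each $\ell$-subgroup $R\le N_{\Gb^F}(\Tb,\Bb)$, the complex
\[
\brauer_{\Delta R}\bigl((1-e_\XC)\Grm\G_c(\Yb_\Bb,\L)e_\theta\bigr)\simeq
\bigl(1-\mathrm{br}_R(e_\XC)\bigr)\,\Grm\G_c\bigl(\Yb_{C_\Bb(R)}^{C_\Gb(R)},k\bigr)\,\mathrm{br}_R(e_\theta)
\]
is acyclic. This follows once one knows that $\mathrm{br}_R(e_\XC)$ and $\mathrm{br}_R(e_\theta)$ decompose compatibly into local series idempotents and that Theorem~\ref{th:blocksperf} applies to $C_\Gb(R)$ --- essentially the mechanism behind Theorem~\ref{theo:brauer}. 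The paper's own one-line justification ``by Theorem~\ref{th:blocksperf}'' is just as compressed here, but your proposed resolution via the appendix alone is not sufficient; you must either supply the $\brauer_{\Delta R}$-acyclicity directly or cite the Brauer-compatibility of the idempotents $e_\XC$.
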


\bigskip

\begin{proof}
By Theorem \ref{th:blocksperf}, we have $\Grm\G_c(\Yb_{\Vb},\L) e_\theta\otimes_{\L Q} L
\in\Ho^b(\BC_\XC)$ if $(\Tb,\theta)\in\XC$. It follows that
$\AC_\XC\subset\Ho^b(\BC_\XC)$. Since $\AC=\bigoplus_{\XC\in\nabla_\L(\Gb,F)/\equiv}
\AC_{\XC}$, the theorem follows from Theorem \ref{theo:engendrement}.
\end{proof}

\bigskip

\subsection{Decomposition map and Deligne-Lusztig induction} 

The following result generalizes \cite[Th\'eor\`eme 3.2]{BrMi} to 
non-cyclic $\ell$-subgroups and to disconnected groups (needed to handle
the non-cyclic case by induction).

\begin{theo}\label{theo:brauer}
Let $Q$ be an $\ell$-subgroup of $\Gb^F$. The map
$i_{Q_\bullet}^\Gb$ (cf Proposition \ref{pr:seriesQ})
is independent of the filtration of $Q$ and we
denote it by $i_Q=i_Q^\Gb$.

Let $\XC\in\nabla_{\ell'}(\Gb,F)/\equiv$.
We have
$$\DS{\mathrm{br}_Q(e_\XC) = \sum_{\YC\in i_Q^{-1}(\XC)} e_\YC}.$$
\end{theo}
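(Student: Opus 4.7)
\emph{Setup.} I will first observe that, since $\mathrm{br}_Q$ restricts to a ring homomorphism $Z(\OC\Gb^F)\to Z(kC_\Gb(Q)^F)$ sending an orthogonal decomposition of $1$ to one, there is a unique partition $\nabla_{\ell'}(C_\Gb(Q),F)/\!\equiv\ =\bigsqcup_\XC S(\XC)$ with $\mathrm{br}_Q(e_\XC)=\sum_{\YC\in S(\XC)}e_\YC$. It will therefore suffice to prove that $S(\XC)=(i_{Q_\bullet}^\Gb)^{-1}(\XC)$ for a single choice of filtration $Q_\bullet$: the independence-of-filtration assertion then follows automatically, since $S(\XC)$ is intrinsic to the pair $(Q,\Gb)$.

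\emph{Reduction to the cyclic case.} The argument will proceed by induction on $|Q|$, the case $|Q|=1$ being trivial. In the inductive step, I choose a central subgroup $Z$ of $Q$ of order $\ell$ and a filtration $Q_\bullet$ with $Q_1=Z$. The identity $\mathrm{br}_Q^\Gb=\mathrm{br}_{Q/Z}^{C_\Gb(Z)}\circ\mathrm{br}_Z^\Gb$ on $Q$-fixed elements, together with the fact that, for this filtration, the construction of $i_{Q_\bullet}^\Gb$ in~\S\ref{sec:fixed points} factors as $i_Z^\Gb\circ i_{(Q/Z)_\bullet'}^{C_\Gb(Z)}$ (using $C_\Gb(Q)=C_{C_\Gb(Z)}(Q/Z)$), will reduce the statement to the case $Q=\langle g\rangle$ of order $\ell$. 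Crucially, $C_\Gb(Z)$ is generally disconnected even when $\Gb$ is connected, which is precisely why the disconnected framework of~\S\ref{se:rationalseries}--\ref{sec:fixed points} must be available throughout the induction.

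\emph{Cyclic case, disconnected reduction.} For $Q=\langle g\rangle$ of order $\ell$, I will next reduce from $\Gb$ to $\Gb^\circ$. The key point is that $e_\XC^\Gb=\sum_{\XC^\circ}e_{\XC^\circ}^{\Gb^\circ}$, where $\XC^\circ$ runs over the $\Gb^F$-orbit of rational series in $\nabla(\Gb^\circ,F)/\!\equiv$ constituting $\XC$. When $g$ does not stabilize $\XC^\circ$, the sum of Brauer images over the $g$-orbit vanishes, since $\mathrm{br}_Z(x)=\mathrm{br}_Z(gxg^{-1})$ forces the orbit sum to reduce to a scaled trace. For $g$-stable $\XC^\circ$, Proposition~\ref{prop:centralisateur}(a) identifies $C_\Gb(g)^\circ$ with $C_{\Gb^\circ}(g)$, and Corollary~\ref{coro:brauer-rational} guarantees that $i_Q^\Gb$ is compatible with the orbit correspondence $\YC\mapsto\YC^\circ$ on the centralizer side. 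The connected cyclic case then reduces to a reformulation of~\cite[Th\'eor\`eme~3.2]{BrMi}, once the map implicit there is identified with $(\Sb,\theta)\mapsto(C_{\Gb^\circ}(\Sb),\theta^+)$; this identification is a direct character computation using~(\ref{eq:fini-ell}) and~(\ref{eq:theta-plus-trivial}).

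\emph{Main obstacle.} The principal difficulty will be the compatibility at the interface of the two reductions, namely the associativity identity $i_{Q_\bullet}^\Gb=i_Z^\Gb\circ i_{(Q/Z)_\bullet'}^{C_\Gb(Z)}$. This requires unwinding the definition of $i_{Q_\bullet}^\Gb$ through the normalizer tower $\Gb_i=N_\Gb(Q_1\subset\cdots\subset Q_i)$ and showing that iterated application of the extension construction $\theta\mapsto\theta^+$ is independent of whether one centralizes one cyclic factor at a time or the whole group $Z$ in one step. Once this and the dual statement for the Brauer morphism are in hand, the induction closes and the theorem follows simultaneously with the independence of $i_{Q_\bullet}^\Gb$ from the filtration.
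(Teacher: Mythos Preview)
Your overall architecture (the partition observation in the setup, the induction on $|Q|$ via transitivity of $\mathrm{br}$, and deducing filtration-independence from the intrinsic nature of $S(\XC)$) is correct and matches the paper. The divergence is in how you handle the cyclic base case, and there is a genuine gap there.

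Your reduction from a disconnected $\Gb$ to $\Gb^\circ$ followed by an appeal to \cite[Th\'eor\`eme~3.2]{BrMi} only works when $g\in\Gb^{\circ F}$. When $g\notin\Gb^\circ$, the idempotent $e_{\XC^\circ}^{\Gb^\circ}$ may well be $g$-stable, but computing $\mathrm{br}_Q(e_{\XC^\circ}^{\Gb^\circ})$ is then a statement about an \emph{outer} $\ell$-automorphism of $\Gb^{\circ F}$, which is precisely what \cite{BrMi} does not cover. And this case is unavoidable in your induction: already for $\Gb=\mathrm{PGL}_3$, $\ell=3$, $q\equiv 1\pmod 3$, a non-toral elementary abelian $Q$ of rank $2$ has the property that after centralizing one generator $Z$, the other generator lies outside $C_\Gb(Z)^\circ$. (Relatedly, your assertion that Proposition~\ref{prop:centralisateur}(a) identifies $C_\Gb(g)^\circ$ with $C_{\Gb^\circ}(g)$ is false: the right-hand side is typically disconnected, and the correct statement $C_\Gb(g)^\circ=C_{\Gb^\circ}(g)^\circ$ does not help your reduction.)

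The paper circumvents this entirely by proving the cyclic case directly for disconnected $\Gb$, without reducing to $\Gb^\circ$ or invoking \cite{BrMi}. The argument is geometric: for $(\Sb,\theta)\in\YC$ with $i_Q(\YC)=\XC$, one lifts a Borel $\Bb_Q\supset\Sb$ of $C_\Gb(Q)$ to a $Q$-stable Borel $\Bb$ of $\Gb$ (Lemma~\ref{lem:bijtori}), and then computes $\brauer_{\Delta Q}\bigl(\Grm\Gamma_c(\Yb_\Bb,k)e_{\theta^+}\bigr)$ two ways using Proposition~\ref{prop:centralisateur}(e) and Theorem~\ref{th:blocksperf}. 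One obtains $\mathrm{br}_Q(e_\XC)e_\YC\cdot\Grm\Gamma_c(\Yb_{\Bb_Q},k)e_\theta$ on one side and the non-acyclic complex $\Grm\Gamma_c(\Yb_{\Bb_Q},k)e_\theta$ on the other, forcing $\mathrm{br}_Q(e_\XC)e_\YC\neq 0$; the partition argument you set up then finishes. This is the new input that makes the theorem work for disconnected groups, and it is exactly the step your proposal is missing.

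Finally, what you flag as the ``principal difficulty'' (the associativity $i_{Q_\bullet}^\Gb=i_Z^\Gb\circ i_{(Q/Z)_\bullet'}^{C_\Gb(Z)}$) is essentially built into the iterative definition of $i_{Q_\bullet}^\Gb$ preceding Proposition~\ref{pr:seriesQ}; it is bookkeeping, not the heart of the matter.
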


\bigskip

\begin{proof}
Assume first that $Q$ is cyclic.

Let $\YC\in i_Q^{-1}(\XC)$ and let
$(\Sb,\theta)\in\YC$.
Let $\Bb_Q$ be a Borel subgroup of $C_\Gb(Q)$ containing
$\Sb$. Note that $\Grm\G_c(\Yb_{\Bb_Q},k)e_\theta$ is not acyclic, because its class in $G_0(kC_{\Gb}(Q)^F)$ is
non-zero. We have
$\Grm\G_c(\Yb_{\Bb_Q},k)e_\theta\simeq e_\YC \Grm\G_c(\Yb_{\Bb_Q},k)e_\theta$.
Let $(\Sb^+,\theta^+)=i_Q(\Tb,\theta)\in\XC$ and let $\Bb$ be a $Q$-stable Borel subgroup of $\Gb$ containing $\Sb^+$
(cf Lemma \ref{lem:bijtori}).
We have 
\begin{align*}
\brauer_{\D Q}(\Grm\G_c(\Yb_{\Bb},k)e_{\theta^+})&\simeq
\brauer_{\D Q}(e_{\XC}\Grm\G_c(\Yb_{\Bb},k)e_{\theta^+})\simeq
\mathrm{br}_{\Delta Q}(e_{\XC}\otimes 1)\Grm\G_c(\Yb_{\Bb_Q},k)\mathrm{br}_{\Delta Q}(1\otimes e_{\theta^+})\\
&\simeq\mathrm{br}_Q(e_{\XC})\Grm\G_c(\Yb_{\Bb_Q},k)e_{\theta}\simeq
\mathrm{br}_Q(e_{\XC}) e_\YC\Grm\G_c(\Yb_{\Bb_Q},k)e_{\theta}.
\end{align*}
Similarly,
$$\brauer_{\D Q}(\Grm\G_c(\Yb_{\Bb},k)e_{\theta^+})\simeq
\Grm\G_c(\Yb_{\Bb_Q},k)e_{\theta}{\not=}0.$$
It follows that $\mathrm{br}_Q(e_{\XC}) e_\YC\not=0$.
Since $\sum_{\XC'\in\nabla_{\ell'}(\Gb,F)/\equiv}\mathrm{br}_Q(e_{\XC'})=
1=\sum_{\YC'\in\nabla_{\ell'}(C_{\Gb}(Q),F)/\equiv}e_{\YC'}$, we deduce that
$\mathrm{br}_Q(e_\XC) = \sum_{\YC\in i_Q^{-1}(\XC)} e_\YC$.

By transitivity of $\mathrm{br}_Q$, we obtain the formula for $\mathrm{br}_Q$ for a
general $Q$ by induction on $|Q|$, with $i_Q$ replaced by $i_{Q_\bullet}$. This shows
that actually $i_{Q_\bullet}$ is independent of the chosen filtration of $Q$.
\end{proof}

\begin{rema}
\label{re:indepfiltration}
Let
$Q=Q'\times Q''$ be a product of two cyclic groups of coprime orders. Fix a filtration
$Q_1=Q'$ and $Q_2=Q$. We have $i_{Q_\bullet}=i_Q$. It is easy to deduce now from
Theorem \ref{theo:brauer} that $i_{Q_\bullet}$ is independent of $Q$
for any nilpotent $p'$-group $Q$.
\end{rema}

\bigskip
Brou\'e-Michel's proof of Theorem \ref{theo:brauer} for $\Gb$ connected and $Q$ cyclic relies
on the compatibility of Deligne-Lusztig induction with generalized decomposition maps. This
does generalize to disconnected groups, as we explain below. A direct
approach along the lines of Brou\'e-Michel is possible, based on
the results of \cite{dm-nonc-2}.
While we will not use the results in the remaining part of this section,
they might be useful for character theoretic questions. 

\smallskip
Let $\pi$ be a set of prime numbers not containing $p$. An element of finite order of $\Gb$ is 
a $\pi$-element (resp. a $\pi'$-element) if its order is a product of primes in $\pi$
(resp. not in $\pi$).

Let $g$ be an automorphism of finite order of an algebrac variety $\Xb$. Write 
$g=lx=xl$ where $l$ is a $\pi$-element and $x$ a $\pi'$-element.
The following result is an immediate consequence of~\cite[Theorem~3.2]{DL}:
\equat\label{eq:dl}
\sum_{i \ge 0} (-1)^i \Tr(g, \Hrm_c^i(\Xb,\qlb)) = \sum_{i \ge 0} (-1)^i \Tr(x,\Hrm_c^i(\Xb^l,\qlb)).
\endequat
\begin{proof}
Write $x=su=us$, where $s$ has order prime to $p$ and $u$ has order a power of $p$. Then $l$, $s$ and $u$ 
commute and have coprime orders. By~\cite[Theorem~3.2]{DL}, we have
$$\sum_{i \ge 0} (-1)^i \Tr(g, \Hrm_c^i(\Xb,\qlb)) = \sum_{i \ge 0} (-1)^i \Tr(u,\Hrm_c^i(\Xb^{ls},\qlb))$$
$$\sum_{i \ge 0} (-1)^i \Tr(x, \Hrm_c^i(\Xb^l,\qlb)) = \sum_{i \ge 0} (-1)^i \Tr(u,\Hrm_c^i((\Xb^l)^s,\qlb)).
\leqno{\text{and}}$$
So the result follows from the fact that $\Xb^{ls}=(\Xb^l)^s$ because $\langle ls \rangle = \langle l, s\rangle$.
\end{proof}

\bigskip

Given $H$ a finite group and $h\in H$ a $\pi$-element, we have a generalized decomposition
map from the vector space of class functions $H\to K$ to the vector space of
class functions on $\pi'$-elements of $C_H(h)$ given by
$d_h^H(f)(u)=f(hu)$ for $u$ a $\pi'$-element of $C_H(h)$.

\smallskip
The following result generalizes the character formula for 
$\Rrm_{\Lb \subset \Pb}^\Gb$~\cite[Proposition 2.6]{dm-nonc}, which corresponds
to the case where $\pi$ is the set of all primes distinct from $p$, 

\bigskip

\begin{prop}\label{prop:dec-rlg}
Let $\Pb$ be a parabolic subgroup of $\Gb$, let $\Vb$ be its unipotent radical, let 
$\Lb$ be a Levi complement of $\Pb$ and assume that $\Lb$ is $F$-stable. Let $g \in \Gb^F$ 
be a $\pi$-element. We have
$$d_{g}^{\Gb^F} \circ \Rrm_{\Lb \subset \Pb}^\Gb = 
\sum_{\substack{x \in C_\Gb(g)^F\backslash \Gb^F/\Lb^F \\ g \in \lexp{x}{\Lb}}} 
\Rrm_{C_{\lexp{x}{\Lb}}(g) \subset C_{\lexp{x}{\Pb}}(g)}^{C_\Gb(g)} \circ d_g^{\lexp{x}{\Lb^F}} \circ x_*.
$$
%Here, $[C_\Gb(g)^F\backslash \Gb^F/\Lb^F]$ denotes a set of representatives of $C_\Gb(g)^F\backslash \Gb^F/\Lb^F$.
\end{prop}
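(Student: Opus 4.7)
The plan is to evaluate both sides at an arbitrary $\pi'$-element $u \in C_\Gb(g)^F$ and show they agree. For the left-hand side, the Lefschetz fixed point formula gives
$$d_g^{\Gb^F}(\Rrm_{\Lb\subset\Pb}^\Gb(\chi))(u) = \Rrm_{\Lb\subset\Pb}^\Gb(\chi)(gu) = \frac{1}{|\Lb^F|}\sum_{l \in \Lb^F} \chi(l)\,\Lrm\bigl((gu,l^{-1}),\Yb_\Vb\bigr),$$
where $\Lrm$ denotes the alternating sum of traces on $\ell$-adic cohomology. Writing $l = l_\pi l_{\pi'}$ as the product of its commuting $\pi$- and $\pi'$-parts in $\Lb^F$, the automorphism $(gu,l^{-1}) = (g,l_\pi^{-1})(u,l_{\pi'}^{-1})$ of $\Yb_\Vb$ admits this as its own $\pi$-/$\pi'$-Jordan decomposition, so (\ref{eq:dl}) yields
$$\Lrm\bigl((gu,l^{-1}),\Yb_\Vb\bigr) = \Lrm\bigl((u,l_{\pi'}^{-1}),\Yb_\Vb^{(g,l_\pi^{-1})}\bigr).$$

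The core geometric step is then to identify the fixed-point subvariety. A coset $h\Vb \in \Yb_\Vb$ is fixed by $(g,l_\pi^{-1})$ precisely when $h^{-1}gh \in \Vb l_\pi$; in particular $h^{-1}gh$ is a $\pi$-element of $\Pb$ with Levi part $l_\pi$. Since $\Vb$ is a $p$-group and $p \notin \pi$, a Schur-Zassenhaus argument applied inside the subgroup $\Vb\rtimes\langle l_\pi\rangle$ of $\Pb$ produces $v \in \Vb$ with $(hv)^{-1}g(hv) = l_\pi$, so after modifying $h$ within its $\Vb$-coset one may assume $h^{-1}gh = l_\pi$ exactly. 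Hence $\Yb_\Vb^{(g,l_\pi^{-1})}$ is empty unless $l_\pi$ is $\Gb^F$-conjugate to $g$ via some $x \in \Gb^F$ with $g \in \lexp{x}{\Lb}$. For such an $x$, conjugation by $x^{-1}$ combined with Proposition~\ref{prop:centralisateur}(b),(d), applied to the action of the $p'$-group $\langle l_\pi\rangle$ on $\Gb$, shows both that $C_{\lexp{x}{\Pb}}(g)$ is a parabolic subgroup of the reductive group $C_\Gb(g)$ with $F$-stable Levi complement $C_{\lexp{x}{\Lb}}(g)$ and unipotent radical $C_{\lexp{x}{\Vb}}(g)$, and that $\Vb F(\Vb) \cap C_\Gb(l_\pi) = C_\Vb(l_\pi) F(C_\Vb(l_\pi))$. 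This yields an isomorphism
$$\Yb_\Vb^{(g,l_\pi^{-1})}\;\simeq\;\Yb^{C_\Gb(g)}_{C_{\lexp{x}{\Vb}}(g)},$$
equivariant for the left action of $C_\Gb(g)^F$ and the right action of $C_{\lexp{x}{\Lb}}(g)^F$, the latter being identified with the right action of $C_\Lb(l_\pi)^F$ via $x_*$.

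The last step is to reassemble the sum. The $\pi$-elements $l_\pi \in \Lb^F$ for which $\Yb_\Vb^{(g,l_\pi^{-1})}$ is non-empty fall into $\Lb^F$-conjugacy orbits, which biject with the double cosets $x \in C_\Gb(g)^F\backslash\Gb^F/\Lb^F$ satisfying $g\in\lexp{x}{\Lb}$; each orbit has size $|\Lb^F|/|C_{\lexp{x}{\Lb}}(g)^F|$. On the orbit corresponding to $x$ with $l_\pi = x^{-1}gx$, the $l_{\pi'}$ variable runs over $C_\Lb(l_\pi)^F_{\pi'}$, which maps bijectively onto $C_{\lexp{x}{\Lb}}(g)^F_{\pi'}$ under conjugation by $x$, and $\chi(l_\pi l_{\pi'}) = (d_g^{\lexp{x}{\Lb^F}}(x_*\chi))(xl_{\pi'}x^{-1})$. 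The orbit-size factor cancels the $|\Lb^F|^{-1}$ of the Lefschetz formula and produces the $|C_{\lexp{x}{\Lb}}(g)^F|^{-1}$ that appears in the definition of $\Rrm^{C_\Gb(g)}_{C_{\lexp{x}{\Lb}}(g)\subset C_{\lexp{x}{\Pb}}(g)}$, yielding precisely the right-hand side of the proposition. The main obstacle is the equivariant identification of the fixed-point variety and the careful bookkeeping of double cosets and normalization constants in the disconnected setting; Proposition~\ref{prop:centralisateur} furnishes the structural input needed for the connected-case argument (as in the proof of~\cite[Proposition~2.6]{dm-nonc}) to go through.
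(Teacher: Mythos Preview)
Your approach is essentially the same as the paper's: expand the Lefschetz number, apply \eqref{eq:dl} to pass to the fixed-point variety $\Yb_\Vb^{(g,l_\pi^{-1})}$, identify that variety with a Deligne--Lusztig variety for $C_\Gb(g)$ via Proposition~\ref{prop:centralisateur}, and convert the sum over $\pi$-elements of $\Lb^F$ into a sum over double cosets. The paper packages the ``fixed points are nonempty only when $l_\pi$ is $\Gb^F$-conjugate to $g$'' step as a citation of Lemma~\ref{lem:delta} (whose proof, though stated for $\ell$-subgroups, works verbatim for finite $p'$-groups), whereas you redo that argument inline; and the paper invokes Proposition~\ref{prop:centralisateur} globally (effectively part~(e)) for the isomorphism $\Yb_\Vb^{(g,l)}\simeq\Yb_{C_{{}^x\Vb}(g)}^{C_\Gb(g)}$, whereas you cite the constituent parts (b),(d).

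One point to tighten: your Schur--Zassenhaus step produces $hv\in\Gb$ with $(hv)^{-1}g(hv)=l_\pi$, but you then assert $\Gb^F$-conjugacy. The missing line is that, since $g,l_\pi$ are $F$-fixed, $(hv)^{-1}F(hv)\in C_\Gb(l_\pi)\cap \Vb\cdot F(\Vb)=C_\Vb(l_\pi)\cdot F(C_\Vb(l_\pi))\subset C_\Gb^\circ(l_\pi)$ (the identity you already quote from Proposition~\ref{prop:centralisateur}(d)), so Lang's theorem in $C_\Gb^\circ(l_\pi)$ yields $x\in\Gb^F$ with $x^{-1}gx=l_\pi$. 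With that inserted, your argument and the paper's coincide.
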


\bigskip

\begin{proof}
Given $H$ a finite group, we denote by $H_\pi$ (respectively $H_{\pi'}$) the set of $\pi$-elements 
(resp. $\pi'$-elements) of $H$. The proof follows essentially 
the same argument as the proof of the character formula~(see for instance~\cite[Proposition~12.2]{dmbook}).
Let $\l$ be a class function on $\Lb^F$ and let 
$u \in C_\Gb(g)_{\pi'}^F$ be a $\pi'$-element. 
By definition of the Deligne-Lusztig induction and by using (\ref{eq:dl}), we get
\eqna 
\Rrm_{\Lb \subset \Pb}^\Gb(\l)(gu) &=& 
\DS{\frac{1}{|\Lb^F|} \sum_{l \in \Lb_\pi^F} \sum_{v \in C_\Lb(l)_{\pi'}^F} \l(lv) 
\sum_{i \ge 0} (-1)^i \Tr((gu,lv),\Hrm_c^i(\Yb_\Vb,\qlb)).}\\
&=&\DS{\frac{1}{|\Lb^F|} \sum_{l \in \Lb_\pi^F} \sum_{v \in C_\Lb(l)_{\pi'}^F} \l(lv) 
\sum_{i \ge 0} (-1)^i \Tr((u,v),\Hrm_c^i(\Yb_\Vb^{(g,l)},\qlb)).}\\
\endeqna
But it follows from Lemma~\ref{lem:delta} that $\Yb_\Vb^{(g,l)} \neq \vide$ if and only if 
there exists $x \in \Gb^F$ such that $x^{-1}gx=l$. Moreover, in this case, 
then $\Yb_\Vb^{(g,l)} \simeq \Yb_{C_{\lexp{x}{\Vb}}(g)}^{C_\Gb(g)}$ by Proposition~\ref{prop:centralisateur}. 
Therefore, 
\eqna
\Rrm_{\Lb \subset \Pb}^\Gb(\l)(gu) &=&
\DS{\frac{1}{|\Lb^F|\cdot|C_\Gb(g)^F|} \sum_{\substack{x \in \Gb^F \\ g \in \lexp{x}{\Lb}}}  
\sum_{v \in C_\Lb(l)_{\pi'}^F} \l(x^{-1}gxv) 
\sum_{i \ge 0} (-1)^i \Tr((u,v),\Hrm_c^i(\Yb_\Vb^{(g,x^{-1}gx)},\qlb)).}\\
&=&
\DS{\frac{1}{|\Lb^F|\cdot|C_\Gb(g)^F|} \sum_{\substack{x \in \Gb^F \\ g \in \lexp{x}{\Lb}}}  
\sum_{v \in C_{\lexp{x}{\Lb}}(g)_{\pi'}^F} d_g^{\lexp{x}{\Lb}}(x_*(\l))(v) 
\sum_{i \ge 0} (-1)^i \Tr((u,v),\Hrm_c^i(\Yb_{C_{\lexp{x}{\Vb}}(g)}^{C_\Gb(g)},\qlb)).}
\endeqna
Now, if $x \in \Gb^F$ is such that $g \in \lexp{x}{\Lb}$, then 
$$|C_\Gb(g)^F x \Lb^F| = \frac{|C_\Gb(g)^F|\cdot |\Lb^F|}{|C_{\lexp{x}{\Lb}}(g)^F|}.$$
So the result follows.
\end{proof}

\def\closed{{\mathrm{cl}}}
\def\open{{\mathrm{op}}}

\section{Comparing $\Yb$-varieties}
\label{se:comparing}

\medskip

From now on, and until the end of this article, we assume $\Gb$ is connected. 

\smallskip
Deligne-Lusztig varieties can be associated to sequences of elements of $W$, and there is
a canonical isomorphism $\Xb(v,w)\xrightarrow{\sim}\Xb(vw)$ when $l(vw)=l(v)+l(w)$. We will show 
in this section that while such an isomorphism fails when $l(vw){\not=}l(v)+l(w)$, its consequence
on cohomology remains true for local systems associated to characters of tori satisfying
certain regularity conditions with respect to $(v,w)$.

\smallskip
In this section we will prove the preliminary statements necessary for our 
proof of Theorem \ref{thD}. Roughly speaking, the main 
result of this section (Theorem~\ref{theo:theo-d-borel}) is 
almost equivalent to Theorem \ref{thD} whenever $\Lb$ is a maximal torus. 
As Theorem \ref{thD} will be proved by reduction to this case, 
Theorem~\ref{theo:theo-d-borel} may be seen as the crucial step. 
% In the course of the proof, we will also obtain Corollary~E as a consequence 
% of some of our geometrical results.

In this section \S\ref{se:comparing},
we fix an $F$-stable maximal 
torus $\Tb$ contained in an $F$-stable Borel subgroup $\Bb$ and we denote by $\Ub$ its
unipotent radical. We put $W=N_\Gb(\Tb)/\Tb$. We denote by $\Phi$ the associated root system,
by $\Phi^+$ the set of positive roots and by $\Delta$ the basis of $\Phi$.
Let $\alpha\in\Phi$, we denote by $s_\alpha\in W$ the corresponding
reflection and by $\alpha^\vee\in\Phi^\vee$ the corresponding coroot.
We put $\Tb_{\alpha^\vee}=\mathrm{Im}(\alpha^\vee)\subset\Tb$ and we denote
by $\Ub_\alpha$ the one-parameter subgroup of $\Gb$
normalized by $\Tb$ and associated with $\alpha$. We define $\Gb_\alpha$
as the subgroup of $\Gb$ generated by $\Ub_\alpha$ and $\Ub_{-\alpha}$.

\bigskip

\subsection{Dimension estimates and further}\label{appendice:dimension}

\medskip

We fix in this section four parabolic subgroups 
$\Pb_1$, $\Pb_2$, $\Pb_3$ and $\Pb_4$ admitting a common Levi complement $\Lb$. We denote by 
$\Vb_1$, $\Vb_2$, $\Vb_3$ and $\Vb_4$ the unipotent radicals of $\Pb_1$, $\Pb_2$, $\Pb_3$ and
$\Pb_4$ respectively.

\bigskip

We define the varieties
$$\YCB_{\! 1,2,3}=\{(g_1\Vb_1,g_2\Vb_2,g_3\Vb_3) \in \Gb/\Vb_1 \times \Gb/\Vb_2 \times \Gb/\Vb_3~|~
g_1^{-1}g_2 \in \Vb_1 \cdot \Vb_2\text{ and }g_2^{-1}g_3 \in \Vb_2 \cdot \Vb_3\},$$
$$\YCB_{\! 1,2,3}^\closed=\{(g_1\Vb_1,g_2\Vb_2,g_3\Vb_3) \in \YCB_{\! 1,2,3}~|~
g_1^{-1}g_3 \in \Vb_1 \cdot \Vb_3\}$$
$$\YCB_{\! 1,3}=\{(g_1\Vb_1,g_3\Vb_3) \in \Gb/\Vb_1 \times \Gb/\Vb_3~|~
g_1^{-1}g_3 \in \Vb_1 \cdot \Vb_3\}.\leqno{\text{and}}$$
We denote by $i_{1,3} : \YCB_{\! 1,2,3}^\closed \injto \YCB_{\! 1,2,3}$ the
closed immersion and we define
$$\fonction{\pi_{1,3}}{\YCB_{\! 1,2,3}^\closed}{\YCB_{\! 1,3}}{(g_1\Vb_1,g_2\Vb_2,g_3\Vb_3)}{
(g_1\Vb_1,g_3\Vb_3).}$$
All these varieties are endowed with a diagonal action of $\Gb$, and the morphisms $i_{1,3}$ and 
$\pi_{1,3}$ are $\Gb$-equivariant.

\begin{prop}\label{prop:dim}
We have:
\begin{itemize}
\itemth{a} $\dim(\Vb_1)=\dim(\Vb_2)=\dim(\Vb_3)$.

\itemth{b} $\dim(\YCB_{\! 1,2,3}) - \dim(\YCB_{\! 1,3}) = \dim(\Vb_1) + \dim(\Vb_1 \cap \Vb_3) 
- \dim(\Vb_1 \cap \Vb_2) - \dim(\Vb_2 \cap \Vb_3)$.

\itemth{c} $\dim(\YCB_{\! 1,2,3}) - \dim(\YCB_{\! 1,3}) = 
2\bigl(\dim(\Vb_1 \cap \Vb_3) - \dim(\Vb_1 \cap \Vb_2 \cap \Vb_3)\bigr)$.
\end{itemize}
\end{prop}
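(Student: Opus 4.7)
The plan is to prove (a) by a root-system counting argument, (b) by a fiber-dimension computation on a natural projection, and (c) by reducing to a combinatorial identity that is handled via the involution $\alpha \mapsto -\alpha$ exchanging a unipotent radical with its opposite with respect to $\Lb$.

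For (a), since $\Pb_1, \Pb_2, \Pb_3$ share the Levi complement $\Lb$, each $\Vb_i$ is (as a variety) a product of the root subgroups $\Ub_\alpha$ indexed by a subset $R_i^+ \subseteq \Phi \setminus \Phi_\Lb$ such that $R_i^+ \sqcup (-R_i^+) = \Phi \setminus \Phi_\Lb$, where $\Phi_\Lb$ is the root system of $\Lb$. Hence $\dim \Vb_i = |R_i^+|$ equals half the number of non-Levi roots, independently of $i$.

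For (b), I would use the $\Gb$-equivariant surjective projection $\YCB_{\! 1,2,3} \to \Gb/\Vb_2$ sending $(g_1\Vb_1, g_2\Vb_2, g_3\Vb_3)$ to $g_2\Vb_2$. Its fiber over the base point identifies with $(\Vb_1\Vb_2/\Vb_1) \times (\Vb_2\Vb_3/\Vb_3) \simeq \Vb_2/(\Vb_1 \cap \Vb_2) \times \Vb_2/(\Vb_2 \cap \Vb_3)$, yielding
\[
\dim \YCB_{\! 1,2,3} = \dim \Gb + \dim \Vb_2 - \dim(\Vb_1 \cap \Vb_2) - \dim(\Vb_2 \cap \Vb_3).
\]
The analogous projection $\YCB_{\! 1,3} \to \Gb/\Vb_1$ has fiber $\Vb_1\Vb_3/\Vb_3 \simeq \Vb_1/(\Vb_1 \cap \Vb_3)$, giving $\dim \YCB_{\! 1,3} = \dim \Gb - \dim(\Vb_1 \cap \Vb_3)$. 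Subtracting and substituting $\dim\Vb_2 = \dim\Vb_1$ from (a) yields (b).

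For (c), I would translate the claim into a cardinality identity between intersections of root subsets. From (a), $\dim(\Vb_{i_1} \cap \cdots \cap \Vb_{i_k}) = |R_{i_1}^+ \cap \cdots \cap R_{i_k}^+|$, and splitting $\Phi \setminus \Phi_\Lb = R_j^+ \sqcup (-R_j^+)$ gives the key refinements $\dim \Vb_i - \dim(\Vb_i \cap \Vb_j) = \dim(\Vb_i \cap \Vb_j^{\opp})$ and $\dim(\Vb_i \cap \Vb_k) - \dim(\Vb_i \cap \Vb_j \cap \Vb_k) = \dim(\Vb_i \cap \Vb_j^{\opp} \cap \Vb_k)$, where $\Vb_j^{\opp}$ denotes the unipotent radical of the parabolic opposite to $\Pb_j$ with respect to $\Lb$. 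The involution $\alpha \mapsto -\alpha$ on $\Phi \setminus \Phi_\Lb$ further identifies $\dim(\Vb_1 \cap \Vb_2^{\opp} \cap \Vb_3^{\opp}) = \dim(\Vb_1^{\opp} \cap \Vb_2 \cap \Vb_3)$. Assembling these relations, both sides of (c) reduce to $2\dim(\Vb_1 \cap \Vb_2^{\opp} \cap \Vb_3)$.

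The main obstacle: no single step presents a real difficulty, but the delicate point is the combinatorial bookkeeping in (c); routing through opposite unipotent radicals and the involution $\alpha \mapsto -\alpha$ keeps the cancellation transparent.
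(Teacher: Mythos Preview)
Your argument is correct. Parts (a) and (b) are essentially identical to the paper's proof: the paper also computes $\dim\YCB_{1,2,3}$ and $\dim\YCB_{1,3}$ by a fiber-dimension count (projecting to the first coordinate rather than the middle one, which makes no difference), and (a) is stated there as ``well-known''.

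For (c), the paper takes a different but equally short combinatorial route. Instead of introducing opposite unipotent radicals, it writes $\Phi_1 \cup (-\Phi_1) = (\Phi_1 \cup \Phi_2 \cup \Phi_3) \cup \bigl(-(\Phi_1 \cap \Phi_2 \cap \Phi_3)\bigr)$, hence $2|\Phi_1| = |\Phi_1 \cup \Phi_2 \cup \Phi_3| + |\Phi_1 \cap \Phi_2 \cap \Phi_3|$, and then applies three-set inclusion--exclusion to $|\Phi_1 \cup \Phi_2 \cup \Phi_3|$; combining with (a) and (b) gives (c) directly. Your route through $\Vb_j^{\opp}$ and the involution $\alpha \mapsto -\alpha$ is a repackaging of the same root-set identity: your key cancellation $\dim(\Vb_1 \cap \Vb_2^{\opp} \cap \Vb_3^{\opp}) = \dim(\Vb_1^{\opp} \cap \Vb_2 \cap \Vb_3)$ is precisely what the involution encodes, and the decomposition $R_i^+ = (R_i^+ \cap R_j^+) \sqcup (R_i^+ \cap (-R_j^+))$ does the work that inclusion--exclusion does in the paper. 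Both arguments are of the same length and depth; the paper's has the mild advantage of not needing to name the opposite parabolics, while yours makes the symmetry under $\alpha \mapsto -\alpha$ more explicit.
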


\bigskip

\begin{proof}
(a) is well-known. Also, 
\eqna
\dim(\YCB_{\! 1,2,3})&=&\dim(\Gb/\Vb_1) + \dim(\Vb_1 \cdot \Vb_2/\Vb_2) + \dim(\Vb_2 \cdot \Vb_3 / \Vb_3) \\
&=& \dim(\Gb/\Vb_1) + \dim(\Vb_1)-\dim(\Vb_1 \cap \Vb_2) + \dim(\Vb_2)-\dim(\Vb_2 \cap \Vb_3) 
\endeqna
while
\eqna
\dim(\YCB_{\! 1,3})&=&\dim(\Gb/\Vb_1) + \dim(\Vb_1 \cdot \Vb_3/\Vb_3) \\
&=& \dim(\Gb/\Vb_1) + \dim(\Vb_1)-\dim(\Vb_1 \cap \Vb_3).
\endeqna
So (b) follows from the two equalities (and from (a)).

\medskip

Let us now prove (c). For this, we may assume that $\Tb \subset \Lb$. Let $\Phi_i$ denote the 
set of roots $\a \in \Phi$ such that $\Ub_\a \subset \Vb_i$. Then 
$\Phi_1 \cup -\Phi_1 = \Phi_2 \cup -\Phi_2 = \Phi_3 \cup - \Phi_3=\Phi\setminus\Phi_\Lb$. 
In particular, 
$$\Phi_1 \cup -\Phi_1 = (\Phi_1 \cup \Phi_2 \cup \Phi_3) \cup -(\Phi_1 \cap \Phi_2 \cap \Phi_3).$$
Therefore
$$2|\Phi_1|=|\Phi_1 \cup \Phi_2 \cup \Phi_3| + |\Phi_1 \cap \Phi_2 \cap \Phi_3|.$$
On the other hand, by general facts about the cardinality of a union of finite sets,
$$|\Phi_1 \cup \Phi_2 \cup \Phi_3|=|\Phi_1|+|\Phi_2|+|\Phi_3|
-|\Phi_1 \cap \Phi_2|-|\Phi_1 \cap \Phi_3|-|\Phi_2 \cap \Phi_3|+|\Phi_1 \cap \Phi_2 \cap \Phi_3|.$$
Hence (c) follows from (a), (b) and from these last two equalities.
\end{proof}

\bigskip

Let $d_{1,3}=\dim(\Vb_1 \cap \Vb_3) - \dim(\Vb_1 \cap \Vb_2 \cap \Vb_3)$. By Proposition~\ref{prop:dim}, 
we have
$$d_{1,3}=\frac{1}{2}\bigl(\dim(\YCB_{\! 1,2,3}) - \dim(\YCB_{\! 1,3})\bigr).$$
Let 
$$\fonction{\kappa_{1,3}}{\Gb/(\Vb_1 \cap \Vb_3)}{\YCB_{\! 1,3}}{g(\Vb_1\cap\Vb_3)}{(g\Vb_1,g\Vb_3)}$$
$$\fonction{\kappa_{1,2,3}^\closed}{\Gb/(\Vb_1 \cap \Vb_2 \cap \Vb_3)}{
\YCB_{\! 1,2,3}^\closed}{g(\Vb_1\cap\Vb_2\cap\Vb_3)}{(g\Vb_1,g\Vb_2,g\Vb_3).}\leqno{\text{and}}$$
Both maps are $\Gb$-equivariant morphisms of varieties.

\bigskip

\begin{prop}\label{prop:iso}
The maps $\kappa_{1,3}$ and $\kappa_{1,2,3}^\closed$ are isomorphisms of varieties.
\end{prop}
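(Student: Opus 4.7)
The plan is to view each of $\kappa_{1,3}$ and $\kappa_{1,2,3}^\closed$ as the orbit map for the diagonal action of $\Gb$ on the target. Both maps are $\Gb$-equivariant (with $\Gb$ acting by left translation on the source and diagonally on the target) and manifestly injective, since the cosets of $\Vb_1\cap\Vb_3$ and of $\Vb_1\cap\Vb_2\cap\Vb_3$ are recovered from their images. As both stabilizers are smooth connected unipotent subgroups, each $\kappa$ will be an isomorphism of algebraic varieties once we show that its target is a single $\Gb$-orbit.

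For $\kappa_{1,3}$, the required transitivity is immediate: given $(g_1\Vb_1,g_3\Vb_3)\in\YCB_{1,3}$, factor $g_1^{-1}g_3=v_1v_3$ with $v_i\in\Vb_i$ (possible by the defining condition); then $g:=g_1v_1=g_3v_3^{-1}$ satisfies $g\cdot(\Vb_1,\Vb_3)=(g_1\Vb_1,g_3\Vb_3)$. For $\kappa_{1,2,3}^\closed$, this preliminary transitivity lets us reduce any point to the form $(\Vb_1,g_2\Vb_2,\Vb_3)$, and the remaining conditions force $g_2\in\Vb_1\Vb_2\cap\Vb_3\Vb_2$. Transitivity will then follow from the key identity
$$\Vb_1\Vb_2\cap\Vb_3\Vb_2\;=\;(\Vb_1\cap\Vb_3)\,\Vb_2,$$
for given such an identity, writing $g_2=hw$ with $h\in\Vb_1\cap\Vb_3$ and $w\in\Vb_2$ gives $h\cdot(\Vb_1,\Vb_2,\Vb_3)=(\Vb_1,g_2\Vb_2,\Vb_3)$.

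To prove the identity, I will fix a maximal torus $\Tb\subset\Lb$ and let $\Pb_2^-$ be the parabolic opposite to $\Pb_2$ with Levi $\Lb$, whose unipotent radical $\Vb_2^-$ satisfies $\Vb_2\cap\Vb_2^-=1$. Any $\Tb$-stable closed connected unipotent subgroup $\Vb\subset\Gb$ whose $\Tb$-roots lie in $\Phi\setminus\Phi_\Lb=\Phi_2\sqcup\Phi_2^-$ then admits a product decomposition $\Vb=(\Vb\cap\Vb_2^-)(\Vb\cap\Vb_2)$: injectivity of the multiplication follows from $\Vb_2\cap\Vb_2^-=1$, and surjectivity from a dimension count via the root decomposition. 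Applied to $\Vb_1$, $\Vb_3$, and to the $\Tb$-stable intersection $\Vb_1\cap\Vb_3$ (whose root set is $\Phi_1\cap\Phi_3$), and combined with the trivial $(\Vb\cap\Vb_2)\Vb_2=\Vb_2$, this yields $\Vb_i\Vb_2=(\Vb_i\cap\Vb_2^-)\Vb_2$ and $(\Vb_1\cap\Vb_3)\Vb_2=(\Vb_1\cap\Vb_3\cap\Vb_2^-)\Vb_2$. Finally, because $\Vb_2^-\hookrightarrow\Gb/\Vb_2$ is an embedding (again by $\Vb_2\cap\Vb_2^-=1$), one has $A\Vb_2\cap B\Vb_2=(A\cap B)\Vb_2$ for all subsets $A,B\subset\Vb_2^-$; taking $A=\Vb_1\cap\Vb_2^-$ and $B=\Vb_3\cap\Vb_2^-$ gives $\Vb_1\Vb_2\cap\Vb_3\Vb_2=(\Vb_1\cap\Vb_3\cap\Vb_2^-)\Vb_2$, which matches the previous expression.

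The main obstacle, and essentially the only non-trivial step, is the introduction of the opposite parabolic $\Pb_2^-$; once this is in place, the rest is a short chain of elementary manipulations with products and intersections of unipotent subgroups, using only that $\Vb_2\cap\Vb_2^-=1$ and the root-space decomposition of $\Tb$-stable unipotent subgroups.
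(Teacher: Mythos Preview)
Your proof is correct and follows essentially the same route as the paper's. Both arguments reduce $\kappa_{1,2,3}^\closed$ to proving the set-theoretic identity $\Vb_1\Vb_2\cap\Vb_3\Vb_2=(\Vb_1\cap\Vb_3)\Vb_2$, and both establish it by introducing the opposite unipotent radical $\Vb_2^-$ and using the product decomposition $\Vb_i=(\Vb_i\cap\Vb_2^-)(\Vb_i\cap\Vb_2)$ together with the injectivity of $\Vb_2^-\hookrightarrow\Gb/\Vb_2$. The only cosmetic difference is that the paper phrases the first reduction as ``$\kappa_{1,2,3}^\closed$ is a closed immersion, so it suffices to prove surjectivity'', whereas you frame it as an orbit map with smooth stabilizer; and the paper obtains the inclusion $(\Vb_1\cap\Vb_3)\Vb_2\subset(\Vb_1\cap\Vb_3\cap\Vb_2^-)\Vb_2$ as an immediate consequence of the first equality rather than by invoking the decomposition for $\Vb_1\cap\Vb_3$ separately.
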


\bigskip

\begin{proof}
The fact that $\kappa_{1,3}$ is an isomorphism is clear. It is also clear that 
$\kappa_{1,2,3}^\closed$ is a closed immersion. It is so sufficient to prove 
that $\kappa_{1,2,3}^\closed$ is surjective. 

So, let $(g_1\Vb_1,g_2\Vb_2,g_3\Vb_3) \in \YCB_{\! 1,2,3}^\closed$. Using the 
$\Gb$-action and the fact that $\kappa_{1,3}$ is an isomorphism, 
we may assume that $g_1=g_3=1$. Therefore, 
$$g_2 \in (\Vb_1 \cdot \Vb_2) \cap (\Vb_3 \cdot \Vb_2).$$
Given $i\in\{1,3\}$, the multiplication map
$(\Vb_i\cap\Vb_2^-)\times (\Vb_i\cap\Vb_2)\to \Vb_i$
is an isomorphism of varieties, since $\Vb_i$ and $\Vb_2$ have a common Levi complement. Here,
$\Vb_2^-$ denotes the unipotent radical of the parabolic subgroup opposite to $\Pb_2$.
It follows that
$$(\Vb_1 \cdot \Vb_2) \cap (\Vb_3 \cdot \Vb_2)=(\Vb_1\cap\Vb_3\cap\Vb_2^-)\cdot\Vb_2=(\Vb_1 \cap \Vb_3) \cdot \Vb_2.$$
So there exists $h \in \Vb_1 \cap \Vb_3$ such that $h\Vb_2=g_2\Vb_2$. It is then 
clear that $(g_1\Vb_1,g_2\Vb_2,g_3\Vb_3)=\kappa_{1,2,3}^\closed(h)$, as desired.
\end{proof}

\bigskip

\begin{coro}\label{coro:dim-closed}
The map $\pi_{1,3}$ is a smooth morphism with fibers isomorphic to the affine
space of dimension $d_{1,3}$. Moreover, 
$$\dim(\YCB_{\! 1,2,3}) - \dim(\YCB_{\! 1,2,3}^\closed)
=\dim(\YCB_{\! 1,2,3}^\closed) - \dim(\YCB_{\! 1,3})=d_{1,3}.$$
\end{coro}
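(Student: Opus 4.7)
The plan is to transport the map $\pi_{1,3}$ across the isomorphisms $\kappa_{1,3}$ and $\kappa_{1,2,3}^\closed$ from Proposition~\ref{prop:iso}, which identify $\YCB_{\! 1,3}$ with $\Gb/(\Vb_1 \cap \Vb_3)$ and $\YCB_{\! 1,2,3}^\closed$ with $\Gb/(\Vb_1 \cap \Vb_2 \cap \Vb_3)$. A direct check on the explicit formulas shows that under these identifications $\pi_{1,3}$ becomes the canonical projection
$$\Gb/(\Vb_1 \cap \Vb_2 \cap \Vb_3) \longto \Gb/(\Vb_1 \cap \Vb_3)$$
of homogeneous $\Gb$-spaces. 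Since this map is $\Gb$-equivariant between transitive $\Gb$-varieties, to show it is smooth with the asserted fibers, it suffices to analyze the single fiber over the base-point, which is $(\Vb_1 \cap \Vb_3)/(\Vb_1 \cap \Vb_2 \cap \Vb_3)$.

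Next I would identify the structure of this fiber. Choose a maximal torus $\Tb \subset \Lb$, and let $\Phi_i$ denote the set of roots $\alpha \in \Phi$ with $\Ub_\alpha \subset \Vb_i$, as in the proof of Proposition~\ref{prop:dim}(c). Since $\Vb_2$ and its opposite $\Vb_2^-$ provide the two halves $\Phi_2 \sqcup (-\Phi_2)$ of $\Phi \setminus \Phi_\Lb$, the set $\Phi_1 \cap \Phi_3$ decomposes as
$$\Phi_1 \cap \Phi_3 = (\Phi_1 \cap \Phi_2 \cap \Phi_3) \sqcup (\Phi_1 \cap (-\Phi_2) \cap \Phi_3).$$
For any ordering of the positive roots in $\Phi_1 \cap \Phi_3$ that lists those in $\Phi_1 \cap (-\Phi_2) \cap \Phi_3$ first, the standard ``big cell'' argument (using that $\Vb_1 \cap \Vb_3$ and its three subgroups are $\Tb$-stable closed unipotent subgroups directly spanned by their root subgroups) yields an isomorphism of varieties
$$(\Vb_1 \cap (-\Vb_2) \cap \Vb_3) \times (\Vb_1 \cap \Vb_2 \cap \Vb_3) \longisom \Vb_1 \cap \Vb_3$$
given by multiplication, where I write $\Vb_1 \cap (-\Vb_2) \cap \Vb_3$ for the subgroup generated by $\Ub_\alpha$ with $\alpha \in \Phi_1 \cap (-\Phi_2) \cap \Phi_3$. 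This exhibits the fiber $(\Vb_1 \cap \Vb_3)/(\Vb_1 \cap \Vb_2 \cap \Vb_3)$ as isomorphic to $\Vb_1 \cap (-\Vb_2) \cap \Vb_3$, which is a product of root subgroups $\Ub_\alpha \simeq \AM^1$, hence affine space of dimension $|\Phi_1 \cap (-\Phi_2) \cap \Phi_3| = \dim(\Vb_1 \cap \Vb_3) - \dim(\Vb_1 \cap \Vb_2 \cap \Vb_3) = d_{1,3}$.

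The same decomposition shows that $\pi_{1,3}$ is Zariski-locally trivial with fiber $\AM^{d_{1,3}}$ (translate the big-cell product decomposition by $\Gb$), hence smooth. For the dimension equalities, the isomorphisms of Proposition~\ref{prop:iso} give
$$\dim(\YCB_{\! 1,2,3}^\closed) - \dim(\YCB_{\! 1,3}) = \dim(\Vb_1 \cap \Vb_3) - \dim(\Vb_1 \cap \Vb_2 \cap \Vb_3) = d_{1,3},$$
and combining this with Proposition~\ref{prop:dim}(c), which asserts $\dim(\YCB_{\! 1,2,3}) - \dim(\YCB_{\! 1,3}) = 2d_{1,3}$, gives $\dim(\YCB_{\! 1,2,3}) - \dim(\YCB_{\! 1,2,3}^\closed) = d_{1,3}$. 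The only substantive point requiring care is the product decomposition in the second paragraph, and this is standard once a maximal torus of $\Lb$ is fixed.
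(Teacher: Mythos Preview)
Your proposal is correct and follows essentially the same approach as the paper: both reduce to the identification of $\pi_{1,3}$ with the canonical projection $\Gb/(\Vb_1 \cap \Vb_2 \cap \Vb_3) \to \Gb/(\Vb_1 \cap \Vb_3)$ via the isomorphisms $\kappa_{1,3}$ and $\kappa_{1,2,3}^\closed$ of Proposition~\ref{prop:iso}, and then invoke Proposition~\ref{prop:dim}(c) for the dimension count. The paper simply asserts ``the corollary follows'' after this identification, leaving implicit the standard fact that a quotient of $\Tb$-stable connected unipotent groups is an affine space; you have spelled this out via the root decomposition $\Phi_1 \cap \Phi_3 = (\Phi_1 \cap \Phi_2 \cap \Phi_3) \sqcup (\Phi_1 \cap (-\Phi_2) \cap \Phi_3)$, which is a fine way to justify it.
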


\bigskip

\begin{proof}
Using the isomorphisms $\kappa_{1,3}$ and $\kappa_{1,2,3}^\closed$ of Proposition~\ref{prop:iso}, 
the map $\pi_{1,3}$ may be identified with the canonical projection 
$\Gb/(\Vb_1 \cap \Vb_2 \cap \Vb_3)\longsurto \Gb/(\Vb_1 \cap \Vb_3)$.
The corollary follows.
\end{proof}

\bigskip

Let us now define
\eqna
\YCB_{\! 1,2,3,4}^\closed&=&\{(g_1\Vb_1,g_2\Vb_2,g_3\Vb_3,g_4\Vb_4) \in \Gb/\Vb_1 \times \Gb/\Vb_2 \times \Gb/\Vb_3 
\times \Gb/\Vb_4~|~\\
&&\hskip2cm 
g_1^{-1}g_2 \in \Vb_1 \cdot \Vb_2,~g_2^{-1}g_3 \in \Vb_2 \cdot \Vb_3,~g_3^{-1}g_4 \in \Vb_3 \cdot \Vb_4 \\
&&\hskip2cm\text{and } g_1^{-1}g_4 \in \Vb_1 \cdot \Vb_4\},
\endeqna
$$\YCB_{\! 1,2,3,4}^{\closed,2}=\{(g_1\Vb_1,g_2\Vb_2,g_3\Vb_3,g_4\Vb_4) \in \YCB_{\! 1,2,3,4}^\closed~|~
g_1^{-1}g_3 \in \Vb_1 \cdot \Vb_3\},$$
$$\YCB_{\! 1,2,3,4}^{\closed,3}=\{(g_1\Vb_1,g_2\Vb_2,g_3\Vb_3,g_4\Vb_4) \in \YCB_{\! 1,2,3,4}^\closed~|~
g_2^{-1}g_4 \in \Vb_2 \cdot \Vb_4\}.\leqno{\text{and}}$$
Then:

\bigskip

\begin{coro}\label{coro:egalite-y}
Assume that at least one of the following holds:
\begin{itemize}
\itemth{1} $\Vb_1 \subset \Vb_4 \cdot \Vb_2$.

\itemth{2} $\Vb_2 \subset \Vb_1 \cdot \Vb_3$.

\itemth{3} $\Vb_3 \subset \Vb_2 \cdot \Vb_4$.

\itemth{4} $\Vb_4 \subset \Vb_3 \cdot \Vb_1$.
\end{itemize}
Then $\YCB_{\! 1,2,3,4}^{\closed,2}=\YCB_{\! 1,2,3,4}^{\closed,3}=\YCB_{\! 1,2,3,4}^{\closed}$. 
\end{coro}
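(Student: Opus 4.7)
The plan is to treat condition~(1) in detail; the other cases follow by cyclic relabeling of $(1,2,3,4)$. For~(1), the strategy has two steps: first I would establish the ``opposite'' diagonal condition $g_2^{-1}g_4 \in \Vb_2\Vb_4$ directly by choosing coset representatives, and then deduce $g_1^{-1}g_3 \in \Vb_1\Vb_3$ by invoking Proposition~\ref{prop:iso} on the triple $(\Pb_1,\Pb_2,\Pb_4)$ together with the key identity from its proof.

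My first task is to reformulate~(1) symmetrically. Choosing a maximal torus $\Tb \subset \Lb$ and denoting by $\Phi_i$ the roots whose root subgroup lies in $\Vb_i$ (so that $\Phi_i \sqcup (-\Phi_i) = \Phi \setminus \Phi_\Lb$), a tangent space computation at the identity shows that $\Vb_1 \subset \Vb_4\Vb_2$ implies $\Phi_1 \subset \Phi_2 \cup \Phi_4$. Conversely, under that root-theoretic condition a dimension count using $\Phi_1 = (\Phi_1 \cap \Phi_2) \cup (\Phi_1 \cap \Phi_4)$ yields $\Vb_1 = (\Vb_1\cap\Vb_2)(\Vb_1\cap\Vb_4) = (\Vb_1\cap\Vb_4)(\Vb_1\cap\Vb_2)$, hence both $\Vb_1 \subset \Vb_2\Vb_4$ and $\Vb_1 \subset \Vb_4\Vb_2$. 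Taking complements in $\Phi \setminus \Phi_\Lb$ shows that $\Phi_1 \subset \Phi_2 \cup \Phi_4$ is the same as $\Phi_2 \cap \Phi_4 \subset \Phi_1$, i.e.\ $\Vb_2 \cap \Vb_4 \subset \Vb_1$. Thus~(1) is equivalent to each of $\Vb_1 \subset \Vb_2\Vb_4$ and $\Vb_2 \cap \Vb_4 \subset \Vb_1$.

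Now fix a point of $\YCB^\closed_{1,2,3,4}$ and normalize $g_1 = 1$ via the $\Gb$-action. The conditions $g_1^{-1}g_2 \in \Vb_1\Vb_2$ and $g_1^{-1}g_4 \in \Vb_1\Vb_4$ allow us to pick coset representatives $g_2,g_4 \in \Vb_1$. Then $g_2^{-1}g_4 \in \Vb_1 \subset \Vb_2\Vb_4$ by the reformulation, so $(g_1,g_2,g_4)$ defines a point of $\YCB^\closed_{1,2,4}$ and Proposition~\ref{prop:iso} furnishes $g \in \Gb$ with $g\Vb_i = g_i\Vb_i$ for $i \in \{1,2,4\}$. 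The conditions $g_2^{-1}g_3 \in \Vb_2\Vb_3$ and $g_3^{-1}g_4 \in \Vb_3\Vb_4$ then give $g^{-1}g_3 \in (\Vb_2\Vb_3) \cap (\Vb_4\Vb_3)$, which by the identity from the proof of Proposition~\ref{prop:iso} equals $(\Vb_2 \cap \Vb_4)\Vb_3$. Using $\Vb_2 \cap \Vb_4 \subset \Vb_1$ we obtain $g^{-1}g_3 \in \Vb_1\Vb_3$, and combining with $g_1^{-1}g \in \Vb_1$ yields $g_1^{-1}g_3 \in \Vb_1\Vb_3$, as desired.

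The hard part is the passage from the ``easy'' diagonal to the other one: once $g_2^{-1}g_4 \in \Vb_2\Vb_4$ is known, promoting it to $g_1^{-1}g_3 \in \Vb_1\Vb_3$ requires both the triple isomorphism of Proposition~\ref{prop:iso} and the combinatorial identity $(\Vb_2\Vb_3)\cap(\Vb_4\Vb_3) = (\Vb_2 \cap \Vb_4)\Vb_3$ implicit in its proof. Cases~(2),~(3),~(4) are handled by cyclic permutation of the indices, with the roles of~(e) and~(f) and of the initial normalization exchanged accordingly.
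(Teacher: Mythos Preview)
Your proof is correct and follows essentially the same route as the paper: reduce to one case by cyclic relabeling, obtain one diagonal condition easily, then use Proposition~\ref{prop:iso} together with the identity $(\Vb_i\Vb_j)\cap(\Vb_k\Vb_j)=(\Vb_i\cap\Vb_k)\Vb_j$ from its proof to get the other.

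Two small points of comparison. The paper works with case~(2) rather than~(1); with~(2) the easy diagonal is the single line $g_1^{-1}g_3=(g_1^{-1}g_2)(g_2^{-1}g_3)\in\Vb_1\Vb_2\Vb_3=\Vb_1\Vb_3$, so no normalization is needed there. Your root-theoretic reformulation of~(1) is more elaborate than required: the paper simply states (for case~(2)) that $\Vb_2\subset\Vb_1\Vb_3$ implies $\Vb_1\cap\Vb_3\subset\Vb_2$, and for your case the easy diagonal could likewise be obtained directly via $g_4^{-1}g_2\in\Vb_4\Vb_1\cdot\Vb_1\Vb_2=\Vb_4\Vb_2$ using~(1) as stated. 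The factorization $\Vb_1=(\Vb_1\cap\Vb_2)(\Vb_1\cap\Vb_4)$ is correct but your phrase ``a dimension count'' undersells what is needed (surjectivity, not just equality of dimensions); since this step is actually dispensable, you might prefer to bypass it.
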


\bigskip

\begin{proof}
Using the fact that the map $(g_1\Vb_1,g_2\Vb_2,g_3\Vb_3,g_4\Vb_4) \mapsto (g_4\Vb_4,g_1\Vb_1,g_2\Vb_2,g_3\Vb_3)$ 
induces an isomorphism $\YCB_{\! 1,2,3,4}^\closed \xrightarrow{\sim} \YCB_{\! 4,1,2,3}^\closed$ (with obvious 
notation), we see that it is sufficient to prove only one of the statements. 

So let us assume that $\Vb_2 \subset \Vb_1 \cdot \Vb_3$. Let  
$(g_1\Vb_1,g_2\Vb_2,g_3\Vb_3,g_4\Vb_4) \in \YCB_{\! 1,2,3,4}^\closed$. Then 
$g_1^{-1}g_3=(g_1^{-1}g_2)(g_2^{-1}g_3) \in \Vb_1 \cdot \Vb_2 \cdot \Vb_3=\Vb_1 \cdot \Vb_3$ 
and so $\YCB_{\! 1,2,3,4}^{\closed,2}=\YCB_{\! 1,2,3,4}^\closed$. So it remains to 
prove that $(g_1\Vb_1,g_2\Vb_2,g_3\Vb_3,g_4\Vb_4) \in \YCB_{\! 1,2,3,4}^{\closed,3}$. 
Using the action of $\Gb$ and the isomorphism $\kappa_{1,3,4}^\closed$ 
of Proposition~\ref{prop:iso}, we may assume that $g_1=g_3=g_4=1$. 
Since $\Vb_2\subset \Vb_1\cdot \Vb_3$, we have $\Vb_1\cap\Vb_3\subset\Vb_2$, hence
$g_2^{-1}g_4=g_2^{-1} \in (\Vb_2\cdot\Vb_3)\cap (\Vb_2\cdot\Vb_1)\subset
\Vb_2 \cdot (\Vb_1\cap\Vb_3)\subset \Vb_2$, as desired.
\end{proof}

\bigskip

\begin{rema}\label{rem:borel-inclusion}
Let $w_1$, $w_2$ and $w_3$ be three elements of $W$ and let us assume here 
that $\Vb_1=\Ub$, $\Vb_2=\lexp{w_1}{\Vb_1}$, $\Vb_3=\lexp{w_1w_2}{\Vb_1}$ and $\Vb_4=\lexp{w_1w_2w_3}{\Vb_1}$. 
Then the conditions (1), (2), (3) and (4) of Corollary~\ref{coro:egalite-y} are respectively equivalent 
to:
\begin{itemize}
\itemth{1} $l(w_2w_3)=l(w_1w_2w_3)+l(w_1)$.

\itemth{2} $l(w_1w_2)=l(w_1)+l(w_2)$.

\itemth{3} $l(w_2w_3)=l(w_2)+l(w_3)$.

\itemth{4} $l(w_1w_2)=l(w_1w_2w_3)+l(w_3)$.
\end{itemize}
\end{rema}

\bigskip

\subsection{Setting}\label{sub:notation}

\medskip
We fix a positive integer $r$. Given a family of objects $m_1,\ldots,m_r$ belonging to
a structure acted on by $F$, we put $m_{j+er}=F^e(m_j)$ for $1\le j\le r$ and $e\ge 0$.

\smallskip
Let $\nb=(n_1,\dots,n_r)$ be a sequence of elements of $N_\Gb(\Tb)$.
We denote by $w_i$ the image of $n_i$ in $W$ and we put $w=w_1\cdots w_r$.

We define
$$\Yb(\nb)=\{(g_1\Ub,g_2\Ub,\dots,g_r\Ub)\in (\Gb/\Ub)^r\ |\
g_j\longtrait{n_j}g_{j+1}\ \ \forall 1\le j\le r\}$$
where $g_j\longtrait{n_j}g_{j+1}$ means $g_j^{-1}g_{j+1}\in\Ub n_j\Ub$.
This variety
has a left action by multiplication of $\Gb^F$ and a right action of
$\Tb^{wF}$ where $t\in\Tb^{wF}$ acts by right multiplication by $(t,t^{n_1},\ldots,t^{n_1\cdots n_{r-1}})$.

We define the functor 
$\RC_\nb=\rgammac(\Yb(\nb),\L)\otimes^{\Lb}_{\Lambda\Tb^{wF}}-: \Drm^b(\Lambda\Tb^{wF})\to \Drm^b(\Lambda\Gb^F)$
and we put $\Rrm_\nb=[\RC_\nb]:G_0(\Lambda\Tb^{wF})\to G_0(\Lambda\Gb^F)$ as in \cite[\S 5.2]{BR}.

\smallskip
We fix a positive integer $m$ such that $F^m(n_i)=n_i$ for all $i$. 
The action of $F^m$ on $(\Gb/\Ub)^r$ restricts to an action on $\Yb(\nb)$.

Given $\Zb$ a variety of pure dimension $n$, we put $\rgammacdim(\Zb,\L)=
\rgammac(\Zb,\L)[n](n/2)$, where $(n/2)$ denotes a Tate twist.

\bigskip

%We apply similar conventions to the $n_j$'s and the $w_j$'s.
%??? Not used?

Given $2 \le j \le r$, we denote by $\Yb_j^\closed(\nb)$ the $F^m$-stable closed subvariety 
of $\Yb(\nb)$ defined by
$$\Yb_j^\closed(\nb)=\{(g_1\Ub,g_2\Ub,\dots,g_r\Ub) \in \Yb(\nb)~|~g_{j-1} \longtrait{n_{j-1}n_j} 
g_{j+1}\}$$
and we denote by $\Yb_j^\open(\nb)$ its open complement. They are both stable under the action 
of $\Gb^F \times \Tb^{wF}$. We denote by 
$\pi_j : (\Gb/\Ub)^r \to (\Gb/\Ub)^{r-1}$ the morphism of varieties which forgets the 
$j$-th component and we set 
$$c_j(\nb)=(n_1,n_2,\dots,n_{j-2},n_{j-1}n_{j},n_{j+1},\dots,n_r)$$
$$d_j(\nb)=\DS{\frac{l(w_{j-1})+l(w_{j})-l(w_{j-1} w_{j})}{2}}.\leqno{\text{and}}$$
Let  
$i_{\nb,j} : \Yb_j^\closed(\nb) \injto \Yb(\nb)$ denote the
closed immersion and 
$$\pi_{\nb,j} : \Yb_j^\closed(\nb) \to \Yb(c_j(\nb))$$
denote the restriction of $\pi_j$. Note that 
$\pi_{\nb,j}$ is $(\Gb^F,\Tb^{wF})$-equivariant and commutes with the action of $F^m$. 

\bigskip

\begin{lem}\label{lem:fibration}
If $2 \le j \le r$, then $\pi_{\nb,j}$ is a smooth morphism with fibers
isomorphic to an affine space of dimension $d_j(\nb)$. Moreover, the codimension of 
$\Yb_j^\closed(\nb)$ in $\Yb(\nb)$ is also equal to $d_j(\nb)$.
\end{lem}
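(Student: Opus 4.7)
The plan is to reduce Lemma~\ref{lem:fibration} to Corollary~\ref{coro:dim-closed} by identifying $\pi_{\nb,j}$ with the map $\pi_{1,3}$ of~\S\ref{appendice:dimension}, applied to an appropriate triple of Borel subgroups. Set $\Vb_1=\Ub$, $\Vb_2=n_{j-1}\Ub n_{j-1}^{-1}$ and $\Vb_3=(n_{j-1}n_j)\Ub(n_{j-1}n_j)^{-1}$: these three Borel subgroups of $\Gb$ admit $\Tb$ as a common Levi complement, so the analysis of~\S\ref{appendice:dimension} applies. The right-translation $\Gb$-equivariant isomorphisms $\phi_1=\id$, $\phi_2\colon g\Ub\mapsto gn_{j-1}^{-1}\Vb_2$ and $\phi_3\colon g\Ub\mapsto g(n_{j-1}n_j)^{-1}\Vb_3$ transport, by a direct computation, the three Bruhat conditions $g^{-1}g'\in\Ub n_{j-1}\Ub$, $g^{-1}g'\in\Ub n_j\Ub$ and $g^{-1}g'\in\Ub n_{j-1}n_j\Ub$ into the corresponding conditions $\phi_1(g)^{-1}\phi_2(g')\in\Vb_1\Vb_2$, $\phi_2(g)^{-1}\phi_3(g')\in\Vb_2\Vb_3$ and $\phi_1(g)^{-1}\phi_3(g')\in\Vb_1\Vb_3$.

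Extracting the coordinates at positions $j-1,j,j+1$ (resp.\ $j-1,j+1$) and composing with the $\phi_i$'s then yields morphisms $\Theta^\closed\colon\Yb_j^\closed(\nb)\to\YCB_{1,2,3}^\closed$ and $\Theta\colon\Yb(c_j(\nb))\to\YCB_{1,3}$ fitting into a commutative square with $\pi_{\nb,j}$ on the left and $\pi_{1,3}$ on the right. The crucial observation is that this square is Cartesian: given a pair $(P,Q)$ with $P\in\Yb(c_j(\nb))$ and $Q\in\YCB_{1,2,3}^\closed$ having matching images in $\YCB_{1,3}$, the missing coordinate $g_j\Ub$ is uniquely determined by $Q$ via $\phi_2^{-1}$, and the resulting full tuple lies in $\Yb_j^\closed(\nb)$ because the three cell conditions carried by $Q$ supply the two new Bruhat conditions and the product condition, while the remaining conditions are inherited from $P$.

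By Corollary~\ref{coro:dim-closed}, $\pi_{1,3}$ is smooth with fibers isomorphic to $\AM^{d_{1,3}}$. A step-by-step fibration argument using the Bruhat-cell isomorphisms $\Vb_i\Vb_{i+1}/\Vb_{i+1}\simeq\AM^{l(w_i)}$ gives $\dim\YCB_{1,2,3}=\dim(\Gb/\Ub)+l(w_{j-1})+l(w_j)$ and $\dim\YCB_{1,3}=\dim(\Gb/\Ub)+l(w_{j-1}w_j)$, so Proposition~\ref{prop:dim}(c) yields $d_{1,3}=d_j(\nb)$. Since smoothness and the affine fiber structure transfer under base change, $\pi_{\nb,j}$ is smooth with fibers isomorphic to $\AM^{d_j(\nb)}$.

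The codimension statement follows from the classical formula $\dim\Yb(\nb)=\sum_{i=1}^{r}l(w_i)$, which I would prove by Lang-type reasoning: the $\Ub^r$-torsor $\tilde\Yb(\nb)=\{(g_1,\dots,g_r)\in\Gb^r: g_i^{-1}g_{i+1}\in\Ub n_i\Ub\}$ (with $g_{r+1}=F(g_1)$) projects to $\prod_{i<r}\Ub n_i\Ub$ via $(g_i)\mapsto(g_i^{-1}g_{i+1})_{i<r}$, the remaining constraint reducing to a Lang equation on $g_1$ which is \'etale surjective and gives fibers of dimension $|\Phi^+|+l(w_r)$. Applying this to both $\nb$ and $c_j(\nb)$ and combining with the fiber dimension of $\pi_{\nb,j}$ above gives
$$\dim\Yb(\nb)-\dim\Yb_j^\closed(\nb)=l(w_{j-1})+l(w_j)-l(w_{j-1}w_j)-d_j(\nb)=d_j(\nb).$$
The genuine geometric content sits in Corollary~\ref{coro:dim-closed}; once the Cartesian square is identified, everything reduces to bookkeeping and a dimension count, the only delicate point being the careful transport of Bruhat conditions through the isomorphisms $\phi_i$.
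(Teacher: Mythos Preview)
Your proof is correct and follows the paper's approach: the same choice of $\Vb_1=\Ub$, $\Vb_2={}^{n_{j-1}}\Ub$, $\Vb_3={}^{n_{j-1}n_j}\Ub$, the same Cartesian square identifying $\pi_{\nb,j}$ with a base change of $\pi_{1,3}$, and the same appeal to Corollary~\ref{coro:dim-closed}. Your verification of the transport of Bruhat conditions and of the equality $d_{1,3}=d_j(\nb)$ simply makes explicit what the paper leaves to the reader.

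The one place you diverge is the codimension statement. You compute $\dim\Yb(\nb)=\sum_i l(w_i)$ via a Lang-type argument and then subtract. This is fine, but it is more work than needed. The paper's ``by base change'' implicitly uses a second Cartesian square: the projection $(g_1\Ub,\dots,g_r\Ub)\mapsto(g_{j-1}\Vb_1,g_jn_{j-1}^{-1}\Vb_2,g_{j+1}n_j^{-1}n_{j-1}^{-1}\Vb_3)$ also defines a smooth map $\Yb(\nb)\to\YCB_{1,2,3}$, and $\Yb_j^\closed(\nb)$ is by definition the preimage of $\YCB_{1,2,3}^\closed$. Pulling back the closed immersion $i_{1,3}$ along this smooth map transfers the codimension equality of Corollary~\ref{coro:dim-closed} directly, with no separate dimension count for $\Yb(\nb)$ required.
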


\begin{proof}
Let $\Lb=\Tb$, $\Vb_1=\Ub$, $\Vb_2=\lexp{n_{j-1}}{\Ub}$ and
$\Vb_3=\lexp{n_{j-1}n_j}{\Ub}$.
There is a cartesian square
$$\xymatrix{
\Yb_j^\closed(\nb) \ar[d]_{\pi_{\nb,j}} \ar[rrrrrr]^{(g_1\Ub,\ldots,g_r\Ub)\mapsto
(g_{j-1}\Vb_1,g_jn_{j-1}^{-1}\Vb_2,g_{j+1}n_j^{-1}n_{j-1}^{-1}\Vb_3)}
 &&&&&& \YCB_{\! 1,2,3}^\closed\ar[d]^{\pi_{1,3}} \\
\Yb(c_j(\nb)) \ar[rrrrrr]_{(h_1\Ub,\ldots,h_{r-1}\Ub)\mapsto (g_{j-1}\Vb_1,g_{j+1}n_j^{-1}n_{j-1}^{-1}\Vb_3)} &&&&&&
 \YCB_{\! 1,3}
}$$
The lemma follows from now from Corollary~\ref{coro:dim-closed} by base change.
\end{proof}

\bigskip

The map $\pi_{\nb,j}$ induces a quasi-isomorphism of complexes of
$(\L\Gb^F,\L\Tb^{wF})$-bimodules
\equat\label{eq:iso-fibre}
\rgammac(\Yb_j^\closed(\nb),\L) \longisom \rgammac(\Yb(c_j(\nb)),\L)[-2 d_j(\nb)](-d_j(\nb)).
\endequat
Composing this isomorphism with the morphism 
$i_{\nb,j}^* : \rgammac(\Yb(\nb),\L) \to \rgammac(\Yb_j^\closed(\nb),\L)$, 
we obtain a morphism of complexes of $(\L\Gb^F,\L\Tb^{wF})$-bimodules  
$$\Psi_{\nb,j} : \rgammacdim(\Yb(\nb),\L) \longto \rgammacdim(\Yb(c_j(\nb)),\L)$$
which commutes with the action of $F^m$, and whose cone is quasi-isomorphic
to $\rgammacdim(\Yb_j^\open(\nb),\L)[1]$.
\bigskip

\subsection{Preliminaries} 
\label{se:prelim}
We first recall some results from~\cite{BR}.

\bigskip
We denote by $B$ the braid group of $W$, and by $\sigma:W\to B$
the unique map (not a group morphism) that is a right inverse to the 
canonical map $B\to W$ and that preserves lengths. We extend
it to sequences of elements of $W$ by $\sigma(w_1,\ldots,w_r)=
\sigma(w_1)\cdots\sigma(w_r)$.

We denote by $n\mapsto\bar{n}:N_\Gb(\Tb)\to W$ the quotient map.
 We fix $\dot{\sigma}:N_{\Gb}(\Tb)\to B\ltimes
\Tb$ a map (not a group morphism)
such that $\dot{\sigma}(nt)=\dot{\sigma}(n)t$ for all $t\in\Tb$ and
such that the image of $\dot{\sigma}(n)$ in $B=(B\ltimes\Tb)/\Tb$ is
equal to $\sigma(\bar{n})$.  We extend
it to sequences of elements of $N_\Gb(\Tb)$ by $\dot{\sigma}(n_1,\ldots,n_r)=
\dot{\sigma}(n_1)\cdots\dot{\sigma}(n_r)$.

\smallskip
The following result is \cite[Proposition~5.4]{BR}.

\begin{lem}\label{lem:tresse-tore}
Let $\nb'$ be a sequence of elements of $N_\Gb(\Tb)$. Then:
\begin{itemize}
\itemth{a} If $\dot{\s}(\nb)=\dot{\s}(\nb')$ (they are elements of $B \ltimes \Tb$), then the varieties 
$\Yb(\nb)$ and $\Yb(\nb')$ are canonically isomorphic $\Gb^F$-varieties-$\Tb^{wF}$.

\itemth{b} If $\s(\overline{\nb})=\s(\overline{\nb}')$ (they are elements of $B$), then the varieties 
$\Yb(\nb)$ and $\Yb(\nb')$ are (non-canonically) isomorphic
$\Gb^F$-varieties-$\Tb^{wF}$.
\end{itemize}
\end{lem}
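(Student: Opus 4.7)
The plan is first to reduce (b) to (a). If $\sigma(\bar\nb)=\sigma(\bar\nb')$, then $\dot\sigma(\nb)$ and $\dot\sigma(\nb')$ have the same image in $B$, so they differ by a twist in $\Tb$; by adjusting each entry $n_j'$ on the right by a suitable $t_j\in\Tb$ and transferring the twist along the sequence, I would construct a new sequence $\nb''$ with $\dot\sigma(\nb'')=\dot\sigma(\nb)$ together with an isomorphism $\Yb(\nb')\longisom \Yb(\nb'')$ depending on the chosen $t_j$ (hence non-canonical). Part (a) applied to $\nb$ and $\nb''$ then completes the argument.

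For (a), I would proceed by induction on $r$. The base case $r=1$ is immediate from the definition of $\dot\sigma$: if $\dot\sigma(n_1)=\dot\sigma(n_1')$, their images in $W$ agree and their $\Tb$-components agree, forcing $n_1=n_1'$. For the inductive step, the elementary move to exploit is the following. Whenever $l(w_jw_{j+1})=l(w_j)+l(w_{j+1})$, the Bruhat identity $\Ub n_j\Ub\cdot\Ub n_{j+1}\Ub=\Ub n_jn_{j+1}\Ub$ holds, and the middle coordinate $g_{j+1}$ is uniquely determined by $g_j$ and $g_{j+2}$. The projection forgetting $g_{j+1}$ then produces a canonical $(\Gb^F,\Tb^{wF})$-equivariant isomorphism
$$\Yb(n_1,\ldots,n_j,n_{j+1},\ldots,n_r) \longisom \Yb(n_1,\ldots,n_jn_{j+1},\ldots,n_r).$$

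Using the positive braid monoid presentation of $B$ (Matsumoto's theorem: generators the elements of $W$, relations $w\cdot w'=ww'$ in length-additive cases), and lifting to $B\ltimes\Tb$ via $\dot\sigma$, any two sequences in $N_\Gb(\Tb)$ with the same image under $\dot\sigma$ would be linked by a finite chain of these elementary moves and their inverses. Composing the resulting canonical isomorphisms then yields the required isomorphism $\Yb(\nb)\longisom\Yb(\nb')$.

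The hardest step will be coherence: to verify that the constructed isomorphism does not depend on the chain of elementary moves selected. By a standard reduction, this amounts to checking the braid relations at the level of varieties, i.e.\ identifying the two sides of the rank-two relation $\underbrace{\dot\sigma(s_\alpha)\dot\sigma(s_\beta)\cdots}_{m_{\alpha\beta}}=\underbrace{\dot\sigma(s_\beta)\dot\sigma(s_\alpha)\cdots}_{m_{\alpha\beta}}$ inside the subgroup $\langle\Gb_\alpha,\Gb_\beta\rangle\cdot\Tb$. This reduces to an explicit computation in rank-$2$ groups (types $A_1\times A_1$, $A_2$, $B_2$ and $G_2$), from which the inductive step closes.
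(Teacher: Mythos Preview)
Your outline is correct and is precisely the Deligne-style braid-monoid argument that underlies \cite[Proposition~5.4 and \S 5.5]{BR}; the paper itself does not give an independent proof here but simply refers to that source. Your reduction of (b) to (a) via a torus twist (solved through Lang's theorem on $\Tb$, which is why the resulting isomorphism is non-canonical) and your treatment of (a) through length-additive concatenations together with a coherence check in rank $2$ are exactly the steps carried out there.

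One small point worth making explicit in your write-up: for the elementary move ``replace $(n_j,n_{j+1})$ by $n_jn_{j+1}$ when $l(w_jw_{j+1})=l(w_j)+l(w_{j+1})$'' to preserve the value of $\dot\sigma$, you need $\dot\sigma(n_j)\dot\sigma(n_{j+1})=\dot\sigma(n_jn_{j+1})$ in that situation. This is not automatic from the two axioms the paper imposes on $\dot\sigma$; it holds because in \cite{BR} the map $\dot\sigma$ is built from a Tits section $w\mapsto\dot w$ of $W\to N_\Gb(\Tb)$ satisfying $\dot w\,\dot w'=\dot{(ww')}$ in length-additive cases. You should either assume this (as \cite{BR} does) or note that the statement of (a) is insensitive to the particular choice of $\dot\sigma$, since any two choices differ by a map $W\to\Tb$ and the resulting discrepancy is absorbed by the torus-twist argument you already use for (b).
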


\begin{proof}
(a) is proved in~\cite[5.5]{BR}, while (b) is \cite[Proposition~5.4]{BR}.
\end{proof}

\bigskip

Using Lemma~\ref{lem:tresse-tore}(a), we shall now
write $\Yb(\nb) = \Yb(\nb')$ when
$\dot{\s}(\nb)=\dot{\s}(\nb')$. Strictly speaking, 
Lemma~\ref{lem:tresse-tore}(a) says that these two varieties are only isomorphic but, 
since this isomorphism is canonical, we shall use the symbol $=$ to simplify
the exposition.

We define the {\it cyclic shift} $\shift(\nb)$ of $\nb$ by
$$\shift(\nb)=(n_2,\dots,n_r,F(n_1)).$$
The next result is proved in~\cite[Proposition~3.1.6]{DMR} for the varieties 
$\Xb(\wb)$ and $\Xb(\wb')$. The same proof shows the more precise result below.

\medskip

\begin{lem}\label{lem:facile}
The map 
$$\fonctio{\Yb(\nb)}{\Yb(\shift(\nb))}{(g_1\Ub,\dots,g_r\Ub)}{(g_2\Ub,\dots,g_r\Ub,F(g_1)\Ub)}$$
induces an equivalence of \'etale sites. Moreover, it is a morphism of
$\Gb^F$-varieties-$\Tb^{w F}$, where $t\in\Tb^{w F}$ acts on
$\Yb(\shift(\nb))$ by right multiplication by $n_1^{-1}tn_1$.
Consequently, the diagram
$$\diagram
\Drm^b(\Lambda\Tb^{w_1^{-1}wF(w_1)F}) \rrto^{\DS{n_{1,*}}} \ddrto_{\DS{\RC_{\shift(\nb)}}} &&
\Drm^b(\Lambda\Tb^{wF})
\ddlto^{\DS{\RC_\nb}} \\
&& \\ & \Drm^b(\Lambda\Gb^F)& \\
\enddiagram$$
is commutative.
\end{lem}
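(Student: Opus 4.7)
The plan is to verify the statement in three stages: well-definedness of the map together with its equivariance properties; the fact that it induces an equivalence of \'etale sites; and the consequent commutativity of the diagram of functors.

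First, I would unwind definitions. Extending the sequences so that $g_{j+r}=F(g_j)$ and $n_{j+r}=F(n_j)$ as in the convention at the start of \S\ref{sub:notation}, the defining conditions of $\Yb(\nb)$ read $g_j\longtrait{n_j}g_{j+1}$ for all $j\ge 1$. The shifted tuple $(g_2,\ldots,g_r,F(g_1))$ with sequence $\shift(\nb)$ is subject to the same conditions reindexed by $j\mapsto j+1$, so the map does land in $\Yb(\shift(\nb))$. The left $\Gb^F$-equivariance is clear since $F$ acts trivially on $\Gb^F$. For the right torus actions, if $t\in\Tb^{wF}$ multiplies $(g_1,\ldots,g_r)$ on the right by $(t,t^{n_1},\ldots,t^{n_1\cdots n_{r-1}})$, the image under the shift becomes $(g_2t^{n_1},\ldots,g_rt^{n_1\cdots n_{r-1}},F(g_1)F(t))$; comparing with the natural right action of $s\in\Tb^{w_1^{-1}wF(w_1)F}$ on $\Yb(\shift(\nb))$ forces $s=n_1^{-1}tn_1$, and the match at the last coordinate $s^{n_2\cdots n_r}=t^{n_1\cdots n_r}=F(t)$ follows from $wF(t)=t$.

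Second, the essential step is establishing the equivalence of \'etale sites. The shift extends to a morphism on the ambient variety $(\Gb/\Ub)^r$, obtained by composing a cyclic permutation of coordinates (an isomorphism) with applying $F$ to the last factor. Since some power of $F$ is a Frobenius endomorphism, $F$ is a finite, radicial and surjective morphism of $\Gb/\Ub$, hence a universal homeomorphism; the same holds for its restriction $\psi\colon\Yb(\nb)\to\Yb(\shift(\nb))$. Surjectivity of $\psi$ on $\FM$-points is checked by solving $F(g_1)=h_r$, using that $F$ is bijective on $\FM$-points of $\Gb/\Ub$, and taking $g_i=h_{i-1}$ for $i\ge 2$; the $\Yb(\nb)$ conditions transcribe back to the $\Yb(\shift(\nb))$ conditions. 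By SGA 4, Expos\'e VIII, Th\'eor\`eme 1.1, a universal homeomorphism induces an equivalence of \'etale topoi, and hence an isomorphism on \'etale cohomology with compact support.

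Combining the previous two steps produces an isomorphism $\rgammac(\Yb(\nb),\Lambda)\simeq\rgammac(\Yb(\shift(\nb)),\Lambda)$ of $(\Lambda\Gb^F,\Lambda\Tb^{wF})$-bimodules, where the $\Tb^{wF}$-action on the right-hand side is pulled back along $t\mapsto n_1^{-1}tn_1$ from the natural $\Tb^{w_1^{-1}wF(w_1)F}$-action. Tensoring with an object of $\Drm^b(\Lambda\Tb^{w_1^{-1}wF(w_1)F})$ translates this into the identification $\RC_\nb\circ n_{1,*}\simeq\RC_{\shift(\nb)}$, which is the stated commutativity. The main obstacle is the second step, as the equivalence of \'etale sites is a genuinely geometric input; it becomes routine once one recognizes that $F$ is radicial, which follows from some power of $F$ being a standard Frobenius.
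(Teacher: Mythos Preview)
Your proposal is correct and essentially reconstructs the argument the paper only gestures at: the paper gives no proof of its own, simply citing \cite[Proposition~3.1.6]{DMR} for the $\Xb$-variety case and asserting that the same proof works here. Your three-step verification (well-definedness and equivariance, universal homeomorphism via the radicial nature of $F$ and SGA~4 Exp.~VIII, deduction of the functor identity) is exactly that argument, written out in full.
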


\bigskip

\bigskip

\def\intersec{{\mathbf{INT}}}
\def\fibre{{\mathbf{FIB}}}

Assume in the remaining part of \S\ref{se:prelim} that $3 \le j \le r$ 
(in particular, $r \ge 3$). Note that
$c_{j-1}(c_j(\nb))=c_{j-1}(c_{j-1}(\nb))$. Consider the diagram 
\equat\label{eq:transitivite}
\diagram
\rgammacdim(\Yb(\nb),\L) \rrto^{\DS{\Psi_{\nb,j}}} \ddto_{\DS{\Psi_{\nb,j-1}}} && 
\rgammacdim(\Yb(c_j(\nb)),\L) \ddto^{\DS{\Psi_{c_j(\nb),j-1}}} \\
&&\\
\rgammacdim(\Yb(c_{j-1}(\nb)),\L) \rrto_{\DS{\Psi_{c_{j-1}(\nb),j-1}}} && 
\rgammacdim(\Yb(c_{j-1}(c_j(\nb)),\L).
\enddiagram
\endequat
It does not seem reasonable to expect that the diagram~(\ref{eq:transitivite}) is commutative 
in general. However, it is in some cases. 

Let us first define the following two varieties:
$$\Yb_{j,j-1}^\closed(\nb)=\Yb_{j-1}^\closed(c_j(\nb)) \times_{\Yb(c_j(\nb))} \Yb_j^\closed(\nb)$$
$$\Yb_{j-1,j}^\closed(\nb)=\Yb_{j-1}^\closed(c_{j-1}(\nb)) \times_{\Yb(c_{j-1}(\nb))} \Yb_{j-1}^\closed(\nb).
\leqno{\text{and}}$$
More concretely, they are the closed subvarieties of $\Yb(\nb)$ defined by
$$\Yb_{j,j-1}^\closed(\nb)=\{(g_1\Ub,\dots,g_r\Ub) \in \Yb(\nb)~|~
g_{j-2} \longtrait{n_{j-2}n_{j-1}n_j} g_{j+1}
\text{ and } g_{j-1} \longtrait{n_{j-1}n_j} g_{j+1}\}$$
$$\Yb_{j-1,j}^\closed(\nb)=\{(g_1\Ub,\dots,g_r\Ub) \in \Yb(\nb)~|~
g_{j-2} \longtrait{n_{j-2}n_{j-1}n_j} g_{j+1}
\text{ and } g_{j-2} \longtrait{n_{j-2}n_{j-1}} g_{j}\}.\leqno{\text{and}}$$

\begin{lem}
\label{le:trans}
If $\Yb_{j,j-1}^\closed(\nb)=\Yb_{j-1,j}^\closed(\nb)$, then 
the diagram~(\ref{eq:transitivite}) is commutative.
\end{lem}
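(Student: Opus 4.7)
The strategy is to factor both compositions in~(\ref{eq:transitivite}) through a common intermediate complex $\rgammacdim(Z,\L)$, where $Z := \Yb_{j,j-1}^\closed(\nb) = \Yb_{j-1,j}^\closed(\nb)$ is the variety furnished by the hypothesis, and then to conclude by standard base change for cohomology with compact support.

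First, the fiber product descriptions of the two varieties yield two cartesian squares. The first, coming from $Z = \Yb_{j,j-1}^\closed(\nb)$, has $Z$ and $\Yb_j^\closed(\nb)$ as its top row, $\Yb_{j-1}^\closed(c_j(\nb))$ and $\Yb(c_j(\nb))$ as its bottom row, with $i_{c_j(\nb),j-1}$ on the bottom and $\pi_{\nb,j}$ on the right. The second, coming from $Z = \Yb_{j-1,j}^\closed(\nb)$, has $Z$ and $\Yb_{j-1}^\closed(\nb)$ as its top row, $\Yb_{j-1}^\closed(c_{j-1}(\nb))$ and $\Yb(c_{j-1}(\nb))$ as its bottom row, with $i_{c_{j-1}(\nb),j-1}$ on the bottom and $\pi_{\nb,j-1}$ on the right. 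In each square the horizontal arrows are closed immersions and the vertical arrows are smooth morphisms with affine-space fibers, by the argument of Lemma~\ref{lem:fibration}. The hypothesis $\Yb_{j,j-1}^\closed(\nb) = \Yb_{j-1,j}^\closed(\nb)$ is precisely what identifies the upper-left corners of these two squares.

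Second, the key cohomological input is the following base-change statement: for any cartesian square of varieties whose horizontal arrows are closed immersions and whose vertical arrows are smooth morphisms with affine-space fibers of common dimension $d$, the induced square in $\rgammacdim$ commutes. This follows from the canonical identification $R\pi_!\L\simeq\L[-2d](-d)$ together with proper base change along the closed immersion. Applying this identity to each of the two squares above, the composition $\Psi_{c_j(\nb),j-1}\circ\Psi_{\nb,j}$ rewrites as $\alpha\circ i^*$, where $i\colon Z\hookrightarrow\Yb(\nb)$ is the canonical closed immersion (obtained as the composite $Z\hookrightarrow\Yb_j^\closed(\nb)\hookrightarrow\Yb(\nb)$) and $\alpha$ is the inverse of the smooth-fibration isomorphism for the canonical projection $Z\to\Yb_{j-1}^\closed(c_j(\nb))\to\Yb(c_{j-1}(c_j(\nb)))$. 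The same rewriting applies to $\Psi_{c_{j-1}(\nb),j-1}\circ\Psi_{\nb,j-1}$, via the factorization $Z\hookrightarrow\Yb_{j-1}^\closed(\nb)\hookrightarrow\Yb(\nb)$ and $Z\to\Yb_{j-1}^\closed(c_{j-1}(\nb))\to\Yb(c_{j-1}(c_{j-1}(\nb)))$.

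Finally, one must check that the two resulting pairs $(i,\alpha)$ coincide. Both closed immersions $Z\hookrightarrow\Yb(\nb)$ are the inclusion of the same subvariety, and both composite projections to $\Yb(c_{j-1}(c_j(\nb))) = \Yb(c_{j-1}(c_{j-1}(\nb)))$ simply forget the $(j-1)$-th and $j$-th coordinates of $(g_1,\dots,g_r)$. Hence the two sides of~(\ref{eq:transitivite}) are equal. The main obstacle is really bookkeeping: tracking the shifts and Tate twists as base change is iterated, and checking that the orientation chosen in~(\ref{eq:iso-fibre}) behaves consistently; the genuine geometric input is simply the coincidence of $\Yb_{j,j-1}^\closed(\nb)$ with $\Yb_{j-1,j}^\closed(\nb)$, which assures us that the two routes produce the same intermediate space $Z$.
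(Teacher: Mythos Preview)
Your proposal is correct and follows essentially the same route as the paper. The paper likewise assembles the two cartesian squares arising from the fiber-product definitions of $\Yb_{j,j-1}^\closed(\nb)$ and $\Yb_{j-1,j}^\closed(\nb)$ into a single diagram, applies proper base change to rewrite each composite $\Psi$-map as $(i_{\nb,\ast}\circ\cdot)^*$ followed by the inverse of the affine-fibration isomorphism, and concludes because the two composite closed immersions into $\Yb(\nb)$ and the two composite projections to $\Yb(c_{j-1}(c_j(\nb)))$ coincide once the hypothesis identifies the two intermediate varieties.
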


\begin{proof}
There is a commutative diagram, in which all the arrows of the form $\longinjto$ are closed immersions
and all the arrows of the form $\longsurto$ are smooth morphisms with fibers isomorphic 
to an affine space:

\medskip

\refstepcounter{theo}

\begin{centerline}{
$$\diagram
(\arabic{section}.\arabic{theo})\label{diag}&&& \Yb(\nb) &&& \\
\Yb_{j-1}^\closed(\nb) \ar@{^{(}->}[urrr]^{\DS{i_{\nb,j-1}}} \ar@{->>}[ddd]_{\DS{\pi_{\nb,j-1}}} &&&&&&
\Yb_{j}^\closed(\nb) \ar@{_{(}->}[ulll]_{\DS{i_{\nb,j}}} \ar@{->>}[ddd]^{\DS{\pi_{\nb,j}}}\\
&& \Yb_{j-1,j}^\closed(\nb) \ar@{_{(}->}[ull]_{\DS{i}} \ar@{->>}[ddd]_{\DS{\pi}} \ar@{=}[rr]
&& \Yb_{j,j-1}^\closed(\nb) \ar@{^{(}->}[urr]^{\DS{i'}} \ar@{->>}[ddd]^{\DS{\pi'}} &&\\
& \blacksquare && &&\blacksquare&\\
\Yb(c_{j-1}(\nb)) &&&  &&& \Yb(c_j(\nb)) \\
&& \Yb_{j-1}^\closed(c_{j-1}(\nb)) \ar@{_{(}->}[ull]_{\DS{i_{c_{j-1}(\nb),j-1}}} \ar@{->>}[dr]_{\DS{\pi_{c_{j-1}(\nb),j-1}}} 
&& \Yb_{j-1}^\closed(c_{j}(\nb)) \ar@{^{(}->}[urr]^{\DS{i_{c_{j}(\nb),j-1}}} 
\ar@{->>}[dl]^{\DS{\pi_{c_{j}(\nb),j-1}}} &&\\
&&& \Yb(c_{j-1}(c_j(\nb))) &&&
\enddiagram$$}\end{centerline}

\medskip

\noindent 
Note that the two squares marked with the symbol\hskip0.2cm$\blacksquare$\hskip0.2cm are cartesian by definition. 
By the proper base change Theorem, the composition $\Psi_{c_{j-1}(\nb),j-1} \circ \Psi_{\nb,j-1}$ 
is obtained as the composition of $(i_{\nb,j-1} \circ i)^*$ with the inverse of the isomorphism 
induced by $(\pi_{c_{j-1}(\nb),j-1} \circ \pi)^*$. Similarly, 
the composition $\Psi_{c_{j}(\nb),j-1} \circ \Psi_{\nb,j}$ 
is equal to the composition of $(i_{\nb,j} \circ i')^*$ with the inverse of the isomorphism 
induced by $(\pi_{c_{j}(\nb),j-1} \circ \pi')^*$. The lemma follows.
\end{proof}

\bigskip

\begin{lem}\label{lem:transitivite}
Assume that one of the following holds:
\begin{itemize}
\itemth{1} $l(w_{j-2}w_{j-1})=l(w_{j-2})+l(w_{j-1})$.

\itemth{2} $l(w_{j-1} w_{j})=l(w_{j-1})+l(w_{j})$.

\itemth{3} $l(w_{j-2}w_{j-1})=l(w_{j-2}w_{j-1}w_{j}) + l(w_{j})$.

\itemth{4} $l(w_{j-1}w_{j}) = l(w_{j-2}) + l(w_{j-2}w_{j-1}w_{j})$.
\end{itemize}
Then the diagram~(\ref{eq:transitivite}) is commutative.
\end{lem}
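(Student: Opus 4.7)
The plan is to apply Lemma~\ref{le:trans}: it suffices to prove that $\Yb_{j,j-1}^\closed(\nb)=\Yb_{j-1,j}^\closed(\nb)$ as closed subvarieties of $\Yb(\nb)$ under each of the four length hypotheses. I would then reduce this set-theoretic equality to the situation already handled by Corollary~\ref{coro:egalite-y}.

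To carry out this reduction, I would set $\Vb_1=\Ub$, $\Vb_2=\lexp{n_{j-2}}{\Ub}$, $\Vb_3=\lexp{n_{j-2}n_{j-1}}{\Ub}$ and $\Vb_4=\lexp{n_{j-2}n_{j-1}n_j}{\Ub}$, four unipotent radicals of Borel subgroups admitting $\Tb$ as a common Levi complement, and consider the morphism $\Yb(\nb)\to\Gb/\Vb_1\times\cdots\times\Gb/\Vb_4$ defined by
$$(g_1\Ub,\dots,g_r\Ub)\longmapsto\bigl(g_{j-2}\Vb_1,\ g_{j-1}n_{j-2}^{-1}\Vb_2,\ g_j(n_{j-2}n_{j-1})^{-1}\Vb_3,\ g_{j+1}(n_{j-2}n_{j-1}n_j)^{-1}\Vb_4\bigr);$$
write $h_k$ for its $k$-th component. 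Using $\Ub n\Ub n^{-1}=\Ub\cdot\lexp{n}{\Ub}$, a direct computation shows that the three consecutive conditions $g_{j-3+k}\longtrait{n_{j-3+k}}g_{j-2+k}$ ($k=1,2,3$) translate into $h_k^{-1}h_{k+1}\in\Vb_k\cdot\Vb_{k+1}$; that the extra condition $g_{j-2}\longtrait{n_{j-2}n_{j-1}n_j}g_{j+1}$ becomes $h_1^{-1}h_4\in\Vb_1\cdot\Vb_4$; and that the two ``diagonal'' conditions $g_{j-2}\longtrait{n_{j-2}n_{j-1}}g_j$ and $g_{j-1}\longtrait{n_{j-1}n_j}g_{j+1}$ become $h_1^{-1}h_3\in\Vb_1\cdot\Vb_3$ and $h_2^{-1}h_4\in\Vb_2\cdot\Vb_4$ respectively.

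In this picture, $\Yb_{j-1,j}^\closed(\nb)$ is the preimage of $\YCB_{\!1,2,3,4}^{\closed,2}$ and $\Yb_{j,j-1}^\closed(\nb)$ is the preimage of $\YCB_{\!1,2,3,4}^{\closed,3}$. By Remark~\ref{rem:borel-inclusion} applied with $(w_1,w_2,w_3)=(w_{j-2},w_{j-1},w_j)$, the four length hypotheses (1)--(4) of the lemma correspond, in some order, precisely to the four inclusions (1)--(4) of Corollary~\ref{coro:egalite-y}. Under any of them, that corollary yields $\YCB_{\!1,2,3,4}^{\closed,2}=\YCB_{\!1,2,3,4}^{\closed,3}$, and pulling back gives $\Yb_{j-1,j}^\closed(\nb)=\Yb_{j,j-1}^\closed(\nb)$, as required. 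The only real obstacle is the careful bookkeeping in the change of variables $h_k$; no new geometric input is needed beyond Corollary~\ref{coro:egalite-y}.
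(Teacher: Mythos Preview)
Your proposal is correct and follows exactly the approach of the paper. The paper's proof is the two-line statement that, by Lemma~\ref{le:trans}, it suffices to show $\Yb_{j,j-1}^\closed(\nb)=\Yb_{j-1,j}^\closed(\nb)$, and that this ``follows, after base change, from Corollary~\ref{coro:egalite-y}''; you have simply made that base change explicit with the map $(g_1\Ub,\dots,g_r\Ub)\mapsto(h_1,h_2,h_3,h_4)$ and the appeal to Remark~\ref{rem:borel-inclusion}.
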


\bigskip

\begin{proof}
It is sufficient, by Lemma \ref{le:trans},
 to prove that, if (1), (2), (3) or (4) holds, then 
$\Yb_{j,j-1}^\closed(\nb)=\Yb_{j-1,j}^\closed(\nb)$. This follows, 
after base change, from Corollary~\ref{coro:egalite-y}.
%(see also Remark~\ref{rem:condition}).
\end{proof}

\subsection{Comparison of complexes}
We start with the description of varieties of the form $\Yb_1^\open(\nb)$
in a very special case, 
which will be the fundamental step in
the proof of Theorem~\ref{theo:theo-d-borel}.

\bigskip
Let $\wb=\bar{\nb}=(w_1,\dots,w_r)$.
Given $\alpha\in\Delta$, we define a subgroup of $\Tb^{r+1}$
\begin{multline*}
\Sb(\alpha,\wb)=\{(a_1,\ldots,a_{r+1})\in\Tb^{r+1}\ | \
a_1^{-1}s_\alpha a_2 s_\alpha^{-1}\in \Tb_{\alpha^\vee},\
a_i^{-1}w_{i-1}a_{i+1}w_{i-1}^{-1}=1 \text{ for }2\le i\le r\\
\text{ and }
a_{r+1}^{-1}w_rF(a_1)w_r^{-1}=1\}.
\end{multline*}
Let $x\in\{1,s_\alpha\}$.
The group morphism
$$\Tb\to\Tb^{r+1},\ a\mapsto (a,x^{-1}ax,w_1^{-1}x^{-1}axw_1,\ldots,
w_{r-1}^{-1}\cdots w_1^{-1}x^{-1}axw_1\cdots w_{r-1})$$
restricts to an embedding of $\Tb^{xwF}$ in $\Sb(\alpha,\wb)$.

\smallskip
Given $\ab=(a_1,\ldots,a_m)$ and $\bb=(b_1,\ldots,b_n)$ two sequences,
we denote the concatenation of the sequences by $\ab\bullet\bb=
(a_1,\ldots,a_m,b_1,\ldots,b_n)$.

\begin{lem}\label{lem:crucial}
Let $\a \in \D$ and let $\sdo$ be a representative of $s_\a$ in 
$N_\Gb(\Tb) \cap \Gb_\a$. We assume that $\Gb_\a \simeq \Sb\Lb_2(\FM)$. 
There exists a closed immersion 
$\Yb(\sdo \bullet \nb) \injto \Yb_2^\open((\sdo,\sdo^{-1}) \bullet \nb)$ 
and an action of $\Sb(\a,\wb)$ 
on $\Yb_2^\open((\sdo,\sdo^{-1}) \bullet \nb)$ such that 
$$\Yb_2^\open((\sdo,\sdo^{-1}) \bullet \nb) \simeq 
\Yb(\sdo \bullet \nb) \times_{\Tb^{s_\a wF}} \Sb(\a,\wb),$$
as $\Gb^F$-varieties-$\Tb^{wF}$.
\end{lem}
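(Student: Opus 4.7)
The plan is to construct the isomorphism $\Phi$ explicitly using the Bruhat calculus in the rank-one subgroup $\Gb_\alpha\simeq\Sb\Lb_2(\FM)$. Set $\Ub^{(\alpha)}=\prod_{\beta\in\Phi^+\setminus\{\alpha\}}\Ub_\beta$, so that $\Ub=\Ub_\alpha\cdot\Ub^{(\alpha)}$. Since $s_\alpha^2=1$ in $W$, the product of Bruhat cells is non-reduced: from $\sdo\Ub\sdo^{-1}=\Ub^{(\alpha)}\Ub_{-\alpha}$ and the rank-one identity $u_{-\alpha}(y)=\alpha^\vee(y^{-1})u_\alpha(y)\sdo u_\alpha(y^{-1})$ for $y\ne0$, one gets
$$\Ub\sdo\Ub\cdot\Ub\sdo^{-1}\Ub\;=\;\Ub\,\sqcup\,\Ub\,\Tb_{\alpha^\vee}\sdo\,\Ub.$$
Hence on $\Yb_2^\open$ the element $g_1^{-1}g_3$ lies in $\Ub\Tb_{\alpha^\vee}\sdo\Ub\subset\Bb\sdo\Bb$, and its unique Bruhat decomposition $g_1^{-1}g_3=t\cdot u_\alpha(a)\sdo u$ (with $t\in\Tb$, $a\in\FM$, $u\in\Ub$) produces a well-defined invariant $t\in\Tb_{\alpha^\vee}$, which will parametrize $\Sb(\alpha,\wb)/\Tb^{s_\alpha wF}$.

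The first key step is a rank-one statement: given $(g_1\Ub,g_3\Ub)$ with $g_1^{-1}g_3\in\Ub\Tb_{\alpha^\vee}\sdo\Ub$, there is a \emph{unique} $g_2\Ub$ satisfying $g_1\longtrait{\sdo}g_2$ and $g_2\longtrait{\sdo^{-1}}g_3$. One reduces to $g_1=1$ and computes in $\Gb_\alpha\simeq\Sb\Lb_2(\FM)$: if $g_3=\left(\begin{smallmatrix}a&b\\c&d\end{smallmatrix}\right)$ with $c\ne0$, then $g_2=u_\alpha((a-1)/c)\sdo$ is the unique solution, as a direct matrix calculation confirms (the condition $g_2^{-1}g_3\in\Ub\sdo^{-1}\Ub$ forces the bottom-left entry to be $-1$, which pins $(a-1)/c$ down). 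In general, one reads off the ``$\Ub_\alpha$-part'' from a decomposition $g_1^{-1}g_3=v_1u_\alpha(a)u_{-\alpha}(-y)u_\alpha(d)v_2$ with $v_1,v_2\in\Ub^{(\alpha)}$ and $y\ne 0$, and sets $g_2=g_1\cdot v_1u_\alpha(a)\sdo$.

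I then define $\Phi\colon\Yb(\sdo\bullet\nb)\times\Sb(\alpha,\wb)\to\Yb_2^\open((\sdo,\sdo^{-1})\bullet\nb)$ by $g_1=h_1a_1$, $g_{i+1}=h_ia_i$ for $2\le i\le r+1$, with $g_2$ determined by the rank-one uniqueness above. Writing $h_1^{-1}h_2=u_1\sdo u_2$ with $u_1,u_2\in\Ub$ and using $\Tb$-normality of $\Ub$,
$$g_1^{-1}g_3\;=\;a_1^{-1}u_1\sdo u_2a_2\;=\;u_1'\cdot(a_1^{-1}\sdo a_2\sdo^{-1})\sdo\cdot u_2'\;=\;u_1'\cdot t\sdo\cdot u_2',$$
with $t=a_1^{-1}\sdo a_2\sdo^{-1}\in\Tb_{\alpha^\vee}$ by the first defining condition of $\Sb(\alpha,\wb)$; hence $g_1^{-1}g_3\in\Ub\Tb_{\alpha^\vee}\sdo\Ub$, as required. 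The relations $g_{i+1}\longtrait{n_i}g_{i+2}$ for $i\ge 2$ reduce similarly to $a_{i+1}=n_i^{-1}a_in_i$, which are exactly the middle defining conditions of $\Sb(\alpha,\wb)$; $F^m$-equivariance uses the last condition $a_{r+1}=n_rF(a_1)n_r^{-1}$.

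Finally, for $a\in\Tb^{s_\alpha wF}$ with image $\tilde a=(a,a^{\sdo},a^{\sdo n_1},\ldots,a^{\sdo n_1\cdots n_{r-1}})\in\Sb(\alpha,\wb)$, a direct substitution yields $\Phi(h_\bullet\cdot a,\ab)=\Phi(h_\bullet,\tilde a\cdot\ab)$, so $\Phi$ descends to a morphism $\overline\Phi\colon\Yb(\sdo\bullet\nb)\times_{\Tb^{s_\alpha wF}}\Sb(\alpha,\wb)\to\Yb_2^\open$. Both sides are smooth of dimension $2+\sum_{i=1}^rl(w_i)$: one has $\dim\Sb(\alpha,\wb)=1$ from the exact sequence $1\to\Tb^{s_\alpha wF}\to\Sb(\alpha,\wb)\to\Tb_{\alpha^\vee}\to 1$ induced by $a_1\mapsto a_1^{-1}\sdo a_2\sdo^{-1}$, and bijectivity on geometric points follows from the explicit formula combined with the rank-one uniqueness of $g_2$. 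The claimed closed immersion $\Yb(\sdo\bullet\nb)\injto\Yb_2^\open$ is obtained by restricting $\overline\Phi$ to the fiber of the projection to $\Tb_{\alpha^\vee}$ over $1$, i.e.\ the image of $h_\bullet\mapsto[h_\bullet,(1,\ldots,1)]$; the action of $\Sb(\alpha,\wb)$ on $\Yb_2^\open$ is transported from right translation on the second factor of the fibre product. The main technical step is the rank-one uniqueness of $g_2$; once this is granted, everything else is bookkeeping with the $\Tb$-normalization relations encoded in $\Sb(\alpha,\wb)$.
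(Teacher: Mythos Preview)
Your proof is correct and follows essentially the same route as the paper's: both hinge on the $\Sb\Lb_2$ computation showing $\Ub\sdo\Ub\sdo^{-1}\Ub\setminus\Ub=\Ub\Tb_{\a^\ve}\sdo\Ub$, the observation that $g_2\Ub$ is uniquely recoverable from $(g_1\Ub,g_3\Ub)$ on the open locus, and the identification of the isomorphism with componentwise right multiplication by $\Sb(\a,\wb)$. The only presentational difference is that the paper defines the $\Sb(\a,\wb)$-action directly as the restriction of right $\Tb^{r+1}$-multiplication on $(\Gb/\Ub)^{r+1}$ (so the equivariance and the closed immersion from $\Ub\sdo\Ub\subset\Ub\Tb_{\a^\ve}\sdo\Ub$ are immediate), whereas you transport the action through $\overline\Phi$; these are the same action.

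One small caveat: in your last paragraph you deduce that $\overline\Phi$ is an isomorphism from ``both sides smooth of the same dimension and bijective on geometric points''. In positive characteristic this alone is insufficient (think of Frobenius on $\Ab^1$). You should add that $\overline\Phi$ is \'etale---which is clear, since $\Phi$ is given by right multiplication by torus elements and hence induces isomorphisms on tangent spaces---or simply observe that the inverse is visibly a morphism: read off $t\in\Tb_{\a^\ve}$ from the Bruhat decomposition of $g_1^{-1}g_3$ and divide. The paper sidesteps this by noting that the map is componentwise multiplication, so invertibility is immediate.
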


%\bigskip
%
%\begin{rema}\label{rem:inclusion-tore}
%Recall from~\cite[Proposition~4.11]{BR} that $\Tb^{s_\a wF}$ and $\Tb^{wF}$ 
%can be viewed as natural subgroups of $\Sb_{s_\a \bullet \wb,\wb}$: 
%this gives $\Sb_{s_\a \bullet \wb,\wb}$ the structure of a 
%$\Tb^{s_\a w F}$-variety-$\Tb^{wF}$.\finl
%\end{rema}

\bigskip

\begin{proof}
Given $i\in\{1,\ldots,r\}$, consider a reduced decomposition
$w_i=s_{i,1}\cdots s_{i,d_i}$. We put
$\tilde{\wb}=(s_{1,1},\ldots,s_{1,d_1},s_{2,1},\ldots,s_{2,d_2},\ldots,
s_{r,1},\ldots,s_{r,d_r})$.
Note that $\Sb(\alpha,\wb)$ is isomorphic to the group
$\Sb(s_\alpha\bullet\tilde{\wb},1\bullet\tilde{\wb})$ 
defined in \cite[\S 4.4.3]{BR}:
$$\Sb(s_\alpha\bullet\tilde{\wb},1\bullet\tilde{\wb})\xrightarrow{\sim}
\Sb(\alpha,\wb),\
(a_1,\ldots,a_{1+d_1+\cdots+d_r})\mapsto
(a_1,a_2,a_{2+d_1},a_{2+d_1+d_2},\ldots,a_{2+d_1+\cdots+d_{r-1}}).$$

The following computation in $\Sb\Lb_2(\FM) \simeq \Gb_\a$
$$\begin{pmatrix} 1 & x \\ 0 & 1 \end{pmatrix} \begin{pmatrix} 0 & 1 \\ -1 & 0\end{pmatrix} \begin{pmatrix} 1 & y \\ 0 & 1\end{pmatrix}
\begin{pmatrix} 0 & -1 \\ 1 & 0\end{pmatrix} \begin{pmatrix} 1 & z \\ 0 & 1\end{pmatrix}
=\begin{pmatrix} 1-xy && x+z-xyz \\ -y && 1-yz\end{pmatrix}\leqno{(\#)}$$
shows that the map
$$\fonctio{\Ub_\a \times (\Ub_\a\setminus\{1\}) \times \Ub_\a}{\Ub_\a\Tb_{\a^\ve}\sdo \Ub_\a=\Gb_\a\setminus \Bb\cap\Gb_\a}
{(u_1,u_2,u_3)}{
u_1\sdo u_2 \sdo^{-1} u_3}$$
is an isomorphism of varieties.
Let $\Ub^\a=\Ub\cap \lexp{\sdo}{\Ub}$.
Let $(g_1\Ub,\ldots,g_{r+2}\Ub)\in \Yb((\sdo,\sdo^{-1}) \bullet \nb)$. We have
$(g_1\Ub,\ldots,g_{r+2}\Ub)\in \Yb_2^\open((\sdo,\sdo^{-1}) \bullet \nb)$ if and only if
$$g_1^{-1}g_3\in (\Ub\sdo \Ub \sdo^{-1}\Ub)\setminus \Ub=\Ub^\a\cdot (\Ub_\a \sdo \Ub_a \sdo^{-1}\Ub_\a\setminus
\Ub_\a)=\Ub^\a\cdot (\Ub_\a\Tb_{\a^\ve}\sdo \Ub_\a)=\Ub\Tb_{\a^\ve}\sdo \Ub.$$
Furthermore, if $(g_1\Ub,\ldots,g_{r+2}\Ub)\in \Yb_2^\open((\sdo,\sdo^{-1}) \bullet \nb)$, then $g_2\Ub$ is determined
by $g_1\Ub$ and $g_3\Ub$.

 Therefore, one may forget the second coordinate in the definition 
of the variety $\Yb_2^\open((\sdo,\sdo^{-1}) \bullet \nb)$ and we get
\equat\label{eq:iso-epaississement}
\begin{array}{rcl}
\Yb_2^\open((\sdo,\sdo^{-1}) \bullet \nb)&\simeq &\{(g\Ub,g_1\Ub,\dots,g_r\Ub)~|~g^{-1}g_1 \in \Ub\Tb_{\a^\ve}\sdo \Ub
\quad \mathrm{and}~\\
&&\quad\vphantom{\frac{\DS{A}}{\DS{A}}}
\qquad g_1 \longtrait{n_1} g_2 \longtrait{n_2} \cdots \longtrait{n_{r-1}} g_r
 \longtrait{n_r} F(g) \}.
\end{array}
\endequat

This description shows that the group $\Sb(\a,\wb)$ acts 
on $\Yb_2^\open((\sdo,\sdo^{-1}) \bullet \nb)$ (as the restriction of
the action by right multiplication of $\Tb^{r+1}$ on $(\Gb/\Ub)^{r+1}$).
Also, as $\Ub\sdo\Ub$ is closed in 
$\Ub\Tb_{\a^\ve}\sdo\Ub$, the natural map 
$\Yb(\sdo \bullet \nb) \injto \Yb_2^\open((\sdo,\sdo^{-1}) \bullet \nb)$ is a
closed immersion. We have embeddings
$\Tb^{s_\a w F} \injto \Sb(\a,\wb)$ and $\Tb^{w F} \injto \Sb(\a,\wb)$ 
and
$$\Sb(\a,\wb)=\Tb^{s_\a w F} \cdot \Sb(\a,\wb)^\circ = 
\Sb(\a,\wb)^{\circ}\cdot \Tb^{w F}$$
(see~\cite[Proposition~4.11]{BR}). 
The stabilizer of the closed subvariety $\Yb(\sdo \bullet \nb)$ under this action 
is $\Tb^{s_\a w F}$, so it is readily checked that the componentwise multiplication induces an isomorphism 
of $\Gb^F$-varieties-$\Tb^{w F}$
$$\Yb(\sdo \bullet \nb) \times_{\Tb^{s_\a w F}} \Sb(\a,\wb) \longisom 
\Yb_2^\open((\sdo,\sdo^{-1}) \bullet \nb),$$
as desired.
\end{proof}

\bigskip
The next theorem is the main result of this section.
It provides a sufficient condition for $\Psi_{\nb,j}$
to induce a quasi-isomorphism 
$\rgammacdim(\Yb(\nb),\L)e_\th \longisom \rgammacdim(\Yb(c_j(\nb)),\L)e_\th$.
% If $x \in W$, we set 
% $$\Phi^+(x)=\{\a \in \Phi^+~|~x^{-1}(\a) \in -\Phi^+\}.$$
% If $x=s_1s_2\cdots s_m$ with $s_i=s_{\a_i} \in S$ and $m=l(x)$, then
% \equat\label{eq:phi-longueur}
% \Phi^+(x)=\{s_1\cdots s_{i-1}(\a_i^\ve)~|~1 \le i \le m\}\qquad\text{and}\qquad l(x)=|\Phi^+(x)|.
% \endequat
% If $y \in W$, then
% \equat\label{eq:phi-plus}
% \Phi^+(xy)=\Phi^+(x) \dot{+} x(\Phi^+(y))\qquad\text{and}\qquad \frac{l(x)+l(y)-l(xy)}{2}=
% |\Phi^+(x) \cap x(\Phi^+(y))|,
% \endequat
% where $\dot{+}$ denotes the symmetric difference.

\medskip
Given $x,y \in W$, we put
$$\Phi^+(x,y)=\{\a \in \Phi^+~|~x^{-1}(\a) \in - \Phi^+\text{~and~}(xy)^{-1}(\a) \in \Phi^+\}.$$

We define $N_w:Y(\Tb)\to\Tb^{wF},\ \lambda\mapsto N_{F^d/wF}(\lambda(\zeta))$
(cf \S \ref{se:rationalseries}).

\bigskip

\begin{theo}\label{theo:theo-d-borel}
Let $\th : \Tb^{wF} \to \L^\times$ be a character.
Let $j \in \{2,3,\dots,r\}$ and assume that $\th(N_w(w_1\cdots w_{j-2}(\a^\ve))) \neq 1$ 
for all $\a \in \Phi^+(w_{j-1},w_{j})$. We have
$\rgammac(\Yb^\open_j(\nb),\L)e_\th=0$ and
$$\Psi_{\nb,j,\th} : \rgammacdim(\Yb(\nb),\L)e_\th \longisom \rgammacdim(\Yb(c_j(\nb)),\L)e_\th$$
is a quasi-isomorphism of complexes of $(\L\Gb^F,\L\Tb^{wF})$-bimodules commuting
with the action of $F^m$.
\end{theo}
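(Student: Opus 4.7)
\textbf{Reduction to a vanishing statement.} The open/closed decomposition $\Yb(\nb)=\Yb_j^\closed(\nb)\sqcup\Yb_j^\open(\nb)$ yields a distinguished triangle
\[ \rgammac(\Yb_j^\open(\nb),\L)\longto\rgammac(\Yb(\nb),\L)\xrightarrow{i_{\nb,j}^*}\rgammac(\Yb_j^\closed(\nb),\L)\longto \]
of $(\L\Gb^F,\L\Tb^{wF})$-bimodules commuting with the $F^m$-action. Since the composite of $i_{\nb,j}^*$ with the isomorphism~(\ref{eq:iso-fibre}) is $\Psi_{\nb,j}$ (up to shift and Tate twist), the theorem reduces to proving $\rgammac(\Yb_j^\open(\nb),\L)e_\th=0$. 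Applying Lemma~\ref{lem:facile} iteratively $(j-2)$ times cyclically moves $n_{j-1}$ into first position and produces an equivariant identification $\Yb_j^\open(\nb)\simeq\Yb_2^\open(\nb')$ where $\nb'=(n_{j-1},n_j,\dots,n_r,F(n_1),\dots,F(n_{j-2}))$; the character $\th$ is transported to a character $\th'$ of $\Tb^{w'F}$ with $w'=\bar{\nb'}$. Because the norm intertwines cyclic shift with the action of $w_1\cdots w_{j-2}$ on cocharacters, the hypothesis rewrites as $\th'(N_{w'}(\a^\vee))\neq 1$ for every $\a\in\Phi^+(w'_1,w'_2)=\Phi^+(w_{j-1},w_j)$. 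We may therefore assume $j=2$ throughout the remainder.

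\textbf{Braid refinement.} If $l(w_1w_2)=l(w_1)+l(w_2)$, Bruhat's product formula gives $\Ub n_1\Ub n_2\Ub=\Ub n_1n_2\Ub$, so $\Yb_2^\open(\nb)=\vide$ and $\Phi^+(w_1,w_2)=\vide$: the statement is vacuous. Otherwise, the exchange condition provides a simple root $\b$ with $l(w_1s_\b)<l(w_1)$ and $l(s_\b w_2)<l(w_2)$. Write $w_1=w_1's_\b$ and $w_2=s_\b w_2'$; passing to a central isogeny if necessary so that $\Gb_\b\simeq\SL_2(\FM)$, choose a representative $\sdo\in N_\Gb(\Tb)\cap\Gb_\b$ of $s_\b$ and factor $n_1=n_1'\sdo$, $n_2=\sdo^{-1}n_2'$. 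Consider the refined sequence $\nb^\vee=(n_1',\sdo,\sdo^{-1},n_2',n_3,\dots,n_r)$. Uniqueness of factorisations in the length-additive Bruhat products $\Ub n_1'\Ub\cdot\Ub\sdo\Ub=\Ub n_1\Ub$ and $\Ub\sdo^{-1}\Ub\cdot\Ub n_2'\Ub=\Ub n_2\Ub$ makes $(h_1,\dots,h_{r+2})\mapsto(h_1,h_3,h_5,h_6,\dots,h_{r+2})$ a $\Gb^F\times(\Tb^{wF})^\opp$-equivariant isomorphism $\Yb(\nb^\vee)\xrightarrow{\sim}\Yb(\nb)$. A direct calculation from the Bruhat decomposition $\Ub\sdo\Ub\sdo^{-1}\Ub=\Ub\sqcup\Ub\sdo\Tb_{\b^\vee}\sdo^{-1}\Ub$ shows that this carries $\Yb_2^\open(\nb)$ onto the open locus $\{h_2^{-1}h_4\notin\Ub\}\subset\Yb(\nb^\vee)$, which, after one further cyclic shift, is identified with $\Yb_2^\open((\sdo,\sdo^{-1})\bullet\mb)$ for $\mb=(n_2',n_3,\dots,n_r,F(n_1'))$.

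\textbf{Application of Lemma~\ref{lem:crucial}.} Setting $\widetilde w=\overline{\mb}_1\cdots\overline{\mb}_r$, Lemma~\ref{lem:crucial} gives
\[ \Yb_2^\open((\sdo,\sdo^{-1})\bullet\mb)\simeq\Yb(\sdo\bullet\mb)\times_{\Tb^{s_\b\widetilde wF}}\Sb(\b,\overline{\mb}), \]
with the right $\Tb^{\widetilde wF}$-action factoring through the inclusion $\Tb^{\widetilde wF}\hookrightarrow\Sb(\b,\overline{\mb})$. Since the quotient is by a finite $\ell'$-group acting freely, passage to the $\th$-isotypic component yields
\[ \rgammac(\Yb_2^\open((\sdo,\sdo^{-1})\bullet\mb),\L)e_\th\simeq\rgammac(\Yb(\sdo\bullet\mb),\L)\otimes^\LM_{\L\Tb^{s_\b\widetilde wF}}\bigl(\rgammac(\Sb(\b,\overline{\mb}),\L)e_\th\bigr). \]
Using $\Sb(\b,\overline{\mb})=\Sb(\b,\overline{\mb})^\circ\cdot\Tb^{s_\b\widetilde wF}$ from~\cite[Proposition~4.11]{BR} together with the identification of $\Sb(\b,\overline{\mb})^\circ$ modulo its intersection with $\Tb^{s_\b\widetilde wF}$ as a rank-one torus supported on $\Tb_{\b^\vee}$, a standard $\GM_m$-cohomology computation gives $\rgammac(\Sb(\b,\overline{\mb}),\L)e_\th=0$ as soon as $\th'$ is non-trivial on the image of $\Tb_{\b^\vee}^{\widetilde wF}$ in $\Tb^{\widetilde wF}$. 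Translating back through the cyclic shifts, this is the hypothesis $\th(N_w(\gamma^\vee))\neq 1$ applied to $\gamma=w_1'(\b)=-w_1(\b)$. From $\gamma>0$, $w_1^{-1}(\gamma)=-\b<0$, and $(w_1w_2)^{-1}(\gamma)=-w_2^{-1}(\b)>0$ one verifies $\gamma\in\Phi^+(w_1,w_2)$, so the theorem's hypothesis supplies the needed inequality and the proof concludes.

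The main obstacle is the bookkeeping of Steps~2 and~3: checking that the refined-sequence isomorphism is genuinely $(\Gb^F,\Tb^{wF})$-equivariant and carries $\Yb_2^\open(\nb)$ precisely onto the expected locus, confirming the compatibility of the cyclic shifts with the norm map $N_w$, and pinning down the exact coroot $\gamma$ produced by Lemma~\ref{lem:crucial} as an element of $\Phi^+(w_1,w_2)$ so that the theorem's hypothesis can be invoked. Verifying the rank-one reduction in the last step relies on the structural description of $\Sb(\b,\overline{\mb})$ in~\cite[\S 4.4]{BR}, which must be transposed carefully through our shift conventions.
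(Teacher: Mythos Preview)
Your Step~2 contains a genuine gap. You claim that under the refinement isomorphism $\Yb(\nb^\vee)\xrightarrow{\sim}\Yb(\nb)$ the open stratum $\Yb_2^\open(\nb)$ is carried onto the locus $\{h_2^{-1}h_4\notin\Ub\}$, i.e.\ onto $\Yb_3^\open(\nb^\vee)$. This is false as soon as $|\Phi^+(w_1,w_2)|>1$. Indeed, $\Yb_2^\closed(\nb)$ is the locus $h_1^{-1}h_5\in\Ub n_1n_2\Ub$, whereas $\Yb_3^\closed(\nb^\vee)$ is the locus $h_2^{-1}h_4\in\Ub$. If $h_2^{-1}h_4\in\Ub$ then $h_1^{-1}h_5\in\Ub n_1'\Ub n_2'\Ub$, but this coincides with $\Ub n_1'n_2'\Ub=\Ub n_1n_2\Ub$ only when $l(w_1'w_2')=l(w_1')+l(w_2')$. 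Since $w_1'w_2'=w_1w_2$ and $l(w_1')+l(w_2')=l(w_1)+l(w_2)-2$, this forces $d_2(\nb)=1$, i.e.\ $|\Phi^+(w_1,w_2)|=1$. In general $\Yb_3^\closed(\nb^\vee)$ is strictly larger than the preimage of $\Yb_2^\closed(\nb)$, so your identification of the open complements fails. (Note also that if your argument worked it would prove the theorem assuming only $\th(N_w(\gamma^\vee))\neq 1$ for the \emph{single} root $\gamma=w_1'(\b)$, which is strictly weaker than the stated hypothesis; that should already be a warning sign.)

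The paper's proof avoids this by an induction on $l(w_{j-1})$. After the same cyclic reduction to $j=2$, one peels off a simple reflection from the \emph{left}, writing $w_1=s_\a w_1'$ and $\nb=c_2(\sdo\bullet\nb')$. The transitivity Lemma~\ref{lem:transitivite}(1) for the triple $(\sdo,n_1',n_2)$ yields the commutative square~(\ref{eq:transitivite}) with $\sdo\bullet\nb'$ in place of $\nb$; three of its four arrows are quasi-isomorphisms after $e_\th$ (one trivially, one by the induction hypothesis via the decomposition $\Phi^+(w_1,w_2)=\bigl(\{\a\}\cap w_1w_2(\Phi^+)\bigr)\sqcup s_\a\Phi^+(w_1',w_2)$, and one by the rank-one case $(\sdo,\sdo^{-1})$ of Lemma~\ref{lem:crucial}), whence so is the fourth. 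Each inductive step consumes exactly one root of $\Phi^+(w_1,w_2)$, which is why the hypothesis is required for \emph{all} such roots.
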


\bigskip

%The proof of this Theorem will be given in \S\ref{sub:proof}: we will argue by induction 
%on the length of $w_{j-1}$, the crucial case being when $w_{j-1}$ is a simple reflection. 
%We will need before some intermediate results that will be discussed in the next two 
%subsections.
%
%
%\subsection{Proof of Theorem~\ref{theo:theo-d-borel}}\label{sub:proof}

\begin{proof}
If $2 \le j \le r$, we denote by $\PC(\nb,j,\th)$ the following property:
$$\text{For all $\a \in \Phi^+(w_{j-1},w_j)$, 
we have $\th(N_w(w_1\cdots w_{j-2}(\a^\ve))) \neq 1$.}\leqno{\PC(\nb,j,\th)}$$
%We denote by $\QC(\nb,j,\th)$ the following property:
%$$\rgammac(\Yb_j^\open(\nb),\L)e_\th = 0.\leqno{\QC(\nb,j,\th)}$$
We want to prove that $\PC(\nb,j,\th)$ implies that $\rgammac(\Yb_j^\open(\nb),\L)e_\th = 0$. 
By~\cite[Proposition~5.19~and~Remark~5.21]{BR}, it is sufficient to prove it 
whenever $[\Gb,\Gb]$ is simply connected, and we will assume this holds.

\medskip

So assume from now that $\PC(\nb,j,\th)$ holds. 
We will prove by induction on $l(w_{j-1})$ that $\rgammac(\Yb_j^\open(\nb),\L)e_\th = 0$. 
Note that the induction hypothesis does not depend on $r$. 
But, first, note that, if $j \ge 3$, then 
$\PC(\nb,j,\th)$ is equivalent to $\PC(\shift(\nb),j-1,\th \circ n_1)$ 
and that the morphism constructed in Lemma~\ref{lem:facile} sends $\Yb_j^\open(\nb)$ to 
$\Yb_{j-1}^\open(\shift(\nb))$. Thus 
$\rgammac(\Yb_j^\open(\nb),\L)e_\th = 0$ is equivalent to 
$\rgammac(\Yb_{j-1}^\open(\shift(\nb)),\L)e_{\th \circ n_1} = 0$. 
By successive applications of this remark, this shows that we may assume that $j=2$. 

\bigskip

\noindent{\it First case: Assume that $l(w_1)=0$.} This means that $n_1 \in \Tb$ and 
it follows from Lemma~\ref{lem:fibration} (or Lemma~\ref{lem:tresse-tore}(a)) 
that $\Yb_2^\open(\nb)=\vide$. So the result 
follows in this case.

\bigskip

\noindent{\it Second case: Assume that $l(w_1)=1$ and $n_1n_2=1$.} 
Let $\a \in \D$ be such that $w_1=s_\a$. By Lemma \ref{lem:tresse-tore}, we may assume that 
$n_1=\sdo$ is a representative of $s_\a$ lying in $\Gb_\a$. Note 
also that, since $[\Gb,\Gb]$ is simply connected, we have $\Gb_\a \simeq \Sb\Lb_2(\FM)$. 
Define $\Sb=\Sb(\a,(w_3,\dots,w_r))$.
Lemma~\ref{lem:crucial} shows that
$$\rgammac(\Yb_2^\open(\nb),\L)e_\th = \rgammac(\Yb(\sdo,n_3,\dots,n_r),\L) 
\otimes_{\L\Tb^{s_\a wF}} \rgammac(\Sb,\L) e_\th.$$
But $\Phi^+(w_1,w_2)=\Phi^+(s_\a,s_\a)=\{\a\}$, so $\th(N_w(\a^\ve)) \neq 1$ 
by hypothesis. Note also that $\Tb^{wF} \cap \Sb^\circ$ acts trivially on the cohomology 
groups of the complex $\rgammac(\Sb)$, as its action extends to the 
connected group $\Sb^\circ$. Since $N_w(\a^\ve) \in \Sb^\circ$ 
(see~\cite[Proof of Proposition~4.11,~Equality~(a)]{BR}), this proves that 
$\rgammac(\Sb,\L) e_\th=0$ and so $\rgammac(\Yb_2^\open(\nb),\L)e_\th=0$, 
as desired.

\bigskip

\noindent{\it Last case: Assume that $l(w_1) \ge 1$.} 
Let $\a \in \D$ be such that $w_1=s_\a w_1'$, 
with $l(w_1')=l(w_1)-1$. Let $\sdo$ be a representative of $s_\a$ in $\Gb_\a$ and let 
$n_1'=\sdo^{-1} n_1$. We will write $\nb'=(n_1',n_2,\dots,n_r)$. 
Then $n_1'$ is a representative of $w_1'$ and, 
by Lemma~\ref{lem:tresse-tore}(a), we have $\Yb(\nb) = \Yb(\sdo \bullet \nb')$ 
(see also the remark following Lemma~\ref{lem:tresse-tore}). 

It is well-known that $\Phi^+(w_1,w_1^{-1})=\Phi^+\cap w_1(-\Phi^+)=\{\a\} \coprod s_\a(\Phi^+(w_1',w_1^{-1}))$. Therefore
\begin{align*}
\Phi^+(w_1,w_2)&=\Phi^+\cap w_1(-\Phi^+)\cap w_1w_2(\Phi^+)=
\bigl(\{\alpha\}\cap w_1w_2(\Phi^+)\bigr)\coprod s_\a\bigr(\Phi^+\cap w'_1(-\Phi^+)\cap w'_1w_2(\Phi^+)\bigr)\\
&=\bigl(\{\alpha\}\cap w_1w_2(\Phi^+)\bigr)\coprod s_\a(\Phi^+(w'_1,w_2)),
\end{align*}
hence
$$\Phi^+(w_1,w_2)=
\begin{cases}
s_\a\bigl(\Phi^+(w_1',w_2)\bigr) & \text{if $l(w_1'w_2) < l(w_1w_2)$,}\\
\{\a\} \coprod s_\a\bigl(\Phi^+(w_1',w_2)\bigr) & \text{if $l(w_1'w_2) > l(w_1w_2)$.}\\
\end{cases}
\leqno{(\#)}
$$
Let us now consider the diagram~(\ref{diag}) with $\nb$ replaced by $\sdo \bullet \nb'$ 
and $j$ is replaced by $3$. Since $c_2(\sdo\bullet\nb')=\nb$, it follows from 
Lemma~\ref{lem:transitivite}(1) that (\ref{eq:transitivite}) gives a commutative diagram
$$\xymatrix{\rgammacdim(\Yb(\sdo\bullet\nb'))e_\theta\ar[r]\ar[d] &  
\rgammacdim(\Yb(c_3(\sdo\bullet\nb')))e_\theta\ar[d] \\
\rgammacdim(\Yb(\nb))e_\theta\ar[r] &  
\rgammacdim(\Yb(c_2(\nb)))e_\theta
}$$
The left vertical map is an isomorphism since $\Yb(\sdo\bullet\nb')\simeq\Yb(\nb)$.
By $(\#)$, we have $\Phi^+(w'_1,w_2)\subset s_\a(\Phi^+(w_1,w_2))$, hence
Property $\PC(\sdo \bullet \nb',3,\th)$ is fulfilled. So, the top horizontal map is an isomorphism by induction.

In order to show that the bottom horizontal map is an isomorphism, 
it remains to show that the right vertical map is an isomorphism.
Note that $c_3(\sdo \bullet \nb')=\sdo \bullet c_2(\nb')$.  Two cases may occur:

\medskip

$\bullet$ Assume first that $l(w_1'w_2) < l(w_1 w_2)$. Then 
$\Yb_2^\open(c_3(\sdo \bullet \nb'))=\vide$, and the result follows. 

\medskip

$\bullet$ Assume now that $l(w_1' w_2) > l(w_1 w_2)$. 
Then, again by Lemma~\ref{lem:tresse-tore}(a), 
we have $\Yb(\sdo \bullet c_2(\nb')) = \Yb((\sdo,\sdo^{-1}) \bullet c_2(\nb))$ 
and, through this identification, $\Yb_2^\open(c_3(\sdo \bullet \nb'))$ 
is identified with $\Yb_2^\open((\sdo,\sdo^{-1}) \bullet c_2(\nb))$. 
So the result now follows from the {\it second case} (thanks to $(\#)$). 
\end{proof}

\begin{rema}
\label{re:Frob}
Theorem \ref{theo:theo-d-borel} provides a comparison of modules, together with the Frobenius
action. Consider the case $\Lambda=K$. We have an isomorphism of $K\Gb^F$-modules,
compatible with the Frobenius action
$$H^i_c(\Yb(\nb),K)\otimes_{K\Tb^{wF}}K_\theta\simeq
H^{i-2r}_c(\Yb(c_j(\nb)),K)\otimes_{K\Tb^{wF}}K_\theta(-r).$$
where $r=d_j(\nb)$.
\end{rema}

\bigskip
Following the same lines as in the proof of Theorem~\ref{theo:theo-d-borel}, we
obtain a new proof of
the following classical result.

\bigskip

\begin{theo}\label{theoe}
If $\L$ is a field, then $\Rrm_\nb=\Rrm_w$. 
% the diagram
% %\equat\label{dia F}
% $$\diagram
% \Krm_0(\Lambda\Tb^{\nb'F}-{\mathrm{mod}}) \rrto^{\DS{x_*}} \ddrto_{\DS{\Rrm_{\nb'}}} && 
% \Krm_0(\Lambda\Tb^{\nb F}-{\mathrm{mod}})
% \ddlto^{\DS{\Rrm_\nb}} \\ && \\ & \Krm_0(\Lambda\Gb^F-{\mathrm{mod}})& \\
% \enddiagram
% $$%\endequat
% is commutative.
\end{theo}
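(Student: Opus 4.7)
The plan is to proceed by induction on $r$ and reduce the statement to establishing $\Rrm_\nb = \Rrm_{c_j(\nb)}$ for an appropriate index $j$; iterating this reduction yields $\Rrm_\nb = \Rrm_{(n)} = \Rrm_w$ for any lift $n$ of $w$, with the length-one case being tautological. Starting from the open--closed decomposition $\Yb(\nb) = \Yb_j^\closed(\nb) \sqcup \Yb_j^\open(\nb)$ together with the affine-bundle identification of Lemma~\ref{lem:fibration} (whose even shift $-2d_j(\nb)$ preserves Grothendieck-group classes), I obtain
$$\Rrm_\nb = \Rrm_{c_j(\nb)} + \Rrm_{\Yb_j^\open(\nb)}$$
as maps $G_0(\L\Tb^{wF}) \to G_0(\L\Gb^F)$. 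The task therefore reduces to showing that $\Rrm_{\Yb_j^\open(\nb)} = 0$ for some $j$.

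If there exists $j$ with $l(w_{j-1}w_j) = l(w_{j-1}) + l(w_j)$, the Bruhat product formula $\Ub w_{j-1}\Ub \cdot \Ub w_j \Ub = \Ub w_{j-1}w_j \Ub$ immediately yields $\Yb_j^\open(\nb) = \vide$ and I am done. Otherwise, a cyclic shift (Lemma~\ref{lem:facile}) and a change of representatives (Lemma~\ref{lem:tresse-tore}(a)) let me assume $j=2$ and $\nb = (\sdo, \sdo^{-1}) \bullet \nb_0$, where $\sdo$ is a representative of some simple reflection $s_\alpha$. Then Lemma~\ref{lem:crucial} supplies
$$\Yb_2^\open\bigl((\sdo, \sdo^{-1}) \bullet \nb_0\bigr) \simeq \Yb(\sdo \bullet \nb_0) \times_{\Tb^{s_\alpha wF}} \Sb(\alpha, \overline{\nb_0}).$$
As noted in the proof of Theorem~\ref{theo:theo-d-borel}, the identity component $\Sb(\alpha, \overline{\nb_0})^\circ$ contains the one-dimensional subtorus $\Tb_{\alpha^\vee}$, so $\chi_c(\Sb(\alpha, \overline{\nb_0})^\circ) = 0$. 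Decomposing $\Sb(\alpha, \overline{\nb_0})$ as a disjoint union of cosets of its identity component (on each of which the connected-group action is trivial on cohomology) yields $[R\Gamma_c(\Sb(\alpha, \overline{\nb_0}), \L)] = 0$ in the Grothendieck group of $(\L\Tb^{s_\alpha wF}, \L\Tb^{wF})$-bimodules, and the Künneth formula then produces $[R\Gamma_c(\Yb_2^\open((\sdo, \sdo^{-1}) \bullet \nb_0), \L)] = 0$, giving $\Rrm_{\Yb_j^\open(\nb)} = 0$.

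The main obstacle is the pairwise-reduced but globally non-reduced case (for instance, $\nb = (s_1, s_2, s_1, s_2)$ in type $A_2$), where no adjacent pair is non-reduced even after cyclic shifts. I would handle this by applying a braid relation via Lemma~\ref{lem:tresse-tore}(b) to produce a braid-equivalent sequence exhibiting an adjacent pair of common value $s$; the resulting isomorphism of Deligne--Lusztig varieties is non-canonical but this is harmless at the Grothendieck-group level. A minor adjustment of lifts by torus elements (or, equivalently, a parallel analysis with $(\sdo, \sdo)$ replacing $(\sdo, \sdo^{-1})$, in which $\Sb(\alpha, \overline{\nb_0})^\circ$ still contains $\Tb_{\alpha^\vee}$ so the vanishing argument applies verbatim) then brings the situation into the form handled above.
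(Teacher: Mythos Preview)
There is a genuine gap in your reduction. Your ``Otherwise'' step asserts that when no adjacent pair is length-additive, a cyclic shift and a change of representatives bring $\nb$ to the form $(\sdo,\sdo^{-1})\bullet\nb_0$; but neither operation changes the Weyl-group entries, so there is no reason $w_1$ should become a simple reflection. Concretely, run your own induction on your example $(s_1,s_2,s_1,s_2)$ in type $A_2$: two Case-A contractions bring you to $(s_2s_1s_2,\,s_2)$, a non-additive pair with $l(w_1)=3$, and after the cyclic shift $(s_2,\,s_2s_1s_2)$ the pair is still non-additive and still has an entry of length~$3$. Your ``main obstacle'' paragraph proposes braid moves via Lemma~\ref{lem:tresse-tore}(b), and that idea can indeed be made to work if one stays throughout in the world of simple-reflection sequences (invoking the word property for Coxeter groups: a non-reduced word is braid-equivalent to one with an adjacent repeat), but your induction on $r$ as written does not stay there --- a Case-A contraction followed by re-expansion into simple reflections just returns the original sequence and makes no progress.

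The paper avoids this by a different induction. Rather than inducting on $r$, it proves $[\rgammac(\Yb_j^\open(\nb),\L)]=0$ for every $j$ by induction on $l(w_{j-1})$, following exactly the template of the proof of Theorem~\ref{theo:theo-d-borel}: write $w_{j-1}=s_\a w'_{j-1}$ with $l(w'_{j-1})=l(w_{j-1})-1$, replace $\nb$ by the \emph{longer} sequence $\sdo\bullet\nb'$ via Lemma~\ref{lem:tresse-tore}(a), and compare the two contractions of $\sdo\bullet\nb'$ (at positions $2$ and $3$) through the commutative square~(\ref{eq:transitivite}), whose hypotheses are supplied by Lemma~\ref{lem:transitivite}. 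One edge of the square is an isomorphism because $\Yb(\sdo\bullet\nb')=\Yb(\nb)$, another by the inductive hypothesis, and the remaining one reduces (after a further case split) to the base case $n_1=\sdo=n_2^{-1}$ handled via Lemma~\ref{lem:crucial}. Your argument for the vanishing of $[\rgammac(\Sb(\a,\wb),\L)]$ in that base case is essentially the paper's, though slightly imprecise: $\Tb^{s_\a wF}\times(\Tb^{wF})^\opp$ permutes (rather than fixes) the cosets of the identity component, so the paper first establishes the vanishing in $G_0(\L T)$ for the coset stabiliser $T$ and then induces.

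You also omit the reduction to $[\Gb,\Gb]$ simply connected (via \cite[Prop.~5.19 and Rem.~5.21]{BR}), which is needed for the hypothesis $\Gb_\a\simeq\Sb\Lb_2(\FM)$ of Lemma~\ref{lem:crucial}.
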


\bigskip

\begin{proof}
By~\cite[Proposition~5.19~and~Remark~5.21]{BR}, it is sufficient to prove the 
Theorem whenever $[\Gb,\Gb]$ is simply connected, and we assume this holds. Also, by proceeding 
step-by-step, it is enough to prove that $\Rrm_\nb=\Rrm_{c_j(\nb)}$. For this, let 
$\rgamma_{\! \nb,j}^\open$ denote the class of the complex 
$\rgammac(\Yb_j^\open(\nb),\L)$ in $G_0(\L\Gb^F\otimes \L\Tb^{wF})$. 
We only need to prove that $\rgamma_{\! \nb,j}^\open=0$.

Proceeding by induction on $l(w_j)$ as in the proof of Theorem~\ref{theo:theo-d-borel}, 
and following the same strategy and arguments, we see that it is enough to prove 
Theorem~\ref{theoe} whenever $j=1$, $n_1=\sdo=n_2^{-1}$, where $\sdo$ is a representative 
in $\Gb_\a$ of $s_\a$ (for some $\a \in \D$). By Lemma~\ref{lem:crucial}, it is sufficient to 
prove that the class $\rgamma_{\! \a,\wb}$ of the complex
$\rgammac(\Sb(\a,\wb),\L)$ 
in $G_0(\L\Tb^{s_\a w F} \otimes \L\Tb^{wF})$ is equal to $0$. 

Now, let $T$ denote the subgroup of $\Tb^{s_\a w F} \times \Tb^{wF}$ consisting 
of pairs $(t_1,t_2)$ such that $t_1t_2 \in \Sb(\a,\wb)^\circ$ 
and let $\rgamma^\circ$ denote the class of the complex \
$\rgammac(\Sb(\a,\wb)^\circ)$ 
in $G_0(\L T)$. Then $\rgamma_{\! \a,\wb}=\Ind_T^{\Tb^{s_\a w F} \times \Tb^{wF}} \rgamma^\circ$. 
But the action of $T$ on $\Sb(\a,\wb)$ extends to an action 
of the connected group $\Sb(\a,\wb)^\circ$, hence $T$ acts trivially on the cohomology 
groups of $\Sb(\a,\wb)^\circ$. Since the Euler characteristic 
of a torus is equal to $0$, this gives $\rgamma^\circ=0$, and
consequently $\rgamma_{\! \a,\wb}=0$, as desired.
\end{proof}

\bigskip

\begin{coro}\label{coro:theoe-weyl}
Let $\nb'=(n_1',n_2',\dots,n_{r'}')$ be a sequence of elements of $N_\Gb(\Tb)$,  
let $x \in W$ and let $w'$ denote the image of $n_1'n_2'\cdots n_{r'}'$ in $W$. 
We assume that $\L$ is a field and that $w'=x^{-1}wF(x)$. Then 
the diagram
%\equat\label{dia F}
$$\diagram
G_0(\Lambda\Tb^{\nb'F}) \rrto^{\DS{x_*}} \ddrto_{\DS{\Rrm_{\nb'}}} && 
G_0(\Lambda\Tb^{\nb F})
\ddlto^{\DS{\Rrm_\nb}} \\ && \\ & G_0(\Lambda\Gb^F)& \\
\enddiagram
$$%\endequat
is commutative.
\end{coro}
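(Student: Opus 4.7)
The plan is to apply Lemma~\ref{lem:facile} (cyclic shift) to a well-chosen auxiliary sequence, and then use Theorem~\ref{theoe} (which requires $\Lambda$ to be a field) to identify the various Deligne--Lusztig functors that appear.

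Pick a lift $\dot x\in N_\Gb(\Tb)$ of $x$ and a lift $\dot w\in N_\Gb(\Tb)$ of $w$, and consider the length-three auxiliary sequence
$$\mb = (\dot x^{-1},\,\dot w,\,F(\dot x)),$$
whose product has image $x^{-1}wF(x)=w'$ in $W$. Its cyclic shift
$$\shift(\mb) = (\dot w,\,F(\dot x),\,F(\dot x^{-1}))$$
has product of image $wF(x)F(x^{-1})=w$ in $W$. The relation $w'=x^{-1}wF(x)$ yields $\dot x\,\Tb^{w'F}\dot x^{-1}=\Tb^{wF}$, so the equivalence $x_*$ of \S\ref{se:finitegroups} makes sense as an equivalence $\Lambda\Tb^{w'F}\modules \longisom \Lambda\Tb^{wF}\modules$, with inverse $(\dot x^{-1})_*$.

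Applying Lemma~\ref{lem:facile} to the sequence $\mb$ yields an isomorphism of functors
$\RC_\mb\circ(\dot x^{-1})_*\simeq \RC_{\shift(\mb)}$ from $\Drm^b(\Lambda\Tb^{wF})$ to $\Drm^b(\Lambda\Gb^F)$. Passing to Grothendieck groups and composing on the right with $x_*$ gives
$$\Rrm_\mb = \Rrm_{\shift(\mb)}\circ x_*$$
as maps $G_0(\Lambda\Tb^{w'F})\to G_0(\Lambda\Gb^F)$. Finally, Theorem~\ref{theoe} identifies $\Rrm_\mb$ with $\Rrm_{w'} = \Rrm_{\nb'}$ and $\Rrm_{\shift(\mb)}$ with $\Rrm_w = \Rrm_\nb$; substituting these into the displayed equality produces $\Rrm_{\nb'} = \Rrm_\nb\circ x_*$, which is the claimed commutativity.

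The argument is essentially formal. The only substantive verification is the conjugation identity $\dot x\,\Tb^{w'F}\dot x^{-1}=\Tb^{wF}$, which ensures that the domains and codomains of $x_*$ and of the various $\Rrm$'s fit together correctly; this is a direct consequence of the relation $w'=x^{-1}wF(x)$. No further geometric or representation-theoretic input is required beyond Theorem~\ref{theoe} and Lemma~\ref{lem:facile}.
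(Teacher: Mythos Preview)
Your proof is correct and follows essentially the same approach as the paper: build an auxiliary sequence beginning with $\dot x^{-1}$, apply the cyclic-shift Lemma~\ref{lem:facile} once, and then invoke Theorem~\ref{theoe} on both sides. The only cosmetic difference is that the paper uses $(\dot x^{-1},n_1,\dots,n_r,F(\dot x))$ where you use $(\dot x^{-1},\dot w,F(\dot x))$; since Theorem~\ref{theoe} reduces everything to the image in $W$ anyway, this makes no difference.
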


\bigskip

\begin{proof}
Let $\nb''=(\xdo^{-1},n_1,n_2,\dots,n_r,F(\xdo))$. Then, by Lemma~\ref{lem:facile}, 
$$\Rrm_{\nb''}=\Rrm_{\nb \bullet (F(\xdo),F(\xdo)^{-1})} \circ x_*.$$ 
But, by Theorem~\ref{theoe}, 
$\Rrm_{\nb''}=\Rrm_{w'}=\Rrm_{\nb'}$ and 
$\Rrm_{\nb \bullet (F(\xdo),F(\xdo)^{-1})}=\Rrm_w=\Rrm_\nb$.
\end{proof}

\bigskip

The following result is a reformulation of Corollary~\ref{coro:theoe-weyl} as in \cite[\S{11.1}]{BR}.

\begin{coro}\label{coro:theoe}
Let $\Tb'$ be an $F$-stable maximal torus of $\Gb$ and let $\Bb'$ and $\Bb''$ 
be two Borel subgroups of $\Gb$ containing $\Tb'$. Then 
$\Rrm_{\Tb' \subset \Bb'}^\Gb(\th') = \Rrm_{\Tb' \subset \Bb''}^\Gb(\th')$ 
for all $\th' \in \Irr(\Tb^{\prime F})$.
\end{coro}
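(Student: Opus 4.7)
The plan is to reduce the statement to Corollary \ref{coro:theoe-weyl} by transporting everything back to the reference torus $\Tb$. First, I would choose elements $x,y \in \Gb$ with $\Tb' = \lexp{x}{\Tb} = \lexp{y}{\Tb}$, $\Bb' = \lexp{x}{\Bb}$ and $\Bb'' = \lexp{y}{\Bb}$. Since $\Tb'$ is $F$-stable (while the $\Bb',\Bb''$ need not be), the elements $\dot a = x^{-1}F(x)$ and $\dot b = y^{-1}F(y)$ lie in $N_\Gb(\Tb)$, with images $a,b \in W$. Their relationship is controlled by $\dot w := y^{-1}x \in N_\Gb(\Tb)$ (image $w \in W$): a direct computation gives
\[
\dot b \;=\; y^{-1}F(y)\;=\;(y^{-1}x)(x^{-1}F(x))F(x^{-1}y)\;=\;\dot w\,\dot a\,F(\dot w)^{-1},
\]
so $a=w^{-1}bF(w)$, which is exactly the compatibility hypothesis of Corollary \ref{coro:theoe-weyl}.

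Next, I would identify the Deligne-Lusztig varieties attached to $(\Tb',\Bb')$ and $(\Tb',\Bb'')$ with the ``braid-style'' varieties $\Yb(\dot a)$ and $\Yb(\dot b)$ used throughout \S\ref{se:comparing}. Letting $\Ub'$ (resp.\ $\Ub''$) denote the unipotent radical of $\Bb'$ (resp.\ $\Bb''$), the map $g\Ub' \mapsto gx\Ub$ gives an isomorphism $\Yb_{\Ub'}^{\Gb} \xrightarrow{\sim} \Yb(\dot a)$ of $\Gb^F$-varieties that intertwines the right action of $\Tb^{\prime F}$ with the action of $\Tb^{\dot a F}$ via conjugation by $x$. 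On the level of Grothendieck groups this yields $\Rrm_{\Tb'\subset\Bb'}^\Gb \circ x_* = \Rrm_{\dot a}$, and similarly $\Rrm_{\Tb'\subset\Bb''}^\Gb \circ y_* = \Rrm_{\dot b}$. Thus, writing $\theta_a := x_*^{-1}(\theta')\in\Irr(\Tb^{\dot a F})$ and $\theta_b := y_*^{-1}(\theta')\in\Irr(\Tb^{\dot b F})$, the desired equality becomes
\[
\Rrm_{\dot a}(\theta_a) \;=\; \Rrm_{\dot b}(\theta_b).
\]

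The final step is to apply Corollary \ref{coro:theoe-weyl} with $\nb = (\dot b)$, $\nb'=(\dot a)$ and the element $x$ of that statement taken to be $w$: this yields $\Rrm_{\dot a} = \Rrm_{\dot b}\circ w_*$. It remains to check that $w_*(\theta_a)=\theta_b$, which is a direct verification: for $t\in\Tb^{\dot b F}$,
\[
(w_*\theta_a)(t) \;=\; \theta_a(\dot w^{-1} t \dot w) \;=\; \theta_a(x^{-1}y\,t\,y^{-1}x) \;=\; \theta'\!\bigl(x\cdot x^{-1}yty^{-1}x\cdot x^{-1}\bigr) \;=\; \theta'(yty^{-1}) \;=\; \theta_b(t).
\]
The conclusion then follows. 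The main task is essentially bookkeeping — tracking how the Frobenius-twisted torus actions transport under the chosen conjugations — and there is no genuine obstacle beyond keeping the identifications straight; the substantive input, namely the independence of $\Rrm_\nb$ on the braid representative (Theorem \ref{theoe} and its consequence Corollary \ref{coro:theoe-weyl}), has already been established.
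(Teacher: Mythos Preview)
Your proposal is correct and follows precisely the approach the paper has in mind: the paper states that this corollary is ``a reformulation of Corollary~\ref{coro:theoe-weyl} as in \cite[\S{11.1}]{BR}'' and gives no further details, so you have simply spelled out the transport-to-the-reference-torus bookkeeping that the paper leaves to that reference. The identifications $\Yb_{\Ub'}^\Gb \simeq \Yb(\dot a)$ via $g\Ub'\mapsto gx\Ub$, the resulting equality $\Rrm_{\Tb'\subset\Bb'}^\Gb\circ x_* = \Rrm_{\dot a}$, and the verification $w_*(\theta_a)=\theta_b$ are all correct, and the hypothesis $a=w^{-1}bF(w)$ needed for Corollary~\ref{coro:theoe-weyl} is exactly what your computation $\dot b=\dot w\,\dot a\,F(\dot w)^{-1}$ provides.
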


\bigskip
\begin{rema}
Corollary \ref{coro:theoe} is well-known. In~\cite[Corollary~4.3]{DL}, 
this result is first proved ``geometrically'' for $\th'=1$~\cite[Theorem~1.6]{DL} by relating 
the varieties $\Xb_{\Bb'}^\Gb$ and $\Xb_{\Bb''}^\Gb$, 
and extended to the general case using the {\it character formula}~\cite[Theorem~4.2]{DL}. 
Note that this result is then used in~\cite[Theorem~6.8]{DL} to deduce the {\it Mackey formula} 
for Deligne-Lusztig induction functors.

In~\cite{L}, Lusztig proposed another argument: the Mackey formula is proved ``geometrically'' 
and {\it a priori}~\cite[Theorem~2.3]{L}, and Corollary
\ref{coro:theoe} follows \cite[Corollary~2.4]{L}.

Our argument relies neither on the Mackey formula nor on the character
formula:
we 
lift Deligne-Lusztig's comparison of $\Xb_{\Bb'}^\Gb$ and $\Xb_{\Bb''}^\Gb$ to a relation between 
the varieties $\Yb_{\Ub'}^\Gb$ and $\Yb_{\Ub''}^\Gb$ (here $\Ub'$ and $\Ub''$ are the unipotent radicals 
of $\Bb'$ and $\Bb''$ respectively).
% Since our arguments are purely geometric, we can obtain the 
%more precise Theorem~D 
%as a consequence, by first proving it for maximal tori and Borel subgroups, 
%and then extending to the general case by using (A).
\end{rema}

\begin{rema}
Some of the results in~\cite{BR2} (Lemma~4.3, Proposition~4.5 and Theorem~4.6) 
rely on a disjointness result used in~\cite[Line~16~of~Page~30]{BR2}. This disjointness result 
was ``proved'' using the isomorphism in~\cite[Line~18~of~Page~30]{BR2}:  
it has been pointed out to the attention of the authors by H.~Wang that 
this equality is false. However, Wang provided a complete proof of this disjointness 
result~\cite[Proposition~3.4.3]{W}, so~\cite[Lemma~4.3,~Proposition~4.5~and~Theorem~4.6]{BR2} remain valid. 

Another proof of this disjointness result has been obtained independently 
by Nguyen~\cite{N} (with slightly different methods).
Using a version of Remark \ref{re:Frob}, Wang and Nguyen have been able to
keep track of the Frobenius eigenvalues.
\end{rema}

% \begin{theo}\label{theo1}
% Let $\th' : \Tb^{\nb' F} \to \L^\times$ be a linear character of order invertible in $\L$. 
% We assume that $\th'$ (which is alone in its rational series in $\nabla(\Tb,1,\nb' F)$) is 
% $(\Gb,\vide)$-regular. Then the $(\L\Gb^F,\L\Tb^{\nb F})$-bimodules $x_*(\Hrm_c^{d'}(\Yb(\nb'),\L)e_{\th'})$ 
% and $\Hrm_c^d(\Yb(\nb),\L)e_{x_*(\th')}$ are isomorphic.
% \end{theo}
% 
% \begin{coro}\label{coro:theo1}
% Theorem~1 holds.
% \end{coro}
% 
% \bigskip
% 
% 
% 
% \begin{coro}\label{coro:theo2}
% Theorem~2 holds.
% \end{coro}
% 
% \bigskip
% 
% Note that Corollaries~\ref{coro:theo1} and~\ref{coro:theo2} follow immediately from 
% Theorems~\ref{theo1} and~\ref{theo2} (see~\cite[\S{11.2}]{BR}). The proof of 
% these Theorems will be given in the next section~\ref{sec:proof}. The next section 
% will provide intermediate results that will be used in this proof.
% 
% \bigskip

\section{Independence with respect to the parabolic subgroup}\label{sec:proof}

\medskip
We assume in this section \S\ref{sec:proof} that $\Gb$ is connected.
We fix an $F$-stable maximal torus
$\Tb$ of $\Gb$ and we denote by $(\Gb^*,\Tb^*,F^*)$ a triple dual to $(\Gb,\Tb,F)$. 

\smallskip

We fix a family of parabolic subgroups 
$\Pb_1$, $\Pb_2$,\dots, $\Pb_r$ admitting $\Lb$ as a Levi complement. 

The identification of the 
root system of $\Gb$ with the coroot system of $\Gb^*$ allows us to define 
parabolic subgroups $\Pb_1^*$, $\Pb_2^*$,\dots, $\Pb_r^*$, admitting a common 
$F^*$-stable Levi complement $\Lb^*$ and such that $\Lb^*$ and $\Pb_j^*$ and 
are dual to $\Lb$ and $\Pb_j$ respectively. We denote by $\Vb_j$ and $\Vb_j^*$ 
the unipotent radicals of $\Pb_j$ and $\Pb_j^*$ respectively. 
We denote by $\Vb_\bullet$ the sequence $(\Vb_1,\dots,\Vb_r)$.

Finally, we fix a semisimple element $s \in \Lb^{*F^*}$ 
whose order is invertible in $\L$.

\bigskip

\subsection{Isomorphisms} As announced in the introduction, 
the isomorphism of functors described in Theorem \ref{thD} is {\it canonical}. 
So, before giving the proof, we will explain how it is realized. For this, let us define 
$$\Yb_{\Vb_\bullet}=\{(g_1\Vb_1,\dots,g_r\Vb_r) 
\in \Gb/\Vb_1  \times \cdots \times \Gb/\Vb_r~|~
\forall~j \in \{1,2,\dots,r\},~g_j^{-1} g_{j+1} \in \Vb_j \cdot \Vb_{j+1}\}$$
where $\Vb_{r+1}=F(\Vb_1)$ and $g_{r+1}=F(g_1)$. Given  $2 \le j \le r$, we set
$$\Yb_{\Vb_\bullet,j}^\closed=\{(g_1\Vb_1,g_2\Vb_2,\dots,g_r\Vb_r) \in \Yb_{\Vb_\bullet}~|~
g_{j-1}^{-1}g_{j+1} \in \Vb_{j-1}\cdot \Vb_{j+1}\}.$$
It is a closed subvariety of $\Yb_{\Vb_\bullet}$ and we denote by 
$i_{\Vb_\bullet,j} : \Yb_{\Vb_\bullet,j}^\closed \injto \Yb_{\Vb_\bullet}$ 
the closed immersion. Let $\Yb_{\Vb_\bullet,j}^\open$ denote 
its open complement. 
We define the sequence $c_j(\Vb_\bullet)$ as obtained from the sequence $\Vb_\bullet$ 
by removing the $j$-th component. We then define 
$$\pi_{\Vb_\bullet,j} : \Yb_{\Vb_\bullet,j}^\closed 
\longto \Yb_{c_j(\Vb_\bullet)}$$
as the map which forgets the $j$-th component and we set 
$$d_j(\Vb_\bullet)=\dim(\Vb_{j-1} \cap \Vb_{j+1})-\dim(\Vb_{j-1} \cap \Vb_j \cap \Vb_{j+1}).$$
Note that $\Gb^F$ acts diagonally on $\Yb_{\Vb_\bullet}$ by left translation, 
that $\Lb^F$ acts diagonally by right translation, and that this 
endows $\Yb_{\Vb_\bullet}$ with a structure of $\Gb^F$-variety-$\Lb^F$. 
The varieties $\Yb_{\Vb_\bullet}^\closed$ and $\Yb_{\Vb_\bullet}^\open$ 
are stable under these actions, and the morphisms $i_{\Vb_\bullet,j}$ and 
$\pi_{\Vb_\bullet,j}$ are equivariant. 
As for their analogues $i_{\nb,j}$ and $\pi_{\nb,j}$ defined in \S\ref{sub:notation}, we have the following 
properties, which follow from Corollary~\ref{coro:dim-closed} 
by base change.

\bigskip

\begin{lem}\label{lem:fibration-p}
The map $\pi_{\Vb_\bullet,j}$ is smooth with fibers isomorphic to an 
affine space of dimension $d_j(\Vb_\bullet)$. The codimension of $\Yb_{\Vb_\bullet}^\closed$ 
in $\Yb_{\Vb_\bullet}$ is also equal to $d_j(\Vb_\bullet)$. 
\end{lem}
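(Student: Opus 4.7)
The plan is to carry out the same reduction that worked for Lemma \ref{lem:fibration}, using Corollary \ref{coro:dim-closed} as a black box and expressing $\pi_{\Vb_\bullet,j}$ as a base change of $\pi_{1,3}$. Everything depends only on the three consecutive parabolics $\Vb_{j-1},\Vb_j,\Vb_{j+1}$, so I would set $\Vb_1=\Vb_{j-1}$, $\Vb_2=\Vb_j$, $\Vb_3=\Vb_{j+1}$ in the notation of \S\ref{appendice:dimension}, and then check that with $d_{1,3}=\dim(\Vb_{j-1}\cap\Vb_{j+1})-\dim(\Vb_{j-1}\cap\Vb_j\cap\Vb_{j+1})=d_j(\Vb_\bullet)$ the equality of parameters matches.

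Next I would construct the cartesian square
$$\xymatrix{
\Yb_{\Vb_\bullet,j}^\closed \ar[rr]^-{p^\closed} \ar[d]_{\pi_{\Vb_\bullet,j}} && \YCB_{\! j-1,j,j+1}^\closed \ar[d]^{\pi_{1,3}} \\
\Yb_{c_j(\Vb_\bullet)} \ar[rr]_-{p} && \YCB_{\! j-1,j+1}
}$$
where $p^\closed$ and $p$ simply extract the relevant two or three components. The fact that the square is cartesian is straightforward from the definitions: a point of $\YCB_{\! j-1,j,j+1}^\closed$ together with the remaining components $(g_k\Vb_k)_{k\ne j-1,j,j+1}$ satisfying the appropriate chain conditions is the same data as a point of $\Yb_{\Vb_\bullet,j}^\closed$.

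Since by Corollary \ref{coro:dim-closed} the right vertical map $\pi_{1,3}$ is smooth with fibers isomorphic to affine space of dimension $d_{1,3}=d_j(\Vb_\bullet)$, base change gives the first assertion about $\pi_{\Vb_\bullet,j}$. For the codimension statement, the same corollary yields $\dim(\YCB_{\! j-1,j,j+1})-\dim(\YCB_{\! j-1,j,j+1}^\closed)=d_j(\Vb_\bullet)$; I would then observe that the forgetful map $\Yb_{\Vb_\bullet}\to \YCB_{\! j-1,j,j+1}$ obtained by dropping all coordinates other than $j-1,j,j+1$ is a composition of smooth surjective affine fibrations (each additional factor $g_k\Vb_k$ varies in a translate of $\Vb_{k-1}\cdot\Vb_k/\Vb_k$), so the same difference of dimensions propagates to $\dim(\Yb_{\Vb_\bullet})-\dim(\Yb_{\Vb_\bullet,j}^\closed)=d_j(\Vb_\bullet)$.

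The only potential subtlety, and thus the main thing I would want to verify carefully, is the cartesianness of the square above; there is nothing hard in the verification, but one must be careful that the condition for membership in $\Yb_{\Vb_\bullet,j}^\closed$ decouples as ``the triple $(g_{j-1}\Vb_{j-1},g_j\Vb_j,g_{j+1}\Vb_{j+1})$ lies in $\YCB_{\! j-1,j,j+1}^\closed$'' plus ``the remaining chain conditions for $\Yb_{c_j(\Vb_\bullet)}$ hold'', with no hidden interaction. Once this is checked, the lemma follows at once.
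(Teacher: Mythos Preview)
Your cartesian square and the base-change deduction of the smoothness and fibre description of $\pi_{\Vb_\bullet,j}$ are exactly what the paper has in mind; this is the argument of Lemma~\ref{lem:fibration} transported verbatim to the Levi setting, and the verification of cartesianness that you single out as ``the only potential subtlety'' is indeed routine.

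The actual gap is in your codimension argument. The forgetful map $\Yb_{\Vb_\bullet}\to\YCB_{j-1,j,j+1}$ is \emph{not} a composition of smooth surjective affine fibrations. Your parenthetical ``each additional factor $g_k\Vb_k$ varies in a translate of $\Vb_{k-1}\cdot\Vb_k/\Vb_k$'' is what one would have for an \emph{open} chain, but here the chain closes up through the Frobenius condition $g_r^{-1}F(g_1)\in\Vb_r\cdot F(\Vb_1)$: once $g_{j-1}$ and $g_{j+1}$ are fixed, the remaining coordinates are constrained at \emph{both} ends of the residual chain, and the last step is a closed condition rather than a free affine choice. In fact the map is not even surjective: already for $r=3$ one computes $\dim\YCB_{1,2,3}-\dim\Yb_{\Vb_1,\Vb_2,\Vb_3}=\dim\Lb+\dim(\Vb_3\cap F(\Vb_1))>0$.

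The repair is a straight dimension count. One has $\dim\Yb_{\Vb_\bullet}=\sum_{i=1}^r\bigl(\dim\Vb_i-\dim(\Vb_i\cap\Vb_{i+1})\bigr)$ with $\Vb_{r+1}=F(\Vb_1)$, and the analogous formula for $\Yb_{c_j(\Vb_\bullet)}$; subtracting and applying Proposition~\ref{prop:dim}(b),(c) gives $\dim\Yb_{\Vb_\bullet}-\dim\Yb_{c_j(\Vb_\bullet)}=2d_j(\Vb_\bullet)$. Combined with $\dim\Yb_{\Vb_\bullet,j}^\closed=\dim\Yb_{c_j(\Vb_\bullet)}+d_j(\Vb_\bullet)$, which you already have from the fibration, this yields the codimension statement.
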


\bigskip

We deduce that $\pi_{\Vb_\bullet,j}$ induces a quasi-isomorphism of complexes of $(\L\Gb^F,\L\Lb^F)$-bimodules 
$$\rgammac(\Yb_{\Vb_\bullet,j}^\closed,\L) \simeq \rgammac(\Yb_{c_j(\Vb_\bullet)},\L)
[-2d_j(\Vb_\bullet)](-d_j(\Vb_\bullet)).$$
The closed immersion $i_{\Vb_\bullet,j} : \Yb_{\Vb_\bullet,j}^\closed \injto \Yb_{\Vb_\bullet}$ 
induces a morphism of complexes of $(\L\Gb^F,\L\Lb^F)$-bimodules 
$$i_{\Vb_\bullet,j}^* : \rgammac(\Yb_{\Vb_\bullet},\L) \longto \rgammac(\Yb_{\Vb_\bullet,j}^\closed,\L)$$
which, composed with the previous isomorphism, induces a morphism 
$$\Psi_{\Vb_\bullet,j} : \rgammacdim(\Yb_{\Vb_\bullet},\L) \longto \rgammacdim(\Yb_{c_j(\Vb_\bullet)},\L).$$
The main result of this section is the following theorem. We put $e_s^{\Lb^F}=e_\YC$, where 
$\YC\in\nabla_\Lambda(\Lb,F)/\equiv$ is the rational series corresponding to the $\Lb^{*F^*}$-conjugacy class of
$s$.

\bigskip

\begin{theo}\label{theo:theod}
Let $j \in \{2,3,\dots,r\}$ such that 
$C_{\Vb_{j-1}^* \cap \Vb_{j+1}^*}(s) \subset C_{\Vb_j^*}(s)$. 
We have $$\rgammac(\Yb_{\Vb_\bullet,j}^\open,\L)e_s^{\Lb^F}=0,$$ 
hence
$\Psi_{\Vb_\bullet,j}$ induces a quasi-isomorphism of complexes of $(\L\Gb^F,\L\Lb^F)$-bimodules 
$$\Psi_{\Vb_\bullet,j,s} : 
\rgammacdim(\Yb_{\Vb_\bullet,j},\L) e_s^{\Lb^F} \longisom \rgammacdim(\Yb_{c_j(\Vb_\bullet)},\L)e_s^{\Lb^F}.$$
\end{theo}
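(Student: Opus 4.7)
My plan is to reduce Theorem~\ref{theo:theod} to the Borel-level result Theorem~\ref{theo:theo-d-borel} via two bridges: the generation Theorem~\ref{th:blocksperf} applied to the Levi subgroup $\Lb$, and a transitivity isomorphism between Levi-level and Borel-level Deligne-Lusztig varieties. First, I would invoke Theorem~\ref{th:blocksperf} applied to $\Lb$: the category $(\L\Lb^F e_s^{\Lb^F})\mperf$ is generated as a thick triangulated subcategory by the complexes $\rgammac(\Yb_{\Bb_\Lb}^\Lb,\L)e_\theta^\circ$, where $(\Tb,\theta)\in\nabla_\L(\Lb,F)$ runs over the rational series corresponding to $s$ and $\Bb_\Lb$ is a Borel of $\Lb$ containing $\Tb$. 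Consequently, the vanishing $\rgammac(\Yb_{\Vb_\bullet,j}^\open,\L)e_s^{\Lb^F}=0$ reduces, upon testing against these generators, to showing that
$$\rgammac(\Yb_{\Vb_\bullet,j}^\open,\L) \otimes_{\L\Lb^F}^\Lb \rgammac(\Yb_{\Bb_\Lb}^\Lb,\L)e_\theta^\circ\simeq 0$$
for every such $(\Tb,\theta)$.

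Next, fixing $\Tb\subset\Bb_\Lb=\Tb\cdot\Ub_\Lb$, for each $j$ I would set $\Ub_j:=\Vb_j\cdot\Ub_\Lb=\Ub_\Lb\cdot\Vb_j$, the unipotent radical of the Borel $\Bb_j:=\Bb_\Lb\cdot\Vb_j$ of $\Gb$ contained in $\Pb_j$ and containing $\Tb$. I plan to extend the standard single-parabolic transitivity $\Yb_\Vb^\Gb\times^{\Lb^F}\Yb_{\Bb_\Lb}^\Lb\simeq\Yb_\Ub^\Gb$ (which realizes $\Rrm_{\Lb\subset\Pb}^\Gb\circ\Rrm_{\Tb\subset\Bb_\Lb}^\Lb\simeq\Rrm_{\Tb\subset\Bb}^\Gb$ at the variety level) to the multi-parabolic setting, identifying $\Yb_{\Vb_\bullet}^\Gb\times^{\Lb^F}\Yb_{\Bb_\Lb}^\Lb$ with the Borel-level variety $\Yb_{\Ub_\bullet}^\Gb$; since the right $\Lb^F$-action on $\Yb_{\Vb_\bullet}^\Gb$ is free, the derived and ordinary tensor products coincide. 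Applying the Bruhat decomposition with respect to the relative positions of the $\Bb_j$'s and choosing representatives $\nb=(n_1,\ldots,n_r)$ in $N_\Gb(\Tb)$, each Bruhat stratum of $\Yb_{\Ub_\bullet}^\Gb$ identifies with a variety $\Yb(\nb')$ of Section~\ref{se:comparing}, and the image of $\Yb_{\Vb_\bullet,j}^\open$ decomposes into a union of pieces of type $\Yb_j^\open(\nb')$. Finally, I would translate the hypothesis $C_{\Vb_{j-1}^*\cap\Vb_{j+1}^*}(s)\subset C_{\Vb_j^*}(s)$, via the duality developed in Section~\ref{se:rationalseries}, into the condition $\alpha^\vee(s)\neq 1$ for every root $\alpha\in\Phi_{\Vb_{j-1}}\cap\Phi_{\Vb_{j+1}}\setminus\Phi_{\Vb_j}$; after identifying this set combinatorially with $w_1\cdots w_{j-2}(\Phi^+(w_{j-1},w_j))$, this becomes exactly the regularity condition of Theorem~\ref{theo:theo-d-borel}, which then yields the desired vanishing.

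The hardest part will be the transitivity step. While the single-parabolic identification $\Yb_\Vb^\Gb\times^{\Lb^F}\Yb_{\Bb_\Lb}^\Lb\simeq\Yb_\Ub^\Gb$ is classical, correctly matching the open/closed decompositions between the Levi level and the Borel level in the multi-parabolic setting is subtle. The Levi closed condition $g_{j-1}^{-1}g_{j+1}\in\Vb_{j-1}\Vb_{j+1}$ is strictly more restrictive than the naive Borel analogue $g_{j-1}^{-1}g_{j+1}\in\Ub_{j-1}\Ub_{j+1}=\Vb_{j-1}\Vb_{j+1}\Ub_\Lb$, so the Levi-open piece will correspond not to a single $\Yb_j^\open(\nb)$ but to a union of such pieces spread across several Bruhat strata; isolating which strata contribute and assembling the resulting vanishings will require the bulk of the technical work. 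A secondary obstacle is the combinatorial identification of the dual-root condition with the twisted positive roots $\Phi^+(w_{j-1},w_j)$, which demands consistent conventions for the Weyl elements representing the relative positions of the Borels $\Bb_j$.
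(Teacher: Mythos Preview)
Your overall strategy is exactly the paper's: reduce via generation (Theorem~\ref{th:blocksperf}, i.e.\ \cite[Theorem~A']{BR}) to the torus case, use a transitivity isomorphism, and then translate the centralizer condition into the root condition of Theorem~\ref{theo:theo-d-borel}.

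However, the difficulty you flag as ``the hardest part'' is a phantom. The transitivity isomorphism
\[
\Yb_{\Vb_\bullet}\times_{\Lb^F}\Yb_{\Ub'}^{\Lb}\ \longisom\ \Yb_{\Ub'\Vb_\bullet}
\]
(your $\Ub_j=\Ub'\Vb_j$) matches the open/closed decompositions \emph{exactly}: the image of $\Yb_{\Vb_\bullet,j}^\closed\times_{\Lb^F}\Yb_{\Ub'}^{\Lb}$ is precisely $\Yb_{\Ub'\Vb_\bullet,j}^\closed$, hence likewise for the open complements. Your reasoning goes astray when you compare the Levi closed condition $g_{j-1}^{-1}g_{j+1}\in\Vb_{j-1}\Vb_{j+1}$ with the Borel closed condition \emph{on the same element}. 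Under the transitivity map the Borel condition is imposed on $(g_{j-1}h)^{-1}(g_{j+1}h)=h^{-1}(g_{j-1}^{-1}g_{j+1})h$, and one already knows $g_{j-1}^{-1}g_{j+1}\in\Vb_{j-1}\Vb_j\Vb_{j+1}$ from membership in $\Yb_{\Vb_\bullet}$. A root calculation (using that $\Phi_{\Ub'}\subset\Phi_\Lb$ is disjoint from each $\Phi_{\Vb_i}$) gives $\Ub_{j-1}\cap\Ub_{j+1}=\Ub'(\Vb_{j-1}\cap\Vb_{j+1})$ and hence $d_j(\Ub'\Vb_\bullet)=d_j(\Vb_\bullet)$; combined with the affine-fibration description (Lemma~\ref{lem:fibration-p}) and Proposition~\ref{prop:iso}, this forces the two closed subvarieties to coincide.

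Consequently no Bruhat stratification is needed anywhere. The paper stays in the $\Yb_{\Vb_\bullet}$ formalism all the way down: transitivity produces $\Yb_{\Ub'\Vb_\bullet}$, which is again a variety of the same type but now for the maximal torus $\Tb'$. Only at that final stage does one translate to $\Yb(\nb)$, and this is a \emph{single direct isomorphism} $(g_i\Vb_i)\mapsto(g_ia_i\Ub)$ (no stratification), under which $\Vb_i\Vb_{i+1}$ becomes a single coset $\Ub n_i\Ub$ and $\Yb_{\Vb_\bullet,j}^\closed$ maps to $\Yb_j^\closed(\nb)$. The combinatorial translation you sketch at the end is then exactly right; the one additional check is that the hypothesis $C_{\Vb_{j-1}^*\cap\Vb_{j+1}^*}(s)\subset C_{\Vb_j^*}(s)$ implies $C_{\Ub'^*\Vb_{j-1}^*\cap\Ub'^*\Vb_{j+1}^*}(s)\subset C_{\Ub'^*\Vb_j^*}(s)$, which follows from $\Ub'^*\Vb_{j-1}^*\cap\Ub'^*\Vb_{j+1}^*\subset\Ub'^*(\Vb_{j-1}^*\cap\Vb_{j+1}^*)$.
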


\bigskip

% \begin{rema}\label{rem:hyp}
% Before proving Theorem~\ref{theo:theod}, we want to make some comments about the condition 
% $C_{\Gb^*}(s) \cap \Pb^* \cap \lexp{F^*}{\Pb^*} \subset C_{\Gb^*}(s) \cap \Pb^{\prime *}$. 
% 
% 
% \end{rema}
% 
% \bigskip

\begin{proof}
The proof will proceed in two steps. We first prove the theorem when $\Lb$ is a maximal torus: 
in fact, it will be shown that it is a consequence of Theorem~\ref{theo:theo-d-borel}. 
We then use~\cite[Theorem~A']{BR} to deduce the general case from this particular one. 

\bigskip

\noindent{\it First step: Assume here that $\Lb$ is a maximal torus.} 
Let $a_1$,\dots, $a_r$ be elements of $\Gb$ such that $(\Lb,\Pb_i)=\lexp{a_i}{(\Tb,\Bb)}$ 
for all $i \in \{1,2,\dots,r\}$). As usual, we set $a_{r+1}=F(a_r)$. 
Now, let $n_i=a_i^{-1} a_{i+1}$. It follows from the definition of the $a_i$'s that 
$n_i \in N_\Gb(\Tb)$. We set $\nb=(n_1,\dots,n_r)$. Note that $n_1n_2\cdots n_r=a_1^{-1}F(a_1)$. 
We denote by $w_i$ the image of $n_i$ in $W$ and we set $w=w_1w_2\cdots w_r$. 
It is then easily checked that the map
$$(g_1\Vb_1,\dots,g_r\Vb_r) \longmapsto (g_1\Vb_1a_1,\dots, g_r \Vb_r a_r)$$
induces an isomorphism of varieties 
$$\Yb_{\Vb_\bullet} \longisom \Yb(\nb)$$
which sends $\Yb_{\Vb_\bullet,j}^\closed$ to $\Yb_j^\closed(\nb)$. Moreover, 
conjugacy by $a_1$ induces an isomorphism $\Tb^{wF} \simeq \Lb^F$ and it is easily 
checked that the above isomorphism is $(\Gb^F,\Lb^F)$-equivariant through this 
idenfication. Now, to $s$ is associated a linear character of $\Lb^F$ which, 
through the identification $\Tb^{wF} \simeq \Lb^F$, defines a linear 
character $\th : \Tb^{wF} \to \L^\times$. 

By Theorem~\ref{theo:theo-d-borel}, we only need to prove that 
Condition $C_{\Vb_{j-1}^* \cap \Vb_{j+1}^*}(s) \subset C_{\Vb_j^*}(s)$ 
is equivalent to $\PC(\nb,j,\th)$. So let us prove this last fact. 
The property $\PC(\nb,j,\th)$ can be rewritten 
as follows:
\begin{quotation}
\noindent{\bfit Property $\PC(\nb,j,\th)$.} 
{\it If $\a \in \Phi^+$ is such that $\th(N_w(w_1\cdots w_{j-2}(\a^\ve)))=1$ and 
$(w_{j-1}w_j)^{-1}(\a) \in \Phi^+$, then $w_{j-1}^{-1}(\a) \in \Phi^+$.}
\end{quotation}
Let $s'=a_1^{-1}sa_1 \in \Tb^{*wF^*}$. Note that $\PC(\nb,j,\th)$ is equivalent to 
$$C_{\lexp{w_1\dots w_{j-2}}{\Ub}^*}(s') \cap \lexp{w_1\cdots w_j}{\Ub^*} \subset 
\lexp{w_1\cdots w_{j-1}}{\Ub^*}.$$
By conjugating by $a_1$, and since $\lexp{a_1 n_1 \cdots n_i}{\Ub^*}=\Vb_i^*$, 
we get that $\PC(\nb,j,\th)$ is equivalent to 
$C_{\Vb_{j-1}^*}(s) \cap \Vb_{j+1}^*\subset \Vb_j^*$, as desired.

\bigskip

\noindent{\it Second step: The general case.} 
Let us now come back to the general case: we no longer assume that 
$\Lb$ is a maximal torus. Since 
$\rgammac(\Yb_{\Vb_\bullet,j}^\open,\L)e_s^{\Lb^F}=\rgammac(\Yb_{\Vb_\bullet,j}^\open,\L) 
\otimes_{\L\Lb^F} \L\Lb^Fe_s^{\Lb^F}$, and since $\L\Lb^Fe_s^{\Lb^F}$ lives 
in the category generated by the complexes 
$\RC_{\Tb' \subset \Bb'}^\Lb(\L\Tb^{\prime F} e_s^{\Tb^{\prime F}})$, where 
$\Bb'$ runs over the set of Borel subgroups of $\Lb$ admitting 
an $F$-stable maximal torus $\Tb'$ whose dual torus contains $s$ (see~\cite[Theorem~A']{BR}), 
it is sufficient to prove that 
$$\rgammac(\Yb_{\Vb_\bullet,j}^\open,\L) \otimes_{\L\Lb^F} 
\RC_{\Tb' \subset \Bb'}^\Lb(\L\Tb^{\prime F} e_s^{\Tb^{\prime F}})=0.$$

So let $(\Tb',\Bb')$ be a pair as above. Let $\Ub'$ denote the unipotent radical 
of $\Bb'$, let $\Tb^{\prime *}$ be an $F^*$-stable maximal torus of $\Lb^*$, containing 
$s$ and dual to $\Tb'$ and let $\Bb^{\prime *}$ be a Borel subgroup of $\Lb^*$ 
containing $\Tb^{\prime *}$ and dual to $\Bb'$. Then \cite[11.5]{dmbook}
$$\Yb_{\Vb_\bullet} \times_{\Lb^F} \Yb_{\Ub'}^\Lb \simeq \Yb_{\Ub'\Vb_\bullet},$$
(as $\Gb^F$-varieties-$\Tb^{\prime F}$). Here, we have set 
$\Ub'\Vb_\bullet=(\Ub'\Vb_1,\dots,\Ub'\Vb_r)$. Moreover, through this isomorphism, 
$\Yb_{\Vb_\bullet,j}^\open \times_{\Lb^F} \Yb_{\Ub'}^\Lb$ is sent to 
$\Yb_{\Ub'\Vb_\bullet,j}^\open$ hence, by applying the first step of this proof, 
we only need to prove that 
$C_{\Ub^{\prime *}\Vb_{j-1}^* \cap \Ub^{\prime *}\Vb_{j+1}^*}(s) 
\subset C_{\Ub^{\prime *}\Vb_j^*}(s)$. Since $\Vb_{j-1}^*$ and $\Vb_{j+1}^*$ both admit $\Lb^*$ as a Levi complement and
$\Ub^{\prime *}\subset\Lb^*$, it follows that
$\Ub^{\prime *}\Vb_{j-1}^* \cap \Ub^{\prime *}\Vb_{j+1}^*\subset \Ub^{\prime *}(\Vb_{j-1}^* \cap \Vb_{j+1}^*)$.
On the other hand, 
$C_{\Ub^{\prime *}(\Vb_{j-1}^* \cap \Vb_{j+1}^*)}(s)=C_{\Ub^{\prime *}}(s)C_{\Vb_{j-1}^* \cap \Vb_{j+1}^*}(s)\subset
C_{\Ub^{\prime *}}(s)C_{\Vb_j^*}(s)$ by assumption and this completes the proof.
\end{proof}

\bigskip

\begin{rema}
\label{re:FrobLevi}
Theorem \ref{theo:theod} provides a comparison of modules, together with the Frobenius
action. We have an isomorphism of $(\L\Gb^F,\L\Lb^F)$-bimodules compatible with the
Frobenius action
$$H^i_c(\Yb_{\Vb,j},\L)e_s^{\Lb^F}\simeq
H^{i-2r}_c(\Yb_{c_j(\Vb_\bullet)},\L)e_s^{\Lb^F}(-r).$$
where $r=d_j(\Vb_\bullet)$.
\end{rema}

\bigskip

Let $\shift(\Vb_\bullet)=(\Vb_2,\dots,\Vb_r,\lexp{F}{\Vb_1})$. The map 
$$\fonction{\shift_{\Vb_\bullet}}{\Yb_{\Vb_\bullet}}{\Yb_{\shift(\Vb_\bullet)}}{
(g_1\Vb_1,\dots,g_r\Vb_r)}{(g_2\Vb_2,\dots,g_r\Vb_r,F(g_1\Vb_1))}$$
is $(\Gb^F,\Lb^F)$-equivariant and 
induces an equivalence of \'etale sites. Therefore, it induces a quasi-isomorphism of complexes 
of bimodules
$$\shift_{\Vb_\bullet}^* : \rgammac(\Yb_{\shift(\Vb_\bullet)},\L) \longisom \rgammac(\Yb_{\Vb_\bullet},\L).$$
Applying twice Theorem~\ref{theo:theod}, we obtain the following result.

\bigskip
\begin{coro}\label{coro:theod-suite}
Let $j \in \{2,\dots,r\}$ and assume that 
$$C_{\Vb_{j-1}^* \cap \Vb_{j+1}^*}(s) \subset C_{\Vb_j^*}(s)
\quad\text{\it and}\quad
C_{\Vb_j^* \cap \Vb_{j+2}^*}(s) \subset C_{\Vb_{j+1}^*}(s).$$
The map $\Psi_{\Vb_\bullet,j,s} \circ \shift_{\Vb_\bullet}^* \circ \Psi_{\shift(\Vb_\bullet),j,s}^{-1}$ is 
a quasi-isomorphism of complexes of $(\L\Gb^F,\L\Lb^F)$-bimodules
$$\rgammacdim(\Yb_{c_j(\shift(\Vb_\bullet))},\L)e_s^{\Lb^F} \longisom 
\rgammacdim(\Yb_{c_j(\Vb_\bullet)},\L)e_s^{\Lb^F}.$$
\end{coro}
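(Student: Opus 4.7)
The statement is explicitly flagged by the authors as a consequence of applying Theorem~\ref{theo:theod} twice, and combined with the fact that $\shift_{\Vb_\bullet}^*$ is always a quasi-isomorphism (since it comes from an equivalence of étale sites), the proof should essentially reduce to verifying that each of the two hypotheses of the corollary is exactly the one needed to apply Theorem~\ref{theo:theod} to the relevant sequence of unipotent radicals.

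Concretely, my plan is as follows. First, I would observe that $\shift_{\Vb_\bullet}^*: \rgammac(\Yb_{\shift(\Vb_\bullet)},\L) \longisom \rgammac(\Yb_{\Vb_\bullet},\L)$ is a quasi-isomorphism of $(\L\Gb^F,\L\Lb^F)$-bimodules, independent of any regularity condition on $s$, as already noted just before the corollary. Second, I would apply Theorem~\ref{theo:theod} directly to the sequence $\Vb_\bullet$ at the index $j$: the first hypothesis $C_{\Vb_{j-1}^*\cap\Vb_{j+1}^*}(s)\subset C_{\Vb_j^*}(s)$ is precisely what is required to conclude that $\Psi_{\Vb_\bullet,j,s}$ is a quasi-isomorphism of complexes of $(\L\Gb^F,\L\Lb^F)$-bimodules between $\rgammacdim(\Yb_{\Vb_\bullet},\L)e_s^{\Lb^F}$ and $\rgammacdim(\Yb_{c_j(\Vb_\bullet)},\L)e_s^{\Lb^F}$.

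Third, I would apply Theorem~\ref{theo:theod} to the shifted sequence $\shift(\Vb_\bullet)=(\Vb_2,\ldots,\Vb_r,\lexp{F}{\Vb_1})$, still at the index $j$. Writing $\shift(\Vb_\bullet)=(\Vb'_1,\ldots,\Vb'_r)$ with $\Vb'_i=\Vb_{i+1}$ (cyclically, with the last entry twisted by $F$), the hypothesis required by Theorem~\ref{theo:theod} reads $C_{\Vb'^{*}_{j-1}\cap\Vb'^{*}_{j+1}}(s)\subset C_{\Vb'^{*}_j}(s)$, i.e. $C_{\Vb_j^*\cap\Vb_{j+2}^*}(s)\subset C_{\Vb_{j+1}^*}(s)$, which is exactly the second hypothesis of the corollary. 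This shows that $\Psi_{\shift(\Vb_\bullet),j,s}$ is also a quasi-isomorphism (and in particular invertible in the derived category), so that $\Psi_{\shift(\Vb_\bullet),j,s}^{-1}$ makes sense.

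Finally, composing the three quasi-isomorphisms
\[
\rgammacdim(\Yb_{c_j(\shift(\Vb_\bullet))},\L)e_s^{\Lb^F}
\xrightarrow{\Psi_{\shift(\Vb_\bullet),j,s}^{-1}}
\rgammacdim(\Yb_{\shift(\Vb_\bullet)},\L)e_s^{\Lb^F}
\xrightarrow{\shift_{\Vb_\bullet}^*}
\rgammacdim(\Yb_{\Vb_\bullet},\L)e_s^{\Lb^F}
\xrightarrow{\Psi_{\Vb_\bullet,j,s}}
\rgammacdim(\Yb_{c_j(\Vb_\bullet)},\L)e_s^{\Lb^F}
\]
yields the claimed quasi-isomorphism. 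Since each factor is already known (from the previous theorem and the étale equivalence induced by the shift), there is no real obstacle here; the only point of care is the bookkeeping on indices for the shifted sequence to check that the second hypothesis of the corollary is precisely the hypothesis of Theorem~\ref{theo:theod} applied to $\shift(\Vb_\bullet)$ at $j$.
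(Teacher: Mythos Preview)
Your proposal is correct and matches the paper's approach exactly: the paper simply states that the corollary follows from ``applying twice Theorem~\ref{theo:theod}'' and gives no further details, and your write-up spells out precisely this, including the index bookkeeping for the shifted sequence.
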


\bigskip

In the case $r=2$, Corollary~\ref{coro:theod-suite} becomes the following
result.

\bigskip

\begin{coro}\label{coro:theod}
Assume 
$$C_{\Vb_1^* \cap \lexp{F^*}{\Vb_1^*}}(s) \subset C_{\Vb_2^*}(s)
\quad\text{\it and}\quad
C_{\Vb_2^* \cap \lexp{F^*}{\Vb_2^{*}}}(s) \subset C_{\lexp{F^*}{\Vb_1^*}}(s).$$
The map $\Psi_{\Vb_1,\Vb_2,2,s} \circ \shift_{\Vb_1,\Vb_2}^* \circ \Psi_{\Vb_2,F(\Vb_1),2,s}^{-1}$ is 
a quasi-isomorphism of complexes of $(\L\Gb^F,\L\Lb^F)$-bimodules
$$\rgammacdim(\Yb_{\Vb_2},\L)e_s^{\Lb^F} \longisom 
\rgammacdim(\Yb_{\Vb_1},\L)e_s^{\Lb^F}.$$
As a consequence, we obtain a quasi-isomorphism of functors between
$$\RC_{\Lb \subset \Pb_1}^\Gb[\dim(\Yb_{\Vb_1})] : 
\Drm^b(\L\Lb^F e_s^{\Lb^F}) \longto \Drm^b(\L\Gb^Fe_s^{\Gb^F})$$ 
$$\RC_{\Lb \subset \Pb_2}^\Gb[\dim(\Yb_{\Vb_2})] : 
\Drm^b(\L\Lb^F e_s^{\Lb^F}) \longto \Drm^b(\L\Gb^Fe_s^{\Gb^F}).
\leqno{\text{\it and}}$$
\end{coro}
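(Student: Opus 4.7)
The plan is to derive Corollary \ref{coro:theod} as the specialization of Corollary \ref{coro:theod-suite} to the case $r = 2$, $j = 2$, followed by unwinding the definition of the Deligne-Lusztig induction functor.

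First I would apply Corollary \ref{coro:theod-suite} with $\Vb_\bullet = (\Vb_1, \Vb_2)$ and $j = 2$. Under the convention that $\Vb_{r+1} = \lexp{F}{\Vb_1}$ and, extending periodically, $\Vb_{r+2} = \lexp{F}{\Vb_2}$, the two centralizer inclusions in the hypothesis of Corollary \ref{coro:theod-suite} become exactly the two hypotheses of Corollary \ref{coro:theod}. One then computes that $c_j(\Vb_\bullet) = (\Vb_1)$, so $\Yb_{c_j(\Vb_\bullet)} = \Yb_{\Vb_1}$, while $\shift(\Vb_\bullet) = (\Vb_2, \lexp{F}{\Vb_1})$ and $c_j(\shift(\Vb_\bullet)) = (\Vb_2)$, so $\Yb_{c_j(\shift(\Vb_\bullet))} = \Yb_{\Vb_2}$. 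The composition $\Psi_{\Vb_\bullet,j,s} \circ \shift_{\Vb_\bullet}^* \circ \Psi_{\shift(\Vb_\bullet),j,s}^{-1}$ then specializes exactly to the map displayed in the statement, yielding the bimodule quasi-isomorphism
$$\rgammacdim(\Yb_{\Vb_2},\L)e_s^{\Lb^F} \longisom \rgammacdim(\Yb_{\Vb_1},\L)e_s^{\Lb^F}.$$

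For the statement about the functors, I would unwind the definition~\eqref{eq:rlg-def} of $\RC_{\Lb \subset \Pb_i}^\Gb$ as $\rgammac(\Yb_{\Vb_i}, \L) \otimes^\LM_{\L\Lb^F} -$, combined with the definition $\rgammacdim(\Yb_{\Vb_i}, \L) = \rgammac(\Yb_{\Vb_i}, \L)[\dim \Yb_{\Vb_i}](\dim \Yb_{\Vb_i}/2)$. Forgetting the Tate twists (which do not affect the underlying bimodule), the bimodule quasi-isomorphism above is equivalent to
$$\rgammac(\Yb_{\Vb_2}, \L) e_s^{\Lb^F}[\dim \Yb_{\Vb_2}] \simeq \rgammac(\Yb_{\Vb_1}, \L) e_s^{\Lb^F}[\dim \Yb_{\Vb_1}].$$
Applying $- \otimes^\LM_{\L\Lb^F} M$ for $M \in \Drm^b(\L\Lb^F e_s^{\Lb^F})$ and invoking functoriality of the derived tensor product yields the desired isomorphism of functors.

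There is essentially no obstacle here, since the main geometric content has already been absorbed into Theorem \ref{theo:theod} and its iteration in Corollary \ref{coro:theod-suite}; Corollary \ref{coro:theod} is only the extraction of the natural case $r = 2$ together with a translation from bimodules to tensor-product functors.
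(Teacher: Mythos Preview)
Your proposal is correct and follows exactly the paper's approach: the paper simply states that Corollary~\ref{coro:theod} is the case $r=2$ of Corollary~\ref{coro:theod-suite}, and your verification of the hypotheses and identification of $c_2(\Vb_\bullet)$, $\shift(\Vb_\bullet)$, and $c_2(\shift(\Vb_\bullet))$ is the routine unwinding implicit there. The passage from the bimodule quasi-isomorphism to the isomorphism of functors is likewise a direct translation via the definition~(\ref{eq:rlg-def}), as you indicate.
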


\bigskip

\begin{rema}
The isomorphism of functors of Corollary \ref{coro:theod} comes 
with a Tate twist. Keeping track of this twist has important
applications~\cite{W},~\cite{N}.
\end{rema}
\bigskip

\begin{rema}\label{rem:condition}
Let us make here some comments about the condition 
$$C_{\Vb_1^* \cap \lexp{F^*}{\Vb_1^*}}(s) \subset C_{\Vb_2^*}(s)
\quad\text{\it and}\quad
C_{{\Vb_2^*} \cap \lexp{F^*}{\Vb_2^*}}(s) \subset C_{\lexp{F^*}{\Vb_1^*}}(s).
\leqno{(\CC_{\Vb_1,\Vb_2})}$$

Note that if $C_{\Vb_1^*}(s)=C_{\Vb_2^{*}}(s)$, then Condition $(\CC_{\Vb_1,\Vb_2})$ 
is satisfied. 

Since $C_{\Vb_i^*}(s)$ is connected, it follows that if $C^\circ_{\Gb^*}(s)
\subset\Lb^*$, then 
Condition $(\CC_{\Vb_1,\Vb_2})$ is satisfied.
\end{rema}

\begin{exemple}\label{exemple:trivial}
Of course, Condition $(\CC_{\Vb_1,\Vb_1})$ is fulfilled for all $s$. 
Gluing the quasi-isomorphisms obtained from Corollary~\ref{coro:theod}, 
we get a quasi-isomorphism of complexes of bimodules
$$\Th_{\Vb_1,\Vb_1} : \rgammac(\Yb_{\Vb_1},\L) \longisom \rgammac(\Yb_{\Vb_1},\L).$$
But, since $\Yb_{\Vb_1,\Vb_1}^\open=\vide$, it is readily checked that 
$\Th_{\Vb_1,\Vb_1} = \Id_{\rgammac(\Yb_{\Vb_1},\L)}$.
\end{exemple}

\bigskip

\begin{exemple}\label{exemple:frob}
Similarly, Condition $(\CC_{\Vb_1,F(\Vb_1)})$ is fulfilled for all $s$. 
Gluing the quasi-isomorphisms obtained from Corollary~\ref{coro:theod}, 
we obtain a quasi-isomorphism of complexes of bimodules
$$\Th_{\Vb_1,F(\Vb_1)} : \rgammac(\Yb_{\Vb_1},\L) \longisom \rgammac(\Yb_{F(\Vb_1)},\L).$$
But, since $\Yb_{\Vb_1,F(\Vb_1)}^\open=\vide$, it is readily checked that 
$\Th_{\Vb_1,F(\Vb_1)} = F$.
\end{exemple}

\bigskip

\begin{rema}\label{rem:transitivite}
If $(\CC_{\Vb_1,\Vb_2})$ holds, we denote by 
$$\Th_{\Vb_1,\Vb_2,s} : \rgammacdim(\Yb_{\Vb_2},\L)e_s^{\Lb^F} \longisom 
\rgammacdim(\Yb_{\Vb_1},\L)e_s^{\Lb^F}$$
the quasi-isomorphism defined by 
$\Th_{\Vb_1,\Vb_2,s} = \Psi_{\Vb_1,\Vb_2,2,s} \circ \shift_{\Vb_1,\Vb_2}^* \circ \Psi_{\Vb_2,F(\Vb_1),2,s}^{-1}$. 
Assume moreover that $(\CC_{\Vb_1,\Vb_3})$ and $(\CC_{\Vb_2,\Vb_3})$ hold, so that 
the quasi-isomorphisms of complexes 
$\Th_{\Vb_1,\Vb_3,s}$ and $\Th_{\Vb_2,\Vb_3,s}$ are also well-defined. It is natural to ask the following
\begin{quotation}
\noindent{\bf Question.} 
{\it When does the equality $\Th_{\Vb_1,\Vb_3,s}=\Th_{\Vb_1,\Vb_2,s} \circ \Th_{\Vb_2,\Vb_3,s}$ hold?}
\end{quotation}
For instance, taking Example~\ref{exemple:trivial} into account, when does the equality 
$\Th_{\Vb_1,\Vb_2,s}^{-1} = \Th_{\Vb_2,\Vb_1,s}$ hold?

We do not know the answer to this question, but we can just say that the equality does not always hold. 
Indeed, if $m$ is minimal such that $F^m(\Vb_1)=\Vb_1$, 
then the isomorphisms $\Th_{\Vb_1,F(\Vb_1),s}$, $\Th_{F(\Vb_1),F^2(\Vb_1),s}$,\dots, 
$\Th_{F^{m-1}(\Vb_1),\Vb_1}$ are well-defined and all coincide with the Frobenius endomorphism $F$ 
(see Example~\ref{exemple:frob}), and so 
$$\Th_{\Vb_1,F(\Vb_1),s} \circ \Th_{F(\Vb_1),F^2(\Vb_1),s} \circ \cdots \circ 
\Th_{F^{m-1}(\Vb_1),\Vb_1,s} = F^m \neq \Id = \Th_{\Vb_1,\Vb_1,s}$$
(see Example~\ref{exemple:trivial}).
\end{rema}

\bigskip

\begin{exemple}
Let $\Pb_0$ be a parabolic subgroup admitting an $F$-stable Levi subgroup $\Lb_0$ containing $\Lb$. 
We denote by $\Vb_0$ the unipotent radical of $\Pb_0$ and $\Lb_0^*$ the corresponding Levi subgroup 
of a parabolic subgroup of $\Gb^*$ containing $\Lb^*$, which is dual to $\Lb_0$. 
We assume in this example that $C_{\Gb^*}^\circ(s) \subset \Lb_0^*$. Then 
it follows from~\cite[Theorem~11.7]{BR}, Corollary~\ref{coro:theod} 
and Remark~\ref{rem:condition} that 
we have an isomorphism of $(\L\Gb^F,\L\Lb^F)$-bimodules
$$\Hrm_c^{d_0}(\Yb_{\Vb_0},\L) \otimes_{\L\Lb^F_0} 
\rgammacdim(\Yb_{\Vb \cap \Lb_0}^{\Lb_0},\L) e_s^{\Lb^F}
\simeq \rgammacdim(\Yb_\Vb,\L)e_s^{\Lb^F},
$$
where $d_0=\dim(\Yb_{\Vb_0})$.
\end{exemple}

\bigskip
\begin{rema}
Let us consider the Harish-Chandra case: assume 
that $\Vb_1$ and $\Vb_2$ are $F$-stable. The functors
$\RC_{\Lb \subset \Pb_1}^\Gb$ and
$\RC_{\Lb \subset \Pb_2}^\Gb$ are isomorphic without truncating by any series \cite{DiDu,HoLe}. Such
isomorphisms are given by explicit isomorphisms of bimodules, which do not rely on any algebraic geometry.
We do not know if after truncation by a series satisfying ($\CC_{\Vb_1,\Vb_2}$), they coincide with our
isomorphisms.
\end{rema}

\bigskip

\subsection{Transitivity}
We will provide here an analogue to Lemma~\ref{lem:transitivite} in the 
more general context of this section. Assume in this subsection, and only in this subsection, 
that $3 \le j \le r$ (in particular, $r \ge 3$). 
Since $c_{j-1}(c_j(\Vb_\bullet))=c_{j-1}(c_{j-1}(\Vb_\bullet))$, we can build a diagram 
\equat\label{eq:transitivite-parab}
\diagram
\rgammacdim(\Yb_{\Vb_\bullet},\L) \rrto^{\DS{\Psi_{\Vb_\bullet,j}}} \ddto_{\DS{\Psi_{\Vb_\bullet,j-1}}} && 
\rgammacdim(\Yb_{c_j(\Vb_\bullet)},\L) \ddto_{\DS{\Psi_{c_j(\Vb_\bullet),j-1}}} \\
&&\\
\rgammacdim(\Yb_{c_{j-1}(\Vb_\bullet)},\L) \rrto^{\DS{\Psi_{c_{j-1}(\Vb_\bullet),j-1}}} && 
\rgammacdim(\Yb_{c_{j-1}(c_j(\Vb_\bullet)},\L).
\enddiagram
\endequat
It does not seem reasonable to expect that the diagram~(\ref{eq:transitivite-parab}) is commutative 
in general. However, we have the following result, obtained from the
results of section \S~\ref{appendice:dimension} below by copying the proof of
 Lemma~\ref{lem:transitivite}.

\bigskip

\begin{lem}\label{lem:transitivite-parab}
Assume that one of the following holds:
\begin{itemize}
\itemth{1} $\Vb_{j-2} \subset \Vb_{j+1} \cdot \Vb_{j-1}$.

\itemth{2} $\Vb_{j-1} \subset \Vb_{j-2} \cdot \Vb_j$.

\itemth{3} $\Vb_{j\hphantom{+1}} \subset \Vb_{j-1} \cdot \Vb_{j+1}$.

\itemth{4} $\Vb_{j+1} \subset \Vb_j \cdot \Vb_{j-2}$.
\end{itemize}
Then the diagram~(\ref{eq:transitivite-parab}) is commutative.
\end{lem}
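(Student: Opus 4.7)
The plan is to mimic the proof of Lemma~\ref{lem:transitivite} step by step, replacing the sequences of Weyl group elements by sequences of unipotent radicals and invoking Corollary~\ref{coro:egalite-y} in place of its consequence Remark~\ref{rem:borel-inclusion}. First I would introduce the two closed subvarieties of $\Yb_{\Vb_\bullet}$ given by
$$\Yb_{\Vb_\bullet,j,j-1}^\closed=\Yb_{c_j(\Vb_\bullet),j-1}^\closed\times_{\Yb_{c_j(\Vb_\bullet)}}\Yb_{\Vb_\bullet,j}^\closed,
\quad
\Yb_{\Vb_\bullet,j-1,j}^\closed=\Yb_{c_{j-1}(\Vb_\bullet),j-1}^\closed\times_{\Yb_{c_{j-1}(\Vb_\bullet)}}\Yb_{\Vb_\bullet,j-1}^\closed,$$
and note that, concretely, both are the subvariety of $\Yb_{\Vb_\bullet}$ cut out by the condition
$g_{j-2}^{-1}g_{j+1}\in\Vb_{j-2}\cdot\Vb_{j+1}$ together with an auxiliary condition of type $g_{j-1}^{-1}g_{j+1}\in\Vb_{j-1}\cdot\Vb_{j+1}$ (for the first) or $g_{j-2}^{-1}g_j\in\Vb_{j-2}\cdot\Vb_j$ (for the second).

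Next, I would establish the analogue of Lemma~\ref{le:trans}: if $\Yb_{\Vb_\bullet,j,j-1}^\closed=\Yb_{\Vb_\bullet,j-1,j}^\closed$, then the square~(\ref{eq:transitivite-parab}) commutes. The argument is exactly that of Lemma~\ref{le:trans}: one writes down the hexagonal diagram analogous to~(\ref{diag}), with the four closed immersions $i_{\Vb_\bullet,j-1}$, $i_{\Vb_\bullet,j}$, $i$, $i'$ and the four smooth surjections $\pi_{\Vb_\bullet,j-1}$, $\pi_{\Vb_\bullet,j}$, $\pi$, $\pi'$, whose relevant squares are cartesian by definition. Proper base change then identifies each composite in~(\ref{eq:transitivite-parab}) with the cohomological pull-push through the common middle term $\Yb_{\Vb_\bullet,j,j-1}^\closed=\Yb_{\Vb_\bullet,j-1,j}^\closed$. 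The smoothness and dimension-of-fiber statements needed to normalise the $\rgammacdim$'s are provided by Lemma~\ref{lem:fibration-p} applied successively in each leg.

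Finally, I would establish the equality $\Yb_{\Vb_\bullet,j,j-1}^\closed=\Yb_{\Vb_\bullet,j-1,j}^\closed$ under any of the four hypotheses (1)--(4). This is obtained by base change from the equality $\YCB_{1,2,3,4}^{\closed,2}=\YCB_{1,2,3,4}^{\closed,3}=\YCB_{1,2,3,4}^{\closed}$ of Corollary~\ref{coro:egalite-y}, applied to the four parabolics $\Vb_{j-2},\Vb_{j-1},\Vb_j,\Vb_{j+1}$: hypotheses (1), (2), (3), (4) of the present lemma correspond term by term to the four inclusions in Corollary~\ref{coro:egalite-y}. As noted in the statement, the proof is routine once the dictionary between conditions on Weyl group lengths (Remark~\ref{rem:borel-inclusion}) and inclusions between unipotent radicals is replaced by the direct inclusions~(1)--(4). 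The only point requiring minimal care is checking that the two auxiliary conditions in the definitions of $\Yb_{\Vb_\bullet,j,j-1}^\closed$ and $\Yb_{\Vb_\bullet,j-1,j}^\closed$ become equivalent under any of the four hypotheses; this is precisely what Corollary~\ref{coro:egalite-y} provides, and so there is no genuine obstacle beyond assembling these pieces.
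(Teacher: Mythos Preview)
Your proposal is correct and matches the paper's approach exactly: the paper states that the lemma is ``obtained from the results of section~\S\ref{appendice:dimension} by copying the proof of Lemma~\ref{lem:transitivite}'', which is precisely what you do---introduce the two fibered closed subvarieties, run the hexagonal-diagram/base-change argument of Lemma~\ref{le:trans}, and then invoke Corollary~\ref{coro:egalite-y} with $(\Vb_1,\Vb_2,\Vb_3,\Vb_4)=(\Vb_{j-2},\Vb_{j-1},\Vb_j,\Vb_{j+1})$ to obtain the required equality.
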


\bigskip

\section{Jordan decomposition and quasi-isolated blocks}
\label{se:jordan}

\medskip
In this section, we assume $\Gb$ is connected. We fix an $F$-stable maximal torus
$\Tb$ of $\Gb$ and we denote by $(\Gb^*,\Tb^*,F^*)$ a triple dual to $(\Gb,\Tb,F)$. 

\medskip
We start in \S\ref{se:quasiisolatedsetting} with a recollection of some of the results
of \cite{BR} on the vanishing of the truncated cohomology of certain Deligne-Lusztig
varieties outside the middle degree. We fix an $F$-stable Levi subgroup $\Lb$ and 
consider $s\in\Gb^{*F^*}$
of order invertible in $\Lambda$ such that $C_{\Gb^*}^\circ(s)\subset\Lb^*$ (and we take
$\Lb$ minimal with that property).
We show that the corresponding middle degree
$(\Lambda \Gb^F,\Lambda\Lb^F)$-bimodule 
$\Hrm_c^{\dim(\Yb_{\Pb})}(\Yb_{\Pb},\L)e_s^{\Lb^F}$ does not
depend on the choice of the parabolic subgroup $\Pb$, up to isomorphism, thanks to the results of
\S\ref{sec:proof}. In particular, it is stable under the action of the stabilizer $N$ of
$e_s^{\Lb^F}$ in $N_{\Gb^F}(\Lb)$.

Section \S\ref{se:Clifford} develops some Clifford theory tools in order to extend the action
of $\Lb^F$ on $\Hrm_c^{\dim(\Yb_{\Pb})}(\Yb_{\Pb},\L)e_s^{\Lb^F}$ to an action of $N$.
We apply this in \S\ref{se:Morita}
by embedding $\Gb$ in a group $\Gbt$ with
connected center. This provides a Morita equivalence, extending
the main result of \cite{BR} to the quasi-isolated case.

In section \S\ref{se:splendid}, we show that the action of $\Lb^F$
on the complex of cohomology $C=\Grm\Gamma_c(\Yb_{\Pb},\L)e_s^{\Lb^F}$ also extends
to $N$, and the resulting complex provides a splendid Rickard equivalence. This relies
on checking that given $Q$ an $\ell$-subgroup of $\Lb^F$, the complex $\mathrm{Br}_{\Delta Q}(C)$
arises in a Jordan decomposition
setting for $C_{\Gb}(Q)$, and then applying the results of the Appendix. The main
difficulty is to prove that $\mathrm{br}_Q(e_s^{\Lb^F})$ is a sum of idempotents associated to a
Jordan decomposition setting for $C_{\Gb}(Q)$.  An added difficulty is that
the group $C_{\Gb}(Q)$ need not be connected.

\medskip
\subsection{Quasi-isolated setting}
\label{se:quasiisolatedsetting}

We fix 
a semisimple element $s \in \Gb^{*F^*}$ whose order is invertible in $\L$. 
Let $\Lb^*=C_{\Gb^*}\bigl(\Zrm(C_{\Gb^*}^\circ(s))^\circ\bigr)$,
an $F^*$-stable Levi complement of some parabolic subgroup 
$\Pb^*$ of $\Gb^*$. Note that $\Lb^*$ is a minimal Levi subgroup with respect to the property of 
containing $C_{\Gb^*}^\circ(s)$ and 
$C_{\Gb^*}(s)/C_{\Gb^*}^\circ(s)$ is an abelian $\ell'$-group~\cite[Corollary~2.8(b)]{qi}. In particular,
the series corresponding to $s$ is $(\Gb,\Lb)$-regular.

We denote by $(\Lb,\Pb)$ 
a pair dual to $(\Lb^*,\Pb^*)$. Note that $\Pb$ is a parabolic subgroup 
of $\Gb$ admitting $\Lb$ as an $F$-stable Levi complement. The unipotent 
radical of $\Pb$ will be denoted by $\Vb$. We put $d=\dim(\Yb_\Vb)$.
%We
% denote by $\ZC(\Gb)$ the finite group $\Zrm(\Gb)/\Zrm(\Gb)^\circ$ and we 
%set $A_{\Gb^*}(s)=C_{\Gb^*}(s)/C_{\Gb^*}^\circ(s)$.

The group $C_{\Gb^*}(s)$ normalizes $\Lb^*$ and we set 
$\Nb^*=C_{\Gb^*}(s)^{F^*}\cdot\Lb^*$: it is a subgroup of $N_{\Gb^*}(\Lb^*)$ containing 
$\Lb^*$. Via the canonical isomorphism between $N_{\Gb^*}(\Lb^*)/\Lb^*$ and $N_\Gb(\Lb)/\Lb$,
we define the subgroup $\Nb$ of $N_\Gb(\Lb)$ 
containing $\Lb$ such that $\Nb/\Lb$ corresponds to $\Nb^*/\Lb^*$. Note that 
$\Nb^*$ is $F^*$-stable and so $\Nb$ is $F$-stable, and that $\Nb^{*F^*}/\Lb^{*F^*}$ and 
$\Nb^F/\Lb^F$ are abelian $\ell'$-groups.

\bigskip
\def\jordan{\Hrm_c^d(\Yb_\Vb,\L) e_s^{\Lb^F}}
\def\jordann{\Hrm_c^d(\Yb_{\lexp{n}{\Vb}},\L) e_s^{\Lb^F}}
\def\jordano{\Hrm_c^d(\Yb_\Vb,\OC) e_s^{\Lb^F}}
\def\jordank{\Hrm_c^d(\Yb_\Vb,K) e_s^{\Lb^F}}
\def\jordant{\Hrm_c^d(\Ybt_\Vb,\L) e_s^{\Lb^F}}

\medskip

Let us first derive some consequences of these assumptions. 
Note that $\Nb^*/\Lb^*=(\Nb^*/\Lb^*)^{F^*}=\Nb^{*F^*}/\Lb^{*F^*}$, 
so that $\Nb/\Lb=(\Nb/\Lb)^F=\Nb^F/\Lb^F$. Also, $\Nb^{*F^*}$ is the stabilizer, in 
$N_{\Gb^{*F^*}}(\Lb^*)$, of the $\Lb^{*F^*}$-conjugacy class of $s$. 
Therefore
\equat\label{eq:stab}
\text{\it $\Nb^F$ is the stabilizer of $e_s^{\Lb^F}$ in $N_{\Gb^F}(\Lb)$.}
\endequat
It follows that $e_s^{\Lb^F}$ is a central idempotent of $\L\Nb^F$. 
By~\cite[Theorem~11.7]{BR}, we have
$$\Hrm_c^i(\Yb_\Vb,\L)e_s^{\Lb^F}=0 \text{ for }i{\not=}d.$$
%We will denote this $(\L\Gb^F,\L\Lb^F)$-bimodule by $\jordan{\Lb \subset \Pb}^\Gb(s)$. 

\bigskip

Our first result on the Jordan decomposition is the independence of the choice of
parabolic subgroups.

\begin{theo}
\label{th:indepjordan}
Given $\Pb'$ a parabolic subgroup of $\Gb$ with Levi complement $\Lb$ and unipotent radical
$\Vb'$, then
$\Hrm_c^{\dim(\Yb_{\Vb})}(\Yb_{\Vb},\L)e_s^{\Lb^F}\simeq
\Hrm_c^{\dim(\Yb_{\Vb'})}(\Yb_{\Vb'},\L)e_s^{\Lb^F}$ as
$(\Lambda \Gb^F,\Lambda\Lb^F)$-bimodules.

The $(\Lambda \Gb^F,\Lambda\Lb^F)$-bimodule $\Hrm_c^d(\Yb_{\Vb},\L)e_s^{\Lb^F}$
is $\Nb^F$-stable.
\end{theo}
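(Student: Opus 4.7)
\smallskip

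The plan is to deduce both statements directly from Corollary~\ref{coro:theod} together with the vanishing $\Hrm_c^i(\Yb_\Vb,\L)e_s^{\Lb^F}=0$ for $i\neq d$ recalled just above the theorem. Since $\Lb^*$ is chosen so that $C_{\Gb^*}^\circ(s)\subset\Lb^*$, Remark~\ref{rem:condition} guarantees that condition $(\CC_{\Vb_1,\Vb_2})$ holds for \emph{any} pair of parabolic subgroups $\Pb_1,\Pb_2$ of $\Gb$ with common Levi complement $\Lb$: indeed, for any unipotent radical $\Vb_i^*$, the centralizer $C_{\Vb_i^*}(s)$ is connected, and the hypothesis $C_{\Gb^*}^\circ(s)\subset\Lb^*$ forces $C_{\Vb_i^*}(s)\subset \Lb^*\cap\Vb_i^*=\{1\}$, so in fact all of these centralizers are trivial.

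For the first statement, apply Corollary~\ref{coro:theod} to the pair $(\Vb,\Vb')$. This produces an isomorphism
$$\rgammacdim(\Yb_{\Vb'},\L)e_s^{\Lb^F}\longisom\rgammacdim(\Yb_{\Vb},\L)e_s^{\Lb^F}$$
in $\Drm^b\bigl(\L\Gb^F\otimes(\L\Lb^F)^{\opp}\bigr)$. Since both of these complexes have cohomology concentrated in a single degree (namely $\dim\Yb_{\Vb'}$ and $\dim\Yb_{\Vb}$ respectively, by~\cite[Theorem~11.7]{BR}), passing to cohomology in that degree yields the desired isomorphism of $(\L\Gb^F,\L\Lb^F)$-bimodules.

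For the second statement, let $n\in\Nb^F$ and set $\Vb_n=\lexp{n}{\Vb}$. Since $n$ normalizes $\Lb$, the group $\Vb_n$ is the unipotent radical of the parabolic subgroup $\lexp{n}{\Pb}$, which again has Levi complement $\Lb$. The morphism
$$\Yb_{\Vb}\longto\Yb_{\Vb_n},\qquad g\Vb\longmapsto gn^{-1}\Vb_n$$
is a $\Gb^F$-equivariant isomorphism of varieties which intertwines the right action of $l\in\Lb^F$ on the source with the right action of $nln^{-1}$ on the target. Consequently, as $(\L\Gb^F,\L\Lb^F)$-bimodules, $\Hrm_c^{\dim\Yb_{\Vb_n}}(\Yb_{\Vb_n},\L)$ is the conjugate of $\Hrm_c^d(\Yb_\Vb,\L)$ by $n$ on the right. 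Using~(\ref{eq:stab}), which says that $n$ stabilizes $e_s^{\Lb^F}$, this bimodule isomorphism respects the cutting by $e_s^{\Lb^F}$. Composing with the first part of the theorem applied to the pair $(\Vb,\Vb_n)$, we obtain an isomorphism between $\jordan$ and its $n$-conjugate as $(\L\Gb^F,\L\Lb^F)$-bimodules, proving the $\Nb^F$-stability.

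\smallskip

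No serious obstacle is expected: the technical heart of the argument has already been done in Corollary~\ref{coro:theod} (for the independence) and in the explicit description of Deligne-Lusztig varieties (for transport along conjugation). The only point that requires minor care is to verify that condition $(\CC_{\Vb_1,\Vb_2})$ holds unconditionally under our standing hypothesis $C_{\Gb^*}^\circ(s)\subset\Lb^*$, which is already noted in Remark~\ref{rem:condition}.
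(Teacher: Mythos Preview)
Your proof is correct and follows essentially the same approach as the paper: both derive the first statement from Remark~\ref{rem:condition} together with Corollary~\ref{coro:theod}, and both obtain the second statement by combining the conjugation isomorphism $\Yb_\Vb\xrightarrow{\sim}\Yb_{\lexp{n}{\Vb}}$ with the stability of $e_s^{\Lb^F}$ under $\Nb^F$ and the first part. You give slightly more detail (the explicit reason $C_{\Vb_i^*}(s)=1$, and the passage from the derived-category isomorphism to the bimodule isomorphism via the cohomology concentration), but the structure is identical.
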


\begin{proof}
The first result follows from Remark \ref{rem:condition} and
Corollary \ref{coro:theod}.

Let $n \in \Nb^F$. 
The isomorphism 
of varieties $\Gb/\Vb \longisom \Gb/\lexp{n}{\Vb},\ g\Vb \mapsto g\Vb n^{-1}$ 
induces an isomorphism of varieties
$\Yb_\Vb \xrightarrow{\sim} \Yb_{\lexp{n}{\Vb}}$.
As a consequence, we have an isomorphism
of $(\L\Gb^F,\L\Lb^F)$-bimodules 
$$\Hrm_c^d(\Yb_\Vb,\L) \simeq
n_*\bigl(\Hrm_c^d(\Yb_{\lexp{n}{\Vb}},\L)\bigr),$$
where 
$n_*\bigl(\Hrm_c^d(\Yb_{\lexp{n}{\Vb}},\L)\bigr)=
\Hrm_c^d(\Yb_{\lexp{n}{\Vb}},\L)$ as a left $\L\Gb^F$-module
and the right action of $a\in\L\Lb^F$ on 
$n_*\bigl(\Hrm_c^d(\Yb_{\lexp{n}{\Vb}},\L)\bigr)$ is given by the right
action of $nan^{-1}$ on $\Hrm_c^d(\Yb_{\lexp{n}{\Vb}},\L)$.

Since $n$ fixes $e_s^{\Lb^F}$, we deduce that
$$ \jordan \simeq n_*\bigl(\jordann\bigr).$$
On the other hand, the first part of the theorem shows that
$$\Hrm_c^d(\Yb_{\Vb},\L)e_s^{\Lb^F}\simeq
\Hrm_c^d(\Yb_{\lexp{n}{\Vb}},\L)e_s^{\Lb^F}.$$
It follows that
$\jordan \simeq n_*\bigl(\jordan\bigr)$.
\end{proof}

Recall that, if $\Nb^F=\Lb^F$ (that is, if $C_{\Gb^*}(s)^{F^*} \subset \Lb^*$), then 
$\jordan$ induces a Morita equivalence between $\L\Gb^Fe_s^{\Gb^F}$ and 
$\L\Lb^Fe_s^{\Lb^F}$ by \cite[Theorem~B']{BR}. Note that the assumption
in~\cite[Theorem~B']{BR} is $C_{\Gb^*}(s) \subset \Lb^*$, but it can easily be seen that the
proof requires only the assumption $C_{\Gb^*}(s)^{F^*} \subset \Lb^*$.
Theorem \ref{th:indepjordan} shows that this Morita equivalence does not depend 
on the choice of a parabolic subgroup.

\medskip
We will generalize the Morita equivalence to our situation. The main difficulty is to extend 
the action of $\Lb^F$ on $\Hrm_c^d(\Yb_\Vb,\L)e_s^{\Lb^F}$ to $\Nb^F$.

\subsection{Clifford theory}
\label{se:Clifford}

Let us recall some basic facts of Clifford theory. Let $\kb$ be a field.
Let $Y$ be a finite group and $X$ a normal subgroup of $Y$. Let $M$ be a finitely generated
$\kb X$-module that is $Y$-stable and let $A=\End_{kX}(M)$.

Given $y\in Y$, let $N_y$ be the set of $\phi\in\End_\kb(M)^\times$ such that
$\phi(xm)=yxy^{-1}\phi(m)$ for all $x\in X$ and $m\in M$. Note that $N_yN_{y'}=
N_{yy'}$ for all $y,y'\in Y$.

Let $N=\bigcup_{y\in Y}N_y$,
a subgroup of $\End_\kb(M)^\times$ containing $N_1=A^\times$ as a normal subgroup.
The action of $x\in X$ on $M$ defines an element of $N_x$, and this gives a morphism
$X\to N$. The $Y$-stability of $M$ gives a surjective morphism of groups $Y\to N/N_1,\
y\mapsto N_y$.

% Let $X'=\{y\in Y | N_y=N_1\}$, a normal subgroup of $Y$ containing $C_Y(X)$.
%There is an isomorphism $N/N_1\xrightarrow{\sim} Y/X',\ N_y\ni a\mapsto yX'$.

Let $\hat{Y}=Y\times_{N/N_1}N$. There is a diagonal embedding of $X$ as a normal subgroup of $\hat{Y}$.
There is a commutative diagram whose horizontal and
vertical sequences are exact:
$$\xymatrix{
& & 1\ar[d] & 1\ar[d] \\
&& X\ar@{=}[r]\ar[d] & X\ar[d] \\
1\ar[r] & A^\times\ar[r]\ar@{=}[d] & \hat{Y}\ar[r]\ar[d] & Y\ar[r]\ar[d] & 1 \\
1\ar[r] & A^\times\ar[r] & \hat{Y}/X\ar[r]\ar[d] & Y/X\ar[d]\ar[r]& 1 \\
&& 1 & 1
}$$

The action of $X$ on $M$ extends to an action of $Y$ if and only if the canonical morphism of
groups $\hat{Y}\to Y$ has a splitting that is the identity on $X$.
This is equivalent to the fact that
the canonical morphism of groups $\hat{Y}/X\to Y/X$ is a split surjection.

\smallskip
The extension of groups
$$1\to 1+J(A)\to A^\times\to A^\times/(1+J(A))\to 1$$
splits. Indeed, since $A$ is a finite-dimensional $\kb$-algebra,
there exists a $\kb$-subalgebra $S$ of $A$ such that the composition $S\hookrightarrow A\twoheadrightarrow
A/J(A)$ is an isomorphism. Since $A=S\oplus J(A)$, we have $A^\times=(1+J(A))\rtimes S^\times$.

\smallskip
If $[Y:X]\in\kb^\times$, then every group extension $1\to 1+J(A)\to Z\to Y/X\to 1$
splits, since $1+J(A)$ is the finite extension of abelian groups
$$(1+J(A)^i)/(1+J(A)^{i+1})\simeq
J(A)^i/J(A)^{i+1},$$
and those are $\kb(Y/X)$-modules.
Consequently, if $[Y:X]\in\kb^\times$, then the action of $X$ on $M$ extends to an action of 
$Y$ if and only if the extension
$$1\to A^\times/(1+J(A))\to \hat{Y}/X(1+J(A))\to Y/X\to
 1$$
splits.

\medskip
Consider now $\tilde{Y}$ a finite group with $Y$ and $\tilde{X}$ two normal subgroups such that
$X=Y\cap \tilde{X}$ and $\tilde{Y}=Y\tilde{X}$.
Let $\tilde{M}=\Ind_X^{\tilde{X}}(M)$, a $\tilde{Y}$-stable $\kb\tilde{X}$-module. We define $\tilde{N}_y$, $\tilde{N}$ and
$\hat{\tilde{Y}}$ as above, replacing $M$ by $\tilde{M}$.

Given $y\in Y$, we define a map $\rho:N_y\to\tilde{N}_y,\ \phi\mapsto (a\otimes m\mapsto yay^{-1}\otimes\phi(m))$
for $a\in \kb\tilde{X}$ and $m\in M$. This gives a morphism of groups $N\to\tilde{N}$ extending
the canonical morphism $A\to\End_{\kb\tilde{X}}(\tilde{M})$ and a morphism of
groups $\hat{Y}/X\to \hat{\tilde{Y}}/\tilde{X}$ giving 
a commutative diagram
$$\xymatrix{
1\ar[r] & A^\times\ar[r]\ar@{^{(}->}[d] & \hat{Y}/X \ar[r]\ar[d] & Y/X\ar[d]^\sim \ar[r]& 1 \\
1\ar[r] & \End_{\kb\tilde{X}}(\tilde{M})^\times\ar[r] & \hat{\tilde{Y}}/\tilde{X}\ar[r] & \tilde{Y}/\tilde{X}\ar[r]& 1
}$$

It induces a commutative diagram
$$\xymatrix{
1\ar[r] & A^\times/(1+J(A))\ar[r]\ar@{^{(}->}[d] & 
\hat{Y}/X(1+J(A))\ar[r]\ar[d] & Y/X\ar[d]^\sim \ar[r]& 1 \\
1\ar[r] & \End_{\kb\tilde{X}}(\tilde{M})^\times/(1+J(\End_{\kb\tilde{X}}(\tilde{M})))\ar[r] & 
\hat{\tilde{Y}}/\tilde{X}(1+J(\End_{\kb\tilde{X}}(\tilde{M})))\ar[r] & \tilde{Y}/\tilde{X}\ar[r]& 1
}$$

Assume the inclusion
$$\End_{\kb X}(M)^\times/(1+J(\End_{\kb X}(M)))\hookrightarrow
\End_{\kb\tilde{X}}(\tilde{M})^\times/(1+J(\End_{\kb\tilde{X}}(\tilde{M})))$$
splits (this happens for example if
$\End_{\kb\tilde{X}}(\tilde{M})/
J\bigl(\End_{\kb\tilde{X}}(\tilde{M}))\bigr)\simeq 
\kb^n$ for some $n$, for in that case the algebra embedding
$\End_{\kb X}(M)/J(\End_{\kb X}(M))\hookrightarrow
\End_{\kb \tilde{X}}(\tilde{M})/J(\End_{\kb \tilde{X}}(\tilde{M}))$ has a section).
If the surjection
$\hat{\tilde{Y}}/\tilde{X}(1+J(\End_{\kb\tilde{X}}(\tilde{M})))\to\tilde{Y}/\tilde{X}$
splits, then the surjection 
$\hat{Y}/X(1+J(\End_{\kb X}(M)))\to Y/X$ splits.

As a consequence, we have the following proposition.

\begin{prop}
\label{pr:Clifford}
Let $\tilde{Y}$ be a finite group and $Y$, $\tilde{X}$ be two normal subgroups of $\tilde{Y}$.
Let $X=Y\cap \tilde{X}$. We assume $\tilde{Y}=Y\tilde{X}$.
Let $\kb$ be a field with $[Y:X]\in\kb^\times$.

Let $M$ be a finitely generated $\kb X$-module that is $Y$-stable. We assume
that
$$\End_{\kb\tilde{X}}(\Ind_X^{\tilde{X}}(M))/
J\bigl(\End_{\kb\tilde{X}}(\Ind_X^{\tilde{X}}(M))\bigr)\simeq 
\kb^n\text{ for some }n.$$

If $\Ind_X^{\tilde{X}}(M)$ extends to $\tilde{Y}$, then $M$ extends to $Y$.
\end{prop}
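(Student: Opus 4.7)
The plan is to deduce the proposition directly from the Clifford-theoretic framework assembled in the paragraphs immediately preceding the statement. I would begin by setting $A = \End_{\kb X}(M)$ and $\tilde{A} = \End_{\kb\tilde{X}}(\Ind_X^{\tilde{X}} M)$. The hypotheses $\tilde{Y} = Y\tilde{X}$ and $X = Y \cap \tilde{X}$ provide a canonical isomorphism $Y/X \longisom \tilde{Y}/\tilde{X}$; in particular $[\tilde{Y}:\tilde{X}] = [Y:X]$ is invertible in $\kb$. By the reductions recalled above, extending the $\kb X$-action on $M$ to a $\kb Y$-action is equivalent to splitting the top row of the displayed commutative diagram, while the hypothesis that $\Ind_X^{\tilde{X}} M$ extends to $\tilde{Y}$ is equivalent to splitting its bottom row.

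The second step is to verify the hypothesis on the leftmost vertical arrow of the diagram: the inclusion $A^\times/(1+J(A)) \hookrightarrow \tilde{A}^\times/(1+J(\tilde{A}))$ must admit a retraction of groups. The assumption $\tilde{A}/J(\tilde{A}) \simeq \kb^n$ forces $A/J(A)$ to embed as a reduced commutative $\kb$-subalgebra of $\kb^n$, hence itself to be isomorphic to a product $\kb^m$ for some $m \le n$. Inclusions between two such split commutative $\kb$-algebras are described by surjections of their sets of primitive idempotents, and a $\kb$-algebra section therefore exists; passing to unit groups produces the required retraction.

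With these two ingredients in place, the proposition follows from the general implication displayed in the paragraph just above the statement: a splitting of the bottom row combined with this retraction produces a splitting of the top row, which is in turn equivalent to the sought extension of the $\kb X$-action on $M$ to a $\kb Y$-action. The main obstacle is the verification of that general implication. Concretely, given a splitting $\tilde{s}$ of the bottom row, the natural attempt is, for each $yX \in Y/X$, to lift it to some element of $\hat{Y}/X(1+J(A))$ and compare its image in $\hat{\tilde{Y}}/\tilde{X}(1+J(\tilde{A}))$ with $\tilde{s}$ of the corresponding class in $\tilde{Y}/\tilde{X}$; the discrepancy lies in $\tilde{A}^\times/(1+J(\tilde{A}))$ and can be pushed back to $A^\times/(1+J(A))$ via the retraction to produce a corrected lift. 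The delicate point is to show that the resulting assignment is a group homomorphism: the retraction constructed in the second step is not a priori equivariant for the natural action of $\tilde{Y}/\tilde{X}$ on $\tilde{A}^\times/(1+J(\tilde{A}))$, so multiplicativity does not follow formally and will require either a direct 2-cocycle computation or, more conceptually, a cohomological argument exploiting the fact that the target $\tilde{A}^\times/(1+J(\tilde{A})) \simeq (\kb^\times)^n$ is a permutation module for $\tilde{Y}/\tilde{X}$ together with the invertibility of $[\tilde Y : \tilde X]$ in $\kb$.
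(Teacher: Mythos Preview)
Your plan reproduces the paper's argument step for step: Proposition~\ref{pr:Clifford} is presented there as an immediate consequence of the discussion in \S\ref{se:Clifford}, and the paper, like you, asserts the key implication (retraction of the left vertical inclusion plus splitting of the bottom row implies splitting of the top row) without supplying a detailed proof. You have correctly located the difficulty in the possible failure of $Y/X$-equivariance of the retraction $r$.

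Where your proposal falls short is in the suggested resolution. The cohomological route via ``invertibility of $[\tilde Y:\tilde X]$ in $\kb$'' cannot work: the coefficient group $(\kb^\times)^n$ is not a $\kb$-module, so no transfer or averaging argument applies, and $H^2(Q,(\kb^\times)^n)$ is typically nonzero even when $|Q|\in\kb^\times$ --- for instance $H^2((\ZM/2)^2,\CM^\times)\simeq\ZM/2$. What the argument genuinely requires is that the algebra section $\tilde A/J(\tilde A)\to A/J(A)$ be chosen $Q$-equivariantly, i.e.\ that the surjection of $Q$-sets from the primitive idempotents of $\tilde A/J(\tilde A)\simeq\kb^n$ to those of $A/J(A)\simeq\kb^m$ admit a $Q$-equivariant section; once that is secured, $\iota_*$ is split injective on $H^2(Q,-)$ and the implication is immediate. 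This equivariance does not follow from the hypotheses of the proposition as stated, so a complete argument must either extract it from the specific structure of the functor $\Ind_X^{\tilde X}$ (using that $\tilde M=\Ind_X^{\tilde X}M$ rather than an arbitrary $\tilde Y$-stable module) or find a different route. The paper leaves this same point implicit.
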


\bigskip
\subsection{Embedding in a group with connected center and Morita equivalence}
\label{se:Morita}

We fix a connected reductive algebraic group $\Gbt$ containing $\Gb$ as a closed subgroup,
with an extension of $F$ to an endomorphism of $\Gbt$ such that $F^\delta$ is a Frobenius
endomorphism of $\Gbt$ defining an $\FM_q$-structure, and such that $\Gbt=\Gb\cdot Z(\Gbt)$
and $Z(\Gbt)$ is connected~\cite[proof of Corollary 5.18]{DL}. The inclusion
$\Gb\hookrightarrow\Gbt$ is called a {\em regular embedding}.

Let $\Tbt=\Tb\cdot Z(\Gbt)$, an $F$-stable maximal torus of $\Gbt$.
Fix a triple $(\Gbt^*,\Tbt^*,F^*)$ dual to $(\Gbt,\Tbt,F)$.
The inclusion $i:\Gb\hookrightarrow\Gbt$ induces a surjection
$i^*:\Gbt^*\twoheadrightarrow\Gb^*$.
Let $\Lbt=\Lb\cdot Z(\Gbt)$, so that $\Lbt^*=(i^*)^{-1}(\Lb^*)$.
Let $\tilde{\Nb}=\Nb\tilde{\Lb}$.

Let $J$ be a set of representatives of conjugacy classes of $\ell'$-elements $\tilde{t}\in
\Gbt^{*F^*}$ such that $i^*(\tilde{t})=s$ (recall that $C_{\Gbt^*}(\tilde{t})$ is connected because
$Z(\Gbt)$ is connected). Note that $J\subset\Lbt^{*F^*}$.

\begin{lem}
\label{le:idemptilde}
We have $e_s^{\Gb^F}=\sum_{\tti\in J}e_\tti^{\Gbt^F}$ and
$e_s^{\Lb^F}=\sum_{n\in\Nb^F/\Lb^F}\sum_{\tti\in J}ne_\tti^{\Lbt^F}n^{-1}$.
\end{lem}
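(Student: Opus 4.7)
My plan is to view both sides as central idempotents of $\Lambda\Gbt^F$ and to show they dominate the same block idempotents. The first step is to verify that $e_s^{\Gb^F}$ actually lies in $Z(\Lambda\Gbt^F)$: Lang--Steinberg applied to the connected central torus $\ker i^*$ yields surjectivity of $\Gbt^{*F^*}\twoheadrightarrow\Gb^{*F^*}$, so the $\Gbt^F$-conjugation on the normal subgroup $\Gb^F$ dualizes to inner automorphisms of $\Gb^{*F^*}$, and hence preserves every $\Gb^{*F^*}$-conjugacy class. Second, for a block idempotent $\tilde{b}$ of $\Lambda\Gbt^F$ lying in a Deligne--Lusztig series $(\tilde{t})$, I claim that the conditions $\tilde{b}\mid\sum_{\tti\in J}e_\tti^{\Gbt^F}$ and $\tilde{b}\mid e_s^{\Gb^F}$ are each equivalent to $i^*(\tilde{t}_{\ell'})\sim_{\Gb^{*F^*}}s$. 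For the first, if $\tilde{t}_{\ell'}\sim_{\Gbt^{*F^*}}\tti$ for some $\tti\in J$ then projecting by $i^*$ gives $i^*(\tilde{t}_{\ell'})\sim_{\Gb^{*F^*}}s$; conversely, any relation $i^*(\tilde{t}_{\ell'})=gsg^{-1}$ in $\Gb^{*F^*}$ lifts (again by Lang on $\ker i^*$) to an element of $\Gbt^{*F^*}$ conjugating $\tilde{t}_{\ell'}$ into the set of lifts of $s$. The second equivalence uses the compatibility between Deligne--Lusztig series and regular embeddings (Lusztig; see Bonnaf\'e~\cite{asterisque}): the restriction to $\Gb^F$ of an irreducible character in series $(\tilde{t})$ decomposes into a sum of characters in series $(i^*(\tilde{t}))$.

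\textbf{Plan for the second equality.}
I would first apply the first equality to the regular embedding $\Lb\hookrightarrow\Lbt$ (still regular, since $Z(\Lbt)\supset Z(\Gbt)$ is connected and $\Lbt=\Lb\cdot Z(\Gbt)$) and to $s\in\Lb^{*F^*}$. Writing $J_\Lb$ for a set of representatives of $\Lbt^{*F^*}$-conjugacy classes of $\ell'$-lifts of $s$ in $\Lbt^{*F^*}$, this gives
$$e_s^{\Lb^F}=\sum_{\tilde{u}\in J_\Lb}e_{\tilde{u}}^{\Lbt^F}.$$
The action of $n\in\Nb^F/\Lb^F$ by conjugation on $\Lbt^F$ (well defined as $n$ normalizes $\Lb$ and centralizes $Z(\Gbt)$) dualizes to an action on $\Lbt^{*F^*}$, obtained by choosing an $F^*$-rational lift along
$$\Nb^F/\Lb^F\ \simeq\ \Nb^*/\Lb^*\ \hookrightarrow\ N_{\Gb^*}(\Lb^*)^{F^*}/\Lb^{*F^*}\ \simeq\ N_{\Gbt^*}(\Lbt^*)^{F^*}/\Lbt^{*F^*},$$
the last isomorphism being available through Lang--Steinberg on $\ker i^*$. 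Under this action $n\cdot\tti$, one has $ne_\tti^{\Lbt^F}n^{-1}=e_{n\cdot\tti}^{\Lbt^F}$, so the second equality reduces to the bijectivity of
$$\Phi\colon \Nb^F/\Lb^F\times J\longto J_\Lb,\qquad (n,\tti)\longmapsto n\cdot\tti\ \text{modulo $\Lbt^{*F^*}$-conjugation}.$$

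The key geometric input is that $\Lbt^*$ is the unique minimal Levi of $\Gbt^*$ containing $C_{\Gbt^*}(\tti)$; this follows by applying $i^*$ to the defining characterization $\Lb^*=C_{\Gb^*}(Z(C_{\Gb^*}^\circ(s))^\circ)$ and using $\ker i^*\subset Z(\Gbt^*)\subset\Lbt^*$. For surjectivity of $\Phi$, given $\tilde{u}\in J_\Lb$, any $g\in\Gbt^{*F^*}$ conjugating $\tti\in J$ to $\tilde{u}$ must therefore normalize $\Lbt^*$; since $i^*(g)$ fixes $s$, we have $i^*(g)\in C_{\Gb^*}(s)^{F^*}\cdot\Lb^{*F^*}=\Nb^{*F^*}$, yielding the required $n\in\Nb^F/\Lb^F$. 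For injectivity, a coincidence $n_1\cdot\tti_1\sim_{\Lbt^{*F^*}}n_2\cdot\tti_2$ forces $\tti_1=\tti_2$ (distinct elements of $J$ representing distinct $\Gbt^{*F^*}$-classes), after which a direct manipulation with $F^*$-rational representatives in $N_{\Gbt^*}(\Lbt^*)^{F^*}$ reduces to showing that the centralizer calculation lands inside $\Lbt^{*F^*}$, giving $n_1=n_2$.

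\textbf{Main obstacle.}
The hardest step is the geometric input for surjectivity: verifying that $\Lbt^*$ is the unique minimal Levi of $\Gbt^*$ containing $C_{\Gbt^*}(\tti)$ for every $\tti\in J$. This requires careful control of the behaviour of the Lusztig characterization of Levi subgroups under the surjection $i^*$, and in particular of the relationship between $Z(C_{\Gbt^*}(\tti))^\circ$ and $Z(C_{\Gb^*}^\circ(s))^\circ$. A secondary but nontrivial bookkeeping issue is ensuring that all dual-group identifications used to pass between $N_{\Gb^*}(\Lb^*)^{F^*}/\Lb^{*F^*}$ and $N_{\Gbt^*}(\Lbt^*)^{F^*}/\Lbt^{*F^*}$ are genuinely equivariant with respect to $F^*$, which is where Lang--Steinberg for $\ker i^*$ intervenes repeatedly.
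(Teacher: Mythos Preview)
Your approach is correct and follows the same architecture as the paper: cite the first equality as classical (the paper refers to \cite[Proposition~11.7]{asterisque}), then for the second equality reduce to showing that the map $\Phi\colon\Nb^F/\Lb^F\times J\to J_\Lb$ is a bijection. The paper's proof establishes only the freeness half explicitly, but the surjectivity you add is exactly what is implicitly needed.

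Where you diverge is in the tool used to control both halves of the bijection. The paper exploits a single clean observation: since $Z(\Gbt)$ is connected, the centraliser $C_{\Gbt^*}(\tti)$ is connected, hence
$$i^*\bigl(C_{\Gbt^*}(\tti)\bigr)\subset C_{\Gb^*}^\circ(s)\subset\Lb^*.$$
This one line does all the work. For injectivity: if $n\in\Nbt^{*F^*}$ satisfies $n\tti n^{-1}\sim_{\Lbt^{*F^*}}\tti$, then $n\in\Lbt^{*F^*}\cdot C_{\Gbt^*}(\tti)$, so $i^*(n)\in\Lb^{*F^*}$, whence $n\in\Lbt^{*F^*}$. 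For surjectivity: after adjusting by an element of $\Lbt^{*F^*}$ one may assume $i^*(\tilde u)=s$, and then any $g\in\Gbt^{*F^*}$ conjugating $\tilde u$ to $\tti\in J$ has $i^*(g)\in C_{\Gb^*}(s)^{F^*}\subset\Nb^{*F^*}$, so $g\in\Nbt^{*F^*}$ directly.

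Your ``main obstacle'' --- showing that $\Lbt^*$ is the \emph{unique minimal} Levi of $\Gbt^*$ containing $C_{\Gbt^*}(\tti)$ --- is therefore unnecessary. You only need $C_{\Gbt^*}(\tti)\subset\Lbt^*$, which is immediate from the connectedness observation above; the minimality plays no role once you argue via $i^*$ rather than via normalisers of Levis. This also removes the secondary bookkeeping worry you flag: the only identification needed is $\Nbt^{*F^*}/\Lbt^{*F^*}\simeq\Nb^{*F^*}/\Lb^{*F^*}$, which follows at once from $\ker i^*\subset\Lbt^*$.
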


\begin{proof}
The first statement is a classical translation from $\Gb^*$ to $\Gb$,
cf for instance \cite[Proposition 11.7]{asterisque}.

Let $\sti$ be a semisimple element of $\Gbt^{*F^*}$ such that $i^*(\sti)=s$. If $\L \neq K$, 
we will assume that $\sti$ has order prime to $\ell$ (this is always possible as we may 
replace $\sti$ by its $\ell'$-part if necessary). 
Note that $\sti \in \Lbt^{*F^*}$. 

Let $n\in\tilde{\Nb}^{*F^*}$ such that
$n\sti n^{-1}$ is $\Lbt^{*F^*}$-conjugate to $\sti$. Then 
$n\in \Lbt^{*F^*}\cdot C_{\Gbt^*}(\sti)$. Since $i^*(C_{\Gbt^*}(\sti))\subset C^\circ_{\Gb^*}(s)\subset
\Lb^*$, it follows that $i^*(n)\in \Lb^{*F^*}$. We have $\Nb^{*F^*}/\Lb^{*F^*}=
\Nbt^{*F^*}/\Lbt^{*F^*}$, hence $n\in \Lbt^{*F^*}$.

It follows that $\Nb^{*F^*}/\Lb^{*F^*}$ acts freely on the set of conjugacy classes of
$\Lbt^{*F^*}$ whose image under $i^*$ is the $\Lb^{*F^*}$-conjugacy class of $s$.
Through the identification of $\Nb^{*F^*}/\Lb^{*F^*}$ with $\Nb^F/\Lb^F$, this shows that
given $\tilde{t}\in J$,
the stabilizer in $\Nb^F$ of $e_{\tilde{t}}^{\Lbt^F}$ is $\Lb^F$.
\end{proof}

\medskip
\begin{theo}
\label{th:Moritaquasi}
The action of $k\Gb^Fe_s^{\Gb^F}\otimes (k\Lb^Fe_s^{\Lb^F})^\opp$
on $\Hrm_c^d(\Yb_\Vb,k) e_s^{\Lb^F}$ extends to an action of
$k\Gb^Fe_s^{\Gb^F}\otimes (k\Nb^Fe_s^{\Lb^F})^\opp$. The resulting 
$(k\Gb^Fe_s^{\Gb^F},k\Nb^Fe_s^{\Lb^F})$-bimodule induces a Morita equivalence.
\end{theo}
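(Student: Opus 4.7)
The plan is to reduce to the connected-centre case handled in \cite[Theorem~B']{BR} by passing to a regular embedding $\Gb\hookrightarrow\Gbt$ as in \S\ref{se:Morita}, and then to descend the resulting bimodule back to $\Gb$ via the Clifford-theoretic tool Proposition~\ref{pr:Clifford}. Theorem~\ref{th:indepjordan} already guarantees that $M:=\Hrm_c^d(\Yb_\Vb,k)e_s^{\Lb^F}$ is $\Nb^F$-stable, so the only issue is the existence of an extension of the $\Lb^F$-action to $\Nb^F$ commuting with $\Gb^F$.

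First I would work in $\Gbt$. For each $\tti\in J$, the centraliser $C_{\Gbt^*}(\tti)$ is connected (since $Z(\Gbt)$ is connected) and its image under $i^*$ lies in $C_{\Gb^*}^\circ(s)\subset\Lb^*$, so $C_{\Gbt^*}(\tti)\subset(i^*)^{-1}(\Lb^*)=\Lbt^*$. Since $\Vb$ is also the unipotent radical of $\Pbt=\Pb\cdot Z(\Gbt)$, \cite[Theorem~B']{BR} applies and shows that $\Mti_\tti:=\Hrm_c^d(\Yb_\Vb^{\Gbt},k)e_\tti^{\Lbt^F}$ induces a Morita equivalence between $k\Gbt^Fe_\tti^{\Gbt^F}$ and $k\Lbt^Fe_\tti^{\Lbt^F}$. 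Summing over $\tti\in J$ and using Lemma~\ref{le:idemptilde}, the bimodule $\Mti:=\Hrm_c^d(\Yb_\Vb^{\Gbt},k)\cdot(\sum_\tti e_\tti^{\Lbt^F})$ induces a Morita equivalence between the corresponding sum of blocks. Moreover the group $\Nbt^F=\Nb^F\Lbt^F$ acts on $\Mti$ extending the $\Lbt^F$-action: this is automatic, as elements of $\Nb^F$ permute the summands $\Mti_\tti$ via their natural action on $\Yb_\Vb$ and on the set $J$ (the fact that this action is well-defined is precisely what Theorem~\ref{th:indepjordan} was proved for).

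Next I would relate $\Mti$ to $M$ by restriction along $\Gb^F\subset\Gbt^F$ and $\Lb^F\subset\Lbt^F$: analogously to~(\ref{eq:gy}) one has $\Yb_\Vb^{\Gbt}\simeq\Gbt^F\times_{\Gb^FZ(\Gbt)^F}\Yb_\Vb^\Gb$, which together with Lemma~\ref{le:idemptilde} identifies $\Res_{\Gb^F\times(\Lbt^F)^\opp}^{\Gbt^F\times(\Lbt^F)^\opp}\Mti$ with a direct summand of $\Ind_{\Lb^F}^{\Lbt^F}M$ stable under the diagonal $\Nb^F/\Lb^F$-twists. I would then apply Proposition~\ref{pr:Clifford} with
\[
X=\Gb^F\times(\Lb^F)^\opp,\quad \tilde X=\Gb^F\times(\Lbt^F)^\opp,\quad
Y=\Gb^F\times(\Nb^F)^\opp,\quad \tilde Y=\Gb^F\times(\Nbt^F)^\opp.
\]
The equalities $X=Y\cap\tilde X$ and $\tilde Y=Y\tilde X$ follow from $\Nbt=\Nb\Lbt$ and $\Nb\cap\Lbt=\Lb$; the index $[Y:X]=|\Nb^F/\Lb^F|$ is an $\ell'$-integer, hence invertible in $k$. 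The Morita equivalence over $\Gbt$ identifies $\End_{k\tilde X}(\Ind_X^{\tilde X}M)$ (after replacing $M$ by the relevant block summand) with a direct product of blocks of $k\Gbt^F$, so its semisimple quotient is a product of matrix algebras over $k$, as required. The extension of the $\tilde X$-action on $\Ind_X^{\tilde X}M$ to $\tilde Y$ produced in Step~1 therefore descends, by Proposition~\ref{pr:Clifford}, to the sought extension of the $\Lb^F$-action on $M$ to $\Nb^F$.

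Finally, the Morita property would follow from the $\Gbt$-case. Since $\Nb^F/\Lb^F$ acts freely on $J$ by Lemma~\ref{le:idemptilde}, induction from $k\Lb^Fe_s^{\Lb^F}$ to $k\Nb^Fe_s^{\Lb^F}$ is compatible with the decomposition of $k\Lbt^F\cdot(\sum_\tti e_\tti^{\Lbt^F})$ indexed by $J$, and combining this with the Morita equivalence given by $\Mti$ shows that the bimodule $M$ equipped with its extended $\Nb^F$-action induces a Morita equivalence between $k\Gb^Fe_s^{\Gb^F}$ and $k\Nb^Fe_s^{\Lb^F}$. The main obstacle I foresee is in the bookkeeping of Step~3: precisely matching $\Ind_X^{\tilde X}M$ with the restriction of $\Mti$, and making sure that the splitting of the extension $1\to\End_{k\tilde X}(\Ind_X^{\tilde X}M)^\times/J\to\hat{\tilde Y}/\tilde X\to\tilde Y/\tilde X\to1$ produced on the $\Gbt$-side induces, via the commutative diagram preceding Proposition~\ref{pr:Clifford}, exactly the splitting needed for the $\Gb$-side. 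Once this compatibility is established, Proposition~\ref{pr:Clifford} together with the $\Gbt$-Morita equivalence gives the result formally.
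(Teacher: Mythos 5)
Your overall strategy is the same as the paper's (regular embedding, Theorem~B$'$ in $\Gbt$, then Clifford descent via Proposition~\ref{pr:Clifford}), but the specific group-theoretic set-up you propose for Proposition~\ref{pr:Clifford} is not the one the argument needs, and I do not see how to make it work.

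The paper takes
$$X=(\Gb^F\times(\Lb^F)^\opp)\cdot\Delta\Lbt^F,\qquad
\tilde X=\Gbt^F\times(\Lbt^F)^\opp,$$
so that $\Ind_X^{\tilde X}M\simeq\Hrm_c^d(\Ybt_\Vb,k)e_s^{\Lb^F}$ as a genuine $(k\Gbt^F,k\Lbt^F)$-bimodule (the diagonal $\Delta\Lbt^F$ inside $X$ is precisely what makes the left side of the identification come out as $\Gbt^F$ and not merely $\Gb^F$). This is crucial, because it is \emph{with the full left $\Gbt^F$-action} that Theorem~B$'$ and Lemma~\ref{le:idemptilde} turn $\tilde M e$ into a Morita bimodule and yield
$$\End_{k\tilde X}(\tilde M)\simeq\bigl(Z(k\Lbt^Fe)\bigr)^{[\Nb^F/\Lb^F]},$$
a commutative algebra, whence the semisimple quotient is $\simeq k^n$ and Proposition~\ref{pr:Clifford} applies. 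You instead take $\tilde X=\Gb^F\times(\Lbt^F)^\opp$, keeping $\Gb^F$ on the left. Then $\Ind_X^{\tilde X}M=\Ind_{\Lb^F}^{\Lbt^F}M$ is only a $(k\Gb^F,k\Lbt^F)$-bimodule, and the Morita equivalence from Theorem~B$'$ (which lives over $k\Gbt^F$) gives you no handle on $\End_{k(\Gb^F\times(\Lbt^F)^\opp)}(\Ind_{\Lb^F}^{\Lbt^F}M)$. Restricting a multiplicity-free $(k\Gbt^F,k\Lbt^F)$-bimodule along $\Gb^F\hookrightarrow\Gbt^F$ can enlarge the endomorphism ring in ways controlled by $\Gbt^F/\Gb^F$, and there is no reason a priori for its semisimple quotient to be $k^n$ (or even for the required splitting of the map of unit groups mod $1+J$ to exist). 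Your remark ``\emph{its semisimple quotient is a product of matrix algebras over $k$}'' is also weaker than what Proposition~\ref{pr:Clifford} demands: it asks for $k^n$ (equivalently, commutativity modulo the radical), not merely split semisimplicity.

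Two smaller imprecisions: the claim in your Step~1 that the $\Nbt^F$-action on $\Mti$ ``is automatic, as elements of $\Nb^F$ permute the summands $\Mti_\tti$'' is not a construction of a bimodule extension. The natural action of $n\in\Nb^F$ on the variety conjugates $\Vb$ and so \emph{a priori} lands in a different variety $\Yb_{{}^n\Vb}$; what Theorem~\ref{th:indepjordan} supplies is an abstract isomorphism of bimodules, not an action. The paper instead builds the extension on $\tilde M$ algebraically, via the Morita equivalence $ek\Nbt^F$ between $k\Lbt^Fe$ and $k\Nbt^Fe_s^{\Lb^F}$ (free action of $\Nb^F/\Lb^F$ on $J$). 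Likewise, in your final step, deducing the Morita equivalence over $\Gb$ from the one over $\Gbt$ requires the endomorphism-ring computation $\End_{k\Gb^F}(M')\simeq k\Nb^Fe_s^{\Lb^F}$, which again proceeds through the $\Gbt^F$-side picture you have not set up. If you replace your choice of $\tilde X$ and $X$ by the paper's, the remaining steps do fall into place along the lines you sketch.
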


\begin{proof}
Let $\Pbt=\Pb\cdot Z(\Gbt)$
%$ denote the unique parabolic subgroup of $\Gbt$ containing $\Pb$ 
and let $\Pbt^*=i^{*-1}(\Pb^*)$. 
Note that $\Lbt$ (resp. $\Lbt^*$) is a Levi complement 
of $\Pbt$ (resp. $\Pbt^*$) and it is $F$-stable (resp. $F^*$-stable)
and the pair $(\Lbt^*,\Pbt^*)$ is dual to $(\Lbt,\Pbt)$.

We put 
$$X=(\Gb^F\times(\Lb^F)^\opp)\cdot\Delta \tilde{\Lb}^F, \quad
Y=(\Gb^F\times(\Nb^F)^\opp)\cdot\Delta \tilde{\Nb}^F,$$
$$\tilde{X}=\tilde{\Gb}^F\times(\tilde{\Lb}^F)^\opp \quad\text{ and }\quad
\tilde{Y}=\tilde{\Gb}^F\times(\tilde{\Nb}^F)^\opp.$$

Let $\Ybt_\Vb=\Yb_\Vb^{\Gbt}$. Through the embedding $\Gb/\Vb\hookrightarrow\Gbt/\Vb$, we identify $\Yb_\Vb$ with a
subvariety of $\Ybt_\Vb$. The stabilizer in $\tilde{X}$ of the subvariety $\Yb_\Vb$
of $\Ybt_\Vb$ is $X$, hence we have an isomorphism of $\tilde{X}$-varieties
$\Ind_X^{\tilde{X}}\Yb_\Vb\xrightarrow{\sim}\Ybt_\Vb$.

Let $M=\Hrm_c^d(\Yb_\Vb,k) e_s^{\Lb^F}$, a 
$(kX(e_s^{\Gb^F}\otimes e_s^{\Lb^F}))$-module.
Let $\tilde{M}=\Ind_X^{\tilde{X}}M$, a 
$(k\tilde{X}(e_s^{\Gb^F}\otimes e_s^{\Lb^F}))$-module. We have an isomorphism
of $(k\tilde{X}(e_s^{\Gb^F}\otimes e_s^{\Lb^F}))$-modules 
$\tilde{M}\xrightarrow{\sim}\Hrm_c^d(\Ybt_\Vb,k) e_s^{\Lb^F}$.

We put $e=\sum_{\tti\in J}e_\tti^{\Lbt^F}$. We have
$e_s^{\Lb^F}=\sum_{n\in\Nbt^F/\Lbt^F}nen^{-1}$ and $e$ is a central idempotent of $k\Lbt^F$
(Lemma \ref{le:idemptilde}).

The $kX$-module $M$ is $\Nb^F$-stable (Theorem \ref{th:indepjordan}), hence
the $k\tilde{X}$-module $\tilde{M}$ is $\Nb^F$-stable as well. It follows that given
$\tilde{t}\in J$ and $n\in\Nb^F$, we have
$n_*(\Hrm_c^d(\tilde{\Yb}_\Vb)e_{\tilde{t}}^{\Lbt^F})\simeq
\Hrm_c^d(\tilde{\Yb}_\Vb)e_{n\tilde{t}n^{-1}}^{\Lbt^F}$ as $k\tilde{X}$-modules. The classical Mackey formula for induction and restriction in finite
groups shows now that
$$\Hrm_c^d(\Ybt_\Vb,k) \bigl(\sum_{n\in\Nb^F/\Lb^F}e_{n\tilde{t}n^{-1}}^{\Lbt^F}\bigr)
\simeq
\Res_{\tilde{X}}^{\tilde{Y}}\Ind_{\tilde{X}}^{\tilde{Y}}\bigl(
\Hrm_c^d(\tilde{\Yb}_\Vb)e_{\tilde{t}}^{\Lbt^F}\bigr),$$
hence
$$\tilde{M}\simeq \Res_{\tilde{X}}^{\tilde{Y}}\Ind_{\tilde{X}}^{\tilde{Y}}\bigl(\tilde{M}e).$$

Lemma \ref{le:idemptilde} shows that
$\tilde{M}e$ induces a Morita equivalence between
$k\Gbt^Fe_s^{\Gb^F}$ and $k\Lbt^Fe$ (cf \cite[Theorem~B']{BR}).
In particular, it is a direct sum
of indecomposable modules no two of which are isomorphic.

Since $ek\Nbt^F$ induces a Morita equivalence between $k\Lbt^Fe$
and $k\Nbt^Fe_s^{\Lb^F}$, we deduce that the right action of $\Lbt^F$ on 
$\tilde{M}\simeq \tilde{M}e\otimes_{k\Lbt e}ek\Nbt^F$ extends
to an action of $\Nbt^F$ commuting with the left action of $\Gbt^F$ and the extended bimodule
$\tilde{M}'$ induces a Morita equivalence between $k\Gbt^Fe_s^{\Gb^F}$ and 
$k\Nbt^Fe_s^{\Lb^F}$. It follows that
$$\End_{k\tilde{X}}(\tilde{M})\simeq\End_{k(\Nbt^F\times(\Lbt^F)^{\opp})}(
k\Nbt^Fe_s^{\Lb^F}).$$

Given $n_1,n_2\in\Nbt^F$ with $n_1{\not\in}n_2\Lbt^F$,
the central idempotents $n_1en_1^{-1}$ and
$n_2en_2^{-1}$ of $k\Lbt^F$ are orthogonal. It follows that
$$\End_{k(\Nbt^F\times(\Lbt^F)^{\opp}}(k\Nbt^Fe_s^{\Lb^F})\simeq
\prod_{n\in\Nb^F/\Lb^F}\End_{k(\Nbt^F\times(\Lbt^F)^{\opp})}(
k\Nbt^Fnen^{-1})\simeq 
\bigl(Z(k\Lbt^Fe)\bigr)^{[\Nb^F/\Lb^F]},$$
the last isomorphism following from the fact that $k\Nbt^Fnen^{-1}$ induces a Morita equivalence between
$k\Nbt e_s^{\Lb^F}$ and $k\Lbt^Fnen^{-1}\simeq k\Lbt^Fe$.

We deduce that 
$\End_{k\tilde{X}}(\tilde{M})^\times/
\bigl(1+J(\End_{k\tilde{X}}(\tilde{M}))\bigr)
\simeq (k^\times)^r$ for some $r$.
Since $[Y:X]=[\Nb:\Lb]$ is prime to $\ell$, 
it follows from Proposition \ref{pr:Clifford} that the action of $X$ on $M$ extends to
an action of $Y$. Denote by $M'$ the extended module. We have $\Res_{\tilde{X}}^{\tilde{Y}}
\Ind_Y^{\tilde{Y}}(M')e\simeq \tilde{M}e\simeq \Res_{\tilde{X}}^{\tilde{Y}}
(\tilde{M}')e$, hence
$\Ind_Y^{\tilde{Y}}(M')\simeq \tilde{M}'$.
It follows that $\Ind_Y^{\tilde{Y}}(M')$ induces a Morita
equivalence between $k\Gbt^Fe_s^{\Gb^F}$ and $k\Nbt^Fe_s^{\Lb^F}$.
We have 
$$\End_{k\Gbt^F}(\Ind_Y^{\tilde{Y}}(M'))\simeq
\End_{k\Gbt^F}(k\Gbt^F\otimes_{k\Gb^F}M')\simeq
\Hom_{k\Gb^F}(M',M'\otimes_{k\Nb^F}k\Nbt^F)\simeq
\End_{k\Gb^F}(M')\otimes_{k\Nb^F}k\Nbt^F.$$
The canonical map $k\Nbt^Fe_s^{\Lb^F}\to\End_{k\Gbt^F}(\Ind_Y^{\tilde{Y}}M')$ is an isomorphism,
hence the canonical map $k\Nb^Fe_s^{\Lb^F}\to\End_{k\Gb^F}(M')$ is an isomorphism
as well. Also, $M$ is a faithful $k\Gb^F e_s^{\Gb^F}$-module, since
$\tilde{M}=\Ind_{\Gb^F}^{\Gbt^F}M$ is a faithful $k\Gbt^Fe_s^{\Gb^F}$-module.
We deduce that $M'$ induces a Morita equivalence between $k\Gb^Fe_s^{\Gb^F}$ and
$k\Nb^Fe_s^{\Lb^F}$.
\end{proof}

\bigskip

\subsection{Splendid Rickard equivalence and local structure} 
\label{se:splendid}

Recall that $\Lb$ is the minimal $F$-stable Levi subgroup of $\Gb$ such that $C_{\Gb^*}^\circ(s)\subset\Lb^*$.

\begin{theo}
\label{th:Rickardk}
The action of $k\Gb^Fe_s^{\Gb^F}\otimes (k\Lb^Fe_s^{\Lb^F})^\opp$
on $\Grm\Gamma_c(\Yb_\Vb,k) e_s^{\Lb^F}$ extends to an action of
$k\Gb^Fe_s^{\Gb^F}\otimes (k\Nb^Fe_s^{\Lb^F})^\opp$. The resulting complex induces a splendid
Rickard equivalence between $k\Gb^Fe_s^{\Gb^F}$ and $k\Nb^Fe_s^{\Lb^F}$.
\end{theo}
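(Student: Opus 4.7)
The plan is to reduce the theorem to Theorem~\ref{th:Moritaquasi} by exploiting the fact that the cohomology of $C := \Grm\Gamma_c(\Yb_\Vb,k)e_s^{\Lb^F}$ is concentrated in degree $d = \dim(\Yb_\Vb)$ by~\cite[Theorem~11.7]{BR}. Consequently $C^{\red}$ is homotopy equivalent to the shifted module $\Hrm_c^d(\Yb_\Vb,k)e_s^{\Lb^F}[-d]$ in $\Ho^b(k\Gb^F \otimes (k\Lb^F)^{\opp})$, and by Theorem~\ref{th:Moritaquasi} the middle cohomology is a projective $(k\Gb^F e_s^{\Gb^F}, k\Lb^F e_s^{\Lb^F})$-bimodule inducing a Morita equivalence.

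First I would extend the right $\Lb^F$-action on $C$ to an $\Nb^F$-action. Theorem~\ref{th:Moritaquasi} provides an extension of the right action on $\Hrm_c^d(\Yb_\Vb,k)e_s^{\Lb^F}$ from $\Lb^F$ to $\Nb^F$. Since $C^{\red}$ is homotopy equivalent to this module shifted by $[-d]$, the canonical map $k\Nb^F e_s^{\Lb^F} \to \End^\bullet_{k\Gb^F}(C^{\red})^{\opp}$ in $\Ho^b$ can be read off from the endomorphism algebra of the Morita bimodule, yielding the desired extension of action on $C$ in the homotopy category. Once the action is extended, $C$ induces a derived equivalence between $k\Gb^F e_s^{\Gb^F}$ and $k\Nb^F e_s^{\Lb^F}$, since it is quasi-isomorphic to the shift of the Morita bimodule of Theorem~\ref{th:Moritaquasi}.

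To upgrade this derived equivalence to a \emph{splendid Rickard} equivalence, one must verify that: (i) the terms of $C^{\red}$ (viewed as a $(k\Gb^F, k\Nb^F)$-bimodule complex via the extended action) are $\ell$-permutation modules whose indecomposable summands have vertex contained in $\Delta \Nb^F$; (ii) the canonical algebra maps are isomorphisms in $\Ho^b$, not only in $\Drm^b$. Point (ii) follows because $C^{\red}$ is homotopy equivalent to a shift of a projective $\ell$-permutation module, combining Theorem~\ref{th:Moritaquasi} with the cohomology being concentrated in one degree.

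The hard part will be point (i): the a priori bound from Corollary~\ref{cor:deltavertex} only gives vertex in $\Delta N_{\Gb^F}(\Pb,\Lb)$, not $\Delta \Nb^F$. I would argue by induction on $|Q|$ for $\ell$-subgroups $Q \le \Nb^F$, analysing Brauer images. By Proposition~\ref{prop:centralisateur}(e), $\brauer_{\Delta Q}(C)$ identifies with the cohomology complex of a Deligne-Lusztig variety for the (generally disconnected) reductive group $C_\Gb(Q)$ and its Levi subgroup $C_\Lb(Q)$. Theorem~\ref{theo:brauer} expresses $\brauer_Q(e_s^{\Gb^F})$ and $\brauer_Q(e_s^{\Lb^F})$ as sums of block idempotents $e_{\YC'}$ corresponding, via the map $i_Q$, to Jordan decomposition data for $C_\Gb(Q)$ --- this is precisely where the disconnected framework of \S\ref{se:rational} becomes essential, since $C_\Gb(Q)$ need not be connected. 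The regularity conclusions of Proposition~\ref{pr:seriesQ} then place each Brauer component into the Jordan decomposition setting of \S\ref{se:quasiisolatedsetting} for $C_\Gb(Q)$, to which the inductive hypothesis applies and produces a derived equivalence. The standard local-to-global criterion for Rickard complexes (verifying derived equivalence after $\brauer_{\Delta Q}$ for all $\ell$-subgroups $Q \le \Nb^F$) then yields that $C$ induces a splendid Rickard equivalence.
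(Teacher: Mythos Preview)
Your argument rests on a claim that is false in the interesting cases: you assert that because the cohomology of $C=\Grm\Gamma_c(\Yb_\Vb,k)e_s^{\Lb^F}$ is concentrated in degree $d$, the complex $C^{\red}$ is homotopy equivalent to $\Hrm_c^d(\Yb_\Vb,k)e_s^{\Lb^F}[-d]$ as a $(k\Gb^F,k\Lb^F)$-bimodule complex. Cohomology concentration only gives an isomorphism in $D^b$, not in $\Ho^b$; the two coincide for complexes of projective bimodules, but the terms of $C^{\red}$ are $\ell$-permutation bimodules with vertices in $\Delta\Lb^F$, not projective bimodules. Concretely, for a nontrivial $\ell$-subgroup $Q\le\Lb^F$ one has $\brauer_{\Delta Q}(C)\simeq\Grm\Gamma_c(\Yb_{C_\Vb(Q)}^{C_\Gb(Q)},k)\,\mathrm{br}_Q(e_s^{\Lb^F})$, whose cohomology sits in degree $\dim\Yb_{C_\Vb(Q)}^{C_\Gb(Q)}<d$; were $C^{\red}$ homotopy equivalent to a module placed in degree $d$, its Brauer image would also be concentrated in degree $d$, a contradiction. (Relatedly, the Morita bimodule of Theorem~\ref{th:Moritaquasi} is projective on each side but not as a bimodule, so your parenthetical ``projective $\ell$-permutation bimodule'' is also wrong.) Everything you build on this---the extension of the $\Nb^F$-action, point~(ii), and the reduction of the splendid condition to a statement about a single module---collapses.

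The paper never attempts to contract $C$ to a module. Instead it proves directly that $\End_{k\Gb^F}^\bullet(C)\simeq\End_{D^b(k\Gb^F)}(C)$ in $\Ho^b(k(\Lb^F\times(\Lb^F)^\opp))$: the Brauer analysis you sketch in your last paragraph (Proposition~\ref{prop:centralisateur}(e), Theorem~\ref{theo:brauer}, Proposition~\ref{pr:seriesQ}) is used there, not to bound vertices, but to show each $\brauer_{\Delta Q}(C)$ has cohomology in a single degree, so that Theorem~\ref{th:splitEnd} applies. This identification is then transported to the induced complex $C'=\Ind_{\Gb^F\times(\Lb^F)^\opp}^{\Gb^F\times(\Nb^F)^\opp}C$, giving $\End_{\Ho^b}(C')\simeq\End_{D^b}(C')\simeq\End(\Ind\Hrm_c^d(\Yb_\Vb,k)e_s^{\Lb^F})$. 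The idempotent in the last ring cutting out the extended Morita bimodule $M'$ of Theorem~\ref{th:Moritaquasi} is lifted to $\End_{\Comp}(C')$, producing a direct summand $\tilde{C}$ of $C'$; one then checks $\Res\,\tilde{C}\simeq C$ by comparing multiplicities after taking $H^d$. The extension is thus obtained by carving $\tilde{C}$ out of the induced complex rather than by extending the action on $C$ term-by-term, and splendidness comes for free from the construction together with Corollary~\ref{cor:splendid}.
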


\begin{proof}
$\bullet\ ${\em Step 1}: Identification of
$\End_{k\Gb^F}^\bullet(\Grm\Gamma_c(\Yb_\Vb,k)e_s^{\Lb^F})$ in $\Ho^b(k(\Lb^F\times (\Lb^F)^\opp))$.

\smallskip
Let $C=(\Grm\Gamma_c(\Yb_\Vb,k)e_s^{\Lb^F})^{\red}$.
The vertices of the indecomposable direct summands of components of $C$ are
contained in $\Delta \Lb^F$ by Corollary \ref{cor:deltavertex}.
Let $Q$ be an $\ell$-subgroup of $\Lb^F$. We have
$\brauer_{\Delta Q}(C)\simeq \Grm\Gamma_c(\Yb_{C_\Vb(Q)}^{C_{\Gb}(Q)},k)\mathrm{br}_Q(e_s^{\Lb^F})$ in
$\Ho^b(k(C_{\Gb^F}(Q)\times C_{\Lb^F}(Q)^\opp))$ by Proposition \ref{prop:centralisateur}.
Let $\XC$ be the rational series of $(\Lb,F)$ corresponding to $s$, so that
$e_s^{\Lb^F}=e_\XC$. Theorem \ref{theo:brauer} shows that
$$\mathrm{br}_Q(e_\XC)=\sum_{\YC\in (i_Q^\Lb)^{-1}(\XC)}e_{\YC}.$$

Let $\YC\in (i_Q^{\Lb})^{-1}(\XC)$. Proposition \ref{pr:seriesQ} shows that $\YC$
is $(C_{\Gb}^\circ(Q),C_{\Lb}^\circ(Q))$-regular.
It follows from \cite[Theorem~11.7]{BR}
that $\Hrm_c^i(\Yb_{C_\Vb(Q)}^{C_{\Gb}^\circ(Q)},k)e_\YC=0$ for 
$i{\not=}\dim\Yb_{C_\Vb(Q)}^{C_{\Gb}^\circ(Q)}$, hence
$\Hrm_c^i(\Yb_{C_\Vb(Q)}^{C_{\Gb}(Q)},k)e_\YC=0$ for 
$i{\not=}\dim\Yb_{C_\Vb(Q)}^{C_{\Gb}(Q)}$.
We have shown that the cohomology of
$\brauer_{\Delta Q}(C)$ is concentrated in a single degree. Note that
$\Res_{kC_{\Gb^F}(Q)}(\brauer_{\Delta Q}(C))$ is a perfect complex, hence
its homology is projective as a $kC_{\Gb^F}(Q)$-module.
We deduce from Theorem \ref{th:splitEnd} that
$$\End_{k\Gb^F}^\bullet(C)\simeq \End_{D^b(k\Gb^F)}(C) \quad\text{ in }\Ho^b(k(\Lb^F\times
(\Lb^F)^\opp)).$$

\medskip
$\bullet\ ${\em Step 2}: Study of 
$\End_{\Ho^b(k(\Gb^F\times(\Nb^F)^\opp))}(
\Ind_{\Gb^F\times(\Lb^F)^\opp}^{\Gb^F\times(\Nb^F)^\opp}\Grm\Gamma_c(\Yb_\Vb,k)e_s^{\Lb^F})$.

\smallskip
Let $C'=\Ind_{\Gb^F\times(\Lb^F)^\opp}^{\Gb^F\times(\Nb^F)^\opp}C$. 
Let $P$ be a projective resolution of $k\Nb^F$, i.e., a
complex of $k(\Nb^F\times(\Nb^F)^\opp)\mproj$ with $P^i=0$ for $i>0$, together
with a quasi-isomorphism $P\to k\Nb^F$ of $k(\Nb^F\times(\Nb^F)^\opp)$-modules.
As the terms of $C'$ are projective $k\Gb^F$-modules, we have
a commutative diagram
$$\xymatrix{
\End_{\Ho^b(k(\Gb^F\times(\Nb^F)^\opp))}(C') \ar[r] \ar[d]_\sim & 
\End_{D^b(k(\Gb^F\times(\Nb^F)^\opp))}(C') \ar[d]^\sim \\
\Hom_{\Ho^b(k(\Nb^F\times (\Nb^F)^\opp))}(k\Nb^F,\End_{k\Gb^F}^\bullet(C')) \ar[r] &
\Hom_{\Ho^b(k(\Nb^F\times (\Nb^F)^\opp))}(P,\End_{k\Gb^F}^\bullet(C'))
}$$

Using the isomorphisms of complexes in $\Ho^b(k(\Nb^F\times
(\Nb^F)^\opp))$
$$\End_{k\Gb^F}^\bullet(C')\simeq \Ind_{\Lb^F\times (\Lb^F)^\opp}^{\Nb^F\times (\Nb^F)^\opp}(
\End_{k\Gb^F}^\bullet(C))$$
and
$$\End_{D^b(k\Gb^F)}(C')\simeq \Ind_{\Lb^F\times (\Lb^F)^\opp}^{\Nb^F\times (\Nb^F)^\opp}(
\End_{D^b(k\Gb^F)}(C)),$$
we deduce that
$$\End_{k\Gb^F}^\bullet(C')\simeq \End_{D^b(k\Gb^F)}(C')\text{ in }\Ho^b(k(\Nb^F\times
(\Nb^F)^\opp)).$$
Now, the canonical map
$$\Hom_{\Ho^b(k(\Nb^F\times (\Nb^F)^\opp))}(k\Nb^F,\End_{D^b(k\Gb^F)}(C')) \to
\Hom_{\Ho^b(k(\Nb^F\times (\Nb^F)^\opp))}(P,\End_{D^b(k\Gb^F)}(C'))$$
is an isomorphism, since $\End_{D^b(k\Gb^F)}(C')$ is a complex concentrated in degree $0$.
It follows that the top horizontal map in the commutative diagram
above is an isomorphism, hence we have canonical isomorphisms
$$\End_{\Ho^b(k(\Gb^F\times(\Nb^F)^\opp))}(C')\xrightarrow{\sim}
\End_{D^b(k(\Gb^F\times(\Nb^F)^\opp))}(C')\xrightarrow{\sim}
\End_{k(\Gb^F\times(\Nb^F)^\opp)}(\Ind_{\Gb^F\times(\Lb^F)^\opp}^{\Gb^F\times(\Nb^F)^\opp}
\Hrm_c^d(\Yb_\Vb,k))
.$$

\medskip
$\bullet\ ${\em Step 3}: Construction of a direct summand $\tilde{C}$ of
$\Ind_{\Gb^F\times(\Lb^F)^\opp}^{\Gb^F\times(\Nb^F)^\opp}
(\Grm\Gamma_c(\Yb_\Vb,k)e_s^{\Lb^F})$.

\smallskip

We have shown (Theorem \ref{th:Moritaquasi}) that there is a direct summand $M'$ of
 $\Ind_{\Gb^F\times(\Lb^F)^\opp}^{\Gb^F\times(\Nb^F)^\opp}
\Hrm_c^d(\Yb_\Vb,k)$ whose restriction to $\Gb^F\times (\Lb^F)^\opp$ is isomorphic to 
$\Hrm_c^d(\Yb_\Vb,k)$. Let $i$ be the corresponding idempotent of 
$\End_{k(\Gb^F\times(\Nb^F)^\opp)}(\Ind_{\Gb^F\times(\Lb^F)^\opp}^{\Gb^F\times(\Nb^F)^\opp}
\Hrm_c(\Yb_\Vb,k))$ and $j$ its inverse image in 
$\End_{\Ho^b(k(\Gb^F\times(\Nb^F)^\opp))}(C')$ via the isomorphisms above.
We have a surjective homomorphism of finite-dimensional $k$-algebras
$$\End_{\Comp(k(\Gb^F\times(\Nb^F)^\opp))}(C')\twoheadrightarrow
\End_{\Ho^b(k(\Gb^F\times(\Nb^F)^\opp))}(C').$$
Consequently, $j$ lifts to an idempotent $j'$ of $\End_{\Comp(k(\Gb^F\times(\Nb^F)^\opp))}(C')$ 
\cite[Theorem 3.2]{Th}.
It corresponds to a direct summand $\tilde{C}$ of $C'$ quasi-isomorphic to $M'$ and
$\Res_{\Gb^F\times(\Lb^F)^\opp}^{\Gb^F\times(\Nb^F)^\opp}(\tilde{C})$ is a direct summand of
$\Res_{\Gb^F\times(\Lb^F)^\opp}^{\Gb^F\times(\Nb^F)^\opp}(C')\simeq
C^{\oplus [\Nb^F:\Lb^F]}$.

\medskip
$\bullet\ ${\em Step 4}: $\tilde{C}$ lifts $\Grm\Gamma_c(\Yb_\Vb,k)e_s^{\Lb^F}$.

\smallskip

Let $C=\bigoplus_{1\le r\le n}C_r$ be a decomposition into a direct sum of indecomposable objects
of $\Ho^b(k(\Gb^F\times(\Lb^F)^\opp)$. This induces a decomposition
$M=\bigoplus_{1\le r\le n}M_r$, where $M_r=H^d(C_r)$ and
$M_r$ and $M_{r'}$ have no isomorphic indecomposable summands for $r{\not=}r'$
(cf proof of Theorem \ref{th:Moritaquasi}).
We have 
$\Res_{\Gb^F\times(\Lb^F)^\opp}^{\Gb^F\times(\Nb^F)^\opp}(\tilde{C})\simeq
\bigoplus_{1\le r\le n}C_r^{\oplus a_r}$
in $\Ho^b(k(\Gb^F\times(\Lb^F)^\opp)$ for some integers $0\le a_r\le [\Nb^F:\Lb^F]$ and
$\bigoplus_{1\le r\le n}H^d(C_r)^{\oplus a_r}\simeq M$. It follows that $a_r=1$ for all $r$, hence
$\Res_{\Gb^F\times(\Lb^F)^\opp}^{\Gb^F\times(\Nb^F)^\opp}(\tilde{C})\simeq C$
in $\Ho^b(k(\Gb^F\times(\Lb^F)^\opp)$.
This shows the first statement.

\medskip
$\bullet\ ${\em Step 5}: Rickard equivalence.

\smallskip

We have shown above that $\End_{k\Gb^F}^\bullet(\tilde{C})\simeq\End_{D^b(k\Gb^F)}(\tilde{C})$
in $\Ho^b(k(\Nb^F\times (\Nb^F)^\opp))$. On the other hand,
$\End_{D^b(k\Gb^F)}(\tilde{C})\simeq\End_{k\Gb^F}(M')\simeq k\Nb^Fe_s^{\Lb^F}$.
It follows from Corollary \ref{cor:splendid} that $\tilde{C}$
induces a splendid Rickard equivalence.
\end{proof}

We now summarize and complete the description of the Jordan decomposition of blocks.

\begin{theo}
The complex of 
$(\OC\Gb^Fe_s^{\Gb^F},\OC\Lb^Fe_s^{\Lb^F})$-bimodules
$\Grm\Gamma_c(\Yb_\Vb,\OC)^\red e_s^{\Lb^F}$ extends to a complex $C$ of
$(\OC\Gb^Fe_s^{\Gb^F},\OC\Nb^Fe_s^{\Lb^F})$-bimodules. The complex $C$ induces a splendid Rickard
equivalence between $\OC\Gb^Fe_s^{\Gb^F}$ and $\OC\Nb^Fe_s^{\Lb^F}$.

There is a (unique) bijection $b\mapsto b'$ between blocks of $\OC\Gb^Fe_s^{\Gb^F}$ and
$\OC\Nb^Fe_s^{\Lb^F}$ such that $bC\simeq Cb'$.

Given $b$ a block of $\OC\Gb^Fe_s^{\Gb^F}$, then:
\begin{itemize}
\item 
the bimodule $H^{\dim\Yb_\Vb}(bCb')$ induces a Morita equivalence
between $\OC\Gb^Fb$ and $\OC\Nb^F b'$
\item
the complex $bCb'$ induces a splendid Rickard equivalence between
$\OC\Gb^Fb$ and $\OC\Nb^F b'$
\item there is a (unique) equivalence $(Q,b'_Q)\mapsto (Q,b_Q)$
from the category of $b'$-subpairs to the category of $b$-subpairs such that
$b_Q\brauer_{\Delta Q}(C)=\brauer_{\Delta Q}(C)b'_Q$. In particular, if $D$ is a defect group
of $b'$, then $D$ is a defect group of $b$.
\end{itemize}
\end{theo}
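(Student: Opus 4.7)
The plan is to lift the $k$-linear statements of Theorems \ref{th:Moritaquasi} and \ref{th:Rickardk} to $\OC$, and then deduce the block-by-block and subpair statements from standard properties of splendid Rickard equivalences.

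First I would establish the $\OC$-analogues of these two theorems. Set $C_0 = (\Grm\Gamma_c(\Yb_\Vb,\OC)e_s^{\Lb^F})^{\red}$. By Corollary \ref{cor:deltavertex}, the indecomposable summands of the terms of $C_0$ have vertex contained in $\Delta \Lb^F$. For each $\ell$-subgroup $Q$ of $\Lb^F$, Proposition \ref{prop:centralisateur}(e) and Theorem \ref{theo:brauer} describe $\brauer_{\Delta Q}(C_0)$ as a sum of truncated Deligne-Lusztig cohomology complexes for $C_\Gb(Q)$, indexed by $\YC \in (i_Q^\Lb)^{-1}(\XC)$, with $\XC$ the rational series of $s$. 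By Proposition \ref{pr:seriesQ} each such $\YC$ is $(C_\Gb^\circ(Q), C_\Lb^\circ(Q))$-regular, so \cite[Theorem~11.7]{BR} forces the cohomology of each summand to be concentrated in a single degree. Theorem \ref{th:splitEnd} of the appendix then gives an isomorphism $\End^\bullet_{\OC\Gb^F}(C_0) \simeq \End_{D^b(\OC\Gb^F)}(C_0)$ in $\Ho^b(\OC(\Lb^F \times (\Lb^F)^\opp))$. Running Steps 2--4 of the proof of Theorem \ref{th:Rickardk} over $\OC$, the Morita equivalence of Theorem \ref{th:Moritaquasi} lifts to $\OC$ via idempotent lifting in the Clifford-theoretic obstruction of \S\ref{se:Clifford}, and the corresponding idempotent in $\End_{\Ho^b(\OC(\Gb^F \times (\Nb^F)^\opp))}(\Ind C_0)$ lifts through the surjection from $\End_{\Comp}$ onto $\End_{\Ho^b}$ (\cite[Theorem 3.2]{Th}), producing a direct summand $\tilde C$ restricting to $C_0$ on $\Gb^F \times (\Lb^F)^\opp$. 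Corollary \ref{cor:splendid} ensures that $\tilde C$ induces a splendid Rickard equivalence.

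The block bijection $b \mapsto b'$ is then immediate, since the derived equivalence induced by $C$ yields an isomorphism of centers of the corresponding block algebras. For each block $b$, the truncation $bCb'$ is a splendid Rickard complex between $\OC\Gb^F b$ and $\OC\Nb^F b'$, both properties passing to block truncations. Its cohomology is concentrated in degree $\dim \Yb_\Vb$ by \cite[Theorem 11.7]{BR}, hence $H^{\dim \Yb_\Vb}(bCb')$ provides the Morita equivalence.

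The step I expect to require the most care is the subpair correspondence. Applying $\brauer_{\Delta Q}$ to $C$ for each $\ell$-subgroup $Q$, standard results on splendid Rickard equivalences yield a splendid Rickard equivalence between appropriate block algebras over $C_\Gb(Q)^F$ and $C_\Nb(Q)^F$; the induced bijection of blocks defines the correspondence $b'_Q \leftrightarrow b_Q$, with the identity $b_Q \brauer_{\Delta Q}(C) = \brauer_{\Delta Q}(C) b'_Q$ holding by construction. Functoriality of the Brauer construction under inclusion of $\ell$-subgroups shows that this defines an equivalence of subpair categories. Defect groups are preserved since they are characterized as the maximal $\ell$-subgroups $D$ such that the Brauer image of the corresponding block idempotent is nonzero.
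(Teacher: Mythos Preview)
Your proposal has a genuine gap in the passage from $k$ to $\OC$. You plan to re-run the proofs of Theorems \ref{th:Moritaquasi} and \ref{th:Rickardk} directly over $\OC$, but the two tools you invoke for this are only available over the field $k$. First, Theorem \ref{th:splitEnd} and the underlying Lemma \ref{le:H0Brauer} are stated and proved for complexes of $k$-modules, so your assertion that Theorem \ref{th:splitEnd} yields $\End^\bullet_{\OC\Gb^F}(C_0)\simeq\End_{D^b(\OC\Gb^F)}(C_0)$ in $\Ho^b(\OC(\Lb^F\times(\Lb^F)^\opp))$ is not justified as written. Second, and more seriously, the Clifford-theoretic extension argument of \S\ref{se:Clifford} relies in an essential way on working over a field: the splitting $A^\times=(1+J(A))\rtimes S^\times$ uses a Wedderburn complement, and the vanishing of the extension obstruction uses that the successive quotients $(1+J(A)^i)/(1+J(A)^{i+1})\simeq J(A)^i/J(A)^{i+1}$ are vector spaces on which $[Y:X]$ acts invertibly. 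Neither argument survives over $\OC$, so your sentence ``the Morita equivalence of Theorem \ref{th:Moritaquasi} lifts to $\OC$ via idempotent lifting in the Clifford-theoretic obstruction'' does not go through.

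The paper avoids this entirely by a different route: it takes the splendid Rickard equivalence over $k$ already produced in Theorem \ref{th:Rickardk} and applies Rickard's lifting theorem \cite[Theorem 5.2]{Ri2}, which says that a splendid Rickard equivalence over $k$ lifts, uniquely up to isomorphism, to one over $\OC$. Uniqueness is the crucial point: since $\Grm\Gamma_c(\Yb_\Vb,\OC)^{\red}e_s^{\Lb^F}$ is the unique splendid $\OC$-lift of $\Grm\Gamma_c(\Yb_\Vb,k)^{\red}e_s^{\Lb^F}$, the restriction of the lifted complex $C$ to $\Gb^F\times(\Lb^F)^\opp$ is forced to be the $\OC$-Deligne-Lusztig complex, giving the claimed extension. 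The Morita statement over $\OC$ then follows because it already holds after reduction mod $\ell$. For the subpair correspondence and defect-group preservation the paper simply cites \cite[Theorem 19.7]{Pu}, which packages exactly the local consequences of a splendid equivalence; your sketch of that last part is correct in spirit.
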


\begin{proof}
Theorem \ref{th:Rickardk} provides a complex $C'$ of
$(k\Gb^Fe_s^{\Gb^F}\otimes (k\Nb^Fe_s^{\Lb^F})^\opp)$-modules
inducing a splendid Rickard equivalence.
By Rickard's lifting Theorem \cite[Theorem 5.2]{Ri2}, there is a splendid complex $C$ of
$(\OC\Gb^Fe_s^{\Gb^F}\otimes (\OC\Nb^Fe_s^{\Lb^F})^\opp)$-modules, unique up to isomorphism in
$\Comp(\OC(\Gb^F\times(\Nb^F)^\opp))$, such that $kC\simeq C'$. Also,
\cite[proof of Theorem 5.2]{Ri2} shows that $\Grm\Gamma_c(\Yb_\Vb,\OC)^\red e_s^{\Lb^F}$
is the unique splendid complex that lifts $\Grm\Gamma_c(\Yb_\Vb,k)^\red e_s^{\Lb^F}$.
As a consequence, there is an isomorphism of complexes
$$\Res_{\Gb^F\times (\Lb^F)^\opp}^{\Gb^F\times(\Nb^F)^\opp}(C)\simeq
\Grm\Gamma_c(\Yb_\Vb,\OC)^\red e_s^{\Lb^F}.$$
By \cite[Theorem 5.2]{Ri2}, the complex $C$ induces a splendid Rickard equivalence.

\smallskip
Since $H^d(bkCb')$ induces a Morita equivalence, it follows that $H^d(bCb')$ induces a Morita
equivalence (cf e.g. \cite[proof of Theorem 5.2]{Ri2}).

The existence of the bijection between blocks follows from the isomorphism of algebras
$Z(\OC\Gb^Fe_s^{\Gb^F})\xrightarrow{\sim} Z(\OC\Nb^Fe_s^{\Lb^F})$ induced by the Morita
equivalence, and the blockwise statements on Morita and Rickard equivalence are clear.

By \cite[Theorem 19.7]{Pu}, it follows that the Brauer categories of $k\Gb^Fb$ and $k\Nb^Fb'$ are
equivalent, and in particular, $k\Gb^Fb$ and $k\Nb^F b'$ have isomorphic defect groups.
\end{proof}

\bigskip
\begin{rema}
If was already known that given $b$ a block of $\OC\Gb^Fe_s^{\Gb^F}$,
then $b$ and $b'$ have isomorphic defect groups under
one of the following assumptions:
\begin{itemize}
\item 
$\ell$ doesn't divide $|Z(\Gb)/Z(\Gb)^\circ)^F|$ nor
$|Z(\Gb^*)/Z(\Gb^*)^\circ)^F|$, $\ell\ge 5$ and $\ell\ge 7$ if $\Gb$ has a
component of type $E_8$ \cite[Proposition 5.1]{CaEn1}.
\item $C_{\Gb^*}(s)\subset\Lb^*$ and 
either $b$ or $b'$ has a defect group that is abelian
modulo the $\ell$-center of $\Gb^F$ \cite[Theorem 1.3]{KeMa}.
\end{itemize}
\end{rema}

\bigskip

\begin{exemple}\label{exemple:cgs-levi}
Assume in this example that $C_{\Gb^*}^\circ(s)=\Lb^*$ and 
that $(C_{\Gb^*}(s)/C_{\Gb^*}^\circ(s))^{F^*}$ is cyclic. The element
$s$ defines a linear character $\shat : \Lb^F \to \OC^\times$ 
which induces an isomorphism of algebra $\OC\Lb^Fe_s^{\Lb^F} \simeq \OC\Lb^F e_1^{\Lb^F}$. 
The linear character $\shat$ is stable under the action of $\Nb^F$ so, since $\Nb^F/\Lb^F$ is cyclic, 
it extends to a linear character $\shat^+ : \Nb^F \to \OC^\times$. Again, 
$\shat^+$ induces an isomorphism of algebra $\OC\Nb^Fe_s^{\Lb^F} \simeq \OC\Nb^F e_1^{\Lb^F}$. 
Combined with this, Theorem~\ref{th:Moritaquasi} provides a Morita equivalence between
$\OC\Nb^F e_1^{\Lb^F}$ and $\OC\Gb^Fe_s^{\Gb^F}$.
\end{exemple}

\bigskip

\begin{exemple}[{\bfit Type ${\boldsymbol{A}}$}]\label{exemple:type-a}
Assume in this example that all the simple components of 
$\Gb$ are of type $A$ (no assumption is made on the action of $F$). 
Then $C_{\Gb^*}^\circ(s)=\Lb^*$ and $C_{\Gb^*}(s)/C_{\Gb^*}^\circ(s)$
is cyclic. 
Therefore, Example~\ref{exemple:cgs-levi} can be 
applied to provide a Morita equivalence 
between $\OC\Nb^F e_1^{\Lb^F}$ and $\OC\Gb^Fe_s^{\Gb^F}$.
\end{exemple}

\bigskip
\begin{rema}
This article was announced at the end of the introduction of \cite{BR}.
Unfortunately, we have not been able to settle the problem of finiteness of source algebras.
On the other hand, in addition to what was announced in \cite{BR}, we have provided an extension
of the Jordan decomposition to the quasi-isolated case.
\end{rema}

\bigskip
\setcounter{section}{0}
\renewcommand\thesection{\Alph{section}}
\def\sectionname{Appendix}

\section{About $\ell$-permutation modules}\label{app:l-perm}

\bigskip

%\subsection{Indecomposable $\ell$-permutation modules} 
In this section, we assume $\Lambda=\OC$ or $\Lambda=k$. 

Let us recall here some results of Brou\'e and Puig, cf
\cite[\S{3.6}]{broue perm}. 
Let $G$ be a finite group.
Note that an $\ell$-permutation $\OC G$-module $M$ is indecomposable
if and only if $kM$ is an indecomposable $kG$-module.

Let $P$ be an $\ell$-subgroup of $G$.
An indecomposable $\ell$-permutation $\Lambda G$-module $M$ 
has a vertex containing $P$ if and only if $\brauer_P(M) \neq 0$. 
Also, given $V$ an indecomposable projective $k[N_G(P)/P]$-module 
(it is then an $\ell$-permutation $kG$-module), there exists 
a unique indecomposable $\ell$-permutation $\Lambda G$-module $\Mrm(P,V)$ 
such that $\brauer_P \Mrm(P,V) \simeq V$. The $\Lambda G$-module
$\Mrm(P,V)$ has vertex $P$. Moreover, every indecomposable 
$\ell$-permutation $\Lambda G$-module with vertex $P$ is isomorphic to 
such an $\Mrm(P,V)$.

\bigskip
The following lemma is a variant of \cite[Proposition 6.4]{Bou}.

\begin{lem}\label{lem:split}
Let $M$ and $N$ be $\ell$-permutation $\Lambda G$-modules and let $\psi \in \Hom_{\Lambda G}(M,N)$. 
Assume that all indecomposable summands of $N$ have a vertex 
equal to a given subgroup $P$ of $G$ and that $\brauer_P(\psi)$ is a 
surjection. Then $\psi$ is a split surjection.
\end{lem}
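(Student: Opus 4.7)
The strategy is to produce a section of $\psi$ by first splitting $\brauer_P(\psi)$ at the level of $k[N_G(P)/P]$-modules, then lifting through the Brauer functor, and finally correcting by a unit in $\End_{\Lambda G}(N)$. Write $N \simeq \bigoplus_i \Mrm(P, V_i)$ with each $V_i$ an indecomposable projective $k[N_G(P)/P]$-module, so that $\brauer_P(N) \simeq \bigoplus_i V_i$ is itself a projective $k[N_G(P)/P]$-module. Since $\brauer_P(\psi) : \brauer_P(M) \to \brauer_P(N)$ is surjective onto a projective module, it admits a section $\bar\tau : \brauer_P(N) \to \brauer_P(M)$ satisfying $\brauer_P(\psi) \bar\tau = \id$.

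The next step, which I expect to be the main technical point, is to lift $\bar\tau$ to a morphism $\tau : N \to M$ of $\Lambda G$-modules such that $\brauer_P(\tau) = \bar\tau$. This rests on a standard property of the Brauer functor: since every indecomposable summand of $N$ has vertex exactly $P$, the natural map
\[
\Hom_{\Lambda G}(N, M) \longrightarrow \Hom_{k[N_G(P)/P]}(\brauer_P(N), \brauer_P(M))
\]
is surjective. By additivity it suffices to check this on each indecomposable summand $\Mrm(P, V_i)$ of $N$, and for such a trivial-source module the adjunction between induction-type constructions and the Brauer functor (equivalently, the Green correspondence, together with the fact that $V_i$ is projective over $k[N_G(P)/P]$) ensures that any $k[N_G(P)/P]$-linear morphism $V_i \to \brauer_P(M)$ arises as $\brauer_P$ of some $\Lambda G$-linear morphism $\Mrm(P, V_i) \to M$.

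Once $\tau$ is in hand, set $f = \psi \tau \in \End_{\Lambda G}(N)$; then $\brauer_P(f) = \id_{\brauer_P(N)}$. A classical theorem of Brou\'e (cf.~\cite{broue perm}) asserts that, for an $\ell$-permutation module $N$ all of whose indecomposable summands have vertex $P$, the kernel of the algebra map
\[
\brauer_P : \End_{\Lambda G}(N) \longrightarrow \End_{k[N_G(P)/P]}(\brauer_P(N))
\]
is contained in the Jacobson radical of $\End_{\Lambda G}(N)$, which is nilpotent since $\End_{\Lambda G}(N)$ has finite length over $\Lambda$. Consequently $f - \id_N$ is nilpotent, so $f$ is a unit in $\End_{\Lambda G}(N)$, and $\sigma := \tau f^{-1} : N \to M$ is a section of $\psi$. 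This shows that $\psi$ is a split surjection, as required.
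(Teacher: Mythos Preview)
Your argument is correct, but it follows a different path from the paper's. The paper reduces by induction to $N$ indecomposable, decomposes $M = \bigoplus_i M_i$ into indecomposables, finds an $i$ with $\brauer_P(\psi_i)$ a split surjection, invokes Brou\'e's Theorem~3.2(4) to deduce that $N$ is a summand of $M_i$ and hence $N \cong M_i$, and then shows $\psi_i$ is an isomorphism because $\brauer_P(\psi_i\psi')$ is invertible and $\End_{\Lambda G}(N)$ is local. Your approach instead constructs a global section directly by lifting one through the Brauer functor and then correcting by a unit. Both rest on the same circle of ideas from \cite{broue perm}, but yours packages them as a single lift-and-invert step rather than a Krull--Schmidt argument.

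Two remarks on your write-up. First, the justification of the lifting step is vague: there is no genuine adjunction here. The clean argument is that $N$, having all summands of vertex $P$, is a direct summand of a finite sum of copies of $\Ind_P^G\Lambda$, so by additivity and naturality of the map $\Hom_{\Lambda G}(-,M)\to\Hom_{k[N_G(P)/P]}(\brauer_P(-),\brauer_P M)$ one may assume $N=\Ind_P^G\Lambda$; then Frobenius reciprocity identifies this map with the canonical surjection $M^P\twoheadrightarrow\brauer_P(M)$. Second, your claim that $J(\End_{\Lambda G}(N))$ is nilpotent is false when $\Lambda=\OC$: the endomorphism ring is then a free $\OC$-module of finite rank, not of finite length, and its radical contains the non-nilpotent ideal $\ell\cdot\End_{\OC G}(N)$. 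This does not affect the conclusion, since once $f-\id_N\in J(\End_{\Lambda G}(N))$ the element $f$ is a unit simply because $1+J(R)\subset R^\times$ for any ring $R$.
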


\bigskip

\begin{proof}
Proceeding by induction on the dimension of $N$, we can assume that 
$N$ is indecomposable. Fix a decomposition $M=\bigoplus_{i \in I} M_i$ where 
$M_i$ is indecomposable for all $i \in I$ and let $\psi_i : M_i \to N$ 
denote the restriction of $\psi$. Since $\brauer_P(\psi)$ is a surjection
and $\brauer_P(N)$ is an indecomposable projective $k[N_G(P)/P]$-module,
we deduce that $\brauer_P(\psi_i) : \brauer_P(M_i) 
\to \brauer_P(N)$ is a split surjection for some $i \in I$.

By \cite[Theorem 3.2(4)]{broue perm}, it follows that $N$ is isomorphic to
a direct summand
of $M_i$. Since $M_i$ is indecomposable, there is an isomorphism
$\psi' : N \longisom M_i$.
The morphism $\brauer_P(\psi_i \psi')=\brauer_P(\psi_i)\brauer_P(\psi')$ is
an isomorphism, 
so it is not nilpotent. Therefore, $\psi_i \psi'$ does not belong to the radical 
of $\End_{\Lambda G}(N)$, hence it is 
invertible (because $\End_{\Lambda G}(N)$ is a local ring). So $\psi_i$ is an 
isomorphism, as desired.
\end{proof}

\bigskip

\begin{lem}\label{lem:bounded}
Let $C$ be a bounded complex of $\ell$-permutation $\L G$-modules and
$P$ an $\ell$-subgroup of $G$
such that $\brauer_Q(C)$ is acyclic for all $\ell$-subgroups of $G$ that
are not conjugate to a subgroup of $P$.
Let $D$ be a bounded 
complex of finitely generated projective $k[N_G(P)/P]$-modules. We assume that 
$\brauer_P(C)\simeq D$ in $\Ho^b(k[N_G(P)/P])$.

Then there exists a bounded complex $C'$ of $\ell$-permutation 
$\L G$-modules, all of whose indecomposable summands have a vertex contained in $P$, 
such that $C'\simeq C$ in $\Ho^b(\L G)$ and $\brauer_P(C')\simeq D$
in $\Comp^b(k[N_G(P)/P])$.
\end{lem}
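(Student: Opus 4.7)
The plan is to replace $C$ by its reduced form $C^{\red}$, show by contradiction that every indecomposable summand of every term of $C^{\red}$ has vertex conjugate to a subgroup of $P$, and then adjust by adding a contractible complex of lifted $\ell$-permutation modules so that the Brauer quotient matches $D$ on the nose in $\Comp^b(k[N_G(P)/P])$.

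After replacing $C$ by $C^{\red}$, each $\brauer_Q(C^{\red})$ remains acyclic for $Q$ not subconjugate to $P$. Suppose for contradiction that some indecomposable summand of some $(C^{\red})^i$ has vertex not subconjugate to $P$, and choose such a vertex $Q$ that is maximal under the subconjugacy partial order. A direct transitivity argument shows that any vertex $R$ of an indecomposable summand of $C^{\red}$ with $Q$ subconjugate to $R$ is itself not subconjugate to $P$, and hence conjugate to $Q$ by maximality. Consequently the only indecomposable summands of terms of $C^{\red}$ contributing to $\brauer_Q$ have vertex conjugate to $Q$; for such a summand $M \simeq \Mrm(Q,V)$ the image $\brauer_Q(M) = V$ is indecomposable projective over $k[N_G(Q)/Q]$. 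Thus $\brauer_Q(C^{\red})$ is a bounded acyclic complex of finitely generated projective $k[N_G(Q)/Q]$-modules, hence contractible.

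Write the contractible complex $\brauer_Q(C^{\red})$ as a direct sum of elementary contractible pieces $V_j \to V_j$ with identity differential, in degrees $j, j+1$. Using the bijection $V \mapsto \Mrm(Q, V)$ between indecomposable projective $k[N_G(Q)/Q]$-modules and indecomposable $\ell$-permutation $\Lambda G$-modules of vertex $Q$, I lift each $V_j$ to a direct summand $\widetilde V_j$ of $(C^{\red})^j$ and the target copy to a summand $\widetilde V_j^*$ of $(C^{\red})^{j+1}$. The image under $\brauer_Q$ of the composition of the differential $\widetilde V_j \to (C^{\red})^{j+1}$ with the projection onto $\widetilde V_j^*$ equals the identity, so Lemma~\ref{lem:split} shows this composition is a split surjection of $\ell$-permutation modules of equal Brauer rank, hence an isomorphism. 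A standard chain-complex Gaussian elimination, valid in any additive category once an invertible matrix entry of the differential is identified and with the required basis changes forced by $d^2 = 0$ to respect the adjacent differentials, then splits off $\widetilde V_j \longisom \widetilde V_j^*$ as a contractible direct summand of $C^{\red}$ in $\Comp^b(\Lambda G\perm)$, contradicting its reducedness. Hence every indecomposable summand of every term of $C^{\red}$ has vertex conjugate to a subgroup of $P$.

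In particular, $\brauer_P(C^{\red})$ is itself a bounded complex of finitely generated projective $k[N_G(P)/P]$-modules, and the same splitting argument applied with $P$ in place of $Q$ shows that $\brauer_P(C^{\red})$ has no elementary contractible direct summand, so that $\brauer_P(C^{\red}) \simeq D^{\red}$ in $\Comp^b(k[N_G(P)/P])$. Lifting the contractible complement $E$ of $D^{\red}$ in $D$ elementary piece by elementary piece --- each $V \to V$ lifts trivially to $\Mrm(P, V) \to \Mrm(P, V)$ with identity differential --- yields a contractible complex $\widetilde E$ of $\ell$-permutation $\Lambda G$-modules with all indecomposable summands of vertex $P$. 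Setting $C' = C^{\red} \oplus \widetilde E$ gives a complex satisfying $C' \simeq C$ in $\Ho^b(\Lambda G)$ and $\brauer_P(C') \simeq D$ in $\Comp^b(k[N_G(P)/P])$, with all terms having indecomposable summands of vertex subconjugate to $P$. The main obstacle is the Gaussian elimination step: passing from contractibility of $\brauer_Q(C^{\red})$ to an honest contractible direct summand of $C^{\red}$, which is precisely where Lemma~\ref{lem:split} provides the essential input.
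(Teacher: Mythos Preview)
Your proof is correct and follows essentially the same strategy as the paper: reduce to $C^{\red}$, use Lemma~\ref{lem:split} to show that any indecomposable summand with vertex not subconjugate to $P$ would force a contractible direct summand of $C^{\red}$, then lift the contractible complement of $D^{\red}$ in $D$ via $\Mrm(P,V)$ exactly as the paper does. The only organizational difference is that the paper argues by descending induction on the top nonzero degree of $C$ (showing that each summand of the last term has vertex in $P$, then peeling it off), whereas you fix a \emph{maximal} bad vertex $Q$, observe that $\brauer_Q(C^{\red})$ is then a contractible complex of projectives, decompose it into elementary pieces, and lift one such piece to a contractible summand of $C^{\red}$ via Gaussian elimination. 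Both routes rest on the same key input (Lemma~\ref{lem:split}) and yield the same conclusion; your maximal-vertex packaging is slightly more global but not materially different.
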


\bigskip

\begin{proof}
Up to isomorphism in $\Ho^b(\L G)$, we may assume that $C=C^\red$. We write 
$C=(C^\bullet,d^\bullet)$. 
We will first show by induction on the length of $C$ that $\brauer_P(C)=\brauer_P(C)^\red$ and that the indecomposable summands of $C$ have a vertex contained
in $P$.

Let $n$ be maximal such that $C^{n+1} \neq 0$. We fix a decomposition 
$C^{n+1}=\bigoplus_{i \in I} M_i$ where $M_i$ is indecomposable for all $i \in I$ 
and we denote by $p_i : C^{n+1} \to M_i$ the projection. 

Let $i\in I$ and let $Q$ be the vertex of $M_i$.
Assume that the composition 
$$\diagram \brauer_Q(C^n) \rrto^{\DS{\brauer_Q(d^n)}} && \brauer_Q(C^{n+1}) 
\rrto^{\DS{\brauer_Q(p_i)}} && \brauer_Q(M_i) \enddiagram$$
is surjective.
It follows from Lemma~\ref{lem:split} that $p_i d^n : C^n \to M_i$ 
is a split surjection: this contradicts the fact that $C=C^\red$. 
If $Q$ is not conjugate to a subgroup of $P$, then $\brauer_Q(d^n)$ is
sujective by assumption, hence a contradiction.
We deduce by induction that the indecomposable summands of $C$ have a vertex
contained in $P$.

$\brauer_Q(C)=0$ if $Q$ is not conjugate to a
subgroup of $P$.

We deduce also that the complex 
$$\diagram 0 \rto & \brauer_P(C^n) \rrto^{\DS{\brauer_P(d^n)}} &&
\brauer_P(C^{n+1}) \rto & 0 \enddiagram$$
has no non-zero direct summand that is homotopy equivalent to $0$. 
By the induction hypothesis, the complex
$$\diagram \cdots  \rto & \brauer_P(C^{n-1}) \rrto^{\DS{\brauer_P(d^{n-1})}} &&
\brauer_P(C^{n}) \rto & 0 \enddiagram$$
has no non-zero direct summand that is homotopy equivalent to $0$. 
It follows that $\brauer_P(C)=\brauer_P(C)^\red$.

We deduce from this that $D \simeq \brauer_P(C) \oplus D'$, where $D'$ is homotopy equivalent to $0$. 
So $D'$ is a sum of complexes of the form $0 \to V \xrightarrow{\Id} V \to 0$ 
with $V$ projective indecomposable
(up to a shift), hence there is a bounded complex $C'$ of $\ell$-permutation 
$\L G$-modules that is a direct sum of 
complexes of the form $0 \to \Mrm(P,V) \xrightarrow{\Id} \Mrm(P,V) \to 0$
with $V$ projective indecomposable
such that $\brauer_P(C') \simeq D'$. We have $\brauer_P(C \oplus C') \simeq D$, 
as desired.
\end{proof}

\bigskip
The following lemma is close to \cite[Proposition 7.9]{Bou}.

\begin{lem}
\label{le:H0Brauer}
Let $G$ be a finite group and $C$ be a bounded complex of $\ell$-permutation
 $kG$-modules.
Assume $H^i(\brauer_Q(C))=0$ for all $i\not=0$ and all $\ell$-subgroups $Q$ of $G$.

Then $C\simeq H^0(C)$ in $\Ho^b(kG)$.
\end{lem}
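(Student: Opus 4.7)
\medskip
\noindent\textbf{Proof plan.} The strategy is to reduce to the case $C = C^\red$ and then show this complex is concentrated in degree $0$. Since the Brauer functor descends to the homotopy category of $\ell$-permutation modules, the hypothesis on $\brauer_Q(C)$ passes to $C^\red$, and the desired conclusion for $C^\red$ immediately implies the one for $C$; so we may assume $C = C^\red$ throughout.

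First I would bound the top non-zero degree of $C$. Let $n+1$ be maximal with $C^{n+1}\ne 0$, and fix an indecomposable summand $M_i$ of $C^{n+1}$, of vertex $Q$, with projection $p_i:C^{n+1}\to M_i$; then $\brauer_Q(M_i)\ne 0$. If $\brauer_Q(d^n)$ were surjective, composing with the (surjective) projection $\brauer_Q(p_i)$ would give a surjection $\brauer_Q(p_i\circ d^n):\brauer_Q(C^n)\twoheadrightarrow \brauer_Q(M_i)$; by Lemma~\ref{lem:split}, $p_i\circ d^n:C^n\to M_i$ would then be a split surjection, yielding a contractible direct summand of $C$ (of shape $0\to M_i\xrightarrow{\sim} M_i\to 0$ in degrees $n,n+1$) and contradicting $C=C^\red$. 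This is exactly the argument at the top degree in the proof of Lemma~\ref{lem:bounded}. Hence $\brauer_Q(d^n)$ is not surjective; since $C^{n+2}=0$ we get $H^{n+1}(\brauer_Q(C))\ne 0$, and the hypothesis forces $n+1\le 0$.

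To handle the bottom degree, I would pass to the $k$-linear dual $C^\vee$ given by $(C^\vee)^i=(C^{-i})^*$ with dualized differentials. The $k$-dual of an $\ell$-permutation $kG$-module is again an $\ell$-permutation module (with the same vertices on indecomposables), the reduced property is preserved (so $C^\vee=(C^\vee)^\red$), and $\brauer_Q$ commutes with $k$-duality on $\ell$-permutation modules, giving $H^i(\brauer_Q(C^\vee))\simeq H^{-i}(\brauer_Q(C))^*=0$ for $i\ne 0$. Applying the previous step to $C^\vee$ shows that its largest non-vanishing degree is $\le 0$, i.e., the smallest non-vanishing degree of $C$ is $\ge 0$. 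Combined with the earlier bound, $C$ is concentrated in degree $0$, whence $C\simeq C^0=H^0(C)$ in $\Ho^b(kG)$, and unwinding the reduction gives $C\simeq H^0(C)$.

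The main obstacle is really just the bottom-degree case, since the argument of Lemma~\ref{lem:bounded} is tailored to the top. Passing to the $k$-dual is the cleanest way around this, relying on the (standard) compatibility of $\brauer_Q$ with $k$-duality on $\ell$-permutation modules. Alternatively, one could prove a companion to Lemma~\ref{lem:split} asserting that a morphism $\phi:M\to N$ of $\ell$-permutation modules with $M$ indecomposable of vertex $P$ and $\brauer_P(\phi)$ injective is automatically a split monomorphism (using that $\brauer_P(M)$ is an indecomposable projective $k[N_G(P)/P]$-module) and run the same argument at the bottom.
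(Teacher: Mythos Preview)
Your proof is correct and follows essentially the same approach as the paper's: reduce to $C=C^{\red}$, use the surjectivity of $\brauer_Q(d^n)$ at the top degree together with Lemma~\ref{lem:split} to force the top degree to be $\le 0$, and then dualize to handle the bottom degree. The paper is terser (it directly assumes the top degree is $>0$ and derives a contradiction, and writes simply ``replacing $C$ by $C^*$'' for the dual step), but your more explicit handling of the indecomposable summand and its vertex, and of the compatibility of $\brauer_Q$ with $k$-duality, is exactly the content behind the paper's argument.
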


\begin{proof}
Replacing $C$ by $C^{\red}$, we can and will assume that $C$ has
no nonzero direct summands that are homotopy equivalent to $0$.

Let $i>0$ be maximal such that $C^i{\not=}0$. The map
$d^{i-1}_{\brauer_Q(C)}=\brauer_Q(d^{i-1}_C):\brauer_Q(C^{i-1})\to\brauer_Q(C^i)$ is surjective for all
$\ell$-subgroups $Q$. It follows from Lemma \ref{lem:split}
that $d^{i-1}_C$ is a split surjection: this
contradicts our assumption on $C$. So $C^i=0$ for $i>0$.
Replacing $C$ by $C^*$, we obtain similarly that $C_Q^i=0$ for $i<0$. The lemma
follows.
\end{proof}

\bigskip

The following theorem is a variant of \cite[Theorem 5.6]{Rou2}.

\begin{theo}
\label{th:splitEnd}
Let $G$ be a finite group and $H$ a subgroup of $G$.
Let $C$ be a bounded complex of
$\ell$-permutation $k(G\times H^\opp)$-modules all of whose indecomposable summands have a vertex
contained in $\Delta H$.

Assume $\Hom_{D^b(kC_G(Q))}(\brauer_{\Delta Q}(C),\brauer_{\Delta Q}(C)[i])=0$ for all $i\not=0$ and
all $\ell$-subgroups $Q$ of $H$.

Then $\End^\bullet_{kG}(C)$ is isomorphic to $\End_{D^b(kG)}(C)$ in $\Ho^b(k(H\times H^\opp))$.
\end{theo}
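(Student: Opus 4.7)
The plan is to apply Lemma~\ref{le:H0Brauer} (with the role of $G$ played by $H\times H^\opp$) to the bounded complex $E^\bullet:=\End^\bullet_{kG}(C)$ of $k(H\times H^\opp)$-modules, and then to identify $H^0(E^\bullet)$ with $\End_{D^b(kG)}(C)$. The key inputs are that $C$ itself is K-projective as a complex of $kG$-modules and that the Brauer quotients of $E^\bullet$ can be computed in terms of Hom complexes of Brauer quotients of $C$.

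First I would check that each term of $E^\bullet$ is an $\ell$-permutation $k(H\times H^\opp)$-module whose indecomposable direct summands have vertex contained in $\Delta H$. After decomposing the $C^i$ into indecomposable summands, one reduces to the case where each is a direct summand of an induced permutation module $\Ind_{\Delta P}^{G\times H^\opp}k$ with $P\le H$; for such modules, $\Hom_{kG}\bigl(\Ind_{\Delta P_1}^{G\times H^\opp}k,\Ind_{\Delta P_2}^{G\times H^\opp}k\bigr)$ is a permutation $k(H\times H^\opp)$-module with stabilisers lying in $\Delta H$. Moreover, each term $C^n$ is free as a left $kG$-module, since the left $G$-action on $(G\times H^\opp)/\Delta P$ is free (as $\Delta P\cap(G\times 1)=1$); thus $C$ is K-projective over $kG$.

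Next, for each $\ell$-subgroup $Q\le H$, I would establish, term by term, a natural isomorphism of complexes
\begin{equation*}
\brauer_{\Delta Q}(E^\bullet)\ \simeq\ \End^\bullet_{kC_G(Q)}\bigl(\brauer_{\Delta Q}(C)\bigr),
\end{equation*}
using the standard Brauer--Hom compatibility for $\ell$-permutation bimodules whose vertices lie in $\Delta H$. A direct fixed-point computation shows that $\brauer_{\Delta Q}(C^n)$ is free (hence projective) as a left $kC_G(Q)$-module, so $\brauer_{\Delta Q}(C)$ is K-projective over $kC_G(Q)$; consequently
\begin{equation*}
H^i\bigl(\End^\bullet_{kC_G(Q)}(\brauer_{\Delta Q}(C))\bigr)=\Hom_{D^b(kC_G(Q))}\bigl(\brauer_{\Delta Q}(C),\brauer_{\Delta Q}(C)[i]\bigr)=0
\end{equation*}
for $i\neq 0$ by hypothesis. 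For $\ell$-subgroups $P\le H\times H^\opp$ not $(H\times H^\opp)$-conjugate to some $\Delta Q$, $\brauer_P(E^\bullet)=0$ since the indecomposable summands of each $E^n$ have vertices contained in $\Delta H$. Together, these give $H^i(\brauer_P(E^\bullet))=0$ for all $\ell$-subgroups $P\le H\times H^\opp$ and all $i\neq 0$.

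Applying Lemma~\ref{le:H0Brauer} to $E^\bullet$ over the group $H\times H^\opp$ then yields $E^\bullet\simeq H^0(E^\bullet)$ in $\Ho^b(k(H\times H^\opp))$. Since $C$ is K-projective as a complex of left $kG$-modules, $H^0(E^\bullet)=\End_{\Ho^b(kG)}(C)=\End_{D^b(kG)}(C)$, giving the desired isomorphism. The main technical obstacle is the second step: making the Brauer--Hom identification precise and verifying its equivariance for the $N_{H\times H^\opp}(\Delta Q)/\Delta Q$-actions on both sides, so that the resulting homotopy equivalence is genuinely realised in $\Ho^b(k(H\times H^\opp))$ rather than merely at the level of complexes of $k$-vector spaces.
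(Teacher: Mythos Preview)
Your proposal is correct and follows essentially the same route as the paper: apply Lemma~\ref{le:H0Brauer} to $E^\bullet=\End^\bullet_{kG}(C)$ after showing that $\brauer_{\Delta Q}(E^\bullet)\simeq\End^\bullet_{kC_G(Q)}(\brauer_{\Delta Q}(C))$ and that $\brauer_{\Delta Q}(C)$ has projective terms over $kC_G(Q)$, so its cohomology computes derived Homs. The paper simply cites \cite[proof of Theorem~4.1]{Ri2} for the Brauer--Hom compatibility and the vanishing of $\brauer_R(E^\bullet)$ for non-diagonal $R$, rather than sketching it as you do; one tiny slip is that a bimodule direct summand of $\Ind_{\Delta P}^{G\times H^\opp}k$ is in general only \emph{projective} (not free) over $kG$, but projectivity is all that is needed.
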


\begin{proof}
Let $R$ be an $\ell$-subgroup of $H\times H^\opp$.
By \cite[proof of Theorem 4.1]{Ri2}, we have 
$\brauer_R(\End_{kG}^\bullet(C))=0$ if $R$ is not conjugate to a subgroup 
of $\Delta H$, and given $Q\le H$ an $\ell$-subgroup, we have
$$\brauer_{\Delta Q}(\End_{kG}^\bullet(C))\simeq
\End_{kC_G(Q)}^\bullet(\brauer_{\Delta Q}(C))$$
in $\Comp(k(C_H(Q)\times C_H(Q)^\opp))$.

Note that the indecomposable summands of $\brauer_{\Delta Q}(C)$ are
projective for $kC_G(Q)$ since their vertices are contained
in $x(\Delta H)x^{-1} \cap (C_G(Q)\times 1)$ for some $x\in G\times H^\opp$,
 hence
$$H^i(\End_{kC_G(Q)}^\bullet(\brauer_{\Delta Q}(C)))\simeq
\Hom_{D^b(kC_G(Q))}(\brauer_{\Delta Q}(C),\brauer_{\Delta Q}(C)[i])$$
and this vanishes for $i\not=0$. Consequently,
$$\brauer_{\Delta Q}(\End_{kG}^\bullet(C))\simeq \End_{D^b(kC_G(Q))}(\brauer_{\Delta Q}(C))$$
in $D^b(k(C_H(Q)\times C_H(Q)^\opp))$.

The conclusion of the theorem follows now from Lemma \ref{le:H0Brauer} applied to
the complex $\End_{kG}^\bullet(C)$.
%
%\smallskip
%We will prove by induction on $[P:Q]$ that given $Q\le P$, then
%$\End^\bullet_{kC_G(Q)}(\brauer_{\Delta Q}(C))$ is isomorphic to 
%$\End_{D^b(kC_G(Q))}(\brauer_{\Delta Q}(C))$ in $\Ho(k(C_H(Q)\times C_H(Q)^\opp))$.
%
%Consider $Q\le P$ and let $C_Q=\brauer_{\Delta Q}(C)$. This is a bounded complex
%of $\ell$-permutation $k(C_G(Q)\times C_H(Q)^\opp)$-modules,  all of whose indecomposable summands
%have a vertex contained in $\Delta P_Q$, where $P_Q$ is a Sylow $\ell$-subgroup of
%$C_H(Q)$ (\cite[proof of Theorem 4.1]{Ri2}).
%
%Given $Q\lhd R\le P$, $Q\not=R$, we have
%$\brauer_{\Delta R}\simeq \brauer_{\Delta R}\circ\brauer_{\Delta Q}$, hence
%$\brauer_{\Delta R}(C_Q)\simeq \brauer_{\Delta R}(C)$ in $\Comp(k(C_G(R)\times C_H(R)^\opp))$.
%By induction, $\brauer_{\Delta R}(\End^\bullet_{kC_G(Q)}(C_Q))\simeq
%\End^\bullet_{kC_G(R)}(\brauer_{\Delta R}(C))$ is isomorphic to a complex
%concentrated in degree $0$ in $\Ho(k(C_H(R)\times C_H(R)^\opp))$. Since
%$H^i(\End^\bullet_{kC_G(Q)}(C_Q))=0$ for $i\not=0$, it follows from Lemma \ref{le:H0Brauer} that 
%$\End^\bullet_{kC_G(Q)}(C_Q)$ is isomorphic to its $H^0$ in $\Ho(k(C_H(Q)\times C_H(Q)^\opp))$.
\end{proof}

The following corollary, used in the proof of Theorem \ref{th:Rickardk}, might be useful in other settings.

\begin{coro}
\label{cor:splendid}
Let $G$ be a finite group, $H$ a subgroup of $G$, $b$ a block idempotent of $\OC G$, $c$ a
block idempotent of $\Lambda H$. Let $C$ be a bounded complex of $\ell$-permutation
$(\Lambda Gb,\Lambda Hc)$-bimodules all of whose indecomposable summands have a vertex contained in
$\Delta H$.
Assume $\Hom_{D^b(kC_G(Q))}(\brauer_{\Delta Q}(C),\brauer_{\Delta Q}(C)[i])=0$ for all $i\not=0$
and all $\ell$-subgroups $Q$ of $H$ and the canonical map
$kHc\to \End_{D^b(kG)}(kC)$ is an isomorphism.

Then $C$ induces a splendid Rickard equivalence between $\Lambda Gb$ and $\Lambda Hc$.
\end{coro}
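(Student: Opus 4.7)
The plan is to verify the two Rickard conditions, namely that the canonical bimodule maps
\[
\Lambda Hc \longrightarrow \End^\bullet_{\Lambda Gb}(C) \quad\text{and}\quad \Lambda Gb \longrightarrow \End^\bullet_{(\Lambda Hc)^\opp}(C)^\opp
\]
are isomorphisms in the respective homotopy categories of bimodules. I would treat these conditions over $k$ first, then lift to $\OC$ by exploiting that, for bounded complexes of $\ell$-permutation modules, homotopy-triviality is detected by reduction mod $\ell$ (a consequence of the bijection between indecomposable $\ell$-permutation $\OC$- and $k$-modules respecting the reduced forms of bounded complexes).

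For the first Rickard condition I would apply Theorem~\ref{th:splitEnd} to $kC$: the Hom-vanishing hypothesis produces an isomorphism $\End^\bullet_{kGb}(kC) \simeq \End_{D^b(kGb)}(kC)$ in $\Ho^b(k(H\times H^\opp))$, and composing with the canonical isomorphism $kHc \simeq \End_{D^b(kG)}(kC)$ from the hypothesis gives that the canonical bimodule map $kHc \to \End^\bullet_{kGb}(kC)$ is a homotopy equivalence. To lift this to $\OC$, observe that $\End^\bullet_{\OC Gb}(C)$ is a bounded complex of $\ell$-permutation $\OC(H\times H^\opp)$-modules (Hom of $\ell$-permutation bimodules remains $\ell$-permutation), so the cone of $\OC Hc \to \End^\bullet_{\OC Gb}(C)$ is such a complex. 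Its mod-$\ell$ reduction is contractible by what precedes; hence by the detection principle above the $\OC$-cone is contractible.

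For the second Rickard condition I would invoke Rickard's Morita theorem for derived categories \cite{Ri}. The first Rickard condition over $k$ together with the hypothesis at $Q=1$ shows that $kC$ is a perfect complex over $kGb$ with $\Hom_{D^b(kGb)}(kC,kC[i])=0$ for $i\neq 0$ and $\End_{D^b(kGb)}(kC)\simeq kHc$ via the canonical map coming from the bimodule structure. Once one establishes that $kC$ generates $(kGb)\mperf$ as a thick subcategory, $kC$ is a tilting complex in the sense of Rickard and the compatibility with its bimodule structure promotes it to a two-sided tilting complex; consequently $-\otimes^L_{kHc} kC : D^b(kHc)\to D^b(kGb)$ is an equivalence, whose counit at $kGb$ furnishes the second Rickard condition over $k$. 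Lifting to $\OC$ proceeds as before, using that $\End^\bullet_{(\OC Hc)^\opp}(C)^\opp$ is a bounded complex of $\ell$-permutation $\OC(G\times G^\opp)$-modules.

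The main obstacle is the verification that $kC$ generates $(kGb)\mperf$ as a thick subcategory. My proposed approach is induction on $|H|$ (or equivalently on the defect of $c$): for each nontrivial $\ell$-subgroup $Q$ of $H$, the complex $\brauer_{\Delta Q}(kC)$ satisfies the hypotheses of the present corollary for the pair $(C_G(Q),C_H(Q))$ with local block idempotents $\mathrm{br}_Q(b)$ and $\mathrm{br}_Q(c)$---the Hom-vanishing passes to subgroups $Q' \supseteq Q$ of $H$, and the canonical-map identification passes through $\brauer_{\Delta Q}$ via the compatibility of Brauer functors with $\End^\bullet$ used in the proof of Theorem~\ref{th:splitEnd}. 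By induction each $\brauer_{\Delta Q}(kC)$ induces a splendid Rickard equivalence at the local level, and a Brauer-functor analysis in the spirit of Lemma~\ref{lem:bounded} propagates local generation to the global generation of $kC$. The choice of $|H|$ (rather than $|G|$) as induction parameter is crucial, since $|C_G(Q)|$ need not be strictly smaller than $|G|$ when $Q$ is central in $G$.
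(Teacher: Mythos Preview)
Your treatment of the first Rickard condition is essentially the paper's argument: apply Theorem~\ref{th:splitEnd} to identify $\End^\bullet_{kG}(kC)$ with $\End_{D^b(kG)}(kC)\simeq kHc$ in $\Ho^b(k(H\times H^\opp))$, then lift to $\OC$ (the paper packages the lift by citing \cite[proof of Theorem~5.2]{Ri2}, which is exactly your reduction-mod-$\ell$ detection principle for bounded complexes of $\ell$-permutation modules).

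The divergence is in the second Rickard condition, and here the paper takes a much shorter route than you: it simply cites \cite[Theorem~2.1]{Ri2}. For symmetric $k$-algebras (such as $kGb$ and $kHc$), once $C$ has terms biprojective and the canonical map $kHc\to\End^\bullet_{kG}(kC)$ is a homotopy equivalence of bimodules, the other side $kGb\to\End^\bullet_{(kHc)^\opp}(kC)^\opp$ is automatic. No generation argument is needed; the symmetry of the algebras (equivalently, the biadjunction between $kC\otimes_{kHc}-$ and $kC^*\otimes_{kGb}-$) is what closes the loop.

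Your proposed induction has a genuine gap. You rightly note that $|G|$ fails as an induction parameter because $C_G(Q)=G$ when $Q$ is central in $G$; but the very same objection applies to $|H|$: if $Q$ is a nontrivial $\ell$-subgroup of $Z(H)$ then $C_H(Q)=H$ and nothing decreases. ``Defect of $c$'' does not help either, since $\mathrm{br}_Q(c)$ is in general only a central idempotent of $kC_H(Q)$, not a block, and the defect groups of its block summands need not be smaller. Finally, even granting the local Rickard equivalences, the step ``Brauer-functor analysis propagates local generation to global generation of $(kGb)\mperf$'' is not substantiated: Lemma~\ref{lem:bounded} controls the \emph{terms} of a complex via its Brauer images, whereas generation of a thick subcategory is a different kind of statement and there is no evident mechanism here to pass from generation of $(kC_G(Q)\,\mathrm{br}_Q(b))\mperf$ for all $Q$ to generation of $(kGb)\mperf$. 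So replace the whole second-half plan by the one-sided-implies-two-sided result for symmetric algebras from \cite{Ri2}.
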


\begin{proof}
Theorem \ref{th:splitEnd} shows that the canonical map $kHc\to \End_{kG}^\bullet(kC)$ is an
isomorphism in $\Ho^b(k(H\times H^\opp))$. It follows from \cite[Theorem 2.1]{Ri2} that
 $kC$ induces a Rickard equivalence between $kGb$ and $kHc$.
The result follows now from \cite[proof of Theorem 5.2]{Ri2}.
\end{proof}

\bigskip


\begin{thebibliography}{DigMiRo}

% \bibitem[Bo1]{mackey} {\sc C. Bonnaf\'e}, 
% Mackey formula in type A, 
% {\it Proc. London Math. Soc.} {\bf 80} (2000), 545-574.  
% Corrigenda : Mackey formula in type A, 
% {\it Proc. London Math. Soc.} {\bf 86} (2003), 435-442.

\bibitem[Bo1]{bonnafe action} C.~Bonnaf\'e, 
	{\em Actions of relative Weyl groups I}, 
	J. Group Theory {\bf 7} (2004), 1--37.

\bibitem[Bo2]{qi} C.~Bonnaf\'e, 
	{\em Quasi-isolated elements in reductive groups}, 
	Comm. in Algebra {\bf 33} (2005), 2315--2337.

\bibitem[Bo3]{asterisque} C.~Bonnaf\'e, 
	{\em Sur les caract\`eres des groupes r\'eductifs \`a centre non
	connexe~: applications aux groups sp\'eciaux lin\'eaires et
	unitaires}, 
	Ast\'erisque {\bf 306}, 2006, vi + 165 pp.

\bibitem[BoMi]{bm} C.~Bonnaf\'e and J.~Michel, 
	{\em Computational proof of the Mackey formula for $q > 2$}, 
	Journal of Algebra {\bf 327} (2011), 506--526. 

\bibitem[BoRo1]{BR} C.~Bonnaf\'e and R.~Rouquier, 
	{\em Cat\'egories d\'eriv\'ees et vari\'et\'es de Deligne-Lusztig}, 
	Publ. Math. IHES {\bf 57} (2003), 1--57.

\bibitem[BoRo2]{BR2} C.~Bonnaf\'e and R.~Rouquier, 
	{\em Coxeter orbits and modular representations}, 
	Nagoya Math. J. {\bf 183} (2006) 1--34.

\bibitem[Bou]{Bou} S.~Bouc,
	{\em R\'esolutions de foncteurs de Mackey},
	in ``Group representations: cohomology, group actions and topology'',
 	pp 31--83, Amer. Math. Soc., 1998.

\bibitem[Br1]{broue perm} M.~Brou\'e, 
	{\em On Scott modules and $p$-permutation modules:
	an approach through the Brauer morphism}, 
	Proc. of the A.M.S. {\bf 93} (1985), 401--408.

\bibitem[Br2]{broue} M.~Brou\'e,
	{\em Isom\'etries de caract\`eres et \'equivalences 
	de Morita ou d\'eriv\'ees},
	Publ. Math. I.H.E.S {\bf 71} (1990), 45--63.


\bibitem[BrMi]{BrMi} M.~Brou\'e and J.~Michel,
	{\em Blocs et s\'eries de Lusztig dans un groupe r\'eductif fini},
	J. Reine Angew. Math. {\bf 395} (1989), 56--67.

\bibitem[Ca]{Ca} M.~Cabanes,
	{\em On Jordan decomposition of characters for SU$(n,q)$},
	J. Alg. {\bf 374} (2013), 216--230. 

\bibitem[CaEn1]{CaEn1} M.~Cabanes and M.~Enguehard,
	{\em On blocks of finite reductive groups and twisted induction},
	Adv. Math. {\bf 145} (1999), 189--229.

\bibitem[CaEn2]{CaEnbook} M.~Cabanes and M.~Enguehard,
        ``Representation theory of finite reductive groups'',
	Cambridge University Press, 2004.
\bibitem[Dat]{Dat} J.-F. Dat, 
        {\em Equivalences of tame blocks for $p$-adic linear groups},
        preprint {\tt arXiv:1603.07226}.

\bibitem[DeLu]{DL} P.~Deligne and G.~Lusztig, 
	{\em Representations of reductive groups over finite fields}, 
	Ann. of Math. {\bf 103} (1976), 103--161.

\bibitem[DigMi1]{dmbook} F.~Digne and J.~Michel, 
	``Representations of finite groups of Lie type'',
	London Math. Soc. Student Texts {\bf 21}, 1991, 
	Cambridge University Press, iv + 159 pp.

\bibitem[DigMi2]{dm-nonc} F.~Digne and J.~Michel, 
	{\em Groupes r\'eductifs non connexes},
	Ann. Sc. de l'\'Ec. Norm. Sup. {\bf 27} (1994), 345--406.

\bibitem[DigMi3]{dm-nonc-2} F.~Digne and J.~Michel, 
	{\em Complements on disconnected reductive groups}, 
	Pacific J. of Math. {\bf 279} (2015), 203--228.

\bibitem[DigMiRo]{DMR} F.~Digne, J.~Michel and R.~Rouquier, 
	{\em Cohomologie des vari\'et\'es de Deligne-Lusztig}, 
	Adv. in Math. {\bf 209} (2007), 749--822.

\bibitem[DipDu]{DiDu} R.~Dipper and J.~ Du,
	{\em Harish-Chandra vertices},
	J. Reine Angew. Math. {\bf 437} (1993), 101--130. 

\bibitem[DuRou]{DuRou} O.~Dudas and R.~Rouquier,
	{\em Coxeter orbits and Brauer trees III},
	Journal Amer. Math. Soc. {\bf 27} (2014), 1117--1145.

\bibitem[HoLe]{HoLe} R.B.~Howlett and G.I.~Lehrer,
	{\em On Harish-Chandra induction and restriction for modules of Levi subgroups},
	J. Algebra {\bf 165} (1994), 172--183. 

\bibitem[KeMa1]{KeMa} R.~Kessar and G.~Malle,
	{\em Quasi-isolated blocks and Brauer's height zero conjecture},
	Ann. of Math. {\bf 178} (2013), 321--384.

\bibitem[KeMa2]{KeMa2} R.~Kessar and G.~Malle,
	{\em Brauer's height zero conjecture for quasi-simple goups}, 
	preprint (2015), {\tt arXiv:1510.07907}.
	
\bibitem[Lu1]{lufini} G.~Lusztig, 
	{\em On the finiteness of the number of unipotent classes}, 
	Invent. Math. {\bf 34} (1976), 201--213.

\bibitem[Lu2]{L} G.~Lusztig, 
	``Representations of finite Chevalley groups'', 
	Regional Conf. Series in Math. {\bf 39} (1978), AMS, 48 pp.

\bibitem[Ng]{N} T.-H.~Nguyen, 
	{\em Cohomologie des vari\'et\'es de Coxeter pour le groupe lin\'eaire : 
	alg\`ebre d'endomorphismes, compactification}. Ph.D. Thesis (in preparation).

\bibitem[Pu]{Pu} L.~Puig,
	``On the local structure of Morita and Rickard equivalences
	between Brauer blocks'', Birkh\"auser, 1999.

\bibitem[Ri1]{Ri} J.~Rickard,
	{\em Finite group actions and \'etale cohomology},
	Inst. Hautes \'Etudes Sci. Publ. Math. {\bf 80} (1995), 81--94.

\bibitem[Ri2]{Ri2} J.~Rickard,
	{\em Splendid equivalences: derived categories and permutation
        modules},
	Proc. London Math. Soc. {\bf 72} (1996), 331--358.

\bibitem[Rou1]{Rou1} R.~Rouquier, 
	{\em Complexes de cha\^\i nes \'etales et courbes de Deligne-Lusztig},
	J. Algebra {\bf 257} (2002), 482--508.

\bibitem[Rou2]{Rou2}  R.~Rouquier, 
	{\em Block theory via stable and Rickard equivalences},
        in ``Modular representation theory of finite groups'', de Gruyter,
        101--146, 2001.
	
\bibitem[Rou3]{Rou3} R.~Rouquier, 
	{\em Finite generation of cohomology of finite groups},
	preprint (2014).

\bibitem[Sp]{Springer} T.A.~Springer,
	``Linear algebraic groups'',
	Birkh\"auser, 1998.

\bibitem[St]{steinberg} R.~Steinberg,
	``Endomorphisms of linear algebraic groups'', 
	Memoirs of the AMS {\bf 80} (1968).

\bibitem[Th]{Th} J.~Th\'evenaz,
	``$G$-algebras and modular representation theory'',
	Oxford Univ. Press, 1995.

\bibitem[Tho]{Tho} R.W.~Thomason,
        {\em The classification of triangulated subcategories},
        Compositio Math. {\bf 105} (1997), 1--27.

\bibitem[Wa]{W} H.~Wang,
	{\em L'espace symm\'etrique de Drinfeld et correspondance de
	Langlands locale II}, 
	preprint (2014), {\tt arXiv:1402.1965}.
\end{thebibliography}
\end{document}